\documentclass[11pt]{amsart}
\usepackage[left=2.3cm, top=2.3cm,bottom=2.3cm,right=2.3cm]{geometry}
\usepackage[T1]{fontenc}
\usepackage[utf8]{inputenc}
\usepackage{amsmath,amssymb,amsthm}
\usepackage{mathtools}
\usepackage{mathrsfs}
\usepackage{bm}
\usepackage{enumitem}
\usepackage[hidelinks]{hyperref}
\usepackage{graphicx}
\usepackage{subcaption}

\newcommand{\R}{\mathbb R}
\newcommand{\N}{\mathbb N}
\newcommand{\Z}{\mathbb Z}
\newcommand{\Q}{\mathbb Q}

\newcommand{\Pp}{\mathbb P}
\newcommand{\Ee}{\mathbb E}

\newcommand{\bK}{\mathbb K}

\newcommand{\bW}{\mathbb W}

\newcommand{\bU}{\mathbb U}

\newcommand{\conv}{\operatorname{conv}}
\newcommand{\convclos}{\overline{\operatorname{conv}}}
\newcommand{\pos}{\operatorname{pos}}
\newcommand{\aff}{\operatorname{aff}}
\newcommand{\posclos}{\overline{\operatorname{pos}}}
\newcommand{\lin}{\operatorname{lin}}
\newcommand{\relint}{\operatorname{relint}}
\newcommand{\intr}{\operatorname{int}}
\newcommand{\bd}{\operatorname{bd}}
\newcommand{\dist}{\operatorname{dist}}

\newcommand{\Ord}{\operatorname{Ord}}

\newcommand{\ind}{\mathbf 1}

\newcommand{\dint}{{\rm d}}
\newcommand{\eee}{{\rm e}}

\newcommand{\arsinh}{\operatorname{arsinh}}

\newtheorem{theorem}{Theorem}[section]
\newtheorem{proposition}[theorem]{Proposition}
\newtheorem{lemma}[theorem]{Lemma}
\newtheorem{corollary}[theorem]{Corollary}

\theoremstyle{definition}

\newtheorem{example}[theorem]{Example}

\theoremstyle{remark}
\newtheorem{remark}[theorem]{Remark}

\title[Ornstein--Uhlenbeck spirals]{Ornstein--Uhlenbeck spirals: Absorption Probabilities and Conic Intrinsic Volumes}

\author{Zakhar Kabluchko}
\address{
Zakhar Kabluchko:
Institut f\"ur Mathematische Stochastik,
Universit\"at M\"unster,
Orl\'eans-Ring 10,
48149 M\"unster, Germany}
\email{zakhar.kabluchko@uni-muenster.de}

\date{}

\begin{document}

\begin{abstract}
Let $(U_t)_{t\in\R}$ be a stationary Ornstein--Uhlenbeck process
taking values in $\R^d$, and write
$U_t=(U_t^{(1)},\ldots,U_t^{(d)})$.
The coordinate processes $(U_t^{(i)})_{t\in\R}$, $1\le i\le d$,
are independent centered stationary Gaussian processes with covariance
\[
\Ee[U_s^{(i)}U_t^{(i)}]=\eee^{-|t-s|/2},
\qquad s,t\in\R.
\]
For every $T>0$, we obtain an explicit formula for the probability
that the convex hull $\conv\{U_t:0\le t\le T\}$ contains the origin.
We obtain a similar formula for a circular Ornstein--Uhlenbeck loop taking values
in $\R^d$, namely a stationary centered Gaussian process
on the circle $\R/T\Z$ whose covariance function is the normalized
$T$-periodization of the Ornstein--Uhlenbeck covariance function.

For each fixed dimension $d$, we determine the precise long-time
asymptotics of the non-absorption probabilities, which decay as
explicit constants times $T^{d-1}\eee^{-T/2}$ as $T\to\infty$.
We also show that, as $d\to\infty$, the absorption probabilities
in both models undergo a transition from $0$ to $1$ around
$T=2d$, with a common Gaussian limiting profile on the scale
$\sqrt d$.

A key step is a Gaussian-point absorption problem for Brownian motion
and Brownian bridge in $\R^d$. Let $Z\in\R^d$ be a standard Gaussian
vector independent of both processes. For every $\sigma\ge0$, we
explicitly determine the probabilities that $\sigma Z$ belongs to
the convex hull of the Brownian motion and to the convex hull of
the Brownian bridge.
The corresponding discrete-time Gaussian-point absorption problems
for Gaussian random walks and random-walk bridges can be expressed
in terms of the Grassmann angles of certain deterministic polyhedral
cones, which brings the methods of polyhedral geometry into play. We compute the conic intrinsic volumes of these cones
by analyzing their faces and normal cones. Passing to the scaling
limit then yields the Gaussian-point absorption formulas for the
convex hulls of Brownian motion and Brownian bridges, from which
the Ornstein--Uhlenbeck results follow by deterministic time changes
and positive rescalings. The large-$\sigma$ asymptotics of the Gaussian-point absorption
probabilities also allow us to recover the known formulas for the
expected volumes of the Brownian-motion and Brownian-bridge convex
hulls in $\R^d$.

We further consider deterministic Ornstein--Uhlenbeck spirals in real
Hilbert spaces.  For $T>0$, an Ornstein--Uhlenbeck spiral is a deterministic curve
$(\zeta(t))_{0\le t\le T}$ in the unit sphere of a Hilbert space with
Gram kernel
\[
    \langle\zeta(s),\zeta(t)\rangle=\eee^{-|t-s|/2},
    \qquad 0\le s,t\le T.
\]
We compute the conic intrinsic volumes of its closed positive hull
in Hilbert space and identify the Grassmann angles of this cone
with the corresponding Ornstein--Uhlenbeck absorption probabilities.  Analogous results are
obtained for the circular Ornstein--Uhlenbeck spiral. After appropriate rescaling, the small-$T$ limits of these
conic intrinsic volumes recover the formulas of Gao--Vitale
and Gao for the ordinary (non-conic) intrinsic volumes of
the Brownian body and its Brownian-bridge analogue, respectively.
\end{abstract}

\keywords{Ornstein--Uhlenbeck process, convex hull, absorption probability,
conic intrinsic volumes, Grassmann angles, Brownian motion, Brownian bridge,
Gaussian random walks, Gaussian projections, Hilbert-space convex geometry}

\subjclass[2020]{Primary 60D05, 52A22; Secondary 60G15, 60G50, 60J65, 52A05, 52A38, 60G10}

\maketitle

\tableofcontents

\section{Introduction and main results}
\subsection{Background and motivation}
\label{subsec:introduction}
Convex hulls generated by random walks, Brownian motion, and more
general stochastic processes have been studied extensively in
probability, stochastic geometry, and statistical physics. Among
the questions considered are the moments and distributions of
geometric characteristics, face numbers, scaling limits, and
absorption probabilities.
For mathematical work on convex hulls of random walks, see,
for example,~\cite{WadeXu2015,TikhomirovYoussef2017,
KabluchkoVysotskyZaporozhets2017,
KabluchkoVysotskyZaporozhets2017Faces,KabluchkoMarynych2022}.
For convex hulls of Brownian motion, see
\cite{CranstonHsuMarch1989,Eldan2014,AlexanderEldan2019,
PanzoSocher2026};
for convex hulls of stationary Gaussian processes, see
\cite{DavydovDombry2012};
and for convex hulls of L\'evy processes, see
\cite{MolchanovWespi2016}.
These references are by no means exhaustive.
For the statistical-physics perspective, including connections
with extreme-value statistics, see the review by
Majumdar--Comtet--Randon-Furling~
\cite{MajumdarComtetRandonFurling2010}.

A classical result on origin absorption by random convex hulls
is Wendel's theorem~\cite{Wendel1962}; see also
Schneider--Weil~\cite[Theorem~8.2.1]{SchneiderWeil2008}. For a set $A\subseteq\R^d$, we write $\conv A$ for its convex hull.
Let $Y_1,\ldots,Y_N$ be independent identically distributed random
vectors in $\R^d$ whose common distribution is centrally symmetric
and assigns probability zero to every hyperplane through the origin.
Under these assumptions, Wendel's formula gives
\begin{equation}
\label{eq:intro-wendel-nonabsorption}
\Pp\bigl(0\notin\conv\{Y_1,\ldots,Y_N\}\bigr)
=\frac1{2^{N-1}}\sum_{k=0}^{d-1}\binom{N-1}{k}.
\end{equation}

A related problem concerns origin absorption by the convex hull
of the successive positions of a random walk.
Let $X_1,\ldots,X_n$ be independent identically distributed random
vectors in $\R^d$ whose common distribution is centrally symmetric
and assigns probability zero to every affine hyperplane.
Put $S_0:=0$ and $S_k:=X_1+\cdots+X_k$, $1\le k\le n$.
The following distribution-free formula was obtained
by Kabluchko--Vysotsky--Zaporozhets~\cite[Theorem~1.1]{KabluchkoVysotskyZaporozhets2017}:
\begin{equation}
\label{eq:intro-random-walk-nonabsorption}
\Pp\bigl(0\notin\conv\{S_1,\ldots,S_n\}\bigr)
=\frac{2}{2^n n!}\sum_{j\ge0}B[n,d-1-2j],
\end{equation}
where $B[n,k]$ are the Stirling numbers of type $B$, defined by $\prod_{j=1}^n(t+2j-1)=\sum_{k=0}^n B[n,k]t^k$,
with $B[n,k]=0$ for $k<0$ or $k>n$.
The starting point $S_0=0$ is excluded from the convex hull.

Both formulas are distribution-free: under their respective
assumptions, the non-absorption probabilities depend only on
the dimension and the number of points, but not on the
distributions of the random vectors or random-walk increments.

\medskip
In the present paper, we study related origin-absorption problems
for continuous-time Gaussian processes. In particular, we derive
explicit formulas for the probability that the origin belongs to
the convex hull of a stationary Ornstein--Uhlenbeck process in
$\R^d$ observed over a finite time interval, as well as for
a related circular Ornstein--Uhlenbeck process. We also determine
the origin-absorption probability for the convex hull of a
$d$-dimensional Brownian motion observed on $[s_1,s_2]$, where $0 < s_1 < s_2 < \infty$.

\subsection{Convex hulls of Ornstein--Uhlenbeck processes and loops}
\label{subsec:intro-ou-paths-and-loops}
Fix an integer $d\ge1$. We consider two related stationary
Gaussian processes taking values in $\R^d$, one indexed by the real line and the other
by a circle of prescribed circumference $T>0$.

\medskip
\noindent\emph{The stationary Ornstein--Uhlenbeck process in $\R^d$
and its circular counterpart.}
Let $U=(U_t)_{t\in\R}$ be a centered stationary
\emph{Ornstein--Uhlenbeck (OU) process} taking values in $\R^d$,
with continuous paths and covariance
\begin{equation}
\label{eq:intro-ordinary-ou-covariance}
\Ee[U_tU_s^{\mathsf T}]=\eee^{-|t-s|/2}I_d,
\qquad s,t\in\R,
\end{equation}
where $I_d$ denotes the $d\times d$ identity matrix.
Equivalently, writing $U_t=(U_t^{(1)},\ldots,U_t^{(d)})$,
the coordinate processes $(U_t^{(i)})_{t\in\R}$,
$1\le i\le d$, are independent centered stationary
Gaussian processes with covariance
\[
\Ee[U_s^{(i)}U_t^{(i)}]=\eee^{-|t-s|/2},
\qquad s,t\in\R,\quad 1\le i\le d.
\]

For $T>0$, we define the \emph{circular Ornstein--Uhlenbeck loop
of circumference $T$} as the centered Gaussian process
$V^{(T)}=(V_t^{(T)})_{0\le t\le T}$ taking values in $\R^d$,
with continuous paths and covariance
\begin{equation}
\label{eq:intro-circular-ou-covariance}
\Ee\bigl[V_t^{(T)}(V_s^{(T)})^{\mathsf T}\bigr]
=\frac{\cosh(T/4-|t-s|/2)}{\cosh(T/4)}I_d,
\qquad 0\le s,t\le T.
\end{equation}
Its coordinate processes are independent and have unit variance.
Moreover, $V_0^{(T)}=V_T^{(T)}$ almost surely, and the law of
$V^{(T)}$ is invariant under rotations of the circle obtained
from $[0,T]$ by identifying its endpoints.
Ornstein--Uhlenbeck processes indexed by a circle have been studied
by Norris~\cite{Norris1998} and
Brze\'zniak--van Neerven~\cite{BrzezniakVanNeerven2000}.
The relation to the ordinary OU process is expressed by the
covariance identity
\begin{equation}
\label{eq:intro-ou-covariance-periodization}
\frac{\cosh(T/4-h/2)}{\cosh(T/4)}
=\tanh(T/4)\sum_{k\in\Z}\eee^{-|h+kT|/2},
\qquad 0\le h\le T.
\end{equation}
Thus the covariance function of the circular OU loop is the
periodization of the ordinary OU covariance, normalized to have
value one at zero.
The circular OU loop should not be confused with the Gaussian
conditional law of $U$ given $U_0=U_T$: the latter has nonconstant
conditional variance and is not rotation invariant; see
Remark~\ref{rem:circular-ou-endpoint-conditioning}.

\medskip
\noindent\emph{Exact absorption probabilities for OU convex hulls.}
Define the random convex hulls
\begin{equation}
\label{eq:intro-ou-hulls}
\mathcal H_{d,T}^B:=\conv\{U_t:0\le t\le T\},
\qquad
\mathcal H_{d,T}^A:=\conv\{V_t^{(T)}:0\le t\le T\}.
\end{equation}
We refer to $\mathcal H_{d,T}^B$ and $\mathcal H_{d,T}^A$ as the
\emph{OU convex hulls} of the ordinary OU process and the
circular OU loop, respectively.   The superscripts $B$ and $A$ anticipate the random-walk (type $B$)
and random-bridge (type $A$) structures underlying the two models:
absorption problems for symmetric random walks and for exchangeable
random bridges are related to reflection arrangements and Weyl chambers of types
$B$ and $A$, respectively; see~\cite{KabluchkoVysotskyZaporozhets2017}.
The corresponding Brownian-motion and Brownian-bridge descriptions
of the OU process and the OU loop are given in
Proposition~\ref{prop:ou-brownian-representations}.

\medskip
We now state explicit formulas for the probabilities that the origin
belongs to these two convex hulls.

\begin{theorem}[Exact absorption probabilities]
\label{thm:intro-ou-origin-absorption}
For every integer $d\ge1$ and every $T>0$,
\begin{align}
\Pp(0\in\mathcal H_{d,T}^B)
&=\frac2\pi\int_0^{\arsinh\sqrt{\eee^T-1}}
\frac{\Psi_d(x)\,\dint x}{\sqrt{\eee^T-1-\sinh^2x}},
\label{eq:intro-ordinary-ou-absorption}
\\
\Pp(0\in\mathcal H_{d,T}^A)
&=\frac{\Psi_{d+1}(T/2)}{\sinh(T/2)},
\label{eq:intro-circular-ou-absorption}
\end{align}
where
\begin{equation}
\label{eq:intro-hyperbolic-remainders}
\Psi_m(x):=\sum_{j=0}^\infty\frac{x^{m+2j}}{(m+2j)!},
\qquad m=0,1,\ldots,\quad x\ge0.
\end{equation}
Here $\arsinh$ denotes the inverse hyperbolic sine. The integral in \eqref{eq:intro-ordinary-ou-absorption}
is absolutely convergent.
\end{theorem}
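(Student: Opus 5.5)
Following the route indicated in the abstract, the plan is to reduce both OU absorption problems to Gaussian-point absorption problems for Brownian motion and Brownian bridge, by deterministic time changes and positive rescalings.

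\emph{Step 1: Brownian representations.} Write $U_t=\eee^{-t/2}W(\eee^{t})$ with $W$ a standard Brownian motion in $\R^d$; this representation is anticipated in Proposition~\ref{prop:ou-brownian-representations}. Since $0\in\conv A$ is invariant under multiplying each point of $A$ by a positive scalar, we get
\[
\Pp(0\in\mathcal H^B_{d,T})=\Pp\bigl(0\in\conv\{W_s:1\le s\le \eee^T\}\bigr).
\]
Next decompose $W_s=W_1+\widetilde W_{s-1}$, where $\widetilde W$ is an independent Brownian motion and $W_1=Z$ is standard Gaussian. Then $0\in\conv\{W_s\}$ if and only if $-Z\in\conv\{\widetilde W_u:0\le u\le \eee^T-1\}$. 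Rescaling time by $\eee^T-1$ via Brownian scaling gives $\sigma Z\in\conv\{B_u:0\le u\le1\}$ with $\sigma=(\eee^T-1)^{-1/2}$.

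For the loop, I would look for an analogous representation $V_t=c(t)\,\bigl(\xi+ \beta(\phi(t))\bigr)$ with $c>0$, a Brownian bridge $\beta$, and an independent Gaussian $\xi$. This should follow from a covariance match: $\cosh(T/4-|t-s|/2)$ has the form $a(s)a(t)+b(\min)\,b'(\max)$ after a Möbius-type time change. Explicitly, $\cosh(T/4-(t-s)/2)$ with $s<t$ equals $\frac12[\eee^{T/4}\eee^{s/2}\eee^{-t/2}+\eee^{-T/4}\eee^{-s/2}\eee^{t/2}]$. This is a Green-function-type kernel, and it gives a Brownian-bridge-plus-Gaussian-constant representation with $\sigma$ a function of $T$. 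I would take these representations from Proposition~\ref{prop:ou-brownian-representations} as given.

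\emph{Step 2: Gaussian-point formulas.} Invoke the Brownian-motion and Brownian-bridge Gaussian-point absorption formulas. These are obtained, as the abstract says, from conic intrinsic volumes of the random-walk cones via a scaling limit, and give $p^B_d(\sigma):=\Pp(\sigma Z\in\conv B[0,1])$ and the bridge analogue $p^A_d(\sigma)$ explicitly. Then substitute $\sigma=(\eee^T-1)^{-1/2}$ and the corresponding loop value.

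\emph{Step 3: Reduce to the stated closed forms.} The formula \eqref{eq:intro-ordinary-ou-absorption} has the shape of an arcsine-type average $\frac2\pi\int_0^{a}\frac{\Psi_d(x)\,\dint x}{\sqrt{\sinh^2a-\sinh^2x}}$ with $\sinh a=\sqrt{\eee^T-1}$. This suggests that the motion case is a mixture over the bridge case. Concretely, a Brownian motion on $[0,1]$ conditioned on its endpoint is a bridge plus a linear drift, and the relevant endpoint decomposition produces the kernel $(\sinh^2a-\sinh^2x)^{-1/2}$. I would therefore first verify the bridge identity $p^A=\Psi_{d+1}(T/2)/\sinh(T/2)$ directly from Step 2. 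I would then obtain the motion formula either directly from its Gaussian-point formula, via the substitution $\sinh x=\dots$, or by expressing $p^B_d(\sigma)$ as an integral of $p^A$-type quantities.

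Absolute convergence is easy: $\Psi_d(x)\le\cosh x$ is bounded on the interval, and the singularity at the upper endpoint is of order $(a-x)^{-1/2}$, which is integrable.

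The main obstacle will be Step 3: the algebraic and integral identity converting the Gaussian-point absorption series, likely sums of conic intrinsic volumes $\sum_j v_{d+1+2j}$ in the random-walk cones' limit, into the compact hyperbolic expressions. I would first attempt this by matching power series in $T$, or by checking both sides against a common ODE in $T$.
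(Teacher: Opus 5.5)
Your Step~1 is correct and is the reduction the paper uses. The representation $U_t=\eee^{-t/2}W(\eee^t)$, the split $W_s=W_1+\widetilde W_{s-1}$ and Brownian scaling give $\Pp(0\in\mathcal H_{d,T}^B)=\Pp(\rho_B(T)Z\in\mathcal K_d^B)$ with $\rho_B(T)=(\eee^T-1)^{-1/2}$. Proposition~\ref{prop:ou-brownian-representations} gives the loop analogue with $\rho_A(T)=1/(2\sinh(T/4))$, a value you should state explicitly. Your argument for convergence of the integral is also fine.

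The gap is everything after that. If you are citing Theorem~\ref{thm:brownian-gaussian-absorption}, your Step~3 is not needed. That theorem already gives the closed forms $q_d^B(\sigma)=\frac2\pi\int_0^{\arsinh(1/\sigma)}\Psi_d(x)\,\dint x/\sqrt{\sigma^{-2}-\sinh^2x}$ and $q_d^A(\sigma)=\Psi_{d+1}(\theta_\sigma)/\sinh\theta_\sigma$. The proof then ends with $\rho_B(T)^{-2}=\eee^T-1$ and $\theta_{\rho_A(T)}=2\arsinh(\sinh(T/4))=T/2$, exactly as in the paper. If you are not citing it, then ``invoke the formulas'' in Step~2 is the whole content of the statement, and your proposal gives no way to obtain them.

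Concretely, what is missing is the computation of the limiting conic intrinsic volumes $L_k^\bullet(c)=\lim_n\upsilon_k(\bW_n^\bullet(c\sqrt n))$. The paper proceeds as follows:
\begin{itemize}
\item It writes $\Pp(\sigma Z\in\mathcal Q_{d,n}^\bullet)=2\sum_j\upsilon_{d+1+2j}(\bW_n^\bullet(\sigma\sqrt n))$, using the conic Crofton formula for the kernel of a Gaussian map.
\item It evaluates these intrinsic volumes through face and external angles of the simplicial cones, using Sparre Andersen persistence probabilities and conditional limit theorems for walks and bridges.
\item It passes to the limit and inverts Laplace transforms, obtaining $L_k^B(c)=\frac1{\pi(k-1)!}\int_0^{\arsinh(1/c)}x^{k-1}\,\dint x/\sqrt{c^{-2}-\sinh^2x}$ and $L_k^A(c)=\theta_c^k/(2k!\sinh\theta_c)$.
\item It uses $\ell^1$ convergence to take the limit inside the series.
\end{itemize}
Once the individual $L_k^\bullet$ are known, the ``conversion'' you call the main obstacle is a one-line Tonelli summation over $k=d+1,d+3,\ldots$. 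It produces $\Psi_d$ in type $B$ and $\Psi_{d+1}(\theta_c)$ in type $A$.

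Your proposed substitutes do not fill this gap. Matching power series or an ODE in $T$ requires an independent handle on the absorption probability, which you do not have. The ``motion as a mixture of bridges'' idea is unsupported: conditioning on $W_1=y$ gives a bridge plus the drift $ty$, whose hull is neither a translate nor a rescaling of the bridge hull. Nothing in your sketch explains how the kernel $(\sinh^2a-\sinh^2x)^{-1/2}$ would come out of it. In the paper that kernel arises from inverting $\lambda^{-1/2}\eee^{-\lambda\sinh^2u}$, where $\lambda^{-1/2}$ is the Laplace transform of the terminal persistence kernel $1/\sqrt{\pi t}$.
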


The proof is given in Subsection~\ref{subsec:ou-origin-absorption};
see Theorem~\ref{thm:ou-origin-absorption}.
The functions $\Psi_m$ satisfy $\Psi_0(x)=\cosh x$,
$\Psi_1(x)=\sinh x$, and
$\Psi_{m+2}(x)=\Psi_m(x)-x^m/m!$, $m\geq 0$.
Thus the ordinary OU-process formula~\eqref{eq:intro-ordinary-ou-absorption} involves a single integral,
whereas the circular-loop formula~\eqref{eq:intro-circular-ou-absorption} involves a hyperbolic function
with finitely many Taylor terms removed, divided by $\sinh(T/2)$.

\begin{example}[$d=1,2,3$]\label{exam:OU_convex_hull_nonabsorption_low_dim}
For $d=1$, formulas~\eqref{eq:intro-ordinary-ou-absorption}
and \eqref{eq:intro-circular-ou-absorption} yield
the following non-absorption probabilities:
\begin{equation}
\label{eq:intro-ou-absorption-dimension-one}
\Pp(0\notin\mathcal H_{1,T}^B)
=\frac2\pi\arcsin(\eee^{-T/2}),
\qquad
\Pp(0\notin\mathcal H_{1,T}^A)
=1-\tanh(T/4)=\frac{2}{1+\eee^{T/2}}.
\end{equation}
For a real-valued continuous path, non-absorption means that the
path remains strictly positive or strictly negative throughout
the interval $[0,T]$.
By symmetry, $\Pp(0\notin\mathcal H_{1,T}^B)$ is twice
the classical probability that the OU process stays positive on $[0,T]$; see
Slepian~\cite{Slepian1962} and
Aurzada--Baumgarten~\cite[Eq.~(2.5)]{AurzadaBaumgarten2011}.
In dimension $d=2$, the non-absorption probabilities  are given by
\begin{align}
\Pp(0\notin\mathcal H_{2,T}^B)
&=\frac2\pi\int_0^{\pi/2}
\frac{\dint\theta}{\sqrt{1+(\eee^T-1)\sin^2\theta}}
=\frac2\pi\,\eee^{-T/2}
K\bigl(\sqrt{1-\eee^{-T}}\bigr),
\label{eq:intro-ordinary-ou-absorption-dimension-two}
\\
\Pp(0\notin\mathcal H_{2,T}^A)
&=\frac{T/2}{\sinh(T/2)},
\label{eq:intro-circular-ou-absorption-dimension-two}
\end{align}
where $K(k):=\int_0^{\pi/2} (1-k^2\sin^2\theta)^{-1/2} \dint\theta$,  $0\le k<1$, is the complete elliptic integral of the first kind;  see
\cite[Eqs.~(19.2.4) and (19.2.8)]{NIST:DLMF}.
The corresponding absorption probabilities, which are the
complements of the probabilities displayed above, are given
in Corollary~\ref{cor:ou-origin-absorption-low-dimensions}.
In dimension $d=3$, the non-absorption probabilities can also be
expressed in terms of standard special functions:
\begin{align}
\Pp(0\notin\mathcal H_{3,T}^B)
&=\frac2\pi\left[
\arcsin(\eee^{-T/2})
+\sqrt{\eee^T-1}\,
{}_3F_2\left(
\begin{matrix}
1,1,1\\[2pt]
\frac32,\frac32
\end{matrix}
;1-\eee^T\right)
\right],
\\
\Pp(0\notin\mathcal H_{3,T}^A)
&=\frac{1+T^2/8-\eee^{-T/2}}{\sinh(T/2)},
\end{align}
where ${}_3F_2$ denotes the generalized hypergeometric function.
\end{example}

\medskip
\noindent\emph{Asymptotic results for OU absorption probabilities.}
The exact formulas stated in Theorem~\ref{thm:intro-ou-origin-absorption} yield the following
long-time asymptotics for the OU non-absorption probabilities in
each fixed dimension $d$. The proof is given in
Theorem~\ref{thm:ou-nonabsorption-fixed-dimension}.

\begin{theorem}[Long-time non-absorption in fixed dimension]
\label{thm:intro-ou-nonabsorption-fixed-d}
For every fixed integer $d\ge1$, as $T\to\infty$,
\begin{align}
\Pp(0\notin\mathcal H_{d,T}^B)
&\sim\frac{2^{2-d}}{\pi(d-1)!}\,
T^{d-1}\eee^{-T/2},
\label{eq:intro-ordinary-ou-nonabsorption-asymptotic}
\\
\Pp(0\notin\mathcal H_{d,T}^A)
&\sim\frac{2^{2-d}}{(d-1)!}\,
T^{d-1}\eee^{-T/2}.
\label{eq:intro-circular-ou-nonabsorption-asymptotic}
\end{align}
In particular, $\Pp(0\notin\mathcal H_{d,T}^A)/\Pp(0\notin\mathcal H_{d,T}^B)\to\pi$ as $T\to\infty$.
\end{theorem}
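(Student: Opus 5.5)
We derive both asymptotics directly from the exact formulas of Theorem~\ref{thm:intro-ou-origin-absorption}. The two models need different arguments: the circular one is elementary, while the ordinary one needs a Laplace-type analysis of the integral near its upper endpoint.

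\emph{Circular loop.} Since $\Psi_{d+1}(x)=\sinh x-\sum_{0\le k\le d,\ k\equiv d+1 \ (\mathrm{mod}\ 2)}x^k/k!$, we have
\[
\Pp(0\notin\mathcal H_{d,T}^A)=1-\frac{\Psi_{d+1}(T/2)}{\sinh(T/2)}=\frac{1}{\sinh(T/2)}\sum_{\substack{0\le k\le d\\ k\equiv d-1\ (\mathrm{mod}\ 2)}}\frac{(T/2)^k}{k!}.
\]
The dominant term is $k=d-1$, and $\sinh(T/2)\sim \eee^{T/2}/2$. This gives $2\,(T/2)^{d-1}\eee^{-T/2}/(d-1)!=2^{2-d}T^{d-1}\eee^{-T/2}/(d-1)!$, which is \eqref{eq:intro-circular-ou-nonabsorption-asymptotic}.

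\emph{Ordinary OU process.} Put $a:=\arsinh\sqrt{\eee^T-1}$, so that $\sinh a=\sqrt{\eee^T-1}$, $\cosh a=\eee^{T/2}$ and $a=T/2+\log 2+o(1)$.
\begin{itemize}
\item[(1)] Apply \eqref{eq:intro-ordinary-ou-absorption} with $\Psi_1=\sinh$. Substituting $\sinh x=\sinh a\sin\theta$ gives the total mass
\[
\frac2\pi\int_0^a\frac{\sinh x\,\dint x}{\sqrt{\sinh^2a-\sinh^2x}}=\frac2\pi\int_0^{\pi/2}\frac{\sinh a\sin\theta\,\dint\theta}{\sqrt{1+\sinh^2a\sin^2\theta}}.
\]
More simply, with $u=\cosh x$ the left integral becomes $\int_1^{\cosh a}\dint u/\sqrt{\cosh^2a-u^2}=\pi/2-\arcsin(\eee^{-T/2})$. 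This is consistent with the $d=1$ case.
\item[(2)] Write $\Psi_d=\Psi_{d\bmod 2}-P_d$ with $P_d(x)=\sum_{k<d,\ k\equiv d\ (\mathrm{mod}\ 2)}x^k/k!$. Then
\[
\Pp(0\notin\mathcal H_{d,T}^B)=1-\frac2\pi\int_0^a\frac{\Psi_{d\bmod2}(x)\,\dint x}{\sqrt{\sinh^2a-\sinh^2x}}+\frac2\pi\int_0^a\frac{P_d(x)\,\dint x}{\sqrt{\sinh^2a-\sinh^2x}}.
\]
\item[(3)] The first two terms are $O(\eee^{-T/2})$. For odd $d$ this is step (1). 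For even $d$, substituting $u=\sinh x$ with $\Psi_0=\cosh$ gives exactly $\frac2\pi\cdot\frac\pi2=1$, so the first two terms cancel exactly. Either way, these contributions are $O(\eee^{-T/2})$, which is negligible against $T^{d-1}\eee^{-T/2}$ once $d\ge2$. The case $d=1$ is already exact by \eqref{eq:intro-ou-absorption-dimension-one}.
\item[(4)] The main term comes from the top monomial $x^{d-2}/(d-2)!$ in $P_d$; lower monomials contribute $O(T^{d-3}\eee^{-T/2})$. So the key asymptotic to establish is
\[
I_k(a):=\int_0^a\frac{x^k\,\dint x}{\sqrt{\sinh^2a-\sinh^2x}}\sim \frac{a^{k+1}}{k+1}\,\eee^{-a}\cdot 2 .
\]
Heuristically, $\sinh^2a-\sinh^2x\approx \eee^{2a}/4$ for $x<a-C$, so the bulk gives $2\eee^{-a}\int_0^a x^k\,\dint x$. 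The region $x\in[a-C,a]$ contributes only $O(a^k\eee^{-a})$, since the substitution $y=a-x$ gives $\sinh^2a-\sinh^2x\approx\frac{\eee^{2a}}4(1-\eee^{-2y})$, which is integrable at $y=0$.
\item[(5)] With $k=d-2$, $\eee^{-a}\sim \eee^{-T/2}/2$ and $a\sim T/2$, the main term is
\[
\frac2\pi\cdot\frac{1}{(d-2)!}\cdot\frac{2\,(T/2)^{d-1}}{d-1}\cdot\frac{\eee^{-T/2}}{2}=\frac{2^{2-d}}{\pi(d-1)!}T^{d-1}\eee^{-T/2},
\]
which is \eqref{eq:intro-ordinary-ou-nonabsorption-asymptotic}.
\end{itemize}

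The main obstacle is step (4): making the bulk/edge split rigorous via dominated convergence. Concretely, I would write
\[
\eee^{a}I_k(a)=\int_0^a x^k\,\frac{\eee^a}{\sqrt{\sinh^2a-\sinh^2x}}\,\dint x.
\]
The kernel is bounded by $2/\sqrt{1-\eee^{-2(a-x)}}+o(1)$ uniformly in $x$. Dividing by $a^{k+1}$ and substituting $x=as$ then yields the limit $2/(k+1)$.

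The ratio claim follows immediately from the two asymptotics.
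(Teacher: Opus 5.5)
Your argument is correct and is essentially the paper's own proof. Steps (1)--(3) rederive, by direct integration, the finite-sum representation $1-a_d^\bullet(T)=2\sum_{j}L^\bullet_{d-1-2j}(\rho_\bullet(T))$; the paper obtains this representation from the parity normalization of the $L_k^\bullet$. The index $d-1$ then dominates exactly as in the paper. Your dominated-convergence treatment of $I_k(a)$ replaces the paper's explicit bound via $\sinh(b-u)/\sinh b\le\eee^{-u}$. That bound also yields the relative error $O(T^{-1})$, but this is not needed for the $\sim$ statement.

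There are two small corrections.

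\begin{itemize}
\item For odd $d$ one has $\Psi_{d+1}(x)=\cosh x-\sum_{k\ \mathrm{even},\ k\le d-1}x^k/k!$, not $\sinh x-\cdots$. Your exact circular formula is therefore missing the term $-\eee^{-T/2}/\sinh(T/2)=O(\eee^{-T})$; compare the $d=1,3$ formulas in Example~\ref{exam:OU_convex_hull_nonabsorption_low_dim}. This term is negligible and leaves the asymptotics unchanged.
\item The kernel bound in step (4) cannot be an additive uniform $o(1)$, because the kernel blows up at $x=a$; it should be multiplicative. With $y=a-x$ one has the exact factorization $\sinh^2a-\sinh^2x=\tfrac14\eee^{2a}(1-\eee^{-2y})(1-\eee^{2y-4a})$. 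This gives the majorant $2(1-\eee^{-2a})^{-1/2}(1-\eee^{-2y})^{-1/2}$, which is all your dominated-convergence step requires.
\end{itemize}
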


For the ordinary stationary OU process in $\R^d$, the theorem of
Davydov--Dombry~\cite[Theorem~1]{DavydovDombry2012}
implies that, for each fixed $d$, the rescaled convex hull
$(2\log T)^{-1/2}\mathcal H_{d,T}^B$ converges almost surely,
in Hausdorff distance, to the Euclidean unit ball in $\R^d$
as $T\to\infty$. Their theorem applies more generally to
stationary Gaussian processes whose covariance tends to zero,
under suitable regularity assumptions.
In particular, this convergence implies that the probability
of origin non-absorption tends to zero, but does not determine
the precise rate of decay given in
Theorem~\ref{thm:intro-ou-nonabsorption-fixed-d}.

\medskip
We next consider origin absorption by OU convex hulls when both
the dimension $d$ and the time horizon $T$ tend to infinity.
For motivation, we first return to Wendel's theorem.  As the dimension $d$ grows, one may ask how many random i.i.d.\ points
in $\R^d$ are needed for their convex hull to contain the
origin with a probability approaching a prescribed value. Under the assumptions of Wendel's theorem, the formula
\eqref{eq:intro-wendel-nonabsorption}, together with the
central limit theorem for binomial distributions, gives a
precise answer.
Let $N_d$ be a sequence of positive integers satisfying
\begin{equation}
\label{eq:intro-wendel-high-dimensional-scaling}
\frac{N_d-2d}{\sqrt{2d}}\longrightarrow\tau\in\R
\qquad\text{as }d\to\infty.
\end{equation}
For each $d$, let $Y_1,\ldots,Y_{N_d}$ be random vectors in $\R^d$
satisfying the assumptions of Wendel's theorem. Then
\begin{equation}
\label{eq:intro-wendel-high-dimensional-limit}
\begin{aligned}
\Pp\bigl(0\in\conv\{Y_1,\ldots,Y_{N_d}\}\bigr)
=\frac1{2^{N_d-1}}\sum_{k=d}^{N_d-1}
\binom{N_d-1}{k}
=\Pp\bigl(\operatorname{Bin}(N_d-1,1/2)\ge d\bigr)
\longrightarrow\Phi(\tau),
\end{aligned}
\end{equation}
where $\operatorname{Bin}(n,p)$ denotes a binomial random variable
with parameters $n$ and $p$, and
\begin{equation}\label{eq:Phi_def}
\Phi(\tau):=\frac1{\sqrt{2\pi}}\int_{-\infty}^\tau
\eee^{-x^2/2}\,\dint x
\end{equation}
is the standard normal distribution function.

It is natural to ask whether a similar high-dimensional transition
occurs for the convex hulls of Ornstein--Uhlenbeck processes.
The following theorem and its corollary show that both the ordinary OU process
and its circular counterpart exhibit a transition around $T=2d$,
with a common Gaussian limiting profile.

\begin{theorem}[Transition from non-absorption to absorption in high dimension]
\label{thm:intro-ou-high-dimensional-transition}
Let $T_d>0$ satisfy
\begin{equation}
\label{eq:intro-ou-critical-window}
\frac{T_d-2d}{2\sqrt d}\longrightarrow\tau\in\R
\qquad\text{as }d\to\infty.
\end{equation}
Then
\begin{equation}
\label{eq:intro-ou-gaussian-transition}
\Pp(0\in\mathcal H_{d,T_d}^B)\longrightarrow\Phi(\tau),
\qquad
\Pp(0\in\mathcal H_{d,T_d}^A)\longrightarrow\Phi(\tau),
\end{equation}
where $\Phi$ is the standard normal distribution function given by~\eqref{eq:Phi_def}.
\end{theorem}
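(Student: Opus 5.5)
Both limits will be read off from the exact formulas in Theorem~\ref{thm:intro-ou-origin-absorption} by giving each a probabilistic interpretation.

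\emph{Circular loop.} The key identity is $\Psi_m(x)=\eee^{x}\,\Pp(\text{Poi}(x)\ge m,\ \text{Poi}(x)\equiv m \bmod 2)$. Writing $\Psi_m(x)=\tfrac12\eee^x\Pp(\text{Poi}(x)\ge m)+\tfrac12\sum_{k\ge m}(-1)^{k-m}x^k/k!$, the alternating tail is bounded by $\eee^{-x}$ times something small. Hence
\[
\frac{\Psi_{d+1}(T/2)}{\sinh(T/2)}=\Pp\bigl(\text{Poi}(T/2)\ge d+1\bigr)+O(\eee^{-T/2}\cdot\text{poly}),
\]
with error $o(1)$ once $T\to\infty$. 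More cleanly, $\Psi_m(x)-\tfrac12\eee^x\Pp(\text{Poi}(x)\ge m)=\tfrac12\eee^{-x}\Pp(\text{Poi}(-x)\ldots)$; concretely it equals $\tfrac12\sum_{k\ge m}(-1)^{k-m}x^k/k!$, which is $\pm\tfrac12(\eee^{-x}-\text{Taylor polynomial of }\eee^{-x})$ and is bounded by $x^m/m!$. After dividing by $\sinh x$, this gives a relative contribution at most $\Pp(\text{Poi}(x)=m)$, which tends to $0$ in the window.

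With $x=T_d/2=d+\tau\sqrt d+o(\sqrt d)$, the central limit theorem for the Poisson distribution (mean and variance $x$) gives
\[
\Pp(\text{Poi}(x)\ge d+1)=\Pp\Bigl(\frac{\text{Poi}(x)-x}{\sqrt x}\ge \frac{d+1-x}{\sqrt x}\Bigr)\to \Pp(N\ge -\tau)=\Phi(\tau).
\]

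\emph{Ordinary OU process.} Substitute $x=\arsinh(\sqrt{\eee^T-1}\,\sin\theta)$ in \eqref{eq:intro-ordinary-ou-absorption}. Since $\dint x=\sqrt{\eee^T-1}\cos\theta\,\dint\theta/\cosh x$, the formula becomes
\[
\Pp(0\in\mathcal H^B_{d,T})=\frac2\pi\int_0^{\pi/2}\frac{\Psi_d(x_\theta)}{\cosh x_\theta}\,\dint\theta,\qquad x_\theta=\arsinh\bigl(\sqrt{\eee^T-1}\sin\theta\bigr).
\]
By the circular-loop step, $\Psi_d(x)/\cosh x=\Pp(\text{Poi}(x)\ge d)+r_d(x)$, where $\sup_x|r_d(x)|\le\sup_x\Pp(\text{Poi}(x)=d)\lesssim d^{-1/2}$. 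This turns the integrand into a bounded function of $\theta$ to which dominated convergence applies.

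For fixed $\theta\in(0,\pi/2]$ we have $x_\theta=T/2+\log(2\sin\theta)+o(1)$, i.e.\ a shift by $O(1)$, which is negligible on the $\sqrt d$ scale. Hence the integrand tends to $\Phi(\tau)$ pointwise, and by bounded convergence ($\frac2\pi\int_0^{\pi/2}\dint\theta=1$) we obtain $\Phi(\tau)$.

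\emph{Main obstacle.} The delicate point is making the parity and error term uniform, i.e.\ establishing $|\Psi_m(x)-\tfrac12\eee^x\Pp(\text{Poi}(x)\ge m)|\le \tfrac12 x^m/m!$. This holds because the alternating series $\sum_{k\ge m}(-x)^k/k!$ has terms eventually decreasing in modulus; for $x>m$ I would instead use the integral remainder of Taylor's formula for $\eee^{-x}$, which gives the bound $x^m/m!$ directly. Everything else is the Poisson CLT, which is uniform in the centering by Pólya's theorem, and dominated convergence.
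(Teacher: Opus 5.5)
Your argument is correct, and it takes a different route from the paper.

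\textbf{The paper's route.} The paper never expands $\Psi_m$ directly. It introduces a random index $J_T^\bullet$ with $\Pp(J_T^\bullet=k)=L_k^\bullet(\rho_\bullet(T))$, i.e.\ the conic intrinsic volumes of the OU spiral cone. It notes that $\Pp(J_T^\bullet\ge m)=\frac12\bigl(a_{m-1}^\bullet(T)+a_m^\bullet(T)\bigr)$. It then removes the parity restriction by the sandwich $\Pp(J_T^\bullet\ge m+1)\le a_m^\bullet(T)\le\Pp(J_T^\bullet\ge m)$. This sandwich relies on the monotonicity $a_{m+1}^\bullet\le a_m^\bullet$, obtained by projecting the $(m+1)$-dimensional process onto $m$ coordinates. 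After that it proves a CLT for $J_T^\bullet$:
\begin{itemize}
\item in type $A$, $J_T^A$ given $J_T^A>0$ is $\mathrm{Poi}(T/2)$ given positivity;
\item in type $B$, $J_T^B$ given $J_T^B>0$ is $1$ plus a Poisson variable with random mean $X_T$, whose density is proportional to $\eee^x/\sqrt{\eee^T-1-\sinh^2x}$. A tail bound gives $X_T=T/2+O_{\Pp}(1)$, and characteristic functions finish the argument.
\end{itemize}

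\textbf{Your route.} You handle the parity analytically, through the Taylor-remainder bound $0\le\sum_{k\ge m}(-1)^{k-m}x^k/k!\le x^m/m!$. Your substitution $\sinh x=\sqrt{\eee^T-1}\,\sin\theta$ turns $a_d^B(T)$ into an average over a uniform angle. The random parameter $x_\Theta=T/2+\log(2\sin\Theta)+o(1)$ then plays the role of the paper's $X_T$. Dominated convergence replaces both the tightness estimate and the characteristic-function computation.

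\textbf{What each buys.} Your approach is more elementary: it needs neither the auxiliary distribution nor the projection monotonicity. It is also quantitative. For instance, in type $A$ it gives, for all $d$ and $T$,
\[
0\le(1-\eee^{-T})\,a_d^A(T)-\Pp(\mathrm{Poi}(T/2)\ge d+1)\le\Pp(\mathrm{Poi}(T/2)=d+1).
\]
The paper's approach yields a structurally stronger statement: a CLT for the whole intrinsic-volume sequence of the OU spiral cones, concentrated around $T/2$ on the scale $\sqrt{T/2}$. Its interlacing step converts any such CLT into an absorption threshold without further work on the explicit formulas.

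\textbf{Two slips that do not affect correctness.}
\begin{itemize}
\item The error in your first circular display is not $O(\eee^{-T/2}\cdot\mathrm{poly})$ uniformly. The ``polynomial'' $x^{d+1}/(d+1)!$ depends on $d$, so in the window the error is only $O(d^{-1/2})$, as your refined bound correctly shows.
\item In type $B$, the factor $\eee^x/(2\cosh x)=(1+\eee^{-2x})^{-1}$ is not close to $1$ for small $x$. So ``by the circular-loop step'' needs one more line: the extra term satisfies $\eee^{-2x}\Pp(\mathrm{Poi}(x)\ge d)\le\eee^{-2x}x^d/d!\le\Pp(\mathrm{Poi}(x)=d)$, so your claimed bound on $r_d$ does hold. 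In any case, dominated convergence only requires pointwise convergence at fixed $\theta>0$, where $x_\theta\to\infty$, together with $0\le\Psi_d\le\cosh$.
\end{itemize}
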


\begin{corollary}[A linear-time threshold]
\label{cor:intro-ou-linear-time-threshold}
Let $T_d>0$ and suppose that $T_d/d\to c\in[0,\infty]$ with $c\ne2$.
Then, for $\bullet\in\{B,A\}$,
\begin{equation}
\label{eq:intro-ou-linear-time-threshold}
\lim_{d\to\infty}\Pp(0\in\mathcal H_{d,T_d}^\bullet)
=\begin{cases}
0,&c<2,\\
1,&c>2.
\end{cases}
\end{equation}
\end{corollary}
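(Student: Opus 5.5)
The corollary should follow from Theorem~\ref{thm:intro-ou-high-dimensional-transition} by a monotonicity-and-comparison argument. The key observation is that for each fixed $d$ and $\bullet\in\{B,A\}$, the absorption probability $\Pp(0\in\mathcal H_{d,T}^\bullet)$ is nondecreasing in $T$.

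For $\bullet=B$ this is immediate from stationarity of $U$ and the inclusion
\[
\conv\{U_t:0\le t\le T\}\subseteq\conv\{U_t:0\le t\le T'\}
\qquad\text{for } T\le T'.
\]
For $\bullet=A$ no such pathwise inclusion is available, because the loop law changes with $T$. There I would instead show monotonicity directly from the explicit formula~\eqref{eq:intro-circular-ou-absorption}. Writing $x=T/2$, one needs $\Psi_{d+1}(x)/\sinh x$ to be nondecreasing in $x$. Both numerator and denominator are power series with nonnegative coefficients supported on odd or even powers of the same parity shift. The ratio $a_n/b_n$ of the coefficients of $x^n$ in $\Psi_{d+1}$ and $\sinh x$ is $0$ for $n<d+1$ and $1$ afterwards, hence nondecreasing in $n$. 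The standard monotone-ratio lemma for power series (Biernacki--Krzyż) then gives monotonicity of the quotient.

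Given monotonicity, the case $c<2$ goes as follows. Fix $\tau<0$ arbitrary. Since $T_d/d\to c<2$, we have $T_d\le T'_d:=2d+2\tau\sqrt d$ for all large $d$, because $(T'_d-T_d)/d\to 2-c>0$. By monotonicity,
\[
\Pp(0\in\mathcal H^\bullet_{d,T_d})\le \Pp(0\in\mathcal H^\bullet_{d,T'_d}).
\]
The right-hand side tends to $\Phi(\tau)$ by Theorem~\ref{thm:intro-ou-high-dimensional-transition}, so the $\limsup$ is at most $\Phi(\tau)$. Letting $\tau\to-\infty$ gives the limit $0$. This also covers $c=0$; the requirement $T'_d>0$ holds for large $d$. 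For $c>2$, including $c=\infty$, the symmetric argument with $\tau>0$ and $T_d\ge 2d+2\tau\sqrt d$ gives $\liminf\ge\Phi(\tau)\to1$.

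The only nontrivial step is monotonicity in the circular case. If the ratio lemma were awkward to apply, a fallback avoids monotonicity altogether: apply Theorem~\ref{thm:intro-ou-high-dimensional-transition} along the sequence $T_d$ itself, restricting to subsequences on which $(T_d-2d)/(2\sqrt d)\to\pm\infty$. One would then need a version of the theorem allowing $\tau=\pm\infty$, obtained by reading off the proof's Gaussian approximation. I expect the monotone-ratio route to be cleaner.
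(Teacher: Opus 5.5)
Your overall scheme matches the paper's proof. You show that the absorption probability is monotone in $T$, compare $T_d$ with $2d\pm 2M\sqrt d$, apply Theorem~\ref{thm:intro-ou-high-dimensional-transition}, and let $M\to\infty$. The comparison step is correct, and so is your type-$B$ monotonicity via pathwise inclusion of hulls of the single process $U$.

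\textbf{The gap is the type-$A$ monotonicity.} The series $\Psi_{d+1}$ is supported on the powers $x^{d+1},x^{d+3},\ldots$, which have the parity of $d+1$, whereas $\sinh x$ is supported on odd powers.
\begin{itemize}
\item For even $d$ the supports match. After writing both functions as $x$ times a series in $x^2$, the ratio lemma applies exactly as you describe.
\item For odd $d$ the supports do not match: $\Psi_{d+1}$ is even and $\sinh$ is odd. For even $n\ge d+1$ you have $a_n>0=b_n$, and for odd $n\ge d+1$ you have $a_n/b_n=0$, not $1$. So your claim that the ratio is $0$ and then $1$ is false.
\end{itemize}
The Biernacki--Krzy\.{z} lemma needs $b_n>0$ for all $n$, and it genuinely fails under mixed parity: $x^2/(x+x^3)=x/(1+x^2)$ is not monotone. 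The corollary needs monotonicity for every large $d$, so odd $d$ cannot be skipped. Your fallback would require a version of Theorem~\ref{thm:intro-ou-high-dimensional-transition} with $\tau=\pm\infty$, which you do not prove.

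\textbf{Two easy repairs.}
\begin{itemize}
\item Analytic repair for odd $d$. Using $\tanh(x/2)=(\cosh x-1)/\sinh x$, write
\[
\frac{\Psi_{d+1}(x)}{\sinh x}=\tanh(x/2)\cdot\frac{\Psi_{d+1}(x)}{\Psi_2(x)}.
\]
The second factor is a ratio of even series. After dividing by $x^2$ and substituting $y=x^2$, the denominator has positive coefficients and the coefficient ratio is $0,\ldots,0,1,1,\ldots$, so the lemma applies. A product of nonnegative nondecreasing functions is nondecreasing.
\item The paper's argument, which is simpler and treats both types uniformly without any explicit formula. By \eqref{eq:intro-ordinary-ou-brownian-correspondence} and \eqref{eq:intro-circular-ou-brownian-correspondence}, the absorption probability equals $\Pp(\rho Z\in\mathcal K_d^\bullet)$. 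Here $\rho=(\eee^T-1)^{-1/2}$ for type $B$ and $\rho=1/(2\sinh(T/4))$ for type $A$, both decreasing in $T$. Since $\mathcal K_d^\bullet$ is convex and contains $0$, the event $\{\rho Z\in\mathcal K_d^\bullet\}$ shrinks as $\rho$ grows. Indeed, for $0<\rho'\le\rho$ you have $\rho' Z=(\rho'/\rho)\,\rho Z+(1-\rho'/\rho)\,0$.
\end{itemize}
With either replacement your proof is complete.
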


The proof is given in
Theorem~\ref{thm:ou-absorption-high-dimensional-transition} and Corollary~\ref{cor:ou-absorption-linear-time-threshold}. Thus, both the i.i.d.-point model of Wendel's theorem and
the two Ornstein--Uhlenbeck models exhibit a transition
from non-absorption to absorption, centered at $2d$.
The transitions in all three models have Gaussian limiting profiles. The widths of their transition
windows, however, differ: Wendel's model has scale $\sqrt{2d}$
in the number of points, whereas both OU models have scale
$2\sqrt d$ in the time horizon.

\subsection{Convex hulls of Brownian motion and bridges}
\label{subsec:intro-brownian-gaussian-absorption}
We next turn to absorption problems for convex hulls of Brownian
motion and Brownian bridges, which are closely connected with
the OU absorption problems considered above.
We present explicit formulas for absorption probabilities and discuss their
consequences for Brownian paths observed on subintervals,
comparisons with discrete-time random walks and bridges,
and the expected volumes of Brownian convex hulls.

\medskip
\noindent  \emph{Gaussian-point absorption by Brownian convex hulls.}
Let $W=(W_t)_{0\le t\le1}$ be a standard $d$-dimensional Brownian
motion starting at $0$, and define the associated Brownian bridge
from $0$ to $0$ by
$W_t^{\mathrm{br}}:=W_t-tW_1$, $0\le t\le1$. Define
\begin{equation}
\label{eq:intro-brownian-hulls}
\mathcal K_d^B:=\conv\{W_t:0\le t\le1\},
\qquad
\mathcal K_d^A:=\conv\{W_t^{\mathrm{br}}:0\le t\le1\}.
\end{equation}
We refer to $\mathcal K_d^B$ and $\mathcal K_d^A$ as the
\emph{Brownian convex hulls}. Both are almost surely compact.   Let $Z\sim N(0,I_d)$ be
independent of $W$. We determine the probabilities that $\sigma Z$ belongs to
$\mathcal K_d^B$ and to $\mathcal K_d^A$, respectively. The case $\sigma=0$ is immediate, since
both paths contain the origin; the following result concerns
$\sigma>0$.

\begin{theorem}[Gaussian-point absorption by Brownian convex hulls]
\label{thm:intro-brownian-gaussian-absorption}
For every integer $d\ge1$ and every $\sigma>0$,
\begin{align}
\Pp(\sigma Z\in\mathcal K_d^B)
&=\frac2\pi\int_0^{\arsinh(1/\sigma)}
\frac{\Psi_d(x)\,\dint x}{\sqrt{\sigma^{-2}-\sinh^2x}},
\label{eq:intro-type-B-brownian-gaussian-absorption}
\\
\Pp(\sigma Z\in\mathcal K_d^A)
&=\frac{\Psi_{d+1}(\theta_\sigma)}{\sinh\theta_\sigma},
\label{eq:intro-type-A-brownian-gaussian-absorption}
\end{align}
where $\theta_\sigma:=2\arsinh(1/(2\sigma))$ and $\Psi_m$ is
defined in \eqref{eq:intro-hyperbolic-remainders}.
The integral in
\eqref{eq:intro-type-B-brownian-gaussian-absorption}
is absolutely convergent.
\end{theorem}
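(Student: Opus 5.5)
The plan follows the route announced in the abstract: discretize, express the discrete Gaussian-point absorption probabilities through Grassmann angles of explicit polyhedral cones, compute these via conic intrinsic volumes, and pass to the scaling limit.

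\emph{Discrete problem.} Fix $n$ and let $S_k=\xi_1+\dots+\xi_k$ with i.i.d.\ $\xi_i\sim N(0,I_d/n)$, $1\le k\le n$. Let $Z'\sim N(0,I_d)$ be independent, and write $\sigma Z=\sigma Z'$. The event $\sigma Z\in\conv\{0,S_1,\ldots,S_n\}$ is the event that the Gaussian matrix $G=(\xi_1,\ldots,\xi_n,Z')$, a $d\times(n+1)$ matrix with i.i.d.\ entries after rescaling, maps some nonzero vector of a fixed polyhedral cone $C_{n,\sigma}\subset\R^{n+1}$ to $0$. Concretely, $\sum_k\lambda_kS_k-\sigma Z'=0$ with $\lambda\ge0$ and $\sum\lambda_k\le1$ translates into membership of the coefficient vector in the cone
\[
C_{n,\sigma}=\Bigl\{\bigl(\textstyle\sum_{k\ge i}\lambda_k/\sqrt n\bigr)_{i\le n}\oplus(-\sigma\mu):\ \lambda\ge0,\ \textstyle\sum_k\lambda_k\le\mu\Bigr\}
\]
(up to the obvious normalizations). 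By the Gaussian projection/kinematic formula, $\Pp(\ker G\cap C\ne\{0\})$ equals the Grassmann angle of $C$, namely $2\sum_{j\ge0}\upsilon_{d+1+2j}(C)$. For the bridge, one instead uses increments conditioned to sum to zero, which yields a cone in the hyperplane $\{\sum x_i=0\}$ of type $A$.

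\emph{Conic intrinsic volumes.} I compute $\upsilon_k(C_{n,\sigma})=\sum_{F:\dim F=k}\alpha(F)\alpha(N_F)$ by listing the faces. These are indexed by subsets of active constraints, and both $F$ and its normal cone are products of Weyl-chamber-type cones of types $B$ and $A$ together with one extra two-dimensional "angle" factor involving $\sigma$ and the block length. Their solid angles are known in closed form: the type-$B$ chamber gives $\binom{2j}{j}4^{-j}$-type values, and the type-$A$ chambers give $1/j$ factors. The $\sigma$-dependent factor comes from the angle between $(1,\ldots,1)/\sqrt n$ and the $Z'$ direction, and produces expressions of the form $\arctan$ or the Gaussian orthant of a correlated pair. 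Summing over faces then gives generating functions in which the $\Psi_m$ remainders appear, since odd or even $k$ selection corresponds to $\Psi_{d+1+2j}$ sums.

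\emph{Scaling limit.} Let $n\to\infty$. The discrete hull converges to $\mathcal K_d^B$ by Donsker's theorem, and $\Pp(\sigma Z\in\partial\mathcal K)=0$, so the probabilities converge. Riemann sums over face sizes $j\approx xn$ should become the integrals in \eqref{eq:intro-type-B-brownian-gaussian-absorption}. The kernel $(\sigma^{-2}-\sinh^2x)^{-1/2}$ should emerge from the limit of the two-dimensional angle factor, after the substitution $\sinh x$ in place of the algebraic variable. For type $A$, the limit should collapse to the closed form with $\theta_\sigma=2\arsinh(1/(2\sigma))$. Absolute convergence is clear because the only singularity is an inverse square root at the endpoint.

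\emph{Main obstacle.} I expect the hardest step to be the exact face enumeration and the summation producing a tractable generating function for $\upsilon_k(C_{n,\sigma})$. As a fallback, I would bypass exact discrete formulas and compute the limit directly. By conditioning on $Z$, the absorption probability equals $\Pp(\sigma Z\in\mathcal K)$, which, by the kinematic/Crofton relation for Gaussian points, is a sum over the distribution of the support function. This can then be evaluated with the known Gao--Vitale intrinsic volumes of the Brownian body, and one checks that the result matches the stated formulas by differentiating in $\sigma$.
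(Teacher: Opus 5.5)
\textbf{Verdict: genuine gap.} Your architecture matches the paper's: Gaussian kernel meeting a cone, Crofton identity, face/normal-cone evaluation, scaling limit. But the central computation of the $\upsilon_k$ and their limits is missing, what you say about it is false, and your fallback cannot work.

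\textbf{What is right.} You correctly identify $\{\sigma Z\in\conv\{0,S_1,\dots,S_n\}\}$ with a Gaussian kernel meeting a fixed cone nontrivially. Your $C_{n,\sigma}$ is, up to the sign of $e_0$, the paper's $\bW_n^B(\sigma\sqrt n)=\pos(\sigma\sqrt n\,e_0+\bK_n^B)$. You then use $\gamma_d=2\sum_j\upsilon_{d+1+2j}$ and pass to the Brownian hull via $\Pp(\sigma Z\in\bd\mathcal K_d^\bullet)=0$.

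\textbf{The angles are not in closed form.} The face angle of $\pos\{\rho e_0+s_{i_1},\dots,\rho e_0+s_{i_k}\}$ is the ordering probability $\Pp(Z_1/\sqrt{\rho^2+i_1}\ge Z_2/\sqrt{i_2-i_1}\ge\cdots\ge Z_k/\sqrt{i_k-i_{k-1}}\ge0)$, with unequal variances. The $\sigma$-dependent block of the normal cone is not a two-dimensional angle. It is an $i_1$-dimensional cone whose angle is the probability that a Gaussian walk started at an independent $N(0,\rho^2)$ height stays nonpositive, given that it is at $0$ at step $i_1$. In type $A$ it is a bridge persistence probability under a second linear constraint. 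Only the pure walk and bridge blocks give $\binom{2m}{m}4^{-m}$ and $1/m$. So the discrete sums do not collapse to a generating function in which the $\Psi_m$ appear; those functions arise only after the limit.

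\textbf{What the limit actually requires.}
\begin{itemize}
\item Averaging over permutations of the gaps, which turns the ordering probabilities into $\int\varphi\prod_j(\Phi(\sqrt{d_j}\,x)-\tfrac12)/d_j$.
\item Uniform asymptotics of the $\rho$-dependent factor at $\rho=c\sqrt n$: a Rayleigh limit for the walk conditioned to stay positive, and an excursion limit in type $A$.
\item A Riemann-sum limit in which the face dimension $k$ is fixed and the gaps are of order $n$, with control of short gaps. This is not a limit over face sizes $j\approx xn$.
\item Evaluation of the resulting time-convolution integrals by Laplace transform. The identity $\int_0^\infty\eee^{-\lambda t}t^{-1}(\Phi(\sqrt t\,x)-\tfrac12)\,\dint t=\arsinh(x/\sqrt{2\lambda})$ is what produces the hyperbolic variable. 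The kernel $(\sigma^{-2}-\sinh^2x)^{-1/2}$ appears only on inverting the transform.
\item Justification for exchanging the limit with the infinite series in $k$. One way is the finite complementary sum $1-2\sum_j\upsilon_{d-1-2j}$ together with $\sum_jL_{2j}=\sum_jL_{2j+1}=\tfrac12$ for the limits. That in turn needs a separate limit for $\upsilon_0$, e.g.\ via the Brownian maximum.
\end{itemize}

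\textbf{Why the fallback fails.} $\Pp(\sigma Z\in\mathcal K_d^\bullet)=(2\pi\sigma^2)^{-d/2}\,\Ee\int_{\mathcal K_d^\bullet}\eee^{-\|x\|^2/(2\sigma^2)}\,\dint x$ depends on where the hull sits relative to the origin. The Gao--Vitale intrinsic volumes of the Brownian body are translation invariant. They determine only the $\sigma\to\infty$ behaviour $\sigma^d\,\Pp(\sigma Z\in\mathcal K_d^\bullet)\to(2\pi)^{-d/2}\Ee\operatorname{vol}_d(\mathcal K_d^\bullet)$. Already for $d=1$, the segments $[0,v]$ and $[-v/2,v/2]$ have identical intrinsic volumes. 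Their Gaussian-point absorption probabilities are $\frac1\pi\arctan(\|v\|/\sigma)$ and $\frac2\pi\arctan(\|v\|/(2\sigma))$, respectively. Differentiating in $\sigma$ cannot recover information the intrinsic volumes do not contain.
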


The proof is given in
Subsections~\ref{subsec:finite-gaussian-hulls-absorption-formula}
and~\ref{subsec:finite-gaussian-hulls-absorption-proof-short};
see Theorem~\ref{thm:brownian-gaussian-absorption}.
In Proposition~\ref{prop:finite-BA-gaussian-absorption},
we determine the Gaussian-point absorption probabilities for
convex hulls of discrete-time Gaussian random walks and bridges.
The formulas of Theorem~\ref{thm:intro-brownian-gaussian-absorption}
are then obtained by passing to the Brownian scaling limit.

\medskip
Absorption of a \emph{deterministic} prescribed point, rather than an
independent Gaussian point, will be treated in the companion paper~\cite{MultidimensionalBrownianCoverage}, which uses  Theorem~\ref{thm:intro-brownian-gaussian-absorption} as input
and recovers the prescribed-point absorption probabilities
by inverting the associated Gaussian radial averages.
From a geometric viewpoint, the absorption probability of a prescribed
deterministic point is arguably a more natural quantity than that
of an independent Gaussian point. The latter, however, is particularly
well suited to the methods of conic integral geometry and leads to
the explicit formulas obtained here. The prescribed-point problem for planar Brownian motion and Brownian
bridges was recently studied in~\cite{Kabluchko2026PlanarBrownianCoverage},
using a method based on planar convex geometry that does not directly extend to higher dimensions.

\medskip
\noindent\emph{The correspondence with Ornstein--Uhlenbeck processes.}
Suitable increasing time changes and positive deterministic rescalings
transform the shifted paths $W_t-\sigma Z$ and
$W_t^{\mathrm{br}}-\sigma Z$ into processes having the same laws
as the stationary OU process and the circular OU loop,
respectively; see Proposition~\ref{prop:ou-brownian-representations}.
Such time changes and rescalings preserve whether the convex hull
of a path contains the origin. Consequently, for every $T>0$,
\begin{align}
\Pp(0\in\mathcal H_{d,T}^B)
&=\Pp\left(\frac{Z}{\sqrt{\eee^T-1}}\in\mathcal K_d^B\right),
\label{eq:intro-ordinary-ou-brownian-correspondence}
\\
\Pp(0\in\mathcal H_{d,T}^A)
&=\Pp\left(\frac{Z}{2\sinh(T/4)}\in\mathcal K_d^A\right).
\label{eq:intro-circular-ou-brownian-correspondence}
\end{align}
Since the transformations between $T>0$ and the corresponding
Gaussian scale $\sigma>0$ are bijective, these identities establish
the equivalence of Theorems~\ref{thm:intro-brownian-gaussian-absorption}
and~\ref{thm:intro-ou-origin-absorption}.

\medskip
\noindent\emph{Origin absorption by Brownian motion and
Brownian bridges on subintervals.}
Theorem~\ref{thm:intro-brownian-gaussian-absorption} also gives an
explicit answer to a problem posed by Dospolova
in~\cite[pp.~59--60]{Dospolova2023}: to determine the probability
that the origin is contained in the convex hull of a Brownian motion
observed on $[t_0,1]$, where $0<t_0<1$.

More generally, consider a standard $d$-dimensional Brownian motion
$W$ observed on an interval $[s_1,s_2]$, where
$0<s_1<s_2<\infty$. By independent increments and Brownian scaling,
\begin{align}
\Pp\bigl(0\in\conv\{W_t:s_1\le t\le s_2\}\bigr)
=\Pp\left(
\sqrt{\frac{s_1}{s_2-s_1}}\,Z\in\mathcal K_d^B
\right)
=\frac2\pi
\int_0^{\arsinh\sqrt{\frac{s_2-s_1}{s_1}}}
\frac{\Psi_d(x)\,\dint x}
{\sqrt{\frac{s_2-s_1}{s_1}-\sinh^2x}}.
\label{eq:intro-delayed-brownian-absorption}
\end{align}
In particular, the probability depends only on the ratio $s_2/s_1$.
By \eqref{eq:intro-ordinary-ou-brownian-correspondence},
it also equals
$\Pp(0\in\mathcal H_{d,\log(s_2/s_1)}^B)$.

A similar connection holds for the Brownian bridge
$W^{\mathrm{br}}$ on $[0,1]$. Define the increasing bijection
$u:(0,1)\to\R$ by $u(t):=\log\frac{t}{1-t}$, $0<t<1$,
and consider the time-changed and rescaled process
$X=(X_u)_{u\in\R}$ defined by
\[
X_{u(t)}:=\frac{W_t^{\mathrm{br}}}{\sqrt{t(1-t)}},
\qquad 0<t<1.
\]
Then $X$ is a stationary OU process with covariance
\eqref{eq:intro-ordinary-ou-covariance}.
In particular, the interval $[s_1,s_2]$, where
$0<s_1<s_2<1$, corresponds to an OU time interval of length
\[
u(s_2)-u(s_1)
=\log\frac{s_2(1-s_1)}{s_1(1-s_2)}.
\]
Since positive rescalings preserve origin absorption, it follows
that, for $0<s_1<s_2<1$,
\begin{align}
\Pp\bigl(0\in\conv\{W_t^{\mathrm{br}}:
s_1\le t\le s_2\}\bigr)
=\Pp\left(
0\in\mathcal H_{d,\log\frac{s_2(1-s_1)}{s_1(1-s_2)}}^B
\right)
=\Pp\left(
\sqrt{\frac{s_1(1-s_2)}{s_2-s_1}}\,
Z\in\mathcal K_d^B
\right).
\label{eq:intro-restricted-bridge-absorption}
\end{align}
Thus, the origin-absorption probabilities for Brownian bridges
restricted to subintervals $[s_1,s_2]$, where $0<s_1<s_2<1$,
can be obtained from
\eqref{eq:intro-type-B-brownian-gaussian-absorption}
by substituting
$\sigma=\sqrt{s_1(1-s_2)/(s_2-s_1)}$.

\begin{example}[$d=2$]
In dimension $d=2$, the probability of origin non-absorption
by Brownian motion over $[s_1,s_2]$, where
$0<s_1<s_2<\infty$, simplifies to
\[
\Pp\bigl(0\notin\conv\{W_t:s_1\le t\le s_2\}\bigr)
=\frac2\pi\sqrt{\frac{s_1}{s_2}}\,
K\left(\sqrt{1-\frac{s_1}{s_2}}\right),
\]
where $K$ is the complete elliptic integral of the first kind,
defined in Example~\ref{exam:OU_convex_hull_nonabsorption_low_dim}.
In particular, for $s_1=1$ and $s_2=2$, the classical special value $K(1/\sqrt2)=\Gamma(1/4)^2/(4\sqrt\pi)$ (see \cite[Eq.~(23.5.5)]{NIST:DLMF})
gives
\[
\Pp\bigl(0\notin\conv\{W_t:1\le t\le2\}\bigr)
=\frac{\Gamma(1/4)^2}{2\sqrt2\,\pi^{3/2}}.
\]
For the planar Brownian bridge restricted to an interval
$[s_1,s_2]$, where $0<s_1<s_2<1$, the corresponding
non-absorption probability is
\[
\Pp\bigl(0\notin\conv\{W_t^{\mathrm{br}}:
s_1\le t\le s_2\}\bigr)
=\frac2\pi
\sqrt{\frac{s_1(1-s_2)}{s_2(1-s_1)}}\,
K\left(\sqrt{\frac{s_2-s_1}{s_2(1-s_1)}}\right).
\]
\end{example}

\medskip
\noindent\emph{Comparison with random walks and random-walk bridges.}
We now compare origin-absorption probabilities for continuous-time
Brownian paths with those for their discrete-time skeletons.
Let $W=(W_t)_{t\ge0}$ be a standard $d$-dimensional Brownian
motion starting at $0$, and let
$W^{\mathrm{br}}=(W_t^{\mathrm{br}})_{0\le t\le1}$ be the
associated Brownian bridge, defined by
$W_t^{\mathrm{br}}:=W_t-tW_1$.
For each integer $n\ge3$, we compare the convex hull of
$W_1,\ldots,W_n$ with that of the Brownian path on $[1,n]$,
and the convex hull of $W_{k/n}^{\mathrm{br}}$, $1\le k\le n-1$,
with that of the bridge path on $[1/n,1-1/n]$.
In each pair, the discrete-time hull is contained in the continuous-time hull.
The first discrete-time model is a Gaussian random walk; in the second,
the sampled positions, after multiplication by $\sqrt n$ and
inclusion of the two zero endpoints, form a Gaussian random-walk
bridge with $n$ increments. The endpoints are omitted from the
convex hull, and the common positive scaling does not affect origin
absorption. Thus the discrete-time absorption probabilities are Gaussian
instances of the distribution-free results
in~\cite{KabluchkoVysotskyZaporozhets2017}.

We first compare the non-absorption probabilities in fixed dimension $d\geq 1$. Combining \eqref{eq:intro-delayed-brownian-absorption} with
\eqref{eq:intro-ordinary-ou-brownian-correspondence}, and using
\eqref{eq:intro-restricted-bridge-absorption}, we see that the
two continuous-time probabilities are those of ordinary OU convex hulls
with time horizons $\log n$ and $2\log(n-1)$, respectively.
Theorem~\ref{thm:intro-ou-nonabsorption-fixed-d} therefore yields,
for every fixed dimension $d\ge1$, as $n\to\infty$,
\begin{align}
\Pp\bigl(0\notin\conv\{W_t:1\le t\le n\}\bigr)
&\sim\frac{2^{2-d}}{\pi(d-1)!}\,
\frac{(\log n)^{d-1}}{\sqrt n},
\label{eq:intro-continuous-walk-nonabsorption-asymptotic}
\\
\Pp\bigl(0\notin\conv\{W_t^{\mathrm{br}}:
1/n\le t\le1-1/n\}\bigr)
&\sim\frac{2}{\pi(d-1)!}\,
\frac{(\log n)^{d-1}}{n}.
\label{eq:intro-continuous-bridge-nonabsorption-asymptotic}
\end{align}
On the other hand, \cite[Theorem~5.1]{KabluchkoVysotskyZaporozhets2017}
gives, for every fixed dimension $d\ge2$, as $n\to\infty$,
\begin{align*}
\Pp\bigl(0\notin\conv\{W_1,\ldots,W_n\}\bigr)
&\sim\frac{2^{2-d}}{\sqrt\pi(d-1)!}\,
\frac{(\log n)^{d-1}}{\sqrt n},
\\
\Pp\bigl(0\notin\conv\{W_{k/n}^{\mathrm{br}}:
1\le k\le n-1\}\bigr)
&\sim\frac{2}{(d-1)!}\,
\frac{(\log n)^{d-1}}{n}.
\end{align*}
The planar cases of these discrete-time asymptotics were obtained earlier
by Vysotsky--Zaporozhets~\cite{VysotskyZaporozhets2018}.
Both asymptotic formulas remain valid for $d=1$, as follows
from Lemma~\ref{lem:type-B-ordinary-block-probabilities}
and Stirling's formula.
Consequently, for every fixed dimension $d\ge1$,
\begin{align*}
\lim_{n\to\infty}
\frac{
\Pp\bigl(0\notin\conv\{W_t:1\le t\le n\}\bigr)
}{
\Pp\bigl(0\notin\conv\{W_1,\ldots,W_n\}\bigr)
}
&=\frac1{\sqrt\pi},
\\
\lim_{n\to\infty}\frac{
\Pp\bigl(0\notin\conv\{W_t^{\mathrm{br}}:
1/n\le t\le1-1/n\}\bigr)
}{
\Pp\bigl(0\notin\conv\{W_{k/n}^{\mathrm{br}}:
1\le k\le n-1\}\bigr)
}
&=\frac1\pi.
\end{align*}
Thus continuous-time observation changes the leading constant, but not
the asymptotic order, of the non-absorption probability in each fixed
dimension.

The correspondence is even closer when the dimension grows.
For each $d\ge1$, let $n_d\ge3$ be an integer, and consider
the discrete-time and continuous-time convex hulls described above
in $\R^d$, with $n=n_d$.
For the random-walk and Brownian-motion pair,
the absorption probabilities of both the discrete-time hull and the
continuous-time hull converge to $\Phi(\tau)$ if
\begin{equation}
\label{eq:intro-walk-discrete-continuous-critical-window}
\frac{\log n_d-2d}{2\sqrt d}\longrightarrow\tau\in\R
\qquad\text{as }d\to\infty.
\end{equation}
For the random-walk-bridge and Brownian-bridge pair, the same
conclusion holds if
\begin{equation}
\label{eq:intro-bridge-discrete-continuous-critical-window}
\frac{\log n_d-d}{\sqrt d}\longrightarrow\tau\in\R
\qquad\text{as }d\to\infty.
\end{equation}
The discrete-time results follow from
\cite[Theorem~5.2]{KabluchkoVysotskyZaporozhets2017}.
The continuous-time results follow from
Theorem~\ref{thm:intro-ou-high-dimensional-transition}, using
$T_d=\log n_d$ and $T_d=2\log(n_d-1)$, respectively.
Thus the discrete-time and continuous-time models in each pair have matching
transition locations, transition
window scales, and Gaussian limiting profiles.

Earlier estimates for high-dimensional origin
absorption were obtained by
Tikhomirov--Youssef~\cite{TikhomirovYoussef2017}.
For suitable universal constants $0<c<C$, their results show that
$n\ge\eee^{Cd}$ steps suffice for interior absorption with probability
tending to one as $d\to\infty$, for both Gaussian random walks in $\R^d$ and
the standard lattice walk in $\Z^d$.
They also proved that the Brownian convex hull over $[1,\eee^{cd}]$ excludes
the origin with probability tending to one as $d\to\infty$.
For Gaussian random walks and Brownian motion observed on $[1,n_d]$,
the results discussed above provide a more precise description:
in both models, the absorption probability has a Gaussian
limiting profile when
$(\log n_d-2d)/(2\sqrt d)\to\tau$ as $d\to\infty$, with limit $\Phi(\tau)$.

\medskip
\noindent\emph{Large-$\sigma$ asymptotics and expected Brownian convex hull volumes.}
The large-$\sigma$ asymptotics of the Gaussian-point absorption
probabilities from Theorem~\ref{thm:intro-brownian-gaussian-absorption} allow us to recover the expected volumes of the
two Brownian convex hulls.
Write $\operatorname{vol}_d$ for $d$-dimensional Lebesgue measure and
$\kappa_d:=\pi^{d/2}/\Gamma(d/2+1)$ for the volume of the Euclidean
unit ball in $\R^d$.

\begin{corollary}[Large-$\sigma$ asymptotics and expected Brownian convex hull volumes]
\label{cor:intro-brownian-gaussian-large-scale-and-volumes}
For every fixed integer $d\ge1$, as $\sigma\to\infty$,
\begin{align}
\Pp(\sigma Z\in\mathcal K_d^B)
&\sim\frac{\kappa_d^2}{(2\pi)^d}\,\sigma^{-d},
\label{eq:intro-type-B-brownian-gaussian-large-scale}
\\
\Pp(\sigma Z\in\mathcal K_d^A)
&\sim\frac1{(d+1)!}\,\sigma^{-d}.
\label{eq:intro-type-A-brownian-gaussian-large-scale}
\end{align}
Consequently,
\begin{equation}
\label{eq:intro-brownian-hull-expected-volumes}
\Ee\,\operatorname{vol}_d(\mathcal K_d^B)
=\frac{\kappa_d^2}{(2\pi)^{d/2}},
\qquad
\Ee\,\operatorname{vol}_d(\mathcal K_d^A)
=\frac{(2\pi)^{d/2}}{(d+1)!}.
\end{equation}
\end{corollary}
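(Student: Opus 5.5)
The plan has two parts: extract the large-$\sigma$ asymptotics directly from the explicit formulas of Theorem~\ref{thm:intro-brownian-gaussian-absorption}, and then convert these asymptotics into expected volumes by a Gaussian averaging argument.

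\emph{Type $A$.} As $\sigma\to\infty$ we have $\theta_\sigma=2\arsinh(1/(2\sigma))\sim 1/\sigma$. Since $\Psi_{d+1}(x)\sim x^{d+1}/(d+1)!$ and $\sinh x\sim x$ as $x\downarrow0$, formula \eqref{eq:intro-type-A-brownian-gaussian-absorption} gives
\[
\Pp(\sigma Z\in\mathcal K_d^A)\sim\frac{\theta_\sigma^d}{(d+1)!}\sim\frac{\sigma^{-d}}{(d+1)!}.
\]

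\emph{Type $B$.} Put $a:=1/\sigma\to0$ and substitute $\sinh x=a\sin\phi$ in \eqref{eq:intro-type-B-brownian-gaussian-absorption}. Then $\dint x=a\cos\phi\,\dint\phi/\cosh x$ and $\sqrt{a^2-\sinh^2x}=a\cos\phi$, so
\[
\Pp(\sigma Z\in\mathcal K_d^B)=\frac2\pi\int_0^{\pi/2}\frac{\Psi_d(\arsinh(a\sin\phi))}{\cosh(\arsinh(a\sin\phi))}\,\dint\phi .
\]
The integrand equals $(a\sin\phi)^d/d!\,(1+O(a^2))$ uniformly in $\phi$, so dominated convergence yields
\[
\Pp(\sigma Z\in\mathcal K_d^B)\sim\frac{2a^d}{\pi\, d!}\int_0^{\pi/2}\sin^d\phi\,\dint\phi
=\frac{a^d}{d!}\,\frac{\Gamma(\frac{d+1}2)}{\sqrt\pi\,\Gamma(\frac d2+1)} .
\]
It then remains to verify the identity
\[
\frac{\Gamma(\frac{d+1}2)}{\sqrt\pi\,\Gamma(\frac d2+1)\,d!}=\frac{\kappa_d^2}{(2\pi)^d}=\frac{1}{2^d\,\Gamma(\frac d2+1)^2}.
\]
This is equivalent to $2^d\Gamma(\frac{d+1}2)\Gamma(\frac d2+1)=\sqrt\pi\,d!$, which is exactly the Legendre duplication formula $\Gamma(\frac{d+1}2)\Gamma(\frac d2+1)=2^{-d}\sqrt\pi\,\Gamma(d+1)$.

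\emph{Volumes.} By independence of $Z$ and $W$, conditioning on the hull $\mathcal K$ gives
\[
\Pp(\sigma Z\in\mathcal K)=\Ee\int_{\mathcal K}(2\pi\sigma^2)^{-d/2}\eee^{-|y|^2/(2\sigma^2)}\,\dint y .
\]
Multiplying by $\sigma^d$, the integrand increases to $(2\pi)^{-d/2}\ind_{\mathcal K}$ as $\sigma\to\infty$. Monotone convergence then gives
\[
\sigma^d\,\Pp(\sigma Z\in\mathcal K)\to(2\pi)^{-d/2}\,\Ee\operatorname{vol}_d(\mathcal K),
\]
and this holds even without knowing a priori that the expectation is finite. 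Equating this limit with the asymptotic constants found above yields \eqref{eq:intro-brownian-hull-expected-volumes}.

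The only delicate points are the uniformity in the type $B$ expansion, which is immediate since $\arsinh(a\sin\phi)\le\arsinh a\to0$, and the monotone-convergence step, which is clean. There is no real obstacle in either.
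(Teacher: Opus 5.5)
Your proposal is correct and follows essentially the same route as the paper. For type $A$ you use $\theta_\sigma\sim\sigma^{-1}$ and $\Psi_{d+1}(x)\sim x^{d+1}/(d+1)!$. For type $B$ the paper substitutes $u=\sigma\sinh x$ and uses the majorant $\frac{u^d}{d!}\cosh 1$; this is your substitution $\sinh x=a\sin\phi$ up to $u=\sin\phi$, after which both arguments finish with the duplication formula. The expected volumes then come, as in the paper, from conditioning on the hull and monotone convergence.
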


The proof is given in
Corollary~\ref{cor:brownian-gaussian-large-scale-and-volumes}.
The connection with expected volumes follows directly by conditioning on
the convex hull:
\begin{equation}
\label{eq:intro-brownian-gaussian-density-integral}
\sigma^d\Pp(\sigma Z\in\mathcal K_d^\bullet)
=\frac1{(2\pi)^{d/2}}
\Ee\int_{\mathcal K_d^\bullet}
\eee^{-\|x\|^2/(2\sigma^2)}\,\dint x,
\qquad \bullet\in\{B,A\}.
\end{equation}
As $\sigma\to\infty$, the integrand increases to one.
By monotone convergence, the limit of the left-hand side
is $(2\pi)^{-d/2}$ times the expected hull volume.
The expected-volume formulas
\eqref{eq:intro-brownian-hull-expected-volumes}
are already known: see Eldan~\cite[Theorem~1]{Eldan2014}
for Brownian motion in $d\ge2$, and
Kabluchko--Zaporozhets~\cite[Section~1.5]{KabluchkoZaporozhets2016}
for both Brownian motion and Brownian bridges.

\subsection{Ornstein--Uhlenbeck spirals in a Hilbert space}
\label{subsec:intro-ou-spirals-conic-intrinsic-volumes}
A natural geometric viewpoint is to regard the random variables
of a stochastic process as vectors in a Hilbert space.
To make this precise, let $I\subseteq\R$ be an interval, and let
$(X_t)_{t\in I}$ be a real-valued, mean-square continuous
stochastic process with finite second moments, defined on
a probability space $(\Omega,\mathcal F,\Pp)$.
Regarding each $X_t$ as an element of
$L^2(\Omega,\mathcal F,\Pp)$, we obtain a continuous
deterministic curve $t\mapsto X_t$ in $L^2(\Omega,\mathcal F,\Pp)$ whose Gram kernel is
given by
\[
\langle X_s,X_t\rangle=\Ee[X_sX_t],
\qquad s,t\in I.
\]
This geometric point of view goes back to the work of
Kolmogorov~\cite{Kolmogorov1940} on Wiener spirals in Hilbert space.

\medskip
\noindent\emph{Ornstein--Uhlenbeck spirals  in Hilbert space and their cones.}
Applying this construction to the one-dimensional versions of
the ordinary and circular Ornstein--Uhlenbeck processes introduced
above, we obtain two deterministic curves in real Hilbert spaces.
For $T>0$, denote these curves on $[0,T]$ by
$\zeta_T^B=(\zeta_T^B(t))_{0\le t\le T}$ and
$\zeta_T^A=(\zeta_T^A(t))_{0\le t\le T}$, respectively.
Their Gram kernels are
\begin{align}
\langle\zeta_T^B(s),\zeta_T^B(t)\rangle
&=\eee^{-|t-s|/2},
\qquad 0\le s,t\le T,
\label{eq:intro-ordinary-ou-spiral-kernel}
\\
\langle\zeta_T^A(s),\zeta_T^A(t)\rangle
&=\frac{\cosh(T/4-|t-s|/2)}{\cosh(T/4)},
\qquad 0\le s,t\le T.
\label{eq:intro-circular-ou-spiral-kernel}
\end{align}
We call them the \emph{ordinary} and \emph{circular
Ornstein--Uhlenbeck spirals}, respectively. The curves consist of unit vectors, are continuous
in norm, and satisfy $\zeta_T^A(0)=\zeta_T^A(T)$ in the circular case.

For $T>0$, define the closed positive hulls of the two spirals by
\begin{equation}
\label{eq:intro-ou-spiral-cones}
\bU_{\infty, T}^\bullet
:=\posclos\{\zeta_T^\bullet(t):0\le t\le T\},
\qquad \bullet\in\{B,A\},
\end{equation}
where $\posclos M$ denotes the norm closure of the set of all
finite nonnegative linear combinations of elements of $M$. We refer to $\bU_{\infty, T}^B$ and $\bU_{\infty, T}^A$ as the
\emph{OU spiral cones} generated by the ordinary and circular
OU spirals, respectively.
These closed convex cones are nonzero and pointed, meaning that
$\bU_{\infty, T}^\bullet\cap(-\bU_{\infty, T}^\bullet)=\{0\}$;
see Proposition~\ref{prop:hilbert-brownian-cones-ou-spirals}
and Lemma~\ref{lem:hilbert-brownian-bodies-and-cones}.
The Gram kernels determine the cones up to linear isometry.

\medskip
We now study the conic intrinsic volumes and Grassmann angles
of these deterministic Hilbert-space cones.
Although these are deterministic geometric quantities,
the Grassmann angles turn out to coincide exactly with the
origin-absorption probabilities for OU convex hulls from
Subsection~\ref{subsec:intro-ou-paths-and-loops}.
Through the Brownian--OU correspondence, they are also related
to the Gaussian-point absorption probabilities studied in
Subsection~\ref{subsec:intro-brownian-gaussian-absorption}.

\medskip
\noindent\emph{Conic intrinsic volumes and Grassmann angles.}
We first recall these notions for closed convex cones in $\R^D$
and then describe their extension to separable Hilbert spaces.
Let $C\subseteq\R^D$ be a closed convex cone. For $0\le j\le D$,
its $j$th \emph{Grassmann angle} $\gamma_j(C)$ is the probability that
$C$ intersects a uniformly distributed linear subspace of
codimension $j$ in a nonzero vector. For $j>D$, put
$\gamma_j(C):=0$.
These probabilities are related to the \emph{conic intrinsic volumes}
$\upsilon_0(C), \upsilon_1(C),\ldots$ by the conic Crofton formula
\begin{equation}
\label{eq:intro-finite-conic-crofton}
\gamma_j(C)=2\sum_{\ell=0}^\infty
\upsilon_{j+1+2\ell}(C),\qquad j\ge0,
\end{equation}
provided that $C$ is not a linear subspace.
The conic intrinsic volumes are nonnegative, sum to one,
and vanish above dimension $D$.
For their definition and further properties, see
\cite{Schneider2022,McCoyTropp2014,
AmelunxenLotzMcCoyTropp2014,AmelunxenLotz2017}
and Section~\ref{sec:preliminaries} of the present paper.

These notions extend to infinite-dimensional Hilbert spaces.
For a closed convex cone $C$ in a real separable Hilbert space,
following Dospolova~\cite{Dospolova2023}, define
\begin{equation}
\label{eq:intro-hilbert-grassmann-angles}
\gamma_j(C):=\sup_{C_0\subseteq C}\gamma_j(C_0),
\qquad j\ge0,
\end{equation}
where $C_0$ ranges over finite-dimensional closed convex cones
and its Grassmann angles are computed in its linear span.
For a cone which is not a linear subspace, the conic intrinsic
volumes are then defined, following
Dospolova--Zaporozhets~\cite{DospolovaZaporozhets2024}, by
\begin{equation}
\label{eq:intro-hilbert-conic-intrinsic-volumes}
\upsilon_0(C):=1-\frac{\gamma_0(C)+\gamma_1(C)}2,
\qquad
\upsilon_k(C):=\frac{\gamma_{k-1}(C)-\gamma_{k+1}(C)}2,
\quad k\ge1.
\end{equation}
These definitions agree with the finite-dimensional ones
for cones which are not linear subspaces.
Further details  are
recalled in  Subsection~\ref{subsec:hilbert-conic-intrinsic-volumes}.

For every closed convex cone in a finite-dimensional Euclidean
space, the conic intrinsic volumes sum to one.
In infinite-dimensional Hilbert spaces, this need not be the case,
even for nonzero closed pointed convex cones; see
Remark~\ref{rem:hilbert-conic-intrinsic-volumes-total-mass}
for an example in which all conic intrinsic volumes vanish.

\medskip
\noindent\emph{Conic intrinsic volumes of the OU spiral cones.}
The following theorem gives the full intrinsic-volume sequences of
both cones in \eqref{eq:intro-ou-spiral-cones}.

\begin{theorem}[Conic intrinsic volumes of the OU spiral cones]
\label{thm:intro-ou-spiral-intrinsic-volumes}
For every $T>0$, the ordinary Ornstein--Uhlenbeck spiral cone satisfies
\begin{align}
\upsilon_0(\bU_{\infty, T}^B)
&=\frac1\pi\arcsin(\eee^{-T/2}),
\label{eq:intro-ordinary-ou-spiral-zeroth-volume}
\\
\upsilon_k(\bU_{\infty, T}^B)
&=\frac1{\pi(k-1)!}
\int_0^{\arsinh\sqrt{\eee^T-1}}
\frac{x^{k-1}\,\dint x}{\sqrt{\eee^T-1-\sinh^2x}},
\qquad k\ge1.
\label{eq:intro-ordinary-ou-spiral-intrinsic-volumes}
\end{align}
For the circular Ornstein--Uhlenbeck spiral cone,
\begin{align}
\upsilon_0(\bU_{\infty, T}^A)
&=\frac12\left(1-\tanh\frac T4\right),
\label{eq:intro-circular-ou-spiral-zeroth-volume}
\\
\upsilon_k(\bU_{\infty, T}^A)
&=\frac{(T/2)^k}{2k!\sinh(T/2)},
\qquad k\ge1.
\label{eq:intro-circular-ou-spiral-intrinsic-volumes}
\end{align}
For each cone, the sum over the even indices and the sum over the
odd indices are both $1/2$; in particular, the total mass is one.
\end{theorem}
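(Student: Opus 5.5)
Since the Hilbert-space conic intrinsic volumes are defined through the Grassmann angles via \eqref{eq:intro-hilbert-conic-intrinsic-volumes}, the plan is to compute $\gamma_j(\bU_{\infty,T}^\bullet)$ for all $j\ge0$ and then take differences. The key identity to establish first is
\[
\gamma_j(\bU_{\infty,T}^\bullet)=\Pp\bigl(0\in\mathcal H_{j,T}^\bullet\bigr),\qquad j\ge1,
\]
together with $\gamma_0=1$ (the cone is nonzero). To prove it, I would approximate the spiral cone by the finite polyhedral cones $C_n:=\pos\{\zeta_T^\bullet(t_i)\}$ over finer and finer grids $t_i=iT/n$. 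For a finite-dimensional cone $C_0=\pos\{x_1,\dots,x_m\}$, the standard Gaussian-projection representation of Grassmann angles gives $\gamma_j(C_0)=\Pp(0\in\conv\{Gx_1,\dots,Gx_m\})$ with $G$ a standard Gaussian map into $\R^j$ (when $C_0$ is not a subspace): a uniform codimension-$j$ subspace is the kernel of $G$, and $\ker G\cap C_0\neq\{0\}$ is equivalent to $0\in\conv\{Gx_i\}$ up to null events. Since the Gram kernel of $\zeta_T^\bullet$ is the covariance of one coordinate of $U$ or $V^{(T)}$, the vectors $(Gx_i)$ have the joint law of $(U_{t_i})$ or $(V^{(T)}_{t_i})$ in $\R^j$. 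Monotonicity of $\gamma_j$ under inclusion and the supremum definition \eqref{eq:intro-hilbert-grassmann-angles} then give $\gamma_j(\bU^\bullet_{\infty,T})\ge\lim_n\Pp(0\in\conv\{U_{t_i}\})=\Pp(0\in\mathcal H^\bullet_{j,T})$, where the limit uses path continuity and the fact that $0$ lies a.s.\ in the interior of the hull on the event of absorption (the boundary event has probability zero). For the reverse inequality, I would use that every finite cone $C_0\subseteq\bU^\bullet_{\infty,T}$ is generated by vectors in the closure of positive combinations of spiral points, and that $\gamma_j$ is continuous enough under such approximation, so that $\gamma_j(C_0)\le\sup_n\gamma_j(C_n)$. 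I expect this upper bound, a semicontinuity statement for Grassmann angles in Hilbert space, to be the main technical obstacle. I would handle it by realizing $G$ as an isonormal process restricted to the span, so that $\ker G\cap C_0\ne\{0\}$ implies that $0$ lies in the closed convex hull of the Gaussian images of the whole spiral, which is $\mathcal H^\bullet_{j,T}$ up to null sets.

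Given this identity, Theorem~\ref{thm:intro-ou-origin-absorption} supplies $\gamma_j$ explicitly, and the remaining work is algebraic. For $k\ge1$, $\upsilon_k=\tfrac12(\gamma_{k-1}-\gamma_{k+1})$, and by the recursion $\Psi_{m}-\Psi_{m+2}=x^m/m!$ this equals
\[
\frac1\pi\int_0^{\arsinh\sqrt{\eee^T-1}}\frac{x^{k-1}/(k-1)!}{\sqrt{\eee^T-1-\sinh^2x}}\,\dint x
\]
in the $B$ case. For $k=1$ one needs $\gamma_0=1$ to match $\Psi_0=\cosh$. The required identity $\frac2\pi\int_0^{a}\cosh x\,(\sinh^2a-\sinh^2x)^{-1/2}\dint x=1$ follows from the substitution $u=\sinh x$. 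In the $A$ case, the same computation gives $\upsilon_k=(\Psi_k-\Psi_{k+2})(T/2)/(2\sinh(T/2))=(T/2)^k/(2k!\sinh(T/2))$; for $k=1$ this uses $\Psi_1/\sinh=1$.

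Finally, $\upsilon_0=1-\tfrac12(1+\gamma_1)=\tfrac12\Pp(0\notin\mathcal H^\bullet_{1,T})$, which gives \eqref{eq:intro-ordinary-ou-spiral-zeroth-volume} and \eqref{eq:intro-circular-ou-spiral-zeroth-volume} from Example~\ref{exam:OU_convex_hull_nonabsorption_low_dim}. For the parity sums, telescoping gives $\sum_{k\text{ odd}}\upsilon_k=\tfrac12(\gamma_0-\lim_j\gamma_j)$ and $\sum_{k\text{ even}}\upsilon_k=\upsilon_0+\tfrac12(\gamma_1-\lim_j\gamma_j)=\tfrac12(1-\lim_j\gamma_j)$. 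It therefore suffices to show $\gamma_j\to0$ as $j\to\infty$ for fixed $T$. This follows from $\Psi_j(x)\to0$ locally uniformly, with dominated convergence in the integral via the bound $\Psi_j\le\cosh$, or alternatively from Corollary~\ref{cor:intro-ou-linear-time-threshold}.
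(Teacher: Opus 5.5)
Your argument is correct, but it runs in the opposite logical direction from the paper's proof (Theorem~\ref{thm:hilbert-brownian-ou-intrinsic-volumes}).

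\emph{The paper's route.} The paper never computes Grassmann angles of the spiral cone directly. It identifies $\bU_{\infty,T}^\bullet$ isometrically with the Brownian cone $\bW_\infty^\bullet(\rho_\bullet(T))$ (Proposition~\ref{prop:hilbert-brownian-cones-ou-spirals}). It then exhausts this cone by nested dyadic grid cones isometric to $\bW_{2^m}^\bullet(\rho\sqrt{2^m})$ and transfers the intrinsic volumes themselves through Proposition~\ref{prop:hilbert-cone-inner-approximation}. The answer is literally the scaling limit $L_k^\bullet(\rho_\bullet(T))$ of Theorem~\ref{thm:finite-BA-scaling-limit}, and the parity identities are \eqref{eq:finite-BA-limiting-parity-normalization}. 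The identity $\gamma_d(\bU_{\infty,T}^\bullet)=\Pp(0\in\mathcal H_{d,T}^\bullet)$ comes only afterwards, by telescoping, as Corollary~\ref{cor:intro-ou-spiral-grassmann-absorption}.

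\emph{Your route.} You make that identity primary and prove it by a Gaussian-projection argument, essentially a special case of Dospolova's Gaussian-spectrum theorem, which the paper deliberately avoids. You then read off the $\upsilon_k$ by differencing the formulas of Theorem~\ref{thm:intro-ou-origin-absorption} via $\Psi_m-\Psi_{m+2}=x^m/m!$, and get the total mass from $\gamma_j\to0$. This is not circular: that theorem, including $\Pp(0\in\bd\mathcal H_{d,T}^\bullet)=0$, is proved in Section~\ref{sec:brownian-ou-absorption} without Hilbert-space input. Both routes therefore rest on the same analytic core, Theorem~\ref{thm:finite-BA-scaling-limit}.

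\emph{Trade-offs.} Your route bypasses the Brownian-cone isometry and the transfer of intrinsic volumes, and explains why Grassmann angles are absorption probabilities. The paper's route needs no probabilistic argument in infinite dimensions.

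\emph{The upper bound you flag.} You can skip it. The dyadic cones $C_m:=\pos\{\zeta_T^\bullet(iT2^{-m}):0\le i\le2^m\}$ are nested, and $\overline{\bigcup_mC_m}=\bU_{\infty,T}^\bullet$ by norm continuity of the spiral. So Proposition~\ref{prop:hilbert-cone-inner-approximation} gives $\gamma_j(C_m)\uparrow\gamma_j(\bU_{\infty,T}^\bullet)$, and your lower-bound computation identifies the limit.

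If you keep the isonormal argument instead, you must normalize, since $0$ always lies in the closed positive hull of the path. For instance, work on the compact base $\{x\in C_0:\langle x,\zeta_T^\bullet(0)\rangle=1\}$, whose points are limits of convex combinations of rescaled spiral points. Then use a linear version of the process on $\lin C_0$ together with a countable dense subset, so that the inclusion holds simultaneously for all $x$, before invoking Lemma~\ref{lem:positive-rescaling-origin}.
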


The proof is given in
Theorem~\ref{thm:hilbert-brownian-ou-intrinsic-volumes}.
Explicit entire generating functions for both sequences are also
obtained in Subsection~\ref{subsec:conic-generating-functions}.
Since the conic intrinsic volumes of each spiral cone sum to one,
the defining relations~\eqref{eq:intro-hilbert-conic-intrinsic-volumes}
can be inverted by telescoping to recover the Grassmann angles.
Together with the absorption formulas from Theorem~\ref{thm:intro-ou-origin-absorption},
this yields the following result, proved in
Corollary~\ref{cor:hilbert-ou-grassmann-absorption}.

\begin{corollary}[Grassmann angles and origin absorption]
\label{cor:intro-ou-spiral-grassmann-absorption}
For every integer $d\ge1$, every $T>0$, and $\bullet\in\{B,A\}$,
\begin{equation}
\label{eq:intro-ou-spiral-grassmann-absorption}
\gamma_d(\bU_{\infty, T}^\bullet)
=2\sum_{\ell=0}^\infty\upsilon_{d+1+2\ell}(\bU_{\infty, T}^\bullet)
=\Pp(0\in\mathcal H_{d,T}^\bullet).
\end{equation}
\end{corollary}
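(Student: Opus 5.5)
The plan is to split \eqref{eq:intro-ou-spiral-grassmann-absorption} into two equalities. The first is a Hilbert-space Crofton formula obtained by inverting \eqref{eq:intro-hilbert-conic-intrinsic-volumes}. The second is a direct comparison of the explicit series from Theorem~\ref{thm:intro-ou-spiral-intrinsic-volumes} with the absorption formulas of Theorem~\ref{thm:intro-ou-origin-absorption}.

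\emph{Step 1 (Crofton inversion).} Since $\bU_{\infty,T}^\bullet$ is pointed and nonzero, it is not a linear subspace, so \eqref{eq:intro-hilbert-conic-intrinsic-volumes} applies. Summing the relation $2\upsilon_{k}=\gamma_{k-1}-\gamma_{k+1}$ over $k=d+1,d+3,\dots,d+1+2L$ telescopes to
\[
2\sum_{\ell=0}^{L}\upsilon_{d+1+2\ell}=\gamma_d-\gamma_{d+2L+2}.
\]
So it suffices to show that $\gamma_m(\bU_{\infty,T}^\bullet)\to0$ along $m\equiv d\pmod 2$.

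Theorem~\ref{thm:intro-ou-spiral-intrinsic-volumes} says that the even-index and odd-index sums of the intrinsic volumes both equal $1/2$. Telescoping from the bottom gives $2\sum_{\ell\ge0}\upsilon_{1+2\ell}=\gamma_0-\lim\gamma_{2L+2}$ and $2\sum_{\ell\ge1}\upsilon_{2\ell}=\gamma_1-\lim\gamma_{2L+1}$. The monotone limits exist because the Grassmann angles are nonincreasing in $j$, being suprema of nonincreasing finite-dimensional sequences. Combining these with $\upsilon_0=1-(\gamma_0+\gamma_1)/2$ and total mass one forces $\lim\gamma_{2L}+\lim\gamma_{2L+1}=0$. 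Both limits are nonnegative, so both vanish. This yields the first equality for every $d\ge1$.

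\emph{Step 2 (identification with absorption).} We substitute the explicit volumes into the series.

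In the circular case, \eqref{eq:intro-circular-ou-spiral-intrinsic-volumes} gives
\[
2\sum_{\ell}\frac{(T/2)^{d+1+2\ell}}{2(d+1+2\ell)!\sinh(T/2)}=\frac{\Psi_{d+1}(T/2)}{\sinh(T/2)},
\]
which is \eqref{eq:intro-circular-ou-absorption}.

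In the ordinary case, \eqref{eq:intro-ordinary-ou-spiral-intrinsic-volumes} gives
\[
\frac{2}{\pi}\sum_{\ell}\int_0^{a}\frac{x^{d+2\ell}}{(d+2\ell)!}\frac{\dint x}{\sqrt{\eee^T-1-\sinh^2x}},
\qquad a=\arsinh\sqrt{\eee^T-1}.
\]
All terms are nonnegative, so Tonelli lets us interchange sum and integral. This produces exactly $\frac2\pi\int_0^a\Psi_d(x)\,\dint x/\sqrt{\eee^T-1-\sinh^2x}$, which is \eqref{eq:intro-ordinary-ou-absorption}. Theorem~\ref{thm:intro-ou-origin-absorption} then gives the second equality.

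\emph{Main obstacle.} The only delicate point is Step 1: in infinite dimensions the Crofton formula can fail without the total-mass-one property. The argument above resolves this by using the parity statement of Theorem~\ref{thm:intro-ou-spiral-intrinsic-volumes}, together with monotonicity of $j\mapsto\gamma_j(C)$ in the Hilbert setting. That monotonicity is inherited from the finite-dimensional cones $C_0$ through the supremum in \eqref{eq:intro-hilbert-grassmann-angles}. Alternatively, for the ordinary case one can bound $\gamma_m\le 2\sum_{k>m}\upsilon_k$ directly, but the telescoping route is cleaner.
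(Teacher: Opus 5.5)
Your proof is correct and essentially the paper's. The paper obtains $\gamma_N\to0$ in one line from the partial-sum identity \eqref{eq:hilbert-conic-intrinsic-volume-partial-sums}, $\sum_{k=0}^N\upsilon_k=1-(\gamma_N+\gamma_{N+1})/2$, together with total mass one; your even/odd telescoping rederives exactly this. It then telescopes as you do, and identifies the series with $\Pp(0\in\mathcal H_{d,T}^\bullet)$ by the same Tonelli summation, routed through $\upsilon_k(\bU_{\infty,T}^\bullet)=L_k^\bullet(\rho_\bullet(T))$ and the series form of Theorem~\ref{thm:brownian-gaussian-absorption}. Only your closing aside is unjustified: a bound $\gamma_m\le2\sum_{k>m}\upsilon_k$ cannot be obtained ``directly'' without the total-mass property, since it fails for the orthant in $\ell^2$ (Remark~\ref{rem:hilbert-conic-intrinsic-volumes-total-mass}). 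Your main argument does not use it.
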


\medskip
\noindent\emph{Random OU cones.}
We now consider the closed positive hulls of the ordinary OU process
and the circular OU loop in $\R^d$, defined by
\begin{equation}
\label{eq:intro-random-ou-cones}
\mathcal U_{d,T}^\bullet
:=\posclos\mathcal H_{d,T}^\bullet\subseteq\R^d,
\qquad \bullet\in\{B,A\}.
\end{equation}
We refer to these as the \emph{random OU cones}.
Unlike the deterministic OU spiral cones in
\eqref{eq:intro-ou-spiral-cones}, they are generated by
random trajectories in $\R^d$.

\begin{proposition}[Random OU cones]
\label{prop:intro-random-ou-expected-intrinsic-volumes}
For every integer $d\ge1$, every $T>0$, and
$\bullet\in\{B,A\}$, the random cone
$\mathcal U_{d,T}^\bullet$ is full-dimensional almost surely.
Moreover, almost surely, it is either pointed or equal to $\R^d$,
and
\begin{equation}
\label{eq:intro-random-ou-full-space-probability}
\Pp(\mathcal U_{d,T}^\bullet=\R^d)
=\Pp(0\in\mathcal H_{d,T}^\bullet)
=\gamma_d(\bU_{\infty, T}^\bullet).
\end{equation}
For every $k\in\{0,\ldots,d\}$, the expected conic intrinsic volumes of $\mathcal U_{d,T}^\bullet$ are
\begin{equation}
\label{eq:intro-random-ou-expected-intrinsic-volumes}
\Ee\,\upsilon_k(\mathcal U_{d,T}^\bullet)
=\begin{cases}
\upsilon_k(\bU_{\infty, T}^\bullet),&0\le k<d,\\[2pt]
\sum_{\ell=d}^\infty\upsilon_\ell(\bU_{\infty, T}^\bullet),&k=d.
\end{cases}
\end{equation}
The expected solid angle of $\mathcal U_{d,T}^\bullet$ is
\begin{equation}
\label{eq:intro-random-ou-expected-solid-angle}
\Ee\,\alpha(\mathcal U_{d,T}^\bullet)
=\Ee\,\upsilon_d(\mathcal U_{d,T}^\bullet)
=\sum_{\ell=d}^\infty\upsilon_\ell(\bU_{\infty, T}^\bullet)
=\frac{\gamma_{d-1}(\bU_{\infty, T}^\bullet)
+\gamma_d(\bU_{\infty, T}^\bullet)}2.
\end{equation}
Explicitly,
\begin{align}
\Ee\,\alpha(\mathcal U_{d,T}^B)
&=\frac1\pi\int_0^{\arsinh\sqrt{\eee^T-1}}
\frac{\Psi_{d-1}(x)+\Psi_d(x)}
{\sqrt{\eee^T-1-\sinh^2x}}\,\dint x,
\label{eq:intro-ordinary-random-ou-expected-solid-angle}
\\
\Ee\,\alpha(\mathcal U_{d,T}^A)
&=\frac{\Psi_d(T/2)+\Psi_{d+1}(T/2)}
{2\sinh(T/2)}.
\label{eq:intro-circular-random-ou-expected-solid-angle}
\end{align}
\end{proposition}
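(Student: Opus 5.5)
We plan to derive the proposition from the Gaussian-projection principle for random cones spanned by Gaussian processes. The relevant identity is the finite-dimensional one of Kabluchko--Zaporozhets type: if $C$ is a closed convex cone with $\dim C\ge d$ in a Euclidean space and $G$ is a standard Gaussian matrix, then for $k<d$ we have $\Ee\,\upsilon_k(GC)=\upsilon_k(C)$. Moreover, $\Pp(GC=\R^d)=\gamma_d(C)$, and the top volume of $GC$ absorbs the tail $\sum_{\ell\ge d}\upsilon_\ell(C)$. Indeed, the coordinate processes $U^{(i)}$ are independent copies of the one-dimensional OU process. Hence $U_t=G\zeta_T^B(t)$ in law, where $G:H\to\R^d$ is the isonormal Gaussian map $h\mapsto(\langle g_i,h\rangle)_{i\le d}$, and similarly for the loop. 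Consequently $\mathcal U_{d,T}^\bullet$ has the law of $\posclos G\bU_{\infty,T}^\bullet$.

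\textbf{Step 1: finite approximation.} Take finite subsets $t_1<\dots<t_N$ of $[0,T]$ becoming dense, with polyhedral cones $C_N=\pos\{\zeta(t_i)\}$ increasing to the spiral cone. For each $N$, the finite-dimensional Gaussian projection results apply to $G C_N$. We have $\gamma_j(C_N)\uparrow\gamma_j(\bU_{\infty,T}^\bullet)$ by the sup-definition \eqref{eq:intro-hilbert-grassmann-angles} together with the continuity/monotonicity of Grassmann angles established in Subsection~\ref{subsec:hilbert-conic-intrinsic-volumes}. The random cones $GC_N$ increase to $\mathcal U_{d,T}^\bullet$ (up to closure), by path continuity. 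Conic intrinsic volumes are continuous under this convergence, and the event $\{GC_N=\R^d\}$ increases to $\{0\in\intr\,\mathcal H\}$. Passing to the limit by bounded convergence yields \eqref{eq:intro-random-ou-expected-intrinsic-volumes} from the finite identities. The case $k=d$ follows since the intrinsic volumes of $\mathcal U$ sum to one and those of $\bU$ sum to one by Theorem~\ref{thm:intro-ou-spiral-intrinsic-volumes}.

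\textbf{Step 2: dichotomy and full dimensionality.} Full-dimensionality holds because $U_{t_1},\dots,U_{t_d}$ are a.s.\ linearly independent, since their Gaussian joint law is nondegenerate. For the dichotomy, note that if the closed cone is not pointed, it contains a line, so a nonzero $v$ has $\langle v,U_t\rangle$ of both signs. We must then rule out the degenerate case where the cone is a half-space or a wedge, i.e.\ where $0$ lies on the boundary of $\mathcal H$ but not in its interior. We expect this step to be the main obstacle. We would try to show $\Pp(0\in\bd\mathcal H)=0$ by a monotonicity argument: $\Pp(0\in\mathcal H_{d,T})$ is continuous in $T$ (explicit formula), and the time change reduces the problem to a Brownian statement $\sigma Z\in\bd\mathcal K$. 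The latter has probability zero because $\mathcal K$ is compact convex, so its boundary is Lebesgue-null and $Z$ has a density. For the pointed/whole-space dichotomy itself, consider a nonpointed cone other than $\R^d$; it has a supporting hyperplane $v^\perp$ containing a line of the cone. Translated back, this again means $0$ lies on the boundary of a shifted Brownian hull, which is a null event. Then $\Pp(\mathcal U=\R^d)=\Pp(0\in\intr\mathcal H)=\Pp(0\in\mathcal H)$, and Corollary~\ref{cor:intro-ou-spiral-grassmann-absorption} gives \eqref{eq:intro-random-ou-full-space-probability}.

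\textbf{Step 3: solid angle.} For a full-dimensional cone in $\R^d$, $\upsilon_d=\alpha$, which gives the first equality in \eqref{eq:intro-random-ou-expected-solid-angle}. By \eqref{eq:intro-hilbert-conic-intrinsic-volumes},
\[\textstyle\sum_{\ell\ge d}\upsilon_\ell=\tfrac12\sum_{\ell\ge d}(\gamma_{\ell-1}-\gamma_{\ell+1}),\]
which telescopes to $(\gamma_{d-1}+\gamma_d)/2$ because $\gamma_j\to0$. For $d=1$ we use $\gamma_0=1-\upsilon_0\cdot2+\dots$, i.e.\ the total-mass identity. Inserting $\gamma_j=\Pp(0\in\mathcal H_{j,T})$ from Theorem~\ref{thm:intro-ou-origin-absorption} then gives \eqref{eq:intro-ordinary-random-ou-expected-solid-angle} and \eqref{eq:intro-circular-random-ou-expected-solid-angle}. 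Here we interpret $\gamma_0=1$ (the absorption probability in dimension $0$), consistent with $\Psi_0=\cosh$ and with the formulas evaluated at $d=0$: indeed $\tfrac2\pi\int\cosh x/\sqrt{\eee^T-1-\sinh^2x}\,\dint x=1$ by the substitution $u=\sinh x$.
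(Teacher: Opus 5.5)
Your proposal is correct, but for the expected intrinsic volumes it takes a genuinely different route from the paper's.

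\emph{Your route.} You realize $\mathcal U_{d,T}^\bullet$ as the closure of the increasing union of the cones $\pos\{U_{t_1},\dots,U_{t_N}\}\stackrel{\mathrm d}{=}GC_N$. These are Gaussian images of polyhedral subcones $C_N$ of the spiral cone. You apply the finite-dimensional Gaussian-image identity of G\"otze--Kabluchko--Zaporozhets to each $C_N$ and then pass to the limit on both sides:
\begin{itemize}
\item on the random side, via almost-sure conic Hausdorff convergence (nested dense times and path continuity) together with McCoy--Tropp continuity;
\item on the deterministic side, via Proposition~\ref{prop:hilbert-cone-inner-approximation}.
\end{itemize}
In effect, this proves the Dospolova Gaussian-spectrum relation in this special case, which the paper deliberately avoids.

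\emph{The paper's route.} It proves $\Ee\,\gamma_j(\mathcal U_{d,T}^\bullet)=\Pp(0\in\mathcal H_{j,T}^\bullet)$ for $j<d$ directly (Lemma~\ref{lem:random-ou-cones-expected-grassmann-angles}). Projecting onto $L^\perp$, for a uniform codimension-$j$ subspace $L$, yields a $j$-dimensional OU process, because the covariance is a multiple of $I_d$. Off the null event $\{0\in\bd\mathcal H_{d,T}^\bullet\}$, one has $\mathcal U\cap L\ne\{0\}$ if and only if $0\in\Pi\mathcal H$. The paper then inverts Crofton with the full-space correction of Remark~\ref{rem:prelim-conic-full-space-correction}. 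Finally, it identifies the resulting $L_k^\bullet$ with $\upsilon_k(\bU_{\infty,T}^\bullet)$ through Theorem~\ref{thm:hilbert-brownian-ou-intrinsic-volumes}.

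\emph{What each buys.} Your approach is more structural:
\begin{itemize}
\item the identity for $k<d$ uses neither the explicit absorption formulas nor the null-boundary input;
\item it applies to any continuous Gaussian process with i.i.d.\ coordinates;
\item it yields $\Pp(\mathcal U=\R^d)=\gamma_d(\bU_{\infty,T}^\bullet)$ directly from $\{GC_N=\R^d\}\uparrow\{0\in\intr\mathcal H\}$.
\end{itemize}
The cost is three imported results: the finite Gaussian-image theorem (including its full-space correction at $k=d-1$), continuity of conic intrinsic volumes, and the Hilbert inner approximation. The paper needs only the OU absorption probabilities it has already computed, plus isotropy.

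\emph{Points to tighten.}
\begin{itemize}
\item Step~2 is not an obstacle. The statement $\Pp(0\in\bd\mathcal H_{d,T}^\bullet)=0$ is part of Theorem~\ref{thm:ou-origin-absorption}, proved exactly by your time change to $\{\rho Z\in\bd\mathcal K_d^\bullet\}$. The continuity-in-$T$ idea plays no role.
\item The dichotomy then needs the separation argument of Lemma~\ref{lem:random-ou-cones-geometry}. If $0\notin\mathcal H$, compactness gives $\langle v,y\rangle\ge\varepsilon$ on $\mathcal H$, so $\pos\mathcal H$ is closed and pointed. Your ``supporting hyperplane containing a line'' remark alone does not place $0$ in $\mathcal H$.
\item Invoke the Gaussian-image identity only for polyhedral $C_N$, since linear images of general closed cones need not be closed.
\item In Step~3, the telescoping works for every $d\ge1$ using only the positive-index relations, so the separate $d=1$ remark is unnecessary.
\end{itemize}
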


The almost-sure geometric assertions follow from
Lemma~\ref{lem:random-ou-cones-geometry}, while
\eqref{eq:intro-random-ou-full-space-probability}
follows from that lemma and
Corollary~\ref{cor:intro-ou-spiral-grassmann-absorption}.
The full-space probabilities are evaluated explicitly in
Theorem~\ref{thm:intro-ou-origin-absorption}.
Equation~\eqref{eq:intro-random-ou-expected-intrinsic-volumes}
follows by combining
Theorems~\ref{thm:random-ou-cones-expected-intrinsic-volumes}
and~\ref{thm:hilbert-brownian-ou-intrinsic-volumes}.
Since the random OU cones are full-dimensional almost surely,
their solid angles coincide with their top-dimensional conic
intrinsic volumes. The last equality in
\eqref{eq:intro-random-ou-expected-solid-angle}
follows by telescoping the relations between conic intrinsic
volumes and Grassmann angles and using the fact that
the intrinsic volumes of the OU spiral cones sum to one.
Finally, \eqref{eq:intro-ordinary-random-ou-expected-solid-angle}
and \eqref{eq:intro-circular-random-ou-expected-solid-angle}
follow from \eqref{eq:ordinary-random-ou-expected-top-volume}
and \eqref{eq:circular-random-ou-expected-top-volume},
respectively, in
Theorem~\ref{thm:random-ou-cones-expected-intrinsic-volumes}. Further results related to
\eqref{eq:intro-random-ou-expected-solid-angle},
including conditional expected solid angles given that
the random OU cone is not equal to $\R^d$,
are provided in
Corollary~\ref{cor:random-ou-cones-expected-solid-angles}.

\medskip
The general formulas for expected Grassmann angles  and solid angles of Gaussian images of convex cones were established
in finite dimensions by Götze--Kabluchko--Zaporozhets
\cite[Theorem~3.5 and Corollary~3.7]{GotzeKabluchkoZaporozhets2023}.
Dospolova~\cite[Theorems~5 and~6]{Dospolova2023}
extended this theory to suitable
cones in infinite-dimensional Hilbert spaces, using Gaussian
spectra defined through isonormal processes; see also
Dospolova--Zaporozhets~\cite{DospolovaZaporozhets2024}
for the theory of infinite-dimensional conic intrinsic volumes.
In our setting, let $\xi_1,\ldots,\xi_d$ be independent isonormal
processes on the closed linear span of $\zeta_T^\bullet$.
The random curve
\[
\bigl(\xi_1(\zeta_T^\bullet(u)),\ldots,
\xi_d(\zeta_T^\bullet(u))\bigr),\qquad 0\le u\le T,
\]
has a continuous version with the law of the corresponding
$d$-dimensional OU process or circular OU loop.
Its closed positive hull therefore has the law of
$\mathcal U_{d,T}^\bullet$.
Thus, the general relationship between the deterministic OU spiral cone $\bU_{\infty, T}^\bullet$ and the random OU cone $\mathcal U_{d,T}^\bullet$ underlying Proposition~\ref{prop:intro-random-ou-expected-intrinsic-volumes}
fits into the Gaussian-spectrum framework.
Here we prove Proposition~\ref{prop:intro-random-ou-expected-intrinsic-volumes} without invoking the general Gaussian-spectrum theorems from \cite{Dospolova2023}.

\medskip
\noindent\emph{Small-opening limits and ordinary intrinsic volumes.}
The conic intrinsic volumes are connected with the ordinary Euclidean
intrinsic volumes $V_j(K)$ of compact convex sets through a natural
limiting procedure. In finite
dimensions, the ordinary Euclidean intrinsic volumes of a nonempty compact convex set $K\subseteq\R^m$ are defined by the Steiner formula
\begin{equation}
\label{eq:intro-euclidean-steiner-normalization}
\operatorname{vol}_m(K+rB_m)
=\sum_{j=0}^m\kappa_{m-j}V_j(K)r^{m-j},
\qquad r\ge0,
\end{equation}
where  $B_m$ is
the closed unit ball in $\R^m$, and $\kappa_j$ denotes the volume of the unit
ball in $\R^j$, as in
Subsection~\ref{subsec:intro-brownian-gaussian-absorption}, with
$\kappa_0:=1$. For a nonempty compact convex set $K$ in a real
separable Hilbert space, the definition is extended by
$V_j(K):=\sup_{K'\subseteq K}V_j(K')$, where $K'$ ranges over
nonempty finite-dimensional compact convex subsets; the supremum
is allowed to be infinite.
The theory originates in the work of Sudakov~\cite{Sudakov1976} and
Chevet~\cite{Chevet1976} and was further developed by
Tsirelson~\cite{Tsirelson1986}, who related these intrinsic volumes
to Gaussian spectra; see also Vitale~\cite{Vitale2001} and
\cite[Sections~1.1 and~1.4]{KabluchkoZaporozhets2016}.

\medskip
Raising a compact base $K$ farther from the origin produces an
increasingly narrow cone. The following theorem recovers the
ordinary intrinsic volumes of the base from the rescaled conic
intrinsic volumes and Grassmann angles.

\begin{theorem}[Small-opening limits]
\label{thm:intro-small-opening-intrinsic-volumes}
Let $K$ be a nonempty compact convex subset of a real separable
Hilbert space $\mathbb H$, and suppose that $V_1(K)<\infty$.
Adjoin a unit vector $e_0$ orthogonal to $\mathbb H$, and set
\begin{equation}
\label{eq:intro-small-opening-cones}
C_\sigma(K):=\pos(\sigma e_0+K) \subseteq \R e_0 \oplus \mathbb H,\qquad \sigma>0.
\end{equation}
These cones are closed and pointed. For every integer $j\ge0$,
\begin{align}
\lim_{\sigma\to\infty}\sigma^j\gamma_j(C_\sigma(K))
&=\frac{2V_j(K)}{(j+1)\kappa_{j+1}},
\label{eq:intro-small-opening-grassmann-limit}
\\
\lim_{\sigma\to\infty}\sigma^j\upsilon_{j+1}(C_\sigma(K))
&=\frac{V_j(K)}{(j+1)\kappa_{j+1}}.
\label{eq:intro-small-opening-conic-limit}
\end{align}
\end{theorem}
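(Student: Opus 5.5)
The plan is to prove the statement first in finite dimensions, via a Gaussian-projection (Sudakov--Tsirelson type) representation of the Grassmann angles, and then pass to Hilbert space by monotone approximation.

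\emph{Finite-dimensional representation.} Let $K\subseteq\R^m$ be compact convex and $C=C_\sigma(K)\subseteq\R^{m+1}$. For a standard Gaussian matrix $G:\R^{m+1}\to\R^j$, a uniform codimension-$j$ subspace is $\ker G$. For pointed $C$, the subspace $\ker G$ meets $C$ nontrivially iff $0\in G(C\setminus\{0\})$, that is, iff $0\in G(\sigma e_0+K)=\sigma g_0+GK$. Here $g_0:=Ge_0\sim N(0,I_j)$ is independent of $GK$, because $K\perp e_0$. Hence
\[
\gamma_j(C_\sigma(K))=\Pp(-\sigma g_0\in GK)=\Pp(\sigma Z\in GK),
\]
which is exactly the structure of \eqref{eq:intro-brownian-gaussian-density-integral}. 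Conditioning on $G$ gives
\[
\sigma^j\gamma_j(C_\sigma(K))=(2\pi)^{-j/2}\,\Ee\int_{GK}\eee^{-\|x\|^2/(2\sigma^2)}\,\dint x,
\]
and the integrand increases to one as $\sigma\to\infty$. By monotone convergence the limit is $(2\pi)^{-j/2}\,\Ee\operatorname{vol}_j(GK)$. Tsirelson's formula (equivalently Kubota plus Gaussian-matrix rotation invariance) gives $\Ee\operatorname{vol}_j(GK)=\frac{(2\pi)^{j/2}}{\kappa_j}\cdot\frac{j!\kappa_j}{\ldots}V_j(K)$. With the Steiner normalization \eqref{eq:intro-euclidean-steiner-normalization}, the standard identity is $\Ee\operatorname{vol}_j(GK)=(2\pi)^{j/2}\,\frac{2}{(j+1)\kappa_{j+1}}\,V_j(K)$, using $(2\pi)^{j/2}\kappa_j^{-1}\binom{m}{j}$-type constants and $\kappa_j\kappa_{j+1}$ relations. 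This yields \eqref{eq:intro-small-opening-grassmann-limit}; I would double-check the constant on the segment $K=[0,e_1]$ with $j=1$.

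\emph{Passage to Hilbert space.} I expect this to be the main obstacle, together with interchanging limits. First, closedness and pointedness of $C_\sigma(K)$ follow because $\sigma e_0+K$ is compact and convex and misses $0$. The definition \eqref{eq:intro-hilbert-grassmann-angles} gives $\gamma_j(C_\sigma(K))=\sup_{K'}\gamma_j(C_\sigma(K'))$ over finite-dimensional $K'\subseteq K$; this uses that any finite-dimensional subcone is approximated from inside by cones over finite-dimensional bases, together with monotonicity. The finite-dimensional identity then gives the representation
\[
\sigma^j\gamma_j=(2\pi)^{-j/2}\sup_{K'}\Ee\int_{GK'}\eee^{-\|x\|^2/(2\sigma^2)}\,\dint x,
\]
and two suprema (over $K'$ and over $\sigma$) commute since the expression is increasing in both. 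Thus the limit is $\frac{2}{(j+1)\kappa_{j+1}}\sup_{K'}V_j(K')=\frac{2V_j(K)}{(j+1)\kappa_{j+1}}$. The hypothesis $V_1(K)<\infty$ ensures, via Sudakov/Tsirelson, that $K$ is GB, that all $V_j(K)$ are finite (Tsirelson's bound $V_j\le V_1^j/j!$), and that the isonormal image $GK$ is a.s. compact so everything is finite. For $j=0$, the quantity $\gamma_0=1$ matches $2V_0/\kappa_1=1$.

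\emph{Conic intrinsic volumes.} By \eqref{eq:intro-hilbert-conic-intrinsic-volumes}, $\upsilon_{j+1}=(\gamma_j-\gamma_{j+2})/2$, so
\[
\sigma^j\upsilon_{j+1}=\tfrac12\sigma^j\gamma_j-\tfrac12\sigma^{-2}\,\sigma^{j+2}\gamma_{j+2}.
\]
The second term tends to $0$ because $\sigma^{j+2}\gamma_{j+2}$ converges to a finite limit. Therefore the limit is $V_j(K)/((j+1)\kappa_{j+1})$, which is \eqref{eq:intro-small-opening-conic-limit}.
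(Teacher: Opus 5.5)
Your proposal is correct and follows essentially the paper's proof: the Gaussian-kernel identity $\gamma_j(C_\sigma(K'))=\Pp(\sigma Z\in GK')$ for finite-dimensional bases, monotone convergence in $\sigma$ combined with Tsirelson's formula $\Ee\operatorname{vol}_j(GK')=j!\kappa_j(2\pi)^{-j/2}V_j(K')$, the reduction to finite-dimensional bases and an exchange of suprema, and finally $\upsilon_{j+1}=(\gamma_j-\gamma_{j+2})/2$ together with Chevet's bound. Two places need cleaning up. Your garbled intermediate constant should be replaced by Tsirelson's formula and the duplication identity $j!\kappa_j/(2\pi)^j=2/((j+1)\kappa_{j+1})$. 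The reduction to bases is exact rather than an approximation from inside: any nonzero finite-dimensional closed subcone $D\subseteq C_\sigma(K)$ equals $C_\sigma(K_D)$ with $K_D:=\{h\in K:\sigma e_0+h\in D\}$.
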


The proof is given in
Theorem~\ref{thm:general-small-opening-intrinsic-volumes}.

\medskip
\noindent\emph{Brownian bodies and their intrinsic volumes.}
We now apply Theorem~\ref{thm:intro-small-opening-intrinsic-volumes}
to recover the ordinary intrinsic volumes of the Brownian-motion
and Brownian-bridge bodies. For $0\le t\le1$, consider the vectors
$h_t^B:=\ind_{[0,t]}$ and
$h_t^A:=\ind_{[0,t]}-t\ind_{[0,1]}$ in the real Hilbert space
$L^2[0,1]$, where $\ind$ denotes an indicator function.
The \emph{Brownian-motion} and \emph{Brownian-bridge bodies} are
\begin{equation}
\label{eq:intro-hilbert-brownian-bodies}
\bK_\infty^B:=\convclos\{h_t^B:0\le t\le1\},
\qquad
\bK_\infty^A:=\convclos\{h_t^A:0\le t\le1\}.
\end{equation}
Here $\convclos$ denotes the norm-closure of the convex hull. Both bodies
are compact.  They are deterministic Hilbert-space sets, unlike
 the random Brownian convex hulls $\mathcal K_d^B$ and $\mathcal K_d^A$
in \eqref{eq:intro-brownian-hulls}.

The connection with Theorem~\ref{thm:intro-small-opening-intrinsic-volumes} is given by the linear
isometries
\begin{align}
C_\rho(\bK_\infty^B)
&\cong \bU_{\infty, \log(1+\rho^{-2})}^B,
\qquad \rho>0,
\label{eq:intro-brownian-ordinary-ou-cone-isometry}
\\
C_\rho(\bK_\infty^A)
&\cong\bU_{\infty, 4\arsinh(1/(2\rho))}^A,
\qquad \rho>0,
\label{eq:intro-brownian-circular-ou-cone-isometry}
\end{align}
where $e_0$ is adjoined orthogonally to $L^2[0,1]$.
These identifications are proved in
Proposition~\ref{prop:hilbert-brownian-cones-ou-spirals}.
The finiteness condition $V_1(K)<\infty$ required in
Theorem~\ref{thm:intro-small-opening-intrinsic-volumes}
holds for both Brownian bodies, as verified in the proof of
Corollary~\ref{cor:hilbert-brownian-euclidean-intrinsic-volumes}.
Applying that theorem to the explicit conic intrinsic-volume
formulas of Theorem~\ref{thm:intro-ou-spiral-intrinsic-volumes}
therefore recovers the following identities, originally obtained
by Gao--Vitale~\cite{GaoVitale2001} for the Brownian-motion body
and Gao~\cite{Gao2003} for the Brownian-bridge body.

\begin{corollary}[Intrinsic volumes of the Brownian bodies]
\label{cor:intro-hilbert-brownian-intrinsic-volumes}
For every integer $j\ge0$,
\begin{equation}
\label{eq:intro-hilbert-brownian-intrinsic-volumes}
V_j(\bK_\infty^B)=\frac{\kappa_j}{j!},
\qquad
V_j(\bK_\infty^A)=\frac{\kappa_{j+1}}{2j!}.
\end{equation}
\end{corollary}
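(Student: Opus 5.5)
We apply Theorem~\ref{thm:intro-small-opening-intrinsic-volumes} in the form \eqref{eq:intro-small-opening-conic-limit}, combined with the isometries \eqref{eq:intro-brownian-ordinary-ou-cone-isometry}--\eqref{eq:intro-brownian-circular-ou-cone-isometry} and the explicit formulas of Theorem~\ref{thm:intro-ou-spiral-intrinsic-volumes}. Since conic intrinsic volumes are invariant under linear isometries, for $\rho>0$ we have $\upsilon_{j+1}(C_\rho(\bK_\infty^B))=\upsilon_{j+1}(\bU^B_{\infty,T})$ with $T=\log(1+\rho^{-2})$. Similarly, $\upsilon_{j+1}(C_\rho(\bK_\infty^A))=\upsilon_{j+1}(\bU^A_{\infty,T})$ with $T=4\arsinh(1/(2\rho))$. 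The theorem then yields
\[
V_j(\bK_\infty^\bullet)=(j+1)\kappa_{j+1}\lim_{\rho\to\infty}\rho^j\upsilon_{j+1}(C_\rho(\bK_\infty^\bullet)).
\]
It remains to verify the hypothesis $V_1<\infty$ and to compute two elementary limits.

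\emph{Hypothesis.} In the normalization \eqref{eq:intro-euclidean-steiner-normalization}, $V_1(K)$ is a multiple of the mean width, that is, of $\Ee\sup_{x\in K}\xi(x)$ for an isonormal process $\xi$ (Sudakov--Tsirelson). For $\bK_\infty^B$ this is $\sqrt{2\pi}\,\Ee\sup_t W_t$, and for $\bK_\infty^A$ it is $\sqrt{2\pi}\,\Ee\sup_t W^{\mathrm{br}}_t$. Both expectations are finite.

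\emph{Type $B$.} Here $\eee^T-1=\rho^{-2}$, so by \eqref{eq:intro-ordinary-ou-spiral-intrinsic-volumes}
\[
\rho^j\upsilon_{j+1}=\frac{\rho^j}{\pi j!}\int_0^{\arsinh(1/\rho)}\frac{x^j\,\dint x}{\sqrt{\rho^{-2}-\sinh^2x}}.
\]
Substitute $\sinh x=\rho^{-1}\sin\phi$. Then $\dint x=\rho^{-1}\cos\phi\,\dint\phi/\cosh x$ and $\sqrt{\rho^{-2}-\sinh^2x}=\rho^{-1}\cos\phi$, so the integral becomes $\int_0^{\pi/2}x^j/\cosh x\,\dint\phi$. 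As $\rho\to\infty$ we have $x\sim\rho^{-1}\sin\phi$ uniformly, and dominated convergence gives the limit
\[
\frac{1}{\pi j!}\int_0^{\pi/2}\sin^j\phi\,\dint\phi.
\]
Wallis' integral satisfies $\int_0^{\pi/2}\sin^j\phi\,\dint\phi=\frac{\sqrt\pi\,\Gamma((j+1)/2)}{2\Gamma(j/2+1)}=\frac{\kappa_j}{2\kappa_{j-1}}$. Hence
\[
V_j(\bK^B_\infty)=\frac{(j+1)\kappa_{j+1}\kappa_j}{2\pi j!\,\kappa_{j-1}}.
\]
The recursion $\kappa_{j+1}=\frac{2\pi}{j+1}\kappa_{j-1}$ reduces this to $\kappa_j/j!$. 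The case $j=0$ (with $\kappa_{-1}$ undefined) is checked separately: the limit is $1/2$, so $V_0=2\kappa_1\cdot\tfrac12\cdot\tfrac12\cdot\ldots$; more simply, $V_0=1$ holds directly for any nonempty convex set.

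\emph{Type $A$.} Here $T/2=2\arsinh(1/(2\rho))\sim\rho^{-1}$ and $\sinh(T/2)\sim\rho^{-1}$, so by \eqref{eq:intro-circular-ou-spiral-intrinsic-volumes}
\[
\rho^j\upsilon_{j+1}=\frac{\rho^j(T/2)^{j+1}}{2(j+1)!\sinh(T/2)}\to\frac{1}{2(j+1)!}.
\]
Therefore $V_j(\bK^A_\infty)=\frac{(j+1)\kappa_{j+1}}{2(j+1)!}=\frac{\kappa_{j+1}}{2j!}$.

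The main obstacle is only bookkeeping: confirming that the isometries \eqref{eq:intro-brownian-ordinary-ou-cone-isometry}--\eqref{eq:intro-brownian-circular-ou-cone-isometry} use exactly the $e_0$-normalization of Theorem~\ref{thm:intro-small-opening-intrinsic-volumes}, and justifying dominated convergence in the type-$B$ integral. The latter is straightforward, since $x\le\arsinh(1/\rho)$ and $\cosh x\ge1$.
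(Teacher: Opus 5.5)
Your proof is correct and follows the paper's route. You apply the small-opening conic limit \eqref{eq:general-small-opening-conic-limit} to $\bW_\infty^\bullet(\rho)\cong\bU^\bullet_{\infty,T_\bullet(\rho)}$ with the explicit formulas for $L_k^\bullet$; your substitution $\sinh x=\rho^{-1}\sin\phi$ amounts to the paper's $u=\rho\sinh x$ followed by $u=\sin\phi$.

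The only genuine difference is the check of $V_1<\infty$. You get it from the Sudakov--Tsirelson identity $V_1(K)=\sqrt{2\pi}\,\Ee\sup_{x\in K}\xi(x)$, which gives the values $2$ and $\pi/2$. The paper instead stays self-contained by using Remark~\ref{rem:small-opening-without-finiteness} together with the explicit $q_1^\bullet$. Your garbled $j=0$ aside is harmless, since the limit $1/2$ times $\kappa_1=2$ gives $V_0=1$.
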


This deduction is given in
Corollary~\ref{cor:hilbert-brownian-euclidean-intrinsic-volumes}.
Thus the ordinary intrinsic volumes of the Brownian bodies
arise as limits of the appropriately rescaled conic intrinsic
volumes of the Ornstein--Uhlenbeck spiral cones.

Finally, Tsirelson's Gaussian-spectrum theorem~\cite{Tsirelson1986}
relates the ordinary intrinsic volumes of the deterministic
Hilbert-space bodies $\bK_\infty^B$ and $\bK_\infty^A$ to the expected volumes of the random
Brownian convex hulls $\mathcal K_d^B$ and $\mathcal K_d^A$;
see~\cite{KabluchkoZaporozhets2016}. The $d$-dimensional Gaussian
spectra of $\bK_\infty^B$ and $\bK_\infty^A$ have the laws of the Brownian-motion and Brownian-bridge
convex hulls $\mathcal K_d^B$ and $\mathcal K_d^A$, respectively; see
\cite[Theorem~1.9 and Section~1.5]{KabluchkoZaporozhets2016}.
In our normalization, Tsirelson's formula gives
\begin{equation}
\label{eq:intro-tsirelson-brownian-hull-volumes}
\Ee\,\operatorname{vol}_d(\mathcal K_d^\bullet)
=\frac{d!\kappa_d}{(2\pi)^{d/2}}V_d(\bK_\infty^\bullet),
\qquad d\ge1,\quad\bullet\in\{B,A\}.
\end{equation}
Substituting \eqref{eq:intro-hilbert-brownian-intrinsic-volumes}
gives
\[
\Ee\,\operatorname{vol}_d(\mathcal K_d^B)
=\frac{\kappa_d^2}{(2\pi)^{d/2}},
\qquad
\Ee\,\operatorname{vol}_d(\mathcal K_d^A)
=\frac{\kappa_d\kappa_{d+1}}{2(2\pi)^{d/2}}
=\frac{(2\pi)^{d/2}}{(d+1)!}.
\]
Those same formulas were obtained in
Corollary~\ref{cor:intro-brownian-gaussian-large-scale-and-volumes} directly
from the large-$\sigma$ asymptotics of the Gaussian-point absorption probabilities.

\subsection{Organization of the paper}
\label{subsec:organization}
The remainder of the paper is devoted mainly to the proofs of the
results stated in the introduction.
Section~\ref{sec:preliminaries} recalls the necessary
facts from convex geometry, including conic intrinsic volumes,
Grassmann angles, and the conic Crofton formula.
Section~\ref{sec:finite-dim-cones} introduces the finite-dimensional cones
associated with random walks and bridges and computes
their conic intrinsic volumes.
Section~\ref{sec:finite-BA-scaling-limits} establishes the
scaling limits of these conic intrinsic volumes.
Section~\ref{sec:brownian-ou-absorption} derives the absorption
formulas for Brownian motion, Brownian bridges, and
Ornstein--Uhlenbeck processes, together with their
asymptotic consequences and the expected conic intrinsic
volumes of random OU cones.
Finally, Section~\ref{sec:ornstein-uhlenbeck-spirals}
studies the deterministic OU spiral cones in Hilbert space,
computes their conic intrinsic volumes and generating functions,
and establishes the connection with ordinary intrinsic
volumes through the small-opening limit.

\section{Preliminaries}
\label{sec:preliminaries}

In this section we recall the facts from convex geometry and
probability used below. The reader may skip it and return to it whenever necessary.
Throughout the paper, empty sums and products are understood to be $0$ and $1$, respectively.

\subsection{Convex and polyhedral geometry}
\label{subsec:convex-cones-polyhedral-geometry}
For background on convex polytopes and polyhedra, we refer to
Br\o ndsted~\cite{Brondsted1983} and McMullen~\cite{McMullen2026}.
For convex cones and their angles, see Schneider~\cite[Chapters~1--2]{Schneider2022}. For polyhedral cones and their angles, see Amelunxen--Lotz~\cite[Section~2]{AmelunxenLotz2017}.

Let $E$ be a real Euclidean space of dimension $d$, with inner product
$\langle\cdot,\cdot\rangle$ and norm $\|\cdot\|$.
For a nonempty set $M\subseteq E$, write $\conv M$, $\pos M$, $\aff M$ and
$\lin M$ for its convex hull, positive hull, affine hull and linear span,
respectively. Thus $\conv M$ consists of finite convex combinations
of points of $M$, whereas $\pos M$ consists of finite linear
combinations with nonnegative coefficients. In particular,
$0\in\pos M$. The affine hull $\aff M$ is the minimal affine subspace containing $M$.
We write $\overline M$ for closure,  $\convclos M$ for the closed
convex hull, and $\posclos M$ for the closed positive hull. For a nonempty convex set $K$, its dimension is the
dimension of its affine hull, and $\relint K$ is its interior relative
to that affine hull. We use $\intr K$ and $\bd K$ for interior and
boundary in the specified ambient space. For a linear subspace
$L\subseteq E$, its orthogonal complement in $E$ is denoted by $L^\perp$.

A \emph{polyhedron} is an intersection of finitely many closed affine
half-spaces. A \emph{polytope} is the convex hull of a nonempty finite
set, equivalently a nonempty bounded polyhedron. A \emph{simplex} is
the convex hull of affinely independent points.
A \emph{convex cone} is a nonempty set closed under addition and
multiplication by nonnegative scalars; its dimension is
$\dim(\lin C)$. A cone $C$ is \emph{pointed} if
$C\cap(-C)=\{0\}$, and \emph{full-dimensional} in $E$ if $\lin C=E$.
A \emph{polyhedral cone} is an intersection of finitely many closed
half-spaces whose bounding hyperplanes pass through the origin, equivalently a positive hull of  finitely
many vectors. In particular, polyhedral cones are closed.
A cone is \emph{simplicial} if it is generated by linearly independent
vectors; such a cone is polyhedral and pointed.

For a closed convex cone $C\subseteq E$, its \emph{polar cone} is
\begin{equation}
\label{eq:prelim-polar-cone}
C^\circ:=\{y\in E:\langle y,x\rangle\le0\text{ for every }x\in C\}.
\end{equation}
It is a closed convex cone and satisfies $C^{\circ\circ}=C$.
For a linear subspace $L$, this definition gives $L^\circ=L^\perp$.
Polars, orthogonal complements, and normal cones are always taken in
the ambient space specified for the object under consideration;
in particular, this space need not be the largest Euclidean space
in which the object is embedded.

By a \emph{face} of a nonempty polyhedron $P$ we mean a nonempty face, including
$P$ itself: it has the form
$F=\{x\in P:\langle y,x\rangle=a\}$, where
$\langle y,x\rangle\le a$ for every $x\in P$.
The choice $y=0$, $a=0$ includes $F=P$.
For a polyhedral cone $C$, faces have the form
\begin{equation}
\label{eq:prelim-polyhedral-cone-faces}
F=C\cap\{x\in E:\langle y,x\rangle=0\},
\qquad y\in C^\circ.
\end{equation}
We denote the set of its $k$-dimensional faces by $\mathcal F_k(C)$.
The relative interiors of all faces form a disjoint partition of $C$.
If $C$ is pointed, $\{0\}$ is a face; every face of a simplicial cone
is generated by a subset of its linearly independent generators.

For a face $F$ of a polyhedron $P$ and $x_0\in\relint F$, define
its \emph{normal cone} by
\begin{equation}
\label{eq:prelim-normal-cone-definition}
N(F,P):=\{y\in E:\langle y,x-x_0\rangle\le0
                    \text{ for every }x\in P\}.
\end{equation}
This definition is independent of the choice of $x_0\in\relint F$.
For a polyhedral cone $C$, polar face duality gives
\begin{equation}
\label{eq:prelim-normal-cone-polar-intersection}
N(F,C)=C^\circ\cap(\lin F)^\perp.
\end{equation}
More precisely, $F\mapsto N(F,C)$ is an inclusion-reversing bijection
between the nonempty faces of $C$ and those of $C^\circ$, and
\begin{equation}
\label{eq:prelim-normal-cone-dimension}
\dim N(F,C)=d-\dim F,
\qquad
\lin N(F,C)=(\lin F)^\perp;
\end{equation}
see \cite[Section~2.1]{AmelunxenLotz2017}.
These statements are valid also for cones that are not full-dimensional,
with all dimensions and orthogonal complements interpreted in $E$.

A \emph{standard Gaussian vector} in a linear subspace $L$ of a Euclidean space has
independent $N(0,1)$ coordinates in any orthonormal basis of $L$;
for $L=\{0\}$ it is the zero vector. The \emph{solid angle} of a
closed convex cone $C$ is
\begin{equation}
\label{eq:prelim-solid-angle}
\alpha(C):=\Pp(G_{\lin C}\in C),
\end{equation}
where $G_{\lin C}$ is standard Gaussian in $\lin C$.
Thus angles are measured relative to the linear span, and
$\alpha(\{0\})=1$; see also
\cite[Section~2.3.1]{AmelunxenLotz2017}.
For closed convex cones with mutually orthogonal linear spans, write
$C_1\oplus\cdots\oplus C_r$ for their sum. Independence of the
orthogonal Gaussian components gives
\begin{equation}
\label{eq:prelim-orthogonal-angle-product}
\alpha(C_1\oplus\cdots\oplus C_r)
=\prod_{i=1}^r\alpha(C_i).
\end{equation}
For a face $F$ of a polyhedral cone $C$, the quantities $\alpha(F)$
and $\alpha(N(F,C))$ will be called its \emph{face angle} and
\emph{external angle}, respectively. They are measured in $\lin F$
and $(\lin F)^\perp$, respectively, by
\eqref{eq:prelim-normal-cone-dimension}.

\subsection{Conic intrinsic volumes and Grassmann angles}
\label{subsec:conic-intrinsic-volumes-grassmann-angles}

Throughout this subsection, $E$ is a Euclidean space of dimension
$d\ge1$, and cones are nonempty, closed, and convex.
For background on conic intrinsic volumes and Grassmann angles, see
Schneider~\cite[Chapters~2--4]{Schneider2022} and
Amelunxen--Lotz--McCoy--Tropp~\cite[Section~5]{AmelunxenLotzMcCoyTropp2014}.
For the polyhedral theory, see
Amelunxen--Lotz~\cite{AmelunxenLotz2017}.
A treatment in the language of spherical geometry is given in
Schneider--Weil~\cite[Section~6.5]{SchneiderWeil2008}.
We first recall the definition of conic intrinsic volumes for
polyhedral cones and then their extension to arbitrary closed convex
cones by polyhedral approximation; see
McCoy--Tropp~\cite[Section~2]{McCoyTropp2014}.

For a cone $C\subseteq E$, let $\Pi_C(x)$ be the unique point of $C$
nearest to $x\in E$, and let $G$ be standard Gaussian in $E$.
If $C$ is polyhedral, its $k$th \emph{conic intrinsic volume} is
\begin{equation}
\label{eq:prelim-polyhedral-conic-intrinsic-volumes}
\upsilon_k(C)
:=\Pp\left(\Pi_C(G)\in
       \bigcup_{F\in\mathcal F_k(C)}\relint F\right),
\qquad 0\le k\le d;
\end{equation}
see \cite[Definition~2.1]{McCoyTropp2014} and
\cite[Section~2.2]{AmelunxenLotz2017}.
Equivalently, the intrinsic volumes are sums of products of face
and external angles:
\begin{equation}
\label{eq:prelim-conic-face-angle-formula}
\upsilon_k(C)
=\sum_{F\in\mathcal F_k(C)}
  \alpha(F)\alpha\bigl(N(F,C)\bigr),
\qquad 0\le k\le d.
\end{equation}
Here the angles are measured in the respective linear spans, as in
\eqref{eq:prelim-solid-angle}; see
\cite[Sections~2.2--2.3]{AmelunxenLotz2017}.
The facial descriptions
\eqref{eq:prelim-polyhedral-conic-intrinsic-volumes} and
\eqref{eq:prelim-conic-face-angle-formula} are asserted only for
polyhedral cones.

For general cones, we use continuous extension. Let $B_E$ be the
closed unit ball of $E$. We write $C_n\to C$ if
$C_n\cap B_E$ converges to $C\cap B_E$ in Hausdorff distance.
This convergence is equivalent to convergence in the conic
(angular) Hausdorff metric used in \cite{McCoyTropp2014}.
Polyhedral cones are dense in this topology.
For any sequence of polyhedral cones $C_n\to C$, define
\begin{equation}
\label{eq:prelim-conic-intrinsic-volumes-extension}
\upsilon_k(C):=\lim_{n\to\infty}\upsilon_k(C_n),
\qquad 0\le k\le d.
\end{equation}
The limits exist, are independent of the approximating sequence,
and define continuous functions of $C$; see
\cite[Definition~2.2, Section~7.3, and Proposition~8.2]{McCoyTropp2014}.
In particular, continuity holds for convergent sequences of arbitrary
closed convex cones, not only polyhedral ones.
We set $\upsilon_k(C)=0$ for indices outside $\{0,\ldots,d\}$.

The intrinsic volumes are nonnegative and satisfy
\begin{equation}
\label{eq:prelim-conic-intrinsic-volume-normalization}
\sum_{k=0}^d\upsilon_k(C)=1.
\end{equation}
They are invariant under linear isometries, including isometric
embeddings into larger Euclidean spaces. Thus they may equivalently
be computed in $\lin C$, and $\upsilon_k(C)=0$ for $k>\dim C$.
For a linear subspace $L\subseteq E$ of dimension $r$,
\begin{equation}
\label{eq:prelim-conic-intrinsic-volumes-subspaces}
\upsilon_k(L)=\ind_{\{k=r\}};
\end{equation}
in particular, $\upsilon_0(\{0\})=1$ and $\upsilon_d(E)=1$.
Polarity, taken in $E$, reverses the indices:
$\upsilon_k(C^\circ)=\upsilon_{d-k}(C)$ for $0\le k\le d$.
Let $G$ be a standard Gaussian vector in the ambient Euclidean space
$E$.  Then the two endpoint intrinsic volumes admit the representations
\begin{equation}
\label{eq:prelim-conic-intrinsic-volume-endpoints}
\upsilon_0(C)=\Pp(G\in C^\circ),
\qquad
\upsilon_d(C)=\Pp(G\in C).
\end{equation}
Consequently, $\upsilon_d(C)=\alpha(C)$ when $C$ is full-dimensional
in $E$; more generally, $\upsilon_{\dim C}(C)=\alpha(C)$.
These properties hold for all closed convex cones; see
\cite[Section~2]{McCoyTropp2014}.
Our indexing is the conic one: the $k$th conic intrinsic volume,
for $k\ge1$, corresponds to the $(k-1)$st spherical intrinsic
volume of $C\cap \mathbb S^{d-1}$ in the convention of
\cite[Section~6.5]{SchneiderWeil2008}; compare
\cite[Section~1, footnote~1]{McCoyTropp2014}.

We next introduce \emph{Grassmann angles}; for their historical
origin, see Gr\"unbaum~\cite{Grunbaum1968}.
We use the following nontrivial-intersection probability convention.
For $0\le j\le d$, let $L_{d-j}$ be a uniformly distributed
$(d-j)$-dimensional linear subspace of $E$, where uniformity refers
to the orthogonally invariant probability measure on the
Grassmannian. Define
\begin{equation}
\label{eq:prelim-finite-dimensional-grassmann-angles}
\gamma_j(C):=\Pp\bigl(C\cap L_{d-j}\ne\{0\}\bigr),
\qquad 0\le j\le d,
\end{equation}
and set $\gamma_j(C)=0$ for $j>d$.
These quantities are invariant under linear isometries and
isometric embeddings, with the codimension $j$ kept fixed.
They are monotone under inclusion of cones, and
$\gamma_j(C)\ge\gamma_{j+1}(C)$; the latter follows by choosing
uniformly rotated nested subspaces.
Moreover, $\gamma_j(C)=0$ for $j\ge\dim C$ and
$\gamma_0(C)=1$ for $C\ne\{0\}$, whereas all Grassmann angles of
$\{0\}$ vanish.
For a fixed $r$-dimensional linear subspace $L$,
\begin{equation}
\label{eq:prelim-grassmann-angles-subspaces}
\gamma_j(L)=\ind_{\{j<r\}}.
\end{equation}

The following conic Crofton formula holds for arbitrary closed
convex cones under the stated non-subspace hypothesis; see
\cite[Fact~5.10, equation~(5.10)]{AmelunxenLotzMcCoyTropp2014}
and \cite[Chapter~4]{Schneider2022}.
For a proof in the polyhedral setting, see
\cite[Corollary~5.2]{AmelunxenLotz2017}; that statement and its
proof concern polyhedral cones.

\begin{theorem}[Conic Crofton formula]
\label{thm:prelim-conic-crofton}
Let $C\subseteq E$ be a closed convex cone which is not a linear
subspace. For every $0\le j\le d$,
\begin{equation}
\label{eq:finite-dimensional-conic-crofton}
\gamma_j(C)
=\Pp\bigl(C\cap L_{d-j}\ne\{0\}\bigr)
=2\sum_{\ell=0}^\infty\upsilon_{j+1+2\ell}(C).
\end{equation}
The sum is finite under the convention that intrinsic volumes
above dimension $d$ vanish.
\end{theorem}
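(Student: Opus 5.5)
We prove the conic Crofton formula first for polyhedral cones and then pass to general closed convex cones by approximation.

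\emph{Polyhedral case.} Let $C$ be polyhedral and not a linear subspace. Write $L=L_{d-j}$ for a uniform random subspace of codimension $j$, so that $L^\perp$ is uniform of dimension $j$.

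\textbf{Step 1 (kinematic formula).} We use the conic kinematic formula for a cone and a uniformly rotated subspace:
\[
\upsilon_k(C\cap L)=\upsilon_{k+j}(C)\ \text{in expectation, for } k\ge1, \qquad \Ee\,\upsilon_0(C\cap L)=\sum_{i\le j}\upsilon_i(C).
\]
This follows from the facial decomposition \eqref{eq:prelim-conic-face-angle-formula}. Almost surely, every face $F$ of $C$ with $\dim F\ge j$ meets $L$ transversally, in a face of dimension $\dim F-j$. The corresponding product of face and external angles then averages to $\alpha(F)\alpha(N(F,C))$; this is a Gaussian computation using rotation invariance.

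\textbf{Step 2 (Gauss--Bonnet).} For a polyhedral cone $D$ that is not a subspace, the alternating sum $\sum_k(-1)^k\upsilon_k(D)$ vanishes. Together with the normalization \eqref{eq:prelim-conic-intrinsic-volume-normalization}, this gives
\[
\sum_{k\ \mathrm{odd}}\upsilon_k(D)=\sum_{k\ \mathrm{even}}\upsilon_k(D)=\tfrac12 .
\]
To prove it, apply the Euler relation to the faces of $D$ and combine it with the Sommerville/Brianchon--Gram relation $\sum_F(-1)^{\dim F}\alpha(N(F,D))\alpha(F)=0$. Alternatively, use the Gaussian projection: $\Pi_D(G)$ lies in the relative interior of exactly one face.

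\textbf{Step 3 (combine).} Set $D=C\cap L$. Almost surely:
\begin{itemize}
\item if $D=\{0\}$, then $\upsilon_0(D)=1$;
\item if $D\ne\{0\}$, then $D$ is not a subspace, because $C$ is not a subspace and $L$ is in general position.
\end{itemize}
Hence
\[
\ind\{D\ne\{0\}\}=2\sum_{k\ \mathrm{odd}}\upsilon_k(D)\quad\text{a.s.}
\]
Taking expectations and applying Step 1 yields
\[
\gamma_j(C)=2\sum_{\ell\ge0}\upsilon_{j+1+2\ell}(C).
\]

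\emph{General case.} Approximate $C$ by polyhedral cones $C_n\to C$ that are not subspaces. The right-hand side converges by the continuity property \eqref{eq:prelim-conic-intrinsic-volumes-extension}.

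For the left-hand side, we need $\Pp(C_n\cap L\ne\{0\})\to\Pp(C\cap L\ne\{0\})$. This follows from two facts:
\begin{itemize}
\item the event is almost surely stable under small perturbations, since the boundary event, where $L$ touches $C$ without properly intersecting it, has probability zero;
\item consequently the indicator converges almost surely, and dominated convergence applies.
\end{itemize}
The case where $C\cap L$ has dimension exactly one needs care. We handle it by noting that $\Pp(L\cap C\ne\{0\},\ L\cap\intr_{\lin C}C=\emptyset)=0$ for a uniform subspace.

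\emph{Main obstacle.} The key difficulty is Step 1: the kinematic formula with correct genericity, namely showing that the random section averages face angles correctly. In this proposal we would mostly cite it from \cite{AmelunxenLotz2017,Schneider2022}. The null-set argument in the limiting step is the second delicate point.
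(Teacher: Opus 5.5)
Your architecture (the kinematic formula $\Ee\,\upsilon_k(C\cap L)=\upsilon_{k+j}(C)$ for $k\ge1$, Gauss--Bonnet applied to the section $D=C\cap L$, then polyhedral approximation) is the standard derivation behind the sources the paper cites. The paper gives no proof of its own; it refers to Amelunxen--Lotz--McCoy--Tropp, Schneider, and \cite[Corollary~5.2]{AmelunxenLotz2017}. Step~3 is fine, including the almost-sure dichotomy that $C\cap L$ is either $\{0\}$ or not a subspace.

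The genuine gap is Step~2. The relation you want to combine with Euler's relation, $\sum_F(-1)^{\dim F}\alpha(F)\alpha(N(F,D))=0$, is by the face-angle formula \eqref{eq:prelim-conic-face-angle-formula} literally the identity $\sum_k(-1)^k\upsilon_k(D)=0$ you are trying to prove, so the argument is circular. Your alternative (that $\Pi_D(G)$ lies in the relative interior of exactly one face) only yields $\sum_k\upsilon_k(D)=1$, not the alternating sum. You also cannot borrow \eqref{eq:prelim-conic-gauss-bonnet} from the paper, because there it is deduced from the present theorem at $j=0$. Gauss--Bonnet is genuinely independent input: Step~1 is vacuous at $j=0$, where $L=E$, so the Crofton formula at $j=0$ is exactly Gauss--Bonnet. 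You must therefore take it from a source that proves it without passing through Crofton, for instance via McMullen's angle-sum relations.

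Two smaller points. First, in Step~1 transversality only gives $\dim(\lin F\cap L)=\dim F-j$. The set $F\cap L$ is a $(\dim F-j)$-face of $C\cap L$ only on the event $\{L\cap\relint F\ne\emptyset\}$, which typically has probability strictly between $0$ and $1$. The identity you need is $\Ee\bigl[\alpha(F\cap L)\,\alpha(N(F\cap L,C\cap L));\,L\cap\relint F\ne\emptyset\bigr]=\alpha(F)\alpha(N(F,C))$ for $\dim F=k+j$, $k\ge1$. Here the normal cone is taken in $L$ and equals $\Pi_L N(F,C)$. Neither factor is preserved individually, so this is the local Crofton formula, not a rotation-invariance computation. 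Citing it is fine, but it is the whole content of Step~1.

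Second, in the general case your stability argument works, but the care is needed for lower-dimensional $C$, not for $\dim(C\cap L)=1$: stability of $\{C_n\cap L\ne\{0\}\}$ uses that $L$ is transverse to $\lin C$. You can bypass the touching null set entirely by approximating from outside. Take a dense sequence $u_1,u_2,\ldots$ in $C^\circ$. The polyhedral cones $C_n':=\{x:\langle x,u_i\rangle\le0,\ i\le n\}$ decrease to $C$. For every $L$, the compact sets $C_n'\cap L\cap\mathbb S^{d-1}$ decrease to $C\cap L\cap\mathbb S^{d-1}$, so the indicators converge monotonically for every $L$. Moreover $C_n'\to C$, and $C_n'$ is eventually not a subspace, since subspaces form a closed set in this topology.
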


Taking $j=0$ and using
\eqref{eq:prelim-conic-intrinsic-volume-normalization} gives the
conic Gauss--Bonnet identity
\begin{equation}
\label{eq:prelim-conic-gauss-bonnet}
\sum_{\ell=0}^\infty\upsilon_{2\ell}(C)
=\sum_{\ell=0}^\infty\upsilon_{2\ell+1}(C)=\frac12,
\end{equation}
again when $C$ is not a linear subspace.
Subtracting the sums in \eqref{eq:finite-dimensional-conic-crofton}
at indices $k-1$ and $k+1$ gives the positive-index identities below;
the zeroth follows from normalization:
\begin{equation}
\label{eq:prelim-intrinsic-volumes-from-grassmann-angles}
\upsilon_0(C)=1-\frac{\gamma_0(C)+\gamma_1(C)}2,
\qquad
\upsilon_k(C)=\frac{\gamma_{k-1}(C)-\gamma_{k+1}(C)}2,
\quad 1\le k\le d.
\end{equation}
These inversion formulas are used only for cones which are not
linear subspaces; for linear subspaces we use
\eqref{eq:prelim-conic-intrinsic-volumes-subspaces} and
\eqref{eq:prelim-grassmann-angles-subspaces} instead.

\begin{remark}[The full-space case]
\label{rem:prelim-conic-full-space-correction}
A full-dimensional closed convex cone $C\subseteq E$ is either
not a linear subspace or is $E$ itself. To cover both possibilities,
the top intrinsic volume satisfies, for every $d\ge1$,
\begin{equation}
\label{eq:prelim-top-intrinsic-volume-full-space}
\upsilon_d(C)
=\frac{\gamma_{d-1}(C)+\ind_{\{C=E\}}}{2}.
\end{equation}
For $d\ge2$, the next coefficient is
\begin{equation}
\label{eq:prelim-next-to-top-intrinsic-volume-full-space}
\upsilon_{d-1}(C)
=\frac{\gamma_{d-2}(C)-\ind_{\{C=E\}}}{2}.
\end{equation}
For such $d$, the zeroth-coefficient formula in
\eqref{eq:prelim-intrinsic-volumes-from-grassmann-angles}, and the
positive-index formulas for $1\le k\le d-2$, hold also when $C=E$.
Indeed, all assertions follow from
\eqref{eq:prelim-intrinsic-volumes-from-grassmann-angles} when
$C\ne E$, and from the subspace formulas when $C=E$.
In dimension one, the full-dimensional cones are rays and the
whole line, with intrinsic-volume sequences $(1/2,1/2)$ and $(0,1)$,
respectively. The distinction is needed when applying Crofton
identities to random cones that can equal the whole ambient space.
\end{remark}

\section{Finite-dimensional cones}\label{sec:finite-dim-cones}

In this section, we introduce two families of polyhedral cones, $\bW_n^B(\rho)$ and $\bW_n^A(\rho)$, associated with random walks and random bridges, and compute their conic intrinsic volumes. The computation is based on explicit descriptions of their faces, face angles, polar and normal cones, and external angles. The probabilistic significance of these conic intrinsic volumes will become clear later, when Gaussian-point absorption probabilities for random-walk and random-bridge convex hulls are expressed in terms of them; see Subsection~\ref{subsec:finite-gaussian-hulls-absorption-formula}.

\subsection{Random-walk and random-bridge bodies and their cones}
\label{subsec:type-B-bodies-and-cones}

Let $n\ge1$, and let $e_0,e_1,\ldots,e_n$ be the standard
orthonormal basis of $\R e_0\oplus\R^n$. We identify $\R^n$
with the linear subspace spanned by $e_1,\ldots,e_n$. Set $s_0:=0$
and $s_i:=e_1+\cdots+e_i$ for $i=1,\ldots,n$.

The
\emph{random-walk body} and the \emph{random-bridge body} are,
respectively,
\begin{align}
\bK_n^B
&:=\conv\{s_i:i=0,\ldots,n\}\subseteq\R^n,
\label{eq:type-B-body-definition}
\\
\bK_n^A
&:=\conv\left\{s_i-\frac{i}{n}s_n:i=0,\ldots,n-1\right\}
\subseteq s_n^\perp\subseteq \R^n.
\label{eq:type-A-body-definition}
\end{align}
Here the orthogonal complement $s_n^\perp$ is taken in $\R^n$. These bodies, in a different notation, already appeared
in the work of Gao--Vitale~\cite{GaoVitale2001} and Gao~\cite{Gao2003},
who computed their Euclidean intrinsic volumes.
The two bodies have the coordinate descriptions
\begin{align}
\bK_n^B
&=\left\{\sum_{j=1}^n t_je_j:
1\ge t_1\ge\cdots\ge t_n\ge0\right\},
\label{eq:type-B-body-coordinates}
\\
\bK_n^A
&=\left\{\sum_{j=1}^n t_je_j:
\sum_{j=1}^n t_j=0,\quad
 t_1\ge\cdots\ge t_n,\quad t_1-t_n\le1\right\}.
\label{eq:type-A-body-coordinates}
\end{align}

The vectors $s_1,\ldots,s_n$ form a basis of $\R^n$, so the points $0,s_1,\ldots, s_n$ are affinely independent and
hence $\bK_n^B$ is an $n$-dimensional simplex. Likewise,
$s_i-(i/n)s_n$, $i=1,\ldots,n-1$, form a basis of
$s_n^\perp$.  Hence $\bK_n^A$ is an
$(n-1)$-dimensional simplex in $s_n^\perp$. This includes $n=1$, when
$\bK_1^A=\{0\}$.

\medskip
For $\rho>0$, define the \emph{random-walk cone} and the \emph{random-bridge cone} by
\begin{align}
\bW_n^B(\rho)
&:=\pos\bigl(\rho e_0+\bK_n^B\bigr)
=\pos\{\rho e_0+s_i:i=0,\ldots,n\}\subseteq \R e_0 \oplus \R^{n},
\label{eq:type-B-cone-definition}
\\
\bW_n^A(\rho)
&:=\pos\bigl(\rho e_0+\bK_n^A\bigr)
=\pos\left\{\rho e_0+s_i-\frac{i}{n}s_n:
 i=0,\ldots,n-1\right\}\subseteq \R e_0 \oplus  s_n^\perp.
\label{eq:type-A-cone-definition}
\end{align}
In particular, $\rho e_0$ is a generator in both cases. We regard the two cones as
cones in $\R e_0\oplus\R^n$ and $\R e_0\oplus s_n^\perp$,
respectively; their polars and normal cones are taken in these
respective ambient spaces.
Their coordinate descriptions are
\begin{align}
\bW_n^B(\rho)
&=\left\{\rho t_0e_0+\sum_{j=1}^n t_je_j:
 t_0\ge t_1\ge\cdots\ge t_n\ge0\right\},
\label{eq:type-B-cone-coordinates}
\\
\bW_n^A(\rho)
&=\left\{\rho t_0e_0+\sum_{j=1}^n t_je_j:
\sum_{j=1}^n t_j=0,\quad
 t_1\ge\cdots\ge t_n,\quad t_0\ge t_1-t_n\right\}.
\label{eq:type-A-cone-coordinates}
\end{align}

In either case, affine independence of the vertices of the
body $\bK_n^{\bullet}$ with $\bullet \in \{B,A\}$ implies linear independence of the generators of the corresponding cone $\bW_n^{\bullet}(\rho)$:
the $e_0$ coordinate of a linear relation forces the sum of its
coefficients to vanish, and affine independence then forces
each coefficient to vanish. Thus $\bW_n^B(\rho)$ and $\bW_n^A(\rho)$ are simplicial cones of
dimensions $n+1$ and $n$, respectively, and hence are pointed.
Indeed, the linear map from the corresponding Euclidean
space to the ambient space of the cone that sends the standard
basis to its generators is a linear isomorphism, and it maps the
nonnegative orthant onto the cone.

Consequently, the faces of $\bW_n^B(\rho)$ and $\bW_n^A(\rho)$ are precisely
\begin{align}
F_I^B
&:=\pos\{\rho e_0+s_i:i\in I\},
\qquad I\subseteq\{0,\ldots,n\},
\label{eq:type-B-faces}
\\
F_I^A
&:=\pos\left\{\rho e_0+s_i-\frac{i}{n}s_n:i\in I\right\},
\qquad I\subseteq\{0,\ldots,n-1\},
\label{eq:type-A-faces}
\end{align}
with $F_\varnothing^B=F_\varnothing^A:=\{0\}$ and
$\dim F_I^\bullet=|I|$ for each $\bullet\in\{B,A\}$ and
its admissible index sets. These assertions follow from the corresponding
face description of the nonnegative orthant.

Our aim is to determine the conic intrinsic volumes of both
cones by evaluating the face and external angles in the
face-angle formula
\begin{equation}
\label{eq:finite-BA-face-angle-sum}
\upsilon_k\bigl(\bW_n^\bullet(\rho)\bigr)
=
\sum_{|I|=k}
\alpha(F_I^\bullet)
\alpha\bigl(N(F_I^\bullet,\bW_n^\bullet(\rho))\bigr),
\qquad \bullet\in\{B,A\}.
\end{equation}
The sum is over the admissible index sets in
\eqref{eq:type-B-faces} or \eqref{eq:type-A-faces}, respectively.

\subsection{Face angles}
\label{subsec:type-B-internal-angles}
In this subsection we show that the face angle $\alpha(F_I^\bullet)$
can be expressed as the probability that the components of a suitable
Gaussian random vector are nonincreasing. Fix $n\ge1$ and $\rho>0$.

\begin{lemma}[Face angles of $\bW_n^B(\rho)$]
\label{lem:type-B-internal-angle}
Let $I=\{i_1<\cdots<i_k\}\subseteq\{0,\ldots,n\}$ with
$k\ge1$. Then
\begin{equation}
\label{eq:type-B-internal-angle}
\alpha(F_I^B)
=
\Ord_k\left(\rho^2+i_1,
i_2-i_1,\ldots,i_k-i_{k-1}\right),
\end{equation}
where, for $a_1,\ldots,a_k>0$,
\begin{equation}
\label{eq:type-B-Gaussian-ordering-probability}
\Ord_k(a_1,\ldots,a_k)
:=\Pp\left(
\frac{Z_1}{\sqrt{a_1}}\ge\cdots\ge
\frac{Z_k}{\sqrt{a_k}}\ge0\right),
\end{equation}
and $Z_1,\ldots,Z_k$ are independent standard normal random
variables. In particular, $\Ord_1(a_1)=1/2$.
\end{lemma}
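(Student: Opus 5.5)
The plan is to compute the solid angle directly from its definition \eqref{eq:prelim-solid-angle}: $\alpha(F_I^B)=\Pp(G\in F_I^B)$, where $G$ is standard Gaussian in $L:=\lin F_I^B$. I will then pass to a coordinate description of $L$ in which membership in $F_I^B$ becomes a chain of inequalities.

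\emph{Step 1: an orthogonal basis of $L$.} The generators are $g_r:=\rho e_0+s_{i_r}$, $r=1,\ldots,k$. Set
\[
u_1:=g_1=\rho e_0+s_{i_1},\qquad u_r:=g_r-g_{r-1}=s_{i_r}-s_{i_{r-1}}=\sum_{i_{r-1}<j\le i_r}e_j,\quad 2\le r\le k.
\]
These vectors have disjoint supports in the basis $e_0,\ldots,e_n$, so they are mutually orthogonal. Their squared norms are
\[
\|u_1\|^2=\rho^2+i_1,\qquad \|u_r\|^2=i_r-i_{r-1},\quad r\ge2.
\]
Since $g_r=u_1+\cdots+u_r$, the family $u_1,\ldots,u_k$ is an orthogonal basis of $L$. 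Hence
\[
G=\sum_{r=1}^k \frac{Z_r}{\|u_r\|}\,u_r
\]
with $Z_1,\ldots,Z_k$ i.i.d.\ $N(0,1)$.

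\emph{Step 2: the membership condition.} A vector $x=\sum_r c_r u_r\in L$ lies in $F_I^B$ if and only if $x=\sum_r\lambda_r g_r$ with all $\lambda_r\ge0$, and this representation is unique by linear independence. Using $g_r=\sum_{q\le r}u_q$, we get $c_q=\sum_{r\ge q}\lambda_r$, equivalently $\lambda_r=c_r-c_{r+1}$ for $r<k$ and $\lambda_k=c_k$. So the constraint $\lambda\ge0$ becomes
\[
c_1\ge c_2\ge\cdots\ge c_k\ge0.
\]
Substituting $c_r=Z_r/\|u_r\|$ and the norms from Step 1 gives exactly $\Ord_k(\rho^2+i_1,\,i_2-i_1,\ldots,i_k-i_{k-1})$. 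All arguments are positive, since $\rho>0$ and the $i_r$ are strictly increasing. For $k=1$ the formula reduces to $\Pp(Z_1\ge0)=1/2$.

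There is no real obstacle here. The only point needing care is the orthogonality in Step 1. It holds because $i_1\ge0$ and $s_{i_1}$ is supported on $\{1,\ldots,i_1\}$, which is disjoint from the supports of the later differences. The $e_0$ component ensures $u_1\neq0$ even when $i_1=0$.
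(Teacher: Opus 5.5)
Your proposal is correct and takes essentially the same approach as the paper: it uses the same orthogonal basis $\rho e_0+s_{i_1}$, $s_{i_r}-s_{i_{r-1}}$ of $\lin F_I^B$ and the same representation of the standard Gaussian vector in that span, and it turns nonnegativity of the generator coefficients into the same ordering of tail sums. Your explicit inversion $\lambda_r=c_r-c_{r+1}$, $\lambda_k=c_k$ simply writes out the step that the paper states in words.
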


\begin{proof}
\emph{Step 1: Construct a standard Gaussian vector in $\lin F_I^B$.}
The vectors $\rho e_0+s_{i_1}$ and
$s_{i_r}-s_{i_{r-1}}$, $r=2,\ldots,k$, have disjoint
coordinate supports, so they are mutually orthogonal.
Their squared norms are $\rho^2+i_1$ and $i_r-i_{r-1}$,
respectively. They span $\lin F_I^B$, as is also seen from
\begin{equation}
\label{eq:type-B-face-block-expansion}
\sum_{j=1}^k\lambda_j(\rho e_0+s_{i_j})
=
\left(\sum_{j=1}^k\lambda_j\right)(\rho e_0+s_{i_1})
+
\sum_{r=2}^k
\left(\sum_{j=r}^k\lambda_j\right)
(s_{i_r}-s_{i_{r-1}}).
\end{equation}
Consequently, a standard Gaussian vector in $\lin F_I^B$
can be written as
\begin{equation}
\label{eq:type-B-face-Gaussian}
G=
\frac{Z_1}{\sqrt{\rho^2+i_1}}(\rho e_0+s_{i_1})
+
\sum_{r=2}^k\frac{Z_r}{\sqrt{i_r-i_{r-1}}}
(s_{i_r}-s_{i_{r-1}}).
\end{equation}

\medskip
\emph{Step 2: Translate cone membership into an ordering of coordinates.}
The left-hand side of~\eqref{eq:type-B-face-block-expansion} belongs to $F_I^B$ if and only if the coefficients $\lambda_1,\ldots, \lambda_k$ are nonnegative. The coefficients $\lambda_1,\ldots, \lambda_k$  are nonnegative if
and only if their successive tail sums on the right-hand side of~\eqref{eq:type-B-face-block-expansion} are nonincreasing
and nonnegative. Therefore the Gaussian vector $G$ in
\eqref{eq:type-B-face-Gaussian} belongs to $F_I^B$ precisely
when
$$
\frac{Z_1}{\sqrt{\rho^2+i_1}} \geq \frac{Z_2}{\sqrt{i_2-i_{1}}} \geq \ldots \geq \frac{Z_k}{\sqrt{i_k-i_{k-1}}}\geq 0.
$$
Taking probabilities
proves the claim since  $\alpha(F_I^B) = \Pp(G\in F_I^B)$.
\end{proof}

A similar description for the angle of $F_I^A$ is more involved. This time,  the two endpoints of the Gaussian vector form a correlated Gaussian pair,
independent of the coordinates between them.  If
$I=\{i_1<\cdots<i_k\}\subseteq\{0,\ldots,n-1\}$ is nonempty,
put
\begin{equation}
\label{eq:type-A-rooted-face-block-lengths}
a:=n-i_k,\qquad b:=i_1,\qquad h:=a+b,\qquad
d_j:=i_{j+1}-i_j\quad(1\le j<k).
\end{equation}
Thus $a\ge1$, $b\ge0$, $d_j\ge1$, and
$h+d_1+\cdots+d_{k-1}=n$.

\begin{lemma}[Face angles of $\bW_n^A(\rho)$]
\label{lem:type-A-internal-angle}
Let $I=\{i_1<\cdots<i_k\}\subseteq\{0,\ldots,n-1\}$ with
$k\ge1$, and let $a,b,d_1,\ldots,d_{k-1}$ be as in
\eqref{eq:type-A-rooted-face-block-lengths}. Let $(X,Z)$ be a
centered Gaussian random vector with density
\begin{equation}
\label{eq:type-A-endpoint-Gaussian-density}
p_{\rho;a,b}(x,z)
:=\frac{\sqrt{ab+\rho^2(a+b)}}{2\pi}
\eee^{-\frac12\left[bx^2+az^2+\rho^2(x-z)^2\right]},
\qquad (x,z)\in\R^2.
\end{equation}
Let $Z_1,\ldots,Z_{k-1}$ be independent centered Gaussian
random variables, independent of $(X,Z)$, with
$\operatorname{Var}(Z_j)=1/d_j$. Then
\begin{equation}
\label{eq:type-A-internal-angle}
\alpha(F_I^A)
=\Pp\bigl(X\ge Z_1\ge\cdots\ge Z_{k-1}\ge Z\bigr).
\end{equation}
Equivalently,
\begin{equation}
\label{eq:type-A-internal-angle-integral}
\alpha(F_I^A)
=
\int_{z<x}p_{\rho;a,b}(x,z)
\Pp\bigl(x\ge Z_1\ge\cdots\ge Z_{k-1}\ge z\bigr)
\,\dint x\,\dint z.
\end{equation}
For $k=1$, the event in \eqref{eq:type-A-internal-angle}
is $\{X\ge Z\}$, and the inner probability in
\eqref{eq:type-A-internal-angle-integral} is understood to
be $1$ on $z<x$. In particular, every one-dimensional face
has angle $1/2$.
\end{lemma}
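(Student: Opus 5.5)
The plan is to imitate the proof of Lemma~\ref{lem:type-B-internal-angle}: parametrize $\lin F_I^A$ linearly, check that membership in $F_I^A$ is a monotonicity condition on the parameters, and compute the Gaussian law of those parameters. The new feature is the centering by $-(i/n)s_n$. It makes the natural block coordinates redundant by one additive constant, and I will handle it by adjoining a free shift variable and integrating it out.

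\emph{Step 1: block parametrization.} Write $g_i:=\rho e_0+s_i-\frac in s_n$ and consider $\sum_{j=1}^k\lambda_jg_{i_j}$. Put $\mu_r:=\sum_{j\ge r}\lambda_j$, so that $\Lambda:=\mu_1$ is the total mass. Before centering, the $e_m$-coordinate of $\sum_j\lambda_js_{i_j}$ is $\mu_{r}$ for $m$ in the block $(i_{r-1},i_r]$. It equals $\Lambda$ on the first block $(0,i_1]$, which has length $b$, and $0$ on the last block $(i_k,n]$, which has length $a$. The $e_0$-coordinate is $\rho\Lambda$. The vector itself is the $s_n^\perp$-projection of this block vector plus $\rho\Lambda e_0$. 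As in the type~$B$ case, the vector lies in $F_I^A$ if and only if $\lambda_j\ge0$ for all $j$, that is,
\[
\Lambda\ge\mu_2\ge\cdots\ge\mu_k\ge0 .
\]
The map $(\lambda_j)\mapsto(\Lambda,\mu_2,\dots,\mu_k)$ is a linear isomorphism, and the generators are independent, so these $k$ numbers are linear coordinates on $\lin F_I^A$.

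\emph{Step 2: the quadratic form and the shift trick.} The squared norm of the vector is $\rho^2\Lambda^2+\min_{c\in\R}Q(c)$, where
\[
Q(c):=b(\Lambda-c)^2+\sum_{r=1}^{k-1}d_r(\mu_{r+1}-c)^2+a\,c^2 .
\]
This holds because projecting onto $s_n^\perp$ subtracts the optimal constant. The form $Q$ is quadratic in $c$ with leading coefficient $n$, independent of the other variables. Hence
\[
\int_\R \eee^{-\frac12(\rho^2\Lambda^2+Q(c))}\,\dint c=\sqrt{2\pi/n}\;\eee^{-\frac12\|\,\cdot\,\|^2}.
\]
So the standard Gaussian law on $\lin F_I^A$, written in the coordinates $(\Lambda,\mu)$, is the marginal of the $(k+1)$-dimensional density proportional to $\eee^{-\frac12(\rho^2\Lambda^2+Q(c))}$. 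Now change variables by the unimodular linear map
\[
x:=\Lambda-c,\qquad y_r:=\mu_{r+1}-c,\qquad z:=-c .
\]
Then $\Lambda=x-z$, and the exponent becomes
\[
-\tfrac12\Bigl[bx^2+az^2+\rho^2(x-z)^2+\textstyle\sum_r d_ry_r^2\Bigr].
\]
This density factorizes: $(x,z)$ has the density $p_{\rho;a,b}$, up to normalization, and the $y_r$ are independent $N(0,1/d_r)$.

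\emph{Step 3: the event.} The condition $\Lambda\ge\mu_2\ge\cdots\ge\mu_k\ge0$ is invariant under a common shift. In the new variables it reads $x\ge y_1\ge\cdots\ge y_{k-1}\ge z$. Therefore $\alpha(F_I^A)=\Pp(X\ge Z_1\ge\cdots\ge Z_{k-1}\ge Z)$, which gives \eqref{eq:type-A-internal-angle}. The integral form \eqref{eq:type-A-internal-angle-integral} then follows by conditioning on $(X,Z)$.

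It remains to confirm the normalizing constant of $p_{\rho;a,b}$. The quadratic form in $(x,z)$ has matrix $\begin{pmatrix}b+\rho^2&-\rho^2\\-\rho^2&a+\rho^2\end{pmatrix}$, whose determinant is $ab+\rho^2(a+b)$. This is positive since $a\ge1$. For $k=1$ the same computation applies, and the event is $\{X\ge Z\}$. Its probability is $1/2$ by the symmetry $(X,Z)\mapsto(-Z,-X)$? No: that map sends the density with parameters $(a,b)$ to the one with $(b,a)$. The cleaner argument is that $X-Z$ is a centered normal variable, so $\Pp(X\ge Z)=1/2$.

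The main obstacle is Step~2: justifying rigorously that integrating out the shift variable $c$ turns the standard Gaussian on the quotient space $\lin F_I^A$ into the displayed product density. The key points are that the Lebesgue measure in the coordinates $(\Lambda,\mu)$ is a constant multiple of the volume on $\lin F_I^A$, and that the $c$-integral contributes a constant factor. Both constants disappear once the final density is normalized.
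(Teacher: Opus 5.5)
Your proposal is correct and is essentially the paper's proof. Your shift variable $c$ is the coordinate along $s_n$ in the paper's lifted space $V_I=\lin F_I^A\oplus\R s_n$, and your variables $(x,y_1,\ldots,y_{k-1},z)$ are exactly the paper's coefficients $(X,Z_1,\ldots,Z_{k-1},Z)$ in the basis $v_0=\rho e_0+s_{i_1}$, $v_j=s_{i_{j+1}}-s_{i_j}$, $v_k=-\rho e_0+s_n-s_{i_k}$. The ordering event is then recovered by the same telescoping and shift-invariance argument.

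The difference is only in presentation. The paper obtains the law on $\lin F_I^A$ by orthogonally projecting a standard Gaussian vector on $V_I$, whereas your integration over $c$ does the same thing analytically. That step is rigorous as you wrote it, since the $c$-integral is the constant $\sqrt{2\pi/n}$ and linear coordinates induce a constant multiple of volume on $\lin F_I^A$.
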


\begin{proof}
\emph{Step 1: Construct a basis in $\lin F_I^A\oplus \R s_n$.}
Introduce the vectors
\begin{align*}
v_0:=\rho e_0+s_{i_1},
\quad
v_j:=s_{i_{j+1}}-s_{i_j}
\quad
(1\le j<k),
\quad
v_k:=-\rho e_0+s_n-s_{i_k}.
\end{align*}
The vectors $v_1,\ldots,v_{k-1}$ are mutually orthogonal,
have squared norms $d_1,\ldots,d_{k-1}$, and are orthogonal
to both $v_0$ and $v_k$. The Gram matrix of the latter pair is
\begin{equation}
\label{eq:type-A-endpoint-block-Gram-matrix}
\begin{pmatrix}
\rho^2+b&-\rho^2\\
-\rho^2&\rho^2+a
\end{pmatrix},
\end{equation}
whose determinant is $ab+\rho^2(a+b)>0$. Hence all
$k+1$  vectors are linearly independent.

We need an orthogonal projection mapping $\bW_n^B(\rho)$  to $\bW_n^A(\rho)$. Let $P$ be orthogonal projection from $\R e_0\oplus\R^n$ onto
$\R e_0\oplus s_n^\perp$;  explicitly,
\begin{equation}
\label{eq:type-A-centering-projection}
P(ae_0+x)
=ae_0+x-\frac{\langle x,s_n\rangle}{n}s_n,
\qquad a\in\R,\quad x\in\R^n.
\end{equation}
Then $Ps_i=s_i-(i/n)s_n$ for $i=0,\ldots, n$. Hence $P\bK_n^B=\bK_n^A$ and $P\bW_n^B(\rho)=\bW_n^A(\rho)$.

Let $V_I:=\lin\{v_0,\ldots,v_k\}$. The sum of $v_0,\ldots, v_k$ is $s_n$,
and the projection $P$  satisfies
\[
P\left(v_0+\sum_{\ell=1}^{j-1}v_\ell\right)
=\rho e_0+s_{i_j}-\frac{i_j}{n}s_n,
\qquad j=1,\ldots,k.
\]
The kernel of $P$ on $V_I$ is $\R s_n$, so $PV_I$ has
dimension $k$. The displayed vectors span $\lin F_I^A$,
which also has dimension $k$. It follows that
\begin{equation}
\label{eq:type-A-lifted-face-span}
V_I=\lin F_I^A\oplus\R s_n,
\end{equation}
where the sum is orthogonal.

\medskip
\emph{Step 2: Identify the standard Gaussian law in $\lin F_I^A$.}
For $x,z\in\R$, the Gram matrix~\eqref{eq:type-A-endpoint-block-Gram-matrix} yields
\[
\|xv_0+zv_k\|^2=bx^2+az^2+\rho^2(x-z)^2.
\]
The change from the coefficients $(x,z)$ to Euclidean area
measure in $\lin\{v_0,v_k\}$ has the constant Jacobian
$\sqrt{ab+\rho^2(a+b)}$, the square root of the Gram
determinant in \eqref{eq:type-A-endpoint-block-Gram-matrix}.
Thus the coefficients of a standard Gaussian vector in this
two-dimensional space have precisely the density
\eqref{eq:type-A-endpoint-Gaussian-density}.
In particular, this formula is a nondegenerate
Gaussian probability density, also when $b=0$.

Since $v_1,\ldots,v_{k-1}$ are mutually orthogonal and also orthogonal to $\lin \{v_0, v_k\}$,  the random vector
\begin{equation}
\label{eq:type-A-lifted-face-Gaussian}
\widetilde G:=Xv_0+\sum_{j=1}^{k-1}Z_jv_j+Zv_k
\end{equation}
is standard Gaussian in $V_I$. By \eqref{eq:type-A-lifted-face-span},
$G:=P\widetilde G$ is therefore standard Gaussian in
$\lin F_I^A$.

\medskip
\emph{Step 3: Express $G$ in the face generators.}
Set $Z_0:=X$ and $Z_k:=Z$. Telescoping~\eqref{eq:type-A-lifted-face-Gaussian}
and using $Ps_n=0$ gives
\begin{equation}
\label{eq:type-A-face-Gaussian-coefficients}
G
=\sum_{j=1}^k(Z_{j-1}-Z_j)
\left(\rho e_0+s_{i_j}-\frac{i_j}{n}s_n\right).
\end{equation}

The generators of $F_I^A$, that is the vectors $\rho e_0+s_{i_j}-\frac{i_j}{n}s_n$, $j=1,\ldots, k$,  are linearly independent. Consequently,
$G\in F_I^A$ if and only if all $Z_{j-1}-Z_j$ are nonnegative,
which is exactly the ordering in
\eqref{eq:type-A-internal-angle}. Taking probabilities proves
that formula. Conditioning on $(X,Z)$ gives
\eqref{eq:type-A-internal-angle-integral}. For $k=1$, $X-Z$ is a nondegenerate
centered Gaussian variable, so the probability is $1/2$.
\end{proof}

\subsection{Polar and normal cones}
\label{subsec:finite-BA-polar-and-normal-cones}

We continue to regard $\bW_n^B(\rho)$ and $\bW_n^A(\rho)$ as
cones in $\R e_0\oplus\R^n$ and $\R e_0\oplus s_n^\perp$,
respectively. All polars and orthogonal complements below are
taken in these respective ambient spaces. We first describe
the polar cones of $\bW_n^\bullet(\rho)$  and then obtain the normal cone at a face by
turning the corresponding polar inequalities into equalities.

\begin{proposition}[Polar cones of $\bW_n^\bullet(\rho)$]
\label{prop:finite-BA-polar-cones}
Let $n\ge1$ and $\rho>0$. The polar cones have the descriptions
\begin{align}
\bigl(\bW_n^B(\rho)\bigr)^\circ
&=
\left\{
\sum_{j=0}^n y_je_j:
\rho y_0+\sum_{\ell=1}^j y_\ell\le0,
\quad j=0,\ldots,n
\right\},
\label{eq:type-B-polar-inequalities}
\\
\bigl(\bW_n^A(\rho)\bigr)^\circ
&=
\left\{
\sum_{j=0}^n y_je_j:
\begin{array}{l}
\sum_{\ell=1}^n y_\ell=0,\\
\rho y_0+\sum_{\ell=1}^j y_\ell\le0,
\quad j=0,\ldots,n-1
\end{array}
\right\}.
\label{eq:type-A-polar-inequalities}
\end{align}
Equivalently,
\begin{align}
\bigl(\bW_n^B(\rho)\bigr)^\circ
&=
\pos\left(
\{-\rho^{-1}e_0+e_1,-e_n\}
\cup\{-e_j+e_{j+1}:1\le j<n\}
\right),
\label{eq:type-B-polar-generators}
\\
\bigl(\bW_n^A(\rho)\bigr)^\circ
&=
\pos\left(
\{-\rho^{-1}e_0+e_1-e_n\}
\cup\{-e_j+e_{j+1}:1\le j<n\}
\right).
\label{eq:type-A-polar-generators}
\end{align}
The displayed generators form bases of the respective ambient
spaces. In particular, both polar cones are simplicial and
pointed, of dimensions $n+1$ and $n$, respectively. Moreover,
\begin{equation}
\label{eq:type-A-polar-as-section}
\bigl(\bW_n^A(\rho)\bigr)^\circ
=
\bigl(\bW_n^B(\rho)\bigr)^\circ
\cap\bigl(\R e_0\oplus s_n^\perp\bigr).
\end{equation}
\end{proposition}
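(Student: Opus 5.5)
Since both cones are positive hulls of finitely many generators, the polar is obtained by testing a vector against the generators only: $y\in C^\circ$ if and only if $\langle y,g\rangle\le0$ for every generator $g$. For $\bW_n^B(\rho)$ the generators are $\rho e_0+s_j$, $j=0,\ldots,n$. For $y=\sum_{j=0}^n y_je_j$ we have $\langle y,\rho e_0+s_j\rangle=\rho y_0+\sum_{\ell=1}^j y_\ell$. This gives \eqref{eq:type-B-polar-inequalities} at once.

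For $\bW_n^A(\rho)$ the ambient space is $\R e_0\oplus s_n^\perp$, which is exactly the condition $\sum_{\ell=1}^n y_\ell=0$. For such $y$, $\langle y,s_n\rangle=0$, so $\langle y,\rho e_0+s_j-(j/n)s_n\rangle=\rho y_0+\sum_{\ell\le j}y_\ell$. This yields \eqref{eq:type-A-polar-inequalities}. Comparing the two descriptions gives \eqref{eq:type-A-polar-as-section}: on the hyperplane $\sum_\ell y_\ell=0$, the $B$-inequality at $j=n$ reads $\rho y_0\le0$, which coincides with the $A$-inequality at $j=0$ and is therefore redundant.

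For the generator descriptions I would use duality of simplicial cones. Write $W$ for the simplicial cone generated by a basis $g_0,\ldots,g_m$ of the ambient space. Then $W^\circ$ is generated by $-g_0^*,\ldots,-g_m^*$, where $(g_i^*)$ is the dual basis with $\langle g_i^*,g_j\rangle=\delta_{ij}$. Indeed, $W^\circ=\{y:\langle y,g_j\rangle\le0\ \forall j\}$, and expanding $y=\sum c_ig_i^*$ gives $c_j=\langle y,g_j\rangle\le 0$. So it suffices to check biorthogonality, up to sign, of the listed vectors against the generators.

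In case $B$, with generators $g_j=\rho e_0+s_j$, $j=0,\ldots,n$:
\begin{itemize}[leftmargin=*]
\item $u_0:=-\rho^{-1}e_0+e_1$ gives $\langle u_0,g_0\rangle=-1$ and $\langle u_0,g_j\rangle=-1+1=0$ for $j\ge1$.
\item For $1\le j<n$, $e_{j+1}-e_j$ pairs to $-1$ with $g_j$ and to $0$ with every other $g_i$, since $\langle e_{j+1}-e_j,s_i\rangle=\ind_{\{i\ge j+1\}}-\ind_{\{i\ge j\}}$.
\item $-e_n$ pairs to $-1$ with $g_n$ and to $0$ with the rest.
\end{itemize}
Hence these $n+1$ vectors are $-g_j^*$. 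They form a basis, and \eqref{eq:type-B-polar-generators} follows.

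In case $A$, with generators $g_j=\rho e_0+s_j-(j/n)s_n$, $j=0,\ldots,n-1$, all listed vectors lie in $\R e_0\oplus s_n^\perp$ (coordinate sums vanish). Since they are orthogonal to $s_n$, they pair with $g_j$ exactly as with $\rho e_0+s_j$:
\begin{itemize}[leftmargin=*]
\item $-\rho^{-1}e_0+e_1-e_n$ gives $-1$ at $j=0$, and $-1+1-0=0$ for $1\le j\le n-1$, because $\langle e_n,s_j\rangle=0$ for $j<n$.
\item For $1\le j<n$, the vectors $e_{j+1}-e_j$ pair as in case $B$.
\end{itemize}
So again the $n$ listed vectors form the negative dual basis, which yields \eqref{eq:type-A-polar-generators}.

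The linear independence of the generators was established before the statement. Simpliciality and pointedness then follow because the polar of a full-dimensional simplicial cone is simplicial and full-dimensional, with dimensions $n+1$ and $n$ respectively. The main point needing care is keeping the ambient space straight in case $A$: the vector must lie in $s_n^\perp$, and the index $j=n$ is excluded. Everything else is routine.
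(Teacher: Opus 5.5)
Your proposal is correct and follows essentially the same route as the paper: you obtain the inequality descriptions and the section identity by testing against the finitely many generators, and you obtain the generator descriptions by checking that the listed vectors form the negative dual basis of the generator basis in the respective ambient space. Your explicit check that the type-$A$ vectors lie in $\R e_0\oplus s_n^\perp$, and that they therefore pair with $\rho e_0+s_j-(j/n)s_n$ exactly as with $\rho e_0+s_j$, is the only point of care, and you handle it, including the degenerate case $n=1$, where $e_1-e_n=0$.
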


\begin{proof}
\emph{Step 1: Turn the defining generators into polar inequalities.}
Recall that $\bW_n^B(\rho)$ is the positive hull of the vectors
$\rho e_0+s_i$, $i=0,\ldots,n$. Hence
$y=\sum_{j=0}^n y_je_j$ belongs to
$\bigl(\bW_n^B(\rho)\bigr)^\circ$ if and only if
$\langle y,\rho e_0+s_i\rangle\le0$ for every $i=0,\ldots,n$.
Since
$\langle y,\rho e_0+s_i\rangle
=\rho y_0+\sum_{\ell=1}^i y_\ell$,
this proves~\eqref{eq:type-B-polar-inequalities}.

Next recall that $\bW_n^A(\rho)$ is the positive hull of $\rho e_0+s_i-(i/n)s_n$, $i=0,\ldots,n-1$.  Membership of $y$ in the ambient space
$\R e_0\oplus s_n^\perp$ means that
$\sum_{\ell=1}^n y_\ell=0$. Consequently,
for $i=0,\ldots,n-1$ we have
$\langle y,\rho e_0+s_i-(i/n)s_n\rangle
=\rho y_0+\sum_{\ell=1}^i y_\ell$.
This proves~\eqref{eq:type-A-polar-inequalities}.
Moreover, on $\R e_0\oplus s_n^\perp$, the inequality
$\rho y_0+\sum_{\ell=1}^n y_\ell\le0$ coincides with the
inequality $\rho y_0\le0$. Hence
\eqref{eq:type-A-polar-as-section} follows.

\medskip
\emph{Step 2: Identify the generators of the polar cones.}
For type $B$, the vectors
$-\rho^{-1}e_0+e_1$, $-e_j+e_{j+1}$, $1\le j<n$, and $-e_n$
have scalar product $-1$ with, respectively,
$\rho e_0+s_0$, $\rho e_0+s_j$, $1\le j<n$, and
$\rho e_0+s_n$, and scalar product zero with all the remaining
vectors $\rho e_0+s_i$, $i=0,\ldots,n$.
Since the latter vectors form a basis of
$\R e_0\oplus\R^n$, the former vectors constitute its negative
dual basis. A vector belongs to
$\bigl(\bW_n^B(\rho)\bigr)^\circ$ if and only if its coordinates
in this negative dual basis are nonnegative. This proves
\eqref{eq:type-B-polar-generators}.

For type $A$, the vector
$-\rho^{-1}e_0+e_1-e_n$ has scalar product $-1$ with
$\rho e_0+s_0=\rho e_0$ and scalar product zero with
$\rho e_0+s_i-(i/n)s_n$, $i=1,\ldots,n-1$.
For every $1\le j<n$, the vector $-e_j+e_{j+1}$ has scalar
product $-1$ with
$\rho e_0+s_j-(j/n)s_n$ and scalar product zero with all the
remaining vectors
$\rho e_0+s_i-(i/n)s_n$, $i=0,\ldots,n-1$.
Thus the vectors displayed in
\eqref{eq:type-A-polar-generators} form the negative dual basis
to the basis
$\rho e_0+s_i-(i/n)s_n$, $i=0,\ldots,n-1$, of
$\R e_0\oplus s_n^\perp$. This proves
\eqref{eq:type-A-polar-generators}.

The asserted simpliciality, pointedness, and dimensions now follow
from the fact that the displayed generators form bases of the
respective ambient spaces. When $n=1$, the type-$A$ polar has the
single generator $-\rho^{-1}e_0$, in agreement with the same
argument.
\end{proof}

Next we characterize the normal cones of $\bW_n^\bullet(\rho)$.
For a face $F$ of a polyhedral cone $C$ in a $d$-dimensional
Euclidean space, recall from
\eqref{eq:prelim-normal-cone-polar-intersection} that
\begin{equation}
\label{eq:finite-BA-normal-cone-polar-intersection}
N(F,C)=C^\circ\cap(\lin F)^\perp.
\end{equation}
By polar face duality and \eqref{eq:prelim-normal-cone-dimension},
$N(F,C)$ is a face of $C^\circ$ of dimension $d-\dim F$, and
$\lin N(F,C)=(\lin F)^\perp$.
If $C^\circ$ is simplicial and pointed, then so is $N(F,C)$,
since these properties pass to faces. We apply these facts in
$\R e_0\oplus\R^n$ for type $B$ and in
$\R e_0\oplus s_n^\perp$ for type $A$; all orthogonal
complements below are taken in these respective ambient spaces.

\begin{lemma}[Normal cones of $\bW_n^\bullet(\rho)$]
\label{lem:finite-BA-normal-cones}
Let $n\ge1$ and $\rho>0$. For $I\subseteq\{0,\ldots,n\}$,
\begin{equation}
\label{eq:type-B-normal-cone}
N\bigl(F_I^B,\bW_n^B(\rho)\bigr)
=
\left\{
\sum_{j=0}^n y_je_j:
\begin{array}{ll}
\rho y_0+\sum_{\ell=1}^j y_\ell\le0,
&j=0,\ldots,n,\\
\rho y_0+\sum_{\ell=1}^i y_\ell=0,
&i\in I
\end{array}
\right\},
\end{equation}
and, for $I\subseteq\{0,\ldots,n-1\}$,
\begin{equation}
\label{eq:type-A-normal-cone}
N\bigl(F_I^A,\bW_n^A(\rho)\bigr)
=
\left\{
\sum_{j=0}^n y_je_j:
\begin{array}{ll}
\sum_{\ell=1}^n y_\ell=0,&\\
\rho y_0+\sum_{\ell=1}^j y_\ell\le0,
&j=0,\ldots,n-1,\\[1mm]
\rho y_0+\sum_{\ell=1}^i y_\ell=0,
&i\in I
\end{array}
\right\}.
\end{equation}
These normal cones are pointed and simplicial, their linear
spans are $(\lin F_I^\bullet)^\perp$, and their dimensions are
\begin{align}
\dim N\bigl(F_I^B,\bW_n^B(\rho)\bigr)&=n+1-|I|,
\label{eq:type-B-normal-cone-dimension}
\\
\dim N\bigl(F_I^A,\bW_n^A(\rho)\bigr)&=n-|I|.
\label{eq:type-A-normal-cone-dimension}
\end{align}
For $I=\varnothing$ the normal cone is the corresponding
polar cone, and for $F_I^\bullet=\bW_n^\bullet(\rho)$ it is $\{0\}$.

For a nonempty admissible set $I=\{i_1<\cdots<i_k\}$, the
normal cones admit the following orthogonal decompositions.
For each pair of consecutive indices $(i_j, i_{j+1})$ with $1\le j<k$, define the cone
\begin{equation}
\label{eq:finite-BA-consecutive-coordinate-cone}
C_j:=
\left\{
\sum_{\ell=i_j+1}^{i_{j+1}} y_\ell e_\ell:
\begin{array}{l}
\sum_{\ell=i_j+1}^{i_{j+1}}y_\ell=0,\\
\sum_{\ell=i_j+1}^{r}y_\ell\le0,
\quad i_j<r<i_{j+1}
\end{array}
\right\},
\quad 1\le j<k.
\end{equation}

\emph{Type $B$.} Define the cones
\begin{align}
C_0^B
&:=\left\{
\sum_{\ell=0}^{i_1}y_\ell e_\ell:
\begin{array}{l}
\rho y_0+\sum_{\ell=1}^{i_1}y_\ell=0,\\
\rho y_0+\sum_{\ell=1}^{r}y_\ell\le0,
\quad 0\le r<i_1
\end{array}
\right\},
\label{eq:type-B-initial-coordinate-cone}
\\
C_k^B
&:=\left\{
\sum_{\ell=i_k+1}^{n}y_\ell e_\ell:
\sum_{\ell=i_k+1}^{r}y_\ell\le0,
\quad i_k<r\le n
\right\}.
\label{eq:type-B-terminal-coordinate-cone}
\end{align}
Then
\begin{equation}
\label{eq:type-B-normal-cone-orthogonal-decomposition}
N\bigl(F_I^B,\bW_n^B(\rho)\bigr)
=C_0^B\oplus\bigoplus_{j=1}^{k-1}C_j\oplus C_k^B.
\end{equation}

\emph{Type $A$.} Put $a:=n-i_k$, $b:=i_1$, and $h:=a+b$, as in
\eqref{eq:type-A-rooted-face-block-lengths}. For notational convenience,
relabel the coordinate vectors $e_{i_k+1},\ldots,e_n,e_1,\ldots,e_{i_1}$ as $f_1,\ldots,f_h$, respectively.
Define
\begin{equation}
\label{eq:type-A-cyclic-coordinate-cone}
C_0^A:=
\left\{
y_0e_0+\sum_{\ell=1}^{h}z_\ell f_\ell:
\begin{array}{l}
\sum_{\ell=1}^{h}z_\ell=0,\qquad
\rho y_0=\sum_{\ell=1}^{a}z_\ell,\\[1mm]
\sum_{\ell=1}^{r}z_\ell\le0,
\quad 1\le r<h
\end{array}
\right\}.
\end{equation}
Then
\begin{equation}
\label{eq:type-A-normal-cone-orthogonal-decomposition}
N\bigl(F_I^A,\bW_n^A(\rho)\bigr)
=C_0^A\oplus\bigoplus_{j=1}^{k-1}C_j.
\end{equation}
Here $\oplus$ denotes a sum of cones with mutually orthogonal
linear spans. In each summand, its linear span is obtained
by retaining the defining equalities and omitting the inequalities.

Empty sums and families of inequalities are understood in the
usual sense. In particular, $C_0^B=\{0\}$ if $i_1=0$,
$C_k^B=\{0\}$ if $i_k=n$, and $C_j=\{0\}$ if
$i_{j+1}=i_j+1$. If $b=0$ in type $A$, the two equalities
in \eqref{eq:type-A-cyclic-coordinate-cone} reduce to
$\sum_{\ell=1}^h z_\ell=0$ and $y_0=0$.
For $k=1$, no cones $C_j$ occur in either decomposition.
\end{lemma}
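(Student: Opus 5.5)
The descriptions \eqref{eq:type-B-normal-cone} and \eqref{eq:type-A-normal-cone} come from \eqref{eq:finite-BA-normal-cone-polar-intersection}, $N(F,C)=C^\circ\cap(\lin F)^\perp$, combined with Proposition~\ref{prop:finite-BA-polar-cones}. A vector $y$ lies in $(\lin F_I^\bullet)^\perp$ if and only if it is orthogonal to every generator of $F_I^\bullet$. By the computation in Step~1 of that proof, this means exactly $\rho y_0+\sum_{\ell=1}^i y_\ell=0$ for $i\in I$. In type $A$ we also need the ambient constraint $\sum_{\ell=1}^n y_\ell=0$.

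Intersecting with the polar inequalities gives the displayed sets. The general facts recalled before the lemma then give the rest: polar face duality, \eqref{eq:prelim-normal-cone-dimension}, and the fact that simpliciality and pointedness of the polar cones pass to faces. From these we read off that the normal cones are simplicial and pointed, that $\lin N=(\lin F_I^\bullet)^\perp$, and the dimensions $n+1-|I|$ and $n-|I|$. The cases $I=\varnothing$ (no equalities, so we recover the polar) and the full face (zero cone, by dimension) are immediate.

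For the orthogonal decompositions, I introduce the partial sums $\sigma_j:=\rho y_0+\sum_{\ell=1}^j y_\ell$ and read off how the constraints split. The equalities $\sigma_{i_j}=0$ for $i_j\in I$ cut the index range into blocks.

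\emph{Type $B$.}
\begin{itemize}
\item The initial block involves only $y_0,\ldots,y_{i_1}$, with $\sigma_{i_1}=0$ and $\sigma_r\le0$ for $r<i_1$. This is $C_0^B$.
\item For $1\le j<k$, subtracting $\sigma_{i_j}=0$ turns the constraints on $(i_j,i_{j+1}]$ into $\sum_{\ell=i_j+1}^{r}y_\ell\le 0$ for $i_j<r<i_{j+1}$, together with block sum zero. This is $C_j$.
\item The terminal block after $i_k$ has only inequalities $\sum_{\ell=i_k+1}^r y_\ell\le0$. This is $C_k^B$.
\end{itemize}
These blocks have disjoint coordinate supports, so their spans are mutually orthogonal. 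Each constraint involves coordinates of one block only, so the set is exactly the direct sum. This proves \eqref{eq:type-B-normal-cone-orthogonal-decomposition}.

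\emph{Type $A$.} This case is the main obstacle, because the first and last blocks are coupled through $y_0$ and the global constraint $\sum_{\ell=1}^n y_\ell=0$. The middle blocks $C_j$ are handled exactly as in type $B$. Using the vanishing middle block sums, the global constraint reduces to
\[
\sum_{\ell=1}^{i_1}y_\ell+\sum_{\ell=i_k+1}^n y_\ell=0.
\]

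To untangle the remaining constraints, I use the cyclic relabelling $f_1,\ldots,f_h$: first the tail coordinates $e_{i_k+1},\ldots,e_n$, then the head coordinates $e_1,\ldots,e_{i_1}$. The tail inequalities are $\sigma_r\le0$ for $i_k<r<n$. Since $\sigma_{i_k}=0$, they become $\sum_{\ell=1}^{r'}z_\ell\le0$ for $r'<a$.

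Next I use $\sigma_n=\rho y_0$, which holds by the global sum. Then the constraints $\sigma_r\le 0$ for $r<i_1$, rewritten via $\sigma_{i_1}=0$ and the global relation, say that
\[
\sum_{\ell=1}^{a}z_\ell+\sum_{\ell=1}^{r}y_\ell\le 0 .
\]
I need to check this carefully, including the $j=0$ inequality $\rho y_0\le0$. The relation $\sigma_{i_k}=0$ together with the global sum should yield $\rho y_0=\sum_{\ell=1}^a z_\ell$, which I will recheck sign by sign. Given that identity, the head inequalities are the cyclic partial-sum inequalities $\sum_{\ell=1}^{a+r}z_\ell\le0$, the inequality $\rho y_0\le 0$ is the $r=a$ case, and $\sigma_{i_1}=0$ is the total-sum-zero condition.

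Collecting everything gives $C_0^A$. Orthogonality of $C_0^A$ and the $C_j$ again follows from disjoint supports, since $e_0$ occurs only in $C_0^A$. When $b=0$ there are no head coordinates, so $h=a$ and $\rho y_0=\sum_{\ell=1}^h z_\ell=0$, matching the stated degeneration. The claim about linear spans follows by dropping inequalities blockwise, consistent with $\lin N=(\lin F)^\perp$ by a dimension count.
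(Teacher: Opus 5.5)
Your proposal is correct and follows essentially the same route as the paper. The descriptions come from $N(F,C)=C^\circ\cap(\lin F)^\perp$ together with polar face duality, and the decompositions come from splitting the nonpositive partial-sum walk $\sigma_r$ at its zeros $i_1,\ldots,i_k$, using the cyclic tail-then-head block in type $A$; the identity you flagged does check out, since $\rho y_0=\sigma_n-\sigma_{i_k}=\sum_{\ell=i_k+1}^n y_\ell=\sum_{\ell=1}^a z_\ell$, where $\sigma_n=\rho y_0$ follows from the global constraint $\sum_{\ell=1}^n y_\ell=0$.
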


\begin{proof}
The descriptions \eqref{eq:type-B-normal-cone} and
\eqref{eq:type-A-normal-cone} follow from the formula $N(F_I^\bullet,\bW_n^\bullet(\rho))=(\bW_n^\bullet(\rho))^\circ\cap(\lin F_I^\bullet)^\perp$, the
inequalities in~\eqref{eq:type-B-polar-inequalities} and~\eqref{eq:type-A-polar-inequalities} defining $(\bW_n^\bullet(\rho))^\circ$, and the generators of $F_I^\bullet$.
The assertions about pointedness, simpliciality, spans, and
dimensions follow from the polar face duality recalled above
and Proposition~\ref{prop:finite-BA-polar-cones}.

Next we prove~\eqref{eq:type-B-normal-cone-orthogonal-decomposition}.
Let $I=\{i_1<\cdots<i_k\}$ and consider, for
$y=\sum_{\ell=0}^n y_\ell e_\ell$, the partial sums
$p_r:=\rho y_0+\sum_{\ell=1}^r y_\ell$, $r=0,\ldots,n$.
By~\eqref{eq:type-B-normal-cone}, membership of $y$ in
$N(F_I^B,\bW_n^B(\rho))$ is equivalent to $p_r\le0$ for every
$r=0,\ldots,n$, together with $p_{i_j}=0$ for
$j=1,\ldots,k$.
Thus the partial sums of $\rho y_0,y_1,\ldots,y_n$ form a
nonpositive walk which returns to zero at the indices
$i_1,\ldots,i_k$. Splitting the walk at these return times yields
an initial nonpositive bridge, $k-1$ further nonpositive bridges,
and a final nonpositive walk.

More precisely, on the first block, consisting of the coordinates
$y_0,\ldots,y_{i_1}$, the conditions say that the partial sums
of $\rho y_0,y_1,\ldots,y_{i_1}$ are nonpositive and that the
last one is zero. These are precisely the defining conditions
of $C_0^B$.
For $1\le j<k$, since $p_{i_j}=0$, the conditions on the
coordinates $y_{i_j+1},\ldots,y_{i_{j+1}}$ become
$\sum_{\ell=i_j+1}^r y_\ell\le0$ for
$i_j<r<i_{j+1}$, while $p_{i_{j+1}}=0$ becomes
$\sum_{\ell=i_j+1}^{i_{j+1}}y_\ell=0$.
Hence this block belongs precisely to $C_j$.
In probabilistic terminology, each of these first $k$ blocks
describes a nonpositive bridge.
Finally, since $p_{i_k}=0$, the remaining conditions are
$\sum_{\ell=i_k+1}^r y_\ell\le0$ for $i_k<r\le n$,
with no condition on the terminal value. These are exactly the
defining conditions of $C_k^B$, corresponding to a nonpositive
walk after the last return to zero.

Consequently, the conditions defining
$N(F_I^B,\bW_n^B(\rho))$ separate into the conditions defining the cones
$C_0^B,C_1,\ldots,C_{k-1},C_k^B$. These cones involve the
pairwise disjoint coordinate sets
$\{0,\ldots,i_1\}$,
$\{i_j+1,\ldots,i_{j+1}\}$, $1\le j<k$, and
$\{i_k+1,\ldots,n\}$, respectively. Their linear spans are
therefore mutually orthogonal, which proves
\eqref{eq:type-B-normal-cone-orthogonal-decomposition}.

Next we prove~\eqref{eq:type-A-normal-cone-orthogonal-decomposition}.
With the same notation
$p_r:=\rho y_0+\sum_{\ell=1}^r y_\ell$, the conditions in
\eqref{eq:type-A-normal-cone} say that the partial sums form a
nonpositive walk which is zero at the indices
$i_1,\ldots,i_k$ and, additionally, $\sum_{\ell=1}^n y_\ell = 0$.  The portions between consecutive indices
$i_j$ and $i_{j+1}$ therefore give the nonpositive bridges
described by $C_j$, $1\le j<k$.

It remains to consider the cyclic block of indices
$i_k+1,\ldots,n,1,\ldots,i_1$, joining $i_k$ back to $i_1$.
Relabel the corresponding basis vectors
$e_{i_k+1},\ldots,e_n,e_1,\ldots,e_{i_1}$ as
$f_1,\ldots,f_h$, and denote the coordinates of $y$ in these
directions by $z_1,\ldots,z_h$. The inequalities $p_r\le0$,
together with $p_{i_k}=p_{i_1}=0$ and
$\sum_{\ell=1}^n y_\ell=0$, show that the partial sums of
$z_1,\ldots,z_h$ are nonpositive and return to zero at time $h$,
while the sum of the first $a=n-i_k$ increments equals
$\rho y_0$. These are precisely the defining conditions of
$C_0^A$.

Thus the normal-cone conditions split into those defining
$C_0^A,C_1,\ldots,C_{k-1}$. Since these cones involve disjoint
sets of coordinates, their linear spans are mutually orthogonal,
and \eqref{eq:type-A-normal-cone-orthogonal-decomposition}
follows.
\end{proof}

\subsection{Gaussian walk and bridge persistence probabilities}
\label{subsec:finite-BA-gaussian-walk-bridge}

To evaluate the angles of the cones in the preceding orthogonal
decompositions~\eqref{eq:type-B-normal-cone-orthogonal-decomposition} and~\eqref{eq:type-A-normal-cone-orthogonal-decomposition}, we use the following classical identities; see \cite{SparreAndersen1949,SparreAndersen1953}.

\begin{lemma}[Persistence probabilities for Gaussian walks and bridges]
\label{lem:type-B-ordinary-block-probabilities}
Let $\xi_1,\xi_2,\ldots$ be independent standard normal random
variables, and put $G_0:=0$ and $G_m:=\xi_1+\cdots+\xi_m$
for $m\ge1$. Then
\begin{align}
q_m:=\Pp(G_1\le0,\ldots,G_m\le0)
&=\frac1{4^m}\binom{2m}{m},
\qquad m\ge0,
\label{eq:type-B-walk-persistence}
\\
\Pp(G_1\le0,\ldots,G_{m-1}\le0\mid G_m=0)
&=\frac1m,
\qquad m\ge1.
\label{eq:type-B-ordinary-bridge-probability}
\end{align}
Empty families of inequalities impose no restriction; in particular,
$q_0=1$. Conditioning on $G_m=0$ refers to the conditional law in the sense of multivariate Gaussian distributions.
\end{lemma}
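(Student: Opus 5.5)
The plan is to derive both identities from Sparre Andersen's combinatorial lemmas. These use only that the increments are exchangeable and that, almost surely, no nontrivial partial sum vanishes. For Gaussian increments the latter holds, since every nontrivial linear combination $\xi_{j}+\cdots+\xi_{l}$ has a continuous law.

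For \eqref{eq:type-B-walk-persistence}, I would first note the case $m=0$ is trivial. For $m\ge1$, the cleanest route is the Sparre Andersen generating-function identity
\[
\sum_{m\ge0}q_m s^m=\exp\Bigl(\sum_{k\ge1}\frac{s^k}{k}\Pp(G_k\le0)\Bigr).
\]
Symmetry and continuity of the Gaussian law give $\Pp(G_k\le0)=1/2$, so the right-hand side equals $\exp(-\tfrac12\log(1-s))=(1-s)^{-1/2}$. Its coefficients are $\binom{2m}{m}4^{-m}$. The generating-function identity itself I would quote from \cite{SparreAndersen1953}, or reprove via the cyclic-rearrangement (Spitzer) combinatorial lemma. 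The main obstacle is citing or justifying this identity cleanly. A self-contained alternative is to show that $\Pp(G_1\le 0,\dots,G_m\le0)$ equals $\Pp(G_1>0,\dots,G_m>0)$ by symmetry. One then shows that the law of the index of the maximum of $(G_0,\dots,G_m)$ satisfies $\Pp(\text{argmax}=k)=q_kq_{m-k}$, by splitting the path at $k$ and time-reversing the first part using exchangeability. Summing over $k$ gives $\sum_{k=0}^m q_kq_{m-k}=1$, which identifies the generating function as $(1-s)^{-1/2}$ by induction.

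For \eqref{eq:type-B-ordinary-bridge-probability}, I would realize the conditional law concretely. Conditionally on $G_m=0$, the vector $(\xi_1,\dots,\xi_m)$ has the law of $(\xi_i-\bar\xi)_{i\le m}$, where $\bar\xi$ is the mean, and this vector is exchangeable with sum zero. The bridge partial sums are $G_r-\tfrac rm G_m$. Almost surely, no two of these partial sums for $0\le r<m$ coincide, because differences are nondegenerate Gaussians. Consider the $m$ cyclic shifts of the increments. By the classical cycle lemma, exactly one shift has all partial sums $\le0$: it is the one starting just after the index at which the bridge attains its strict maximum. By exchangeability, each cyclic shift has the same law. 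Hence the probability equals $1/m$.

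The only delicate point here is justifying the "conditional law in the Gaussian sense" via the explicit representation $\xi-\bar\xi\mathbf 1$. This holds because the vector $\xi-\bar\xi\mathbf 1$ is independent of $G_m$, and $G_r-\tfrac rmG_m$ is a function of it. So I regard the walk identity as the main step, and the bridge identity as a short consequence of the cycle lemma.
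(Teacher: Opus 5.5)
Your proposal is correct and matches the paper's route. The paper gives no argument beyond citing Sparre Andersen's classical identities. Your two arguments are standard self-contained proofs of exactly those identities for exchangeable increments with a.s.\ distinct partial sums: the argmax splitting gives $\sum_{k=0}^m q_kq_{m-k}=1$, and the cycle lemma applied to the centered vector $\xi-\bar\xi\mathbf 1$ gives the bridge identity.
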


\subsection{External angles for cones of type \texorpdfstring{\(B\)}{B}}
\label{subsec:type-B-external-angles}
We now combine the orthogonal decomposition of the normal
cones of $\bW_n^B(\rho)$ with the Gaussian walk and bridge probabilities from the
preceding subsection to compute the external angles of $\bW_n^B(\rho)$.
\begin{proposition}[External angles of $\bW_n^B(\rho)$]
\label{prop:type-B-external-angle}
Let $n\ge1$, $\rho>0$, and
$I=\{i_1<\cdots<i_k\}\subseteq\{0,\ldots,n\}$ with $k\ge1$.
Then
\begin{equation}
\label{eq:type-B-external-angle}
\alpha\bigl(N(F_I^B,\bW_n^B(\rho))\bigr)
=
\chi_{i_1}^B(\rho)
\left(\prod_{j=1}^{k-1}\frac1{i_{j+1}-i_j}\right)
q_{n-i_k},
\end{equation}
where $q_m$ is defined in \eqref{eq:type-B-walk-persistence},
$\chi_0^B(\rho):=1$, and, for $m\ge1$,
\begin{equation}
\label{eq:type-B-first-block-bridge}
\chi_m^B(\rho)
:=\Pp\left(
Y+G_j\le0\text{ for }j=0,\ldots,m-1
\,\middle|\,Y+G_m=0
\right).
\end{equation}
Here $(G_j)_{j=0}^m$ is the standard Gaussian walk from
Lemma~\ref{lem:type-B-ordinary-block-probabilities}, and
$Y\sim N(0,\rho^2)$ is independent of this walk.
\end{proposition}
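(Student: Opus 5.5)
The plan is to use the orthogonal decomposition \eqref{eq:type-B-normal-cone-orthogonal-decomposition} from Lemma~\ref{lem:finite-BA-normal-cones}. The product rule \eqref{eq:prelim-orthogonal-angle-product} then splits the external angle as
\[
\alpha\bigl(N(F_I^B,\bW_n^B(\rho))\bigr)=\alpha(C_0^B)\prod_{j=1}^{k-1}\alpha(C_j)\,\alpha(C_k^B).
\]
It remains to identify each factor with the corresponding term in \eqref{eq:type-B-external-angle}. Each angle is measured in the linear span of the summand, which is obtained by keeping the equalities and dropping the inequalities. So I will realize a standard Gaussian vector in that span as a Gaussian walk, possibly conditioned on a linear equality. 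The conditioning is where the conditional Gaussian laws of Lemma~\ref{lem:type-B-ordinary-block-probabilities} come from.

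\textbf{Terminal block.} The cone $C_k^B$ has span $\lin\{e_{i_k+1},\ldots,e_n\}$. A standard Gaussian vector there has i.i.d.\ $N(0,1)$ coordinates $\xi_1,\ldots,\xi_m$ with $m=n-i_k$. Membership in $C_k^B$ says that all partial sums are $\le0$. Hence $\alpha(C_k^B)=q_{n-i_k}$ by \eqref{eq:type-B-walk-persistence}, with $q_0=1$ when $i_k=n$.

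\textbf{Middle blocks.} For $C_j$ with $d=i_{j+1}-i_j$, the span is the hyperplane $\{\sum y_\ell=0\}$ in $\R^d$. The standard Gaussian vector in a linear subspace $L$ of $\R^d$ is the orthogonal projection of a standard Gaussian vector of $\R^d$ onto $L$. It also equals the conditional law of that vector given its component in $L^\perp$ vanishes. With $L^\perp=\R(1,\ldots,1)$, this is the law of the increments $\xi_1,\ldots,\xi_d$ conditioned on $G_d=0$. So $\alpha(C_j)=\Pp(G_1\le0,\ldots,G_{d-1}\le0\mid G_d=0)=1/d$ by \eqref{eq:type-B-ordinary-bridge-probability}.

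\textbf{Initial block.} This block is the only genuinely new point. The cone $C_0^B$ lives in the span of $e_0,\ldots,e_{i_1}$, with one equality $\rho y_0+\sum_{\ell=1}^{i_1}y_\ell=0$; for $i_1=0$ it reduces to $y_0=0$, so $C_0^B=\{0\}$ and its angle is $1=\chi_0^B$. For $m=i_1\ge1$, take $(y_0,\ldots,y_m)$ i.i.d.\ standard normal and condition on the linear functional $\rho y_0+\sum_{\ell\le m}y_\ell$ being $0$. Such conditioning is legitimate for any linear functional, since conditioning a standard Gaussian on $\langle u,\cdot\rangle=0$ yields the standard Gaussian on $u^\perp$. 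Setting $Y:=\rho y_0\sim N(0,\rho^2)$ and $G_j:=\sum_{\ell\le j}y_\ell$, the constraints $p_r\le0$ for $0\le r<m$ become $Y+G_r\le0$, and the equality becomes $Y+G_m=0$. Thus $\alpha(C_0^B)=\chi_m^B(\rho)$.

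Multiplying the three contributions gives \eqref{eq:type-B-external-angle}. The main care needed is the justification that standard Gaussian measure on a subspace equals the conditional Gaussian law given a vanishing orthogonal component. This is a routine Gaussian fact, invariant under the scaling of the functional. No real obstacle is expected.
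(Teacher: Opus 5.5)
Your proposal is correct and follows the paper's proof essentially step for step: you factor the external angle via the orthogonal decomposition \eqref{eq:type-B-normal-cone-orthogonal-decomposition} and the product rule. You then identify $\alpha(C_0^B)$, $\alpha(C_j)$, and $\alpha(C_k^B)$ with $\chi_{i_1}^B(\rho)$, $1/(i_{j+1}-i_j)$, and $q_{n-i_k}$, respectively, by realizing standard Gaussian vectors in the spans as Gaussian walks conditioned on the defining equalities, which is exactly the conditioning fact the paper invokes.
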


\begin{proof}
By the orthogonal decomposition
\eqref{eq:type-B-normal-cone-orthogonal-decomposition}, the
external angle is the product of the angles of $C_0^B$,
$C_1,\ldots,C_{k-1}$, and $C_k^B$. We evaluate these angles
separately.

We use the standard conditioning formula for multivariate Gaussian
vectors. If $Z$ is standard Gaussian in $\R^N$ and $u\in\R^N$
is nonzero, then the Gaussian distribution of $Z$ conditioned on
$\langle Z,u\rangle=0$ is centered with covariance matrix
$I_N-uu^{\mathsf T}/\|u\|^2$. This is the matrix of the orthogonal
projection of $\R^N$ onto $u^\perp$. Hence the conditioned vector
is standard Gaussian in the Euclidean subspace $u^\perp$.

For $m=i_1\ge1$, the vector
$(Y/\rho)e_0+\sum_{j=1}^m\xi_je_j$ is standard Gaussian in
$\lin\{e_0,\ldots,e_m\}$. The equation $Y+G_m=0$ defines precisely
the hyperplane $\lin C_0^B$. Hence the Gaussian vector conditioned
on $Y+G_m=0$ is standard Gaussian in $\lin C_0^B$.
Its membership in $C_0^B$ is exactly the event in
\eqref{eq:type-B-first-block-bridge}, and therefore
$\alpha(C_0^B)=\chi_{i_1}^B(\rho)$. This also holds when $i_1=0$,
since $C_0^B=\{0\}$.

For $1\le j<k$, a vector with independent standard normal
coordinates on the coordinate block
$e_{i_j+1},\ldots,e_{i_{j+1}}$, conditioned on the sum of these
coordinates being zero, is standard Gaussian in $\lin C_j$.
Hence Lemma~\ref{lem:type-B-ordinary-block-probabilities} gives
$\alpha(C_j)=1/(i_{j+1}-i_j)$.

Finally, $\lin C_k^B$ is the coordinate space spanned by
$e_{i_k+1},\ldots,e_n$. Thus a standard Gaussian vector in this
space has independent standard normal coordinates, and membership
in $C_k^B$ is exactly the event that their partial sums remain
nonpositive. By  Lemma~\ref{lem:type-B-ordinary-block-probabilities}, $\alpha(C_k^B)=q_{n-i_k}$.

Multiplying these angles proves
\eqref{eq:type-B-external-angle}.
\end{proof}

For the later asymptotic analysis, we record an alternative
representation of $\chi_m^B(\rho)$ and a bound following from it. Write
$\varphi(x):=(2\pi)^{-1/2}\eee^{-x^2/2}$ and
$\Phi(x):=\int_{-\infty}^x\varphi(u)\,\dint u$ for the
standard normal density and distribution function.

\begin{lemma}
\label{lem:type-B-first-block-representations}
For every $\rho>0$ and integer $m\ge0$,
\begin{equation}
\label{eq:type-B-first-block-expectation}
\chi_m^B(\rho)
=
\sqrt{1+\frac m{\rho^2}}\,
\Ee\left[
\eee^{-G_m^2/(2\rho^2)};
G_1\ge0,\ldots,G_m\ge0
\right],
\end{equation}
where $\Ee[U;E]:=\Ee[U\ind_E]$.
In particular,
\begin{equation}
\label{eq:type-B-first-block-bound}
0\le\chi_m^B(\rho)
\le\sqrt{1+\frac m{\rho^2}}\,q_m.
\end{equation}
\end{lemma}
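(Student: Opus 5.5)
The plan is to compute the conditional probability in \eqref{eq:type-B-first-block-bridge} directly from Gaussian densities, and then to reverse the time direction of the walk. The case $m=0$ is trivial: $\chi_0^B(\rho)=1$, and the right-hand side of \eqref{eq:type-B-first-block-expectation} equals $1$ as well. Fix $m\ge1$. The pair $(Y,G_1,\ldots,G_m)$ has a joint density, and $Y+G_m\sim N(0,\rho^2+m)$, whose density at $0$ is $(2\pi(\rho^2+m))^{-1/2}$.

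Conditioning on $Y+G_m=0$ amounts to substituting $Y=-G_m$. This gives
\[
\chi_m^B(\rho)=\frac{\Ee\bigl[\varphi_\rho(-G_m);\,G_j-G_m\le0,\ j=0,\ldots,m-1\bigr]}{(2\pi(\rho^2+m))^{-1/2}},
\]
where $\varphi_\rho(y)=(2\pi\rho^2)^{-1/2}\eee^{-y^2/(2\rho^2)}$ is the density of $Y$. The conditional law here is the regular Gaussian one, so this is the usual ratio of joint density to marginal density integrated over the event. To justify it, I would write the joint density of $(Y,\xi_1,\ldots,\xi_m)$, change variables to $(Y+G_m,\xi_1,\ldots,\xi_m)$ (Jacobian $1$), and evaluate at $Y+G_m=0$. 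The prefactor simplifies to
\[
\frac{(2\pi\rho^2)^{-1/2}}{(2\pi(\rho^2+m))^{-1/2}}=\sqrt{1+m/\rho^2}.
\]

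Next I would apply time reversal. The reversed walk $G'_j:=G_m-G_{m-j}$, $j=0,\ldots,m$, is again a standard Gaussian walk, and it satisfies $G'_m=G_m$. The conditions $G_j\le G_m$ for $j=0,\ldots,m-1$ become $G'_{m-j}\ge0$, that is, $G'_1,\ldots,G'_m\ge0$. The factor $\eee^{-G_m^2/(2\rho^2)}$ is unchanged. This yields \eqref{eq:type-B-first-block-expectation}.

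The bound \eqref{eq:type-B-first-block-bound} then follows by bounding the exponential factor by $1$ and using the symmetry $\Pp(G_1\ge0,\ldots,G_m\ge0)=q_m$ from Lemma~\ref{lem:type-B-ordinary-block-probabilities}. Nonnegativity is clear. The only delicate point is the rigorous identification of the conditional probability with the density ratio, including the handling of the boundary index $j=0$, where the condition is $Y\le0$, i.e.\ $G_m\ge0$. This is routine for nondegenerate Gaussian vectors.
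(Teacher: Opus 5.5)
Your proposal is correct and follows the paper's proof essentially step for step. Like the paper, you evaluate the density of $Y$ at $-G_m$ and divide by the density of $Y+G_m$ at zero, reverse the increments to turn $\{G_m-G_j\ge0,\ j<m\}$ into $\{G_1\ge0,\ldots,G_m\ge0\}$, and then bound the exponential by one using $\Pp(G_1\ge0,\ldots,G_m\ge0)=q_m$.
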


\begin{proof}
For $m\ge1$, evaluating the density of $Y$ at $-G_m$ in
\eqref{eq:type-B-first-block-bridge} gives
\begin{equation}
\label{eq:type-B-first-block-density-ratio}
\chi_m^B(\rho)
=
\frac{
\Ee\left[
\rho^{-1}\varphi(G_m/\rho);
G_m-G_j\ge0\text{ for }j=0,\ldots,m-1
\right]
}{
(\rho^2+m)^{-1/2}\varphi(0)
}.
\end{equation}
The denominator is the density of $Y+G_m$ at zero.
Reversing the order of $\xi_1,\ldots,\xi_m$ preserves their
joint law and total sum, and turns the event in the numerator
into $\{G_1\ge0,\ldots,G_m\ge0\}$.
Substituting the Gaussian densities proves
\eqref{eq:type-B-first-block-expectation}.
For $m=0$, its right-hand side equals $1=\chi_0^B(\rho)$.
Finally, the exponential is at most one, and
$\Pp(G_1\ge0,\ldots,G_m\ge0)=q_m$, proving
\eqref{eq:type-B-first-block-bound}.
\end{proof}

\subsection{External angles for cones of type \texorpdfstring{\(A\)}{A}}
\label{subsec:type-A-external-angles}

We now compute the external angles of the cone $\bW_n^A(\rho)$ using the orthogonal
decomposition \eqref{eq:type-A-normal-cone-orthogonal-decomposition}.

\begin{proposition}[External angles of $\bW_n^A(\rho)$]
\label{prop:type-A-external-angle}
Let $n\ge1$, $\rho>0$, and
$I=\{i_1<\cdots<i_k\}\subseteq\{0,\ldots,n-1\}$ with $k\ge1$.
Then
\begin{equation}
\label{eq:type-A-external-angle}
\alpha\bigl(N(F_I^A,\bW_n^A(\rho))\bigr)
=
\chi_{n-i_k,i_1}^A(\rho)
\prod_{j=1}^{k-1}\frac1{i_{j+1}-i_j},
\end{equation}
where, for integers $a\ge1$, $b\ge0$, and $h:=a+b$,
\begin{equation}
\label{eq:type-A-cyclic-factor-definition}
\chi_{a,b}^A(\rho)
:=\Pp\left(
G_1\le0,\ldots,G_{h-1}\le0
\,\middle|\,G_h=0,\ Y-G_a=0
\right).
\end{equation}
Here $(G_j)_{j=0}^h$ is the standard Gaussian walk from
Lemma~\ref{lem:type-B-ordinary-block-probabilities}, and
$Y\sim N(0,\rho^2)$ is independent of this walk.
The conditioning refers to the joint Gaussian law under the two
linear constraints displayed in
\eqref{eq:type-A-cyclic-factor-definition}.
\end{proposition}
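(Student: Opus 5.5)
By the orthogonal decomposition \eqref{eq:type-A-normal-cone-orthogonal-decomposition} in Lemma~\ref{lem:finite-BA-normal-cones}, together with the product rule \eqref{eq:prelim-orthogonal-angle-product}, the external angle factors as
\[
\alpha\bigl(N(F_I^A,\bW_n^A(\rho))\bigr)
=\alpha(C_0^A)\prod_{j=1}^{k-1}\alpha(C_j).
\]
The cones $C_j$, $1\le j<k$, are exactly the same bridge cones that occurred in type $B$. As in the proof of Proposition~\ref{prop:type-B-external-angle}, a standard Gaussian vector on the block $e_{i_j+1},\ldots,e_{i_{j+1}}$, conditioned on zero coordinate sum, is standard Gaussian in $\lin C_j$. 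Lemma~\ref{lem:type-B-ordinary-block-probabilities} then gives $\alpha(C_j)=1/(i_{j+1}-i_j)$. So the whole task reduces to showing that $\alpha(C_0^A)=\chi_{a,b}^A(\rho)$ with $a=n-i_k$ and $b=i_1$.

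For this, I would work in the space $\lin\{e_0,f_1,\ldots,f_h\}$ and take the standard Gaussian vector
\[
\widetilde G:=(Y/\rho)e_0+\sum_{\ell=1}^h\xi_\ell f_\ell,
\]
where $Y\sim N(0,\rho^2)$ and the $\xi_\ell$ are independent standard normals; write $G_r$ for the partial sums of the $\xi_\ell$. By the last sentence of Lemma~\ref{lem:finite-BA-normal-cones}, $\lin C_0^A$ is cut out by the two linear equalities $\sum_{\ell=1}^h z_\ell=0$ and $\rho y_0=\sum_{\ell=1}^a z_\ell$. For $\widetilde G$ (where $y_0=Y/\rho$) these read $G_h=0$ and $Y-G_a=0$.

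I would then apply the multivariate version of the conditioning fact used in the type $B$ proof. If $Z$ is standard Gaussian in $\R^N$ and $U$ is a linear subspace, then $Z$ conditioned on $Z\in U$ (that is, on $\langle Z,u_1\rangle=\langle Z,u_2\rangle=0$ for a spanning set $u_1,u_2$ of $U^\perp$) has covariance equal to the orthogonal projection onto $U$. It is therefore standard Gaussian in $U$. This can be iterated from the rank-one case, or read off directly from the conditional covariance formula $I-P_{U^\perp}$.

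Two checks are needed. First, the two constraint vectors must be linearly independent. The first, corresponding to $G_h$, has no $e_0$ component; the second has a nonzero $e_0$ component $\rho$. So they are independent, and the conditioning is nondegenerate; this includes $b=0$, where the constraint becomes $Y=G_a=G_h$. Second, under the conditioning, membership in $C_0^A$ is exactly the remaining inequalities $G_r\le0$ for $1\le r<h$. Taking probabilities gives $\alpha(C_0^A)=\chi^A_{a,b}(\rho)$, and multiplying the factors yields \eqref{eq:type-A-external-angle}.

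The only step requiring care, which I expect to be the main (mild) obstacle, is identifying $\lin C_0^A$ precisely with the two stated equalities and checking the dimension count. Here $\dim\lin C_0^A=h+1-2=h-1$. Combined with the $\sum_j(d_j-1)$ dimensions of the $C_j$, this should total $n-k$, in agreement with \eqref{eq:type-A-normal-cone-dimension}. The degenerate cases also need a word. When $h=1$ (that is, $a=1$, $b=0$), $C_0^A=\{0\}$ and the conditional probability of an empty event family is $1$, consistent with the convention. When $k=1$, no $C_j$ factors appear.
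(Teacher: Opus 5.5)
Your proposal is correct and follows the paper's proof essentially step for step. You factor the external angle via the orthogonal decomposition, obtain $1/(i_{j+1}-i_j)$ for the bridge cones $C_j$, and identify $\alpha(C_0^A)$ with $\chi_{a,b}^A(\rho)$ by conditioning the standard Gaussian vector $(Y/\rho)e_0+\sum_{\ell}\xi_\ell f_\ell$ on the two independent linear constraints $G_h=0$ and $Y-G_a=0$, whose common kernel is $\lin C_0^A$. Your extra checks (independence of the constraints, the dimension count $n-k$, and the degenerate cases $h=1$ and $k=1$) are consistent with the paper and simply spell out what it leaves implicit.
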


\begin{proof}
Put $a:=n-i_k$, $b:=i_1$, and $h:=a+b$. By the orthogonal
decomposition~\eqref{eq:type-A-normal-cone-orthogonal-decomposition},
the external angle is the product of the angles of
$C_0^A,C_1,\ldots,C_{k-1}$.

For $1\le j<k$, the cones $C_j$ are the same nonpositive-bridge
cones as in type $B$, and therefore
Lemma~\ref{lem:type-B-ordinary-block-probabilities} gives
$\alpha(C_j)=1/(i_{j+1}-i_j)$.

Next we show that $\alpha(C_0^A)=\chi_{a,b}^A(\rho)$.  Relabel
$e_{i_k+1},\ldots,e_n,e_1,\ldots,e_{i_1}$ as
$f_1,\ldots,f_h$. Then
$(Y/\rho)e_0+\sum_{\ell=1}^h\xi_\ell f_\ell$ is standard Gaussian
in $\lin\{e_0,f_1,\ldots,f_h\}$. Applying the Gaussian conditioning observation from the type-$B$
case successively to the two independent linear constraints
$G_h=0$ and $Y-G_a=0$ shows that the resulting Gaussian vector
is standard Gaussian in their common kernel. By
\eqref{eq:type-A-cyclic-coordinate-cone}, this common kernel is
precisely $\lin C_0^A$. Moreover, membership in $C_0^A$ is exactly
the event $G_1\le0,\ldots,G_{h-1}\le0$. Hence
$\alpha(C_0^A)=\chi_{a,b}^A(\rho)$.
Multiplying the factors proves
\eqref{eq:type-A-external-angle}.
\end{proof}

We record an expectation representation of $\chi_{a,b}^A(\rho)$ and a bound
for the subsequent asymptotic analysis.

\begin{lemma}
\label{lem:type-A-cyclic-factor-representations}
For every $\rho>0$ and integers $a\ge1$, $b\ge0$, put $h:=a+b$.
Then
\begin{equation}
\label{eq:type-A-cyclic-factor-expectation}
\chi_{a,b}^A(\rho)
=
\sqrt{1+\frac{ab}{h\rho^2}}\,
\Ee\left[
\eee^{-G_a^2/(2\rho^2)};
G_1\le0,\ldots,G_{h-1}\le0
\,\middle|\,G_h=0
\right].
\end{equation}
In particular,
\begin{align}
0\le\chi_{a,b}^A(\rho)
&\le\frac1h\sqrt{1+\frac{ab}{h\rho^2}},
\label{eq:type-A-cyclic-factor-bound}
\\
\chi_{a,0}^A(\rho)&=\frac1a.
\label{eq:type-A-cyclic-factor-endpoint}
\end{align}
\end{lemma}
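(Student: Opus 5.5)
The plan mirrors the proof of Lemma~\ref{lem:type-B-first-block-representations}: write the conditional probability in \eqref{eq:type-A-cyclic-factor-definition} as a ratio of densities and integrate out $Y$ explicitly.

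\emph{Step 1: disintegrate the double conditioning.} Condition first on $G_h=0$. Under this conditioning, $(G_j)_{j\le h}$ is a Gaussian bridge, and $Y$ stays independent of it. The remaining constraint $Y=G_a$ is then handled by the density ratio
\[
\chi_{a,b}^A(\rho)
=\frac{\Ee\bigl[\rho^{-1}\varphi(G_a/\rho);\,G_1\le0,\ldots,G_{h-1}\le0\,\big|\,G_h=0\bigr]}
{f(0)},
\]
where $f$ is the density at zero of $Y-G_a$ given $G_h=0$. The numerator evaluates the density of $Y$ at $G_a$ on the event. To justify this, I would note that everything is a nondegenerate finite-dimensional Gaussian vector. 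The conditional law given the two linear constraints is the standard Gaussian on their common kernel, as in the proof of Proposition~\ref{prop:type-A-external-angle}. The ratio formula is then the usual regular-version identity for Gaussian conditioning.

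\emph{Step 2: compute the denominator.} Under the bridge law, $\operatorname{Var}(G_a\mid G_h=0)=a(h-a)/h=ab/h$. Since $Y$ is independent of the bridge, $Y-G_a$ has variance $\rho^2+ab/h$, so
\[
f(0)=\varphi(0)\bigl(\rho^2+ab/h\bigr)^{-1/2}.
\]
The numerator contains $\rho^{-1}\varphi(0)\eee^{-G_a^2/(2\rho^2)}$. Dividing gives the prefactor $\rho^{-1}\sqrt{\rho^2+ab/h}=\sqrt{1+ab/(h\rho^2)}$, which proves \eqref{eq:type-A-cyclic-factor-expectation}. When $b=0$, we have $a=h$ and $G_a=G_h=0$, so the bridge variance vanishes. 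The formula still holds: the constraint reduces to $Y=0$, independent of the walk, and the exponential equals one.

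\emph{Step 3: the bounds.} For \eqref{eq:type-A-cyclic-factor-bound}, bound the exponential by one and apply \eqref{eq:type-B-ordinary-bridge-probability} with $m=h$, which gives $\Pp(G_1\le0,\ldots,G_{h-1}\le0\mid G_h=0)=1/h$. For \eqref{eq:type-A-cyclic-factor-endpoint}, set $b=0$. Then $h=a$, the prefactor is $1$, the exponential is $1$ because $G_a=G_h=0$, and the probability is $1/a$.

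The only step needing care is Step~1, namely making the two-constraint conditioning rigorous and handling the degenerate case $b=0$. I would settle it by the explicit projection description of conditioned standard Gaussians already used in the paper.
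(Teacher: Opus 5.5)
Your proposal is correct and follows essentially the same route as the paper. You condition on $G_h=0$, use the independence of $Y$ from the bridge to write $\chi_{a,b}^A(\rho)$ as a density ratio with denominator $(\rho^2+ab/h)^{-1/2}\varphi(0)$, bound the exponential by one, and apply the bridge identity \eqref{eq:type-B-ordinary-bridge-probability}. Your handling of the degenerate case $b=0$, where $G_a=G_h=0$ so that the prefactor and the exponential both equal one, also matches the paper's.
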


\begin{proof}
Under the conditional law given $G_h=0$, the variable $G_a$ is
centered Gaussian with variance $a-a^2/h=ab/h$, and $Y$ remains
independent of the conditioned walk. Thus $Y-G_a$ has conditional variance
$\rho^2+ab/h>0$. Evaluating the density of $Y$ at $G_a$ gives
\begin{equation}
\label{eq:type-A-cyclic-factor-density-ratio}
\chi_{a,b}^A(\rho)
=
\frac{
\Ee\left[
\rho^{-1}\varphi(G_a/\rho);
G_1\le0,\ldots,G_{h-1}\le0
\,\middle|\,G_h=0
\right]
}{
(\rho^2+ab/h)^{-1/2}\varphi(0)
}.
\end{equation}
Substituting the Gaussian densities proves
\eqref{eq:type-A-cyclic-factor-expectation}.
Bounding its exponential by one and using
\eqref{eq:type-B-ordinary-bridge-probability} proves
\eqref{eq:type-A-cyclic-factor-bound}.
If $b=0$, then $a=h$ and $G_a=0$ under the conditioning, so the
prefactor and exponential in
\eqref{eq:type-A-cyclic-factor-expectation} both equal one.
The same bridge identity \eqref{eq:type-B-ordinary-bridge-probability} yields
\eqref{eq:type-A-cyclic-factor-endpoint}, including $a=1$.
\end{proof}

\subsection{Conic intrinsic volumes: the unsymmetrized formulas}
\label{subsec:finite-BA-intrinsic-volumes-unsymmetrized}

We now combine the face and external angles computed above
in the face-angle formula \eqref{eq:finite-BA-face-angle-sum}.

\begin{proposition}[Conic intrinsic volumes of $\bW_n^\bullet(\rho)$]
\label{prop:finite-BA-intrinsic-volumes-unsymmetrized}
Let $n\ge1$ and $\rho>0$. In each summand below, write
$d_j:=i_{j+1}-i_j$ for $1\le j<k$.
For $1\le k\le n+1$,
\begin{equation}
\label{eq:type-B-intrinsic-volumes-unsymmetrized}
\upsilon_k\bigl(\bW_n^B(\rho)\bigr)
=
\sum_{0\le i_1<\cdots<i_k\le n}
\chi_{i_1}^B(\rho)\,q_{n-i_k}
\left(\prod_{j=1}^{k-1}\frac1{d_j}\right)
\Ord_k\bigl(\rho^2+i_1,d_1,\ldots,d_{k-1}\bigr).
\end{equation}
For $1\le k\le n$,
\begin{multline}
\label{eq:type-A-intrinsic-volumes-unsymmetrized}
\upsilon_k\bigl(\bW_n^A(\rho)\bigr)
=
\sum_{0\le i_1<\cdots<i_k\le n-1}
\chi_{n-i_k,i_1}^A(\rho)
\left(\prod_{j=1}^{k-1}\frac1{d_j}\right)
\int_{z<x}p_{\rho;n-i_k,i_1}(x,z)
\\
\times
\Pp\left(
 x\ge\frac{Z_1}{\sqrt{d_1}}\ge\cdots\ge
 \frac{Z_{k-1}}{\sqrt{d_{k-1}}}\ge z
\right)\,\dint x\,\dint z,
\end{multline}
where $Z_1,\ldots,Z_{k-1}$ are independent standard normal
variables, and $\Ord_k$ and $p_{\rho;a,b}$ are defined in
\eqref{eq:type-B-Gaussian-ordering-probability} and
\eqref{eq:type-A-endpoint-Gaussian-density}, respectively.
For $k=1$,  the products are one,
and the probability under the integral in
\eqref{eq:type-A-intrinsic-volumes-unsymmetrized} is understood
to be one on $z<x$.

The zeroth intrinsic volumes are
\begin{align}
\upsilon_0\bigl(\bW_n^B(\rho)\bigr)
&=
\Pp\left(\rho Z_0+G_j\le0\text{ for }j=0,\ldots,n\right),
\label{eq:type-B-zeroth-intrinsic-volume}
\\
\upsilon_0\bigl(\bW_n^A(\rho)\bigr)
&=
\Pp\left(\rho Z_0+G_j-\frac jnG_n\le0
\text{ for }j=0,\ldots,n-1\right),
\label{eq:type-A-zeroth-intrinsic-volume}
\end{align}
where $Z_0$ is standard normal and independent of the standard
Gaussian walk $(G_j)_{j=0}^n$. The intrinsic volumes vanish for $k>n+1$
in type $B$ and for $k>n$ in type $A$.
\end{proposition}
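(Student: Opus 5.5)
The proof is an assembly of the face-angle formula \eqref{eq:finite-BA-face-angle-sum} with the face and external angles already computed. The only substantive point is the zeroth intrinsic volume, where the face-angle formula with $I=\varnothing$ has to be translated into the stated probabilities.

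For $1\le k\le n+1$ in type $B$, the faces of dimension $k$ are exactly the $F_I^B$ with $|I|=k$, $I\subseteq\{0,\ldots,n\}$, because $\bW_n^B(\rho)$ is simplicial. Substituting Lemma~\ref{lem:type-B-internal-angle} for $\alpha(F_I^B)$ and Proposition~\ref{prop:type-B-external-angle} for the external angle gives \eqref{eq:type-B-intrinsic-volumes-unsymmetrized} directly. In type $A$ with $1\le k\le n$, we use the integral form \eqref{eq:type-A-internal-angle-integral} of Lemma~\ref{lem:type-A-internal-angle}, with $a=n-i_k$ and $b=i_1$. Writing $Z_j$ of variance $1/d_j$ as $Z_j/\sqrt{d_j}$ with $Z_j$ standard, and multiplying by the external angle from Proposition~\ref{prop:type-A-external-angle}, gives \eqref{eq:type-A-intrinsic-volumes-unsymmetrized}. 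The degenerate case $k=1$ is covered by the conventions already stated in those lemmas. The vanishing for $k>n+1$, respectively $k>n$, is the general fact $\upsilon_k(C)=0$ for $k>\dim C$, together with the dimensions $n+1$ and $n$ of the simplicial cones.

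For $k=0$ we use \eqref{eq:prelim-conic-intrinsic-volume-endpoints}, namely $\upsilon_0(C)=\Pp(G\in C^\circ)$, with $G$ standard Gaussian in the ambient space $\R e_0\oplus\R^n$, respectively $\R e_0\oplus s_n^\perp$.

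In type $B$, write $G=Z_0e_0+\sum_{j=1}^n\xi_je_j$. Then \eqref{eq:type-B-polar-inequalities} says that $G\in C^\circ$ if and only if $\rho Z_0+G_j\le0$ for $j=0,\ldots,n$, which is \eqref{eq:type-B-zeroth-intrinsic-volume}.

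In type $A$ the Gaussian lives in $\R e_0\oplus s_n^\perp$, so we realize it by projection. Apply the centering projection $P$ of \eqref{eq:type-A-centering-projection} to the type-$B$ Gaussian. Since $P$ is the orthogonal projection onto the ambient space, $P G$ is standard Gaussian there. Its coordinates are $Z_0$ and $\xi_\ell-G_n/n$, so its partial sums are $G_j-(j/n)G_n$. Plugging these into \eqref{eq:type-A-polar-inequalities} yields \eqref{eq:type-A-zeroth-intrinsic-volume}; the constraint $\sum_\ell y_\ell=0$ holds automatically.

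The main obstacle is notational rather than mathematical: one has to check that the orderings in Lemma~\ref{lem:type-A-internal-angle} match the normalization used under the integral in \eqref{eq:type-A-intrinsic-volumes-unsymmetrized}, and that the index ranges agree with the admissible sets in \eqref{eq:type-B-faces}--\eqref{eq:type-A-faces}.
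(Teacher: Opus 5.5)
Your proposal is correct and follows essentially the same route as the paper. For $k\ge1$ you substitute the face and external angles into the face-angle formula. For $k=0$ you evaluate the Gaussian measure of the polar cone through the polar inequalities, using the projected Gaussian in type $A$. The only cosmetic difference is that the paper gets $\upsilon_0(C)=\alpha(C^\circ)$ from the face-angle formula at the zero face (using pointedness), whereas you use the endpoint identity $\upsilon_0(C)=\Pp(G\in C^\circ)$; these agree here because the polar cones are full-dimensional.
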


\begin{proof}
For $k\ge1$, substitute the face-angle formulas from
Lemmas~\ref{lem:type-B-internal-angle} and
\ref{lem:type-A-internal-angle} and the external-angle formulas
from Propositions~\ref{prop:type-B-external-angle} and
\ref{prop:type-A-external-angle} into
\eqref{eq:finite-BA-face-angle-sum}.
For $k=0$, since both cones are pointed, their zero face is $\{0\}$,
and the same face-angle formula gives
$\upsilon_0(C)=\alpha(C^\circ)$.
Apply the polar descriptions
\eqref{eq:type-B-polar-inequalities} and
\eqref{eq:type-A-polar-inequalities} to the standard Gaussian
vector $Z_0e_0+\sum_{j=1}^n\xi_je_j$ and, respectively, to its
orthogonal projection onto $\R e_0\oplus s_n^\perp$, which is
standard Gaussian in that subspace.
In the latter case, the spatial coordinates are
$\xi_j-G_n/n$, whose partial sums are $G_j-(j/n)G_n$.
\end{proof}

\begin{remark}[Weyl chambers]
\label{rem:type-B-unit-height-Weyl-chamber}
For $\rho=1$, the coordinate description
\eqref{eq:type-B-cone-coordinates} identifies $\bW_n^B(1)= \{t_0\ge t_1\ge\cdots\ge t_n\ge0\}$
with the fundamental Weyl chamber of type $B_{n+1}$. Its
conic intrinsic volumes are
\begin{equation}
\label{eq:type-B-unit-height-intrinsic-volumes}
\upsilon_k\bigl(\bW_n^B(1)\bigr)
=\frac{B[n+1,k]}{2^{n+1}(n+1)!},
\qquad 0\le k\le n+1,
\end{equation}
where $B[N,k]$ is the coefficient of $t^k$ in
$\prod_{j=1}^{N}(t+2j-1)$, the type-$B$ analogue of an unsigned
Stirling number of the first kind.
A direct proof based on face and external angles, similar
in spirit to the present approach, is given in~\cite[Theorem~3.3 and its proof]{GodlandKabluchko2022}.
Here $\chi_m^B(1)=1/(m+1)$ for $m\ge0$; for $m\ge1$ this
follows from \eqref{eq:type-B-ordinary-bridge-probability}
applied to the $m+1$ independent standard normal increments
$Y,\xi_1,\ldots,\xi_m$ in
\eqref{eq:type-B-first-block-bridge}.
\end{remark}

\subsection{Conic intrinsic volumes: the symmetrized formulas}
\label{subsec:finite-BA-intrinsic-volumes-symmetrized}

We now average the preceding sums over permutations of the gaps
$d_j=i_{j+1}-i_j$. This removes the ordering constraints among the
corresponding Gaussian variables and gives product formulas suited
to the scaling analysis.

Let $\mathfrak S_r$ denote the permutation group of $\{1,\ldots,r\}$.
If $Z_1,\ldots,Z_r$ are independent with $Z_j\sim N(0,1/d_j)$,
where $d_j>0$, then for $z<x$,
\begin{equation}
\label{eq:finite-BA-ordering-symmetrization}
\sum_{\pi\in\mathfrak S_r}
\Pp\bigl(x\ge Z_{\pi(1)}\ge\cdots\ge Z_{\pi(r)}\ge z\bigr)
=
\prod_{j=1}^r
\bigl(\Phi(\sqrt{d_j}\,x)-\Phi(\sqrt{d_j}\,z)\bigr).
\end{equation}
Indeed, the ordering events partition, up to null sets, the event
that all $Z_j$ lie in $[z,x]$, whose probability is the product on
the right by independence. For $r=0$, we take $\mathfrak S_0$ to
consist of the empty permutation and interpret both sides as one.

\begin{proposition}[Conic intrinsic volumes of $\bW_n^\bullet(\rho)$: the symmetrized formula]
\label{prop:finite-BA-intrinsic-volumes-symmetrized}
Let $n\ge1$ and $\rho>0$. For $1\le k\le n+1$,
\begin{multline}
\label{eq:type-B-intrinsic-volumes-symmetrized}
\upsilon_k\bigl(\bW_n^B(\rho)\bigr)
=
\frac1{(k-1)!}
\sum_{\substack{
 m\ge0,\ d_1,\ldots,d_{k-1}\ge1\\
 m+d_1+\cdots+d_{k-1}\le n
}}
\chi_m^B(\rho)\,q_{n-m-d_1-\cdots-d_{k-1}}
\\
\times \int_0^\infty\sqrt{\rho^2+m}\,
\varphi\bigl(\sqrt{\rho^2+m}\,x\bigr)
\prod_{j=1}^{k-1}
\frac{\Phi(\sqrt{d_j}\,x)-1/2}{d_j}\,\dint x.
\end{multline}
For $1\le k\le n$,
\begin{equation}
\label{eq:type-A-intrinsic-volumes-symmetrized}
\upsilon_k\bigl(\bW_n^A(\rho)\bigr)
=
\frac1{(k-1)!}
\sum_{\substack{
 h,d_1,\ldots,d_{k-1}\ge1\\
 h+d_1+\cdots+d_{k-1}=n
}}
\int_{z<x}R_{h,\rho}(x,z)
\prod_{j=1}^{k-1}
\frac{\Phi(\sqrt{d_j}\,x)-\Phi(\sqrt{d_j}\,z)}{d_j}
\,\dint x\,\dint z,
\end{equation}
where,  for every integer $h\ge1$ and $\rho>0$,
\begin{equation}
\label{eq:type-A-symmetrized-kernel}
R_{h,\rho}(x,z)
:=\sum_{a=1}^h
\chi_{a,h-a}^A(\rho)\,p_{\rho;a,h-a}(x,z),
\qquad (x,z)\in\R^2,
\end{equation}
and $p_{\rho;a,b}$ is the density in
\eqref{eq:type-A-endpoint-Gaussian-density}.
All summation variables are integers, and the integrals are finite.
The type-$A$ integral is over the whole half-plane
$\{(x,z)\in\R^2:z<x\}$.
\end{proposition}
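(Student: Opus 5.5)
The plan is to start from the unsymmetrized formulas of Proposition~\ref{prop:finite-BA-intrinsic-volumes-unsymmetrized} and reorganize the sum over index sets $I=\{i_1<\cdots<i_k\}$ by the data $(m,d_1,\ldots,d_{k-1})$ in type $B$, with $m=i_1$, and $(a,b,d_1,\ldots,d_{k-1})$ in type $A$. The index set is recovered bijectively from these data. In type $B$ we have $n-i_k=n-m-\sum d_j$. In type $A$ we have $a=n-i_k$, $b=i_1$, $h=a+b=n-\sum d_j$.

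The key observation is that every factor of the summand except the face-angle probability is symmetric in $(d_1,\ldots,d_{k-1})$. For type $B$, the factors $\chi_m^B(\rho)$, $q_{n-m-\sum d_j}$ and $\prod 1/d_j$ are symmetric. For type $A$, the factors $\chi^A_{a,b}(\rho)$, $p_{\rho;a,b}$ and $\prod 1/d_j$ are symmetric. Grouping the terms into orbits of $\mathfrak S_{k-1}$ acting on the gap vector, I would replace each summand by its average over permutations. Equivalently, the sum over ordered gap vectors equals $\frac1{(k-1)!}$ times the sum, over the same ordered vectors, of the permutation-summed face probability. This is valid even when gaps coincide, because summing over all ordered tuples and all permutations counts each tuple exactly $(k-1)!$ times.

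For type $B$, I would write $\Ord_k(\rho^2+m,d_1,\ldots,d_{k-1})$ by conditioning on $X:=Z_1/\sqrt{\rho^2+m}$. This variable has density $\sqrt{\rho^2+m}\,\varphi(\sqrt{\rho^2+m}\,x)$, and we restrict to $x\ge0$. Conditionally on $X=x$, the event is $x\ge Z_2/\sqrt{d_1}\ge\cdots\ge Z_k/\sqrt{d_{k-1}}\ge0$. Applying \eqref{eq:finite-BA-ordering-symmetrization} with $z=0$ gives the product $\prod(\Phi(\sqrt{d_j}x)-1/2)$. This yields \eqref{eq:type-B-intrinsic-volumes-symmetrized}. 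The range $m\ge0$, $d_j\ge1$, $m+\sum d_j\le n$ matches $0\le i_1<\cdots<i_k\le n$.

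For type $A$, I would apply \eqref{eq:finite-BA-ordering-symmetrization} directly inside the integral in \eqref{eq:type-A-intrinsic-volumes-unsymmetrized}, with general $z<x$. This produces $\prod(\Phi(\sqrt{d_j}x)-\Phi(\sqrt{d_j}z))/d_j$. The remaining summation over $a\ge1$, $b\ge0$ with $a+b=h$ fixed is then collected. Here $b=0$ is allowed because $i_1=0$ is admissible, and $a\ge1$ holds because $i_k\le n-1$. Since the product factor depends only on the $d_j$ and not on $(a,b)$, this sum collapses into the kernel $R_{h,\rho}$ of \eqref{eq:type-A-symmetrized-kernel}. The constraint is then $h\ge1$, $d_j\ge1$, $h+\sum d_j=n$.

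The step needing most care is justifying the interchange of the finite sum and the integral and checking the finiteness claims. The sums are finite, and each integrand is nonnegative and dominated by a Gaussian density, since the products are bounded by $1/d_j\le1$. Hence every integral is finite, and Fubini/Tonelli applies trivially. The case $k=1$ needs only the stated empty-product conventions. Beyond that, the main obstacle is bookkeeping: making sure that the reparametrization is a bijection onto the stated summation ranges, and that the boundary case $b=0$ is consistent with the density $p_{\rho;a,0}$.
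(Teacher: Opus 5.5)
Your proposal is correct and follows essentially the same route as the paper's proof: you reparametrize by $m=i_1$ (respectively $a=n-i_k$, $b=i_1$) and the gaps $d_j$, average the ordering probability over permutations of the gaps using the permutation invariance of all the other factors, apply \eqref{eq:finite-BA-ordering-symmetrization} (with $z=0$ after conditioning on the top coordinate in type $B$), and collect the sum over $a+b=h$ into $R_{h,\rho}$. Your finiteness argument, based on bounded distribution-function differences and Gaussian densities, also matches the paper's.
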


\begin{proof}
\emph{Type $B$.}
In \eqref{eq:type-B-intrinsic-volumes-unsymmetrized}, set
$m:=i_1$ and $d_j:=i_{j+1}-i_j$ for $1\le j<k$.
This gives a bijection with the integer tuples in
\eqref{eq:type-B-intrinsic-volumes-symmetrized}, with inverse
$i_r=m+\sum_{j=1}^{r-1}d_j$.
For fixed $m$, both the admissible set of gap tuples and the factor
$\chi_m^B(\rho)q_{n-m-\sum_jd_j}\prod_jd_j^{-1}$ are invariant
under permutations of the gaps  $d_1,\ldots, d_{k-1}$. We may therefore replace
$\Ord_k(\rho^2+m,d_1,\ldots,d_{k-1})$ by its average over the
$(k-1)!$ permutations of $d_1,\ldots,d_{k-1}$.
Conditioning on the upper Gaussian coordinate, whose density at
$x$ is $\sqrt{\rho^2+m}\,\varphi(\sqrt{\rho^2+m}\,x)$, and
applying \eqref{eq:finite-BA-ordering-symmetrization} with
$r=k-1$ and $z=0$ gives
\eqref{eq:type-B-intrinsic-volumes-symmetrized}.

\medskip
\emph{Type $A$.}
In \eqref{eq:type-A-intrinsic-volumes-unsymmetrized}, set
$a:=n-i_k$, $b:=i_1$, and $d_j:=i_{j+1}-i_j$ for $1\le j<k$.
The selected indices correspond bijectively to integer tuples
satisfying $a\ge1$, $b\ge0$, $d_j\ge1$, and
$a+b+\sum_{j=1}^{k-1}d_j=n$, with inverse
$i_r=b+\sum_{j=1}^{r-1}d_j$.
For fixed $a,b$, the set of admissible gap tuples and the factor
$\chi_{a,b}^A(\rho)p_{\rho;a,b}(x,z)\prod_jd_j^{-1}$ are
invariant under permutations of the gaps $d_1,\ldots, d_{k-1}$.
Average the ordering probability in \eqref{eq:type-A-intrinsic-volumes-unsymmetrized} over these permutations and apply
\eqref{eq:finite-BA-ordering-symmetrization} with $r=k-1$.
Finally, put $h:=a+b$ and sum over $a=1,\ldots,h$, including
$a=h$ when $b=0$. By \eqref{eq:type-A-symmetrized-kernel},
this gives \eqref{eq:type-A-intrinsic-volumes-symmetrized}.

All sums are finite, and the integrals are finite because the
distribution-function differences are bounded and the remaining
factors are Gaussian densities or finite sums thereof.
Thus all exchanges of sums and integrals are justified.
\end{proof}

\section{Scaling limits of the conic intrinsic volumes}
\label{sec:finite-BA-scaling-limits}
We now study the limits of the conic intrinsic volumes of $\bW_n^\bullet(\rho_n)$ as $n\to\infty$ when the height $\rho=\rho_n$ grows on the scale $\sqrt n$. As we shall see in Section~\ref{sec:brownian-ou-absorption}, these intrinsic volumes determine Gaussian-point absorption probabilities for the convex hulls of Gaussian random walks and bridges; under the usual diffusive scaling, the latter converge to the corresponding Brownian-motion and Brownian-bridge convex hulls. Thus the scale $\sqrt n$ is precisely the one relevant for passing from the finite-dimensional cone formulas \eqref{eq:type-B-intrinsic-volumes-symmetrized} and \eqref{eq:type-A-intrinsic-volumes-symmetrized} to the Brownian absorption problem.

\subsection{Main theorem}
\label{subsec:finite-BA-scaling-normalization}

Multiplying a generating set of a cone by the same positive number
leaves the cone unchanged. Thus, for every positive sequence
$(\rho_n)_{n\ge1}$,
\begin{equation}
\label{eq:finite-BA-scaling-normalization}
\bW_n^\bullet(\rho_n)
=
\pos\left(
\rho_n e_0 + \bK_n^\bullet
\right)
=
\pos\left(
\frac{\rho_n}{\sqrt n}e_0+\frac1{\sqrt n}\bK_n^\bullet
\right),
\qquad \bullet\in\{B,A\}.
\end{equation}
The following theorem determines the limits of the conic intrinsic volumes of \(\bW_n^\bullet(\rho_n)\) when \(\rho_n/\sqrt n\to c\in(0,\infty)\), and proves convergence of the entire intrinsic-volume sequence in \(\ell^1\).
\begin{theorem}[Scaling limits of the conic intrinsic volumes]
\label{thm:finite-BA-scaling-limit}
Let $(\rho_n)_{n\ge1}$ be a sequence of positive numbers such that
$\rho_n/\sqrt n\to c\in(0,\infty)$. For type $B$, define
\begin{align}
L_0^B(c)
&:=\frac{\arctan c}{\pi},
\label{eq:type-B-limiting-zeroth-intrinsic-volume}
\\
L_k^B(c)
&:=\frac1{\pi(k-1)!}
\int_0^{\arsinh(1/c)}
\frac{x^{k-1}\,\dint x}{\sqrt{c^{-2}-\sinh^2x}},
\qquad k\ge1.
\label{eq:type-B-limiting-intrinsic-volumes}
\end{align}
For type $A$, put $\theta_c:=2\arsinh(1/(2c))$ and define
\begin{align}
L_0^A(c)
&:=\frac12\left(1-\frac1{\sqrt{1+4c^2}}\right),
\label{eq:type-A-limiting-zeroth-intrinsic-volume}
\\
L_k^A(c)
&:=\frac{\theta_c^k}{2k!\sinh\theta_c},
\qquad k\ge1.
\label{eq:type-A-limiting-intrinsic-volumes}
\end{align}
For each $\bullet\in\{B,A\}$ and every fixed integer $k\ge0$,
\begin{equation}
\label{eq:finite-BA-intrinsic-volume-scaling-limit}
\lim_{n\to\infty}
\upsilon_k\bigl(\bW_n^\bullet(\rho_n)\bigr)
=L_k^\bullet(c).
\end{equation}
The limiting coefficients $L_k^\bullet(c)$ satisfy
\begin{equation}
\label{eq:finite-BA-limiting-parity-normalization}
\sum_{j=0}^\infty L_{2j}^\bullet(c)
=
\sum_{j=0}^\infty L_{2j+1}^\bullet(c)
=\frac12,
\qquad \bullet\in\{B,A\}.
\end{equation}
In particular, $\sum_{k=0}^\infty L_k^\bullet(c)=1$, and the
convergence also holds in $\ell^1$:
\begin{equation}
\label{eq:finite-BA-intrinsic-volume-l1-convergence}
\lim_{n\to\infty}
\sum_{k=0}^\infty
\left|
\upsilon_k\bigl(\bW_n^\bullet(\rho_n)\bigr)-L_k^\bullet(c)
\right|
=0,
\qquad \bullet\in\{B,A\}.
\end{equation}
\end{theorem}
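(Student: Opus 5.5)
The plan is to pass to the limit term by term in the symmetrized formulas of Proposition~\ref{prop:finite-BA-intrinsic-volumes-symmetrized}, and to treat the zeroth intrinsic volumes separately through \eqref{eq:type-B-zeroth-intrinsic-volume} and \eqref{eq:type-A-zeroth-intrinsic-volume}. After that, the parity identity \eqref{eq:finite-BA-limiting-parity-normalization} is checked directly on the explicit limits. The $\ell^1$ statement then follows from a Scheffé-type argument: both $(\upsilon_k(\bW_n^\bullet(\rho_n)))_k$ and $(L_k^\bullet(c))_k$ are probability sequences on $\{0,1,2,\ldots\}$, and pointwise convergence of nonnegative sequences with equal total mass one implies $\ell^1$ convergence.

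\emph{Zeroth volumes.} For type $B$, Donsker's theorem and the continuity of the law of the maximum give
\[
\upsilon_0(\bW_n^B(\rho_n))\to\Pp\Bigl(cZ_0+\max_{0\le t\le1}W_t\le0\Bigr).
\]
Here $W$ is a one-dimensional Brownian motion independent of $Z_0$. Since $\max_{0\le t\le 1}W_t\overset{d}{=}|N|$ with $N$ standard normal, the limit equals $\Pp(Z_0<0,\ |N|\le c|Z_0|)=\arctan(c)/\pi$. For type $A$ the same argument applies with the Brownian bridge, whose maximum satisfies $\Pp(\max\le y)=1-\eee^{-2y^2}$. Averaging over $Z_0<0$ gives
\[
\tfrac12\bigl(1-\Ee\,\eee^{-2c^2Z_0^2}\bigr)=\tfrac12\bigl(1-(1+4c^2)^{-1/2}\bigr),
\]
which is $L_0^A(c)$.

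\emph{Positive indices.} In \eqref{eq:type-B-intrinsic-volumes-symmetrized} I would set $m=ns$, $d_j=nu_j$ and $x=y/\sqrt n$, and use the following inputs:
\begin{itemize}
\item $q_m\sim(\pi m)^{-1/2}$;
\item the representation \eqref{eq:type-B-first-block-expectation}, which together with the invariance principle for the meander (Rayleigh endpoint $R$) gives
\[
\sqrt m\,\chi_m^B(\rho_n)\to\pi^{-1/2}\sqrt{1+s/c^2}\;\Ee\,\eee^{-sR^2/(2c^2)};
\]
\item the identity $\bigl(\Phi(\sqrt{d}\,y/\sqrt n)-\tfrac12\bigr)/d=n^{-1}\bigl(\Phi(\sqrt u\,y)-\tfrac12\bigr)/u$.
\end{itemize}
With these, the lattice sum becomes a Riemann sum over the simplex $\{s+u_1+\dots+u_{k-1}\le1\}$ with the correct total power of $n$. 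Domination comes from the bounds \eqref{eq:type-B-first-block-bound} and $q_m\le C(m+1)^{-1/2}$, together with $|\Phi(\sqrt u\,y)-\tfrac12|\le \min(1/2,\sqrt u\,|y|)$. The resulting integrable singularities are of the form $s^{-1/2}(1-\sum)^{-1/2}u_j^{-1/2}$ near the boundary, and these justify dominated convergence. For type $A$ I would do the same with $h=nv$, using \eqref{eq:type-A-cyclic-factor-expectation}, the bound \eqref{eq:type-A-cyclic-factor-bound}, and the bridge limit of $G_a$ in the kernel \eqref{eq:type-A-symmetrized-kernel}, where the sum over $a$ becomes an integral over $a/h\in(0,1)$.

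\emph{Identification of the limits (main obstacle).} The limits produced above are $(k-1)$-fold simplex integrals, and they must be shown to equal $L_k^\bullet(c)$. I would not evaluate them one at a time. Instead I would form the generating function $\sum_k t^{k-1}L_k$. Then the factor $1/(k-1)!$ and the product over the gaps exponentiate after a Laplace transform in the total length, because the simplex constraint is a convolution with renewal structure. This reduces the problem to a one-dimensional integral in $y$, which I expect to match
\[
\frac1\pi\int_0^{\arsinh(1/c)}\frac{\eee^{tx}\,\dint x}{\sqrt{c^{-2}-\sinh^2x}}
\]
in type $B$, and $\sinh(t\theta_c)/(2\sinh\theta_c)$-type expressions in type $A$, after the substitution $\sinh x\leftrightarrow$ a ratio of Gaussian scales. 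If that is too heavy, a fallback is available. The Crofton formula \eqref{eq:finite-dimensional-conic-crofton} shows that $\gamma_d(\bW_n^\bullet(\rho_n))$ equals a Gaussian-point absorption probability, so the limits could instead be identified from a separate computation of that probability. This is the step I expect to take the most work.

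\emph{Parity identity.} Finally, \eqref{eq:finite-BA-limiting-parity-normalization} is direct. In type $B$, summing $x^{k-1}/(k-1)!$ over odd $k$ gives $\cosh x$ and over even $k$ gives $\sinh x$. The substitution $\sinh x=c^{-1}\sin\phi$, so that $\cosh x\,\dint x=c^{-1}\cos\phi\,\dint\phi$, gives $\frac1\pi\int\cosh x\,\dint x/\sqrt{c^{-2}-\sinh^2x}=\tfrac12$. For the even indices, $\frac1\pi\int\sinh x\,\dint x/\sqrt{\cdots}$ plus $\arctan(c)/\pi$ also equals $\tfrac12$, by an elementary computation. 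In type $A$ the analogous computation uses $\sum_{k\ge1}\theta^k/k!$ over odd $k$, which is $\sinh\theta$, and over even $k$, which is $\cosh\theta-1$. The even-index identity then reduces to $1/\sqrt{1+4c^2}=\tanh(\theta_c/2)\cdot\ldots$, which follows from $\sinh(\theta_c/2)=1/(2c)$.
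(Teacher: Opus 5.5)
Your treatment of the zeroth volumes via the maxima of Brownian motion and bridge is right, and so are the Riemann-sum passage with the meander asymptotics and $s^{-1/2}u_j^{-1/2}$-type domination, the parity identity, and the Scheff\'e step. All of these match the paper. The paper first fixes $\rho_n=c\sqrt n$ and then handles $\rho_n/\sqrt n\to c$ by monotonicity of the Grassmann angles in the height; your direct route also works once the transform limits are uniform in $\lambda$.

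The genuine gap is the step you flag yourself. You only announce that the limiting simplex integrals equal $L_k^\bullet(c)$, and that identification is the heart of the theorem. Your Laplace-transform mechanism is the right one, but it first needs a scaling observation. Setting $c=T^{-1/2}$ and dilating time turns height $c$ with total length $1$ into height $1$ with total length $T$. The limit is then a convolution in $T$ whose Laplace transform factorizes, and Laplace uniqueness additionally needs continuity of $T\mapsto I_k^\bullet(T^{-1/2})$.

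In type $B$ everything then goes through as you predict:
\begin{itemize}
\item the factors transform to $\lambda^{-1/2}$, $\varphi(x)/\sqrt{\lambda+x^2/2}$ and $\arsinh(x/\sqrt{2\lambda})$;
\item the substitution $x=\sqrt{2\lambda}\sinh u$ leaves a one-dimensional integral;
\item the identity $\int_a^\infty\eee^{-\lambda T}(\pi(T-a))^{-1/2}\,\dint T=\eee^{-\lambda a}\lambda^{-1/2}$ inverts it.
\end{itemize}

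In type $A$ the problem does not reduce to ``a one-dimensional integral in $y$''. There are two spatial variables $z<x$, and the kernel $R_c(s;x,z)$ is itself an integral over the split $a+b=s$ with the non-separable weight $1/\sqrt{ab+c^2(a+b)}$. Two ideas are missing:
\begin{itemize}
\item[(i)] You must decouple that weight by a Gaussian integral identity. The Laplace transform of $R_c$ in $s$ then becomes $\frac{2c^2}{pq}\eee^{\frac{c^2}{2}[(p+q)^2-(x-z)^2]}\overline\Phi(c(p+q))$, with $p=\sqrt{2\lambda+x^2}$ and $q=\sqrt{2\lambda+z^2}$.
\item[(ii)] You need the change of variables $x=\sqrt{2\lambda}\sinh(v+w/2)$, $z=\sqrt{2\lambda}\sinh(v-w/2)$. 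It turns the gap transform $\arsinh(x/\sqrt{2\lambda})-\arsinh(z/\sqrt{2\lambda})$ into $w$, with Jacobian $pq$. Integrating out $v$ then leaves the weight $c^2\int_w^\infty\eee^{-4c^2\lambda\sinh^2(t/2)}\,\dint t$ in $w$, and the substitution $T=4\sinh^2(t/2)$ inverts the transform.
\end{itemize}
Without something of this kind, the type-$A$ formulas remain conjectural.

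Your fallback does not close the gap. In this paper the Gaussian-point absorption probabilities for the Brownian hulls are \emph{derived from} this theorem, through the Crofton formula and the $\ell^1$ convergence. No independent evaluation is available in general dimension, so identifying the limits that way is circular.

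Also make sure you use the correct invariance principle in type $A$. The expectation in \eqref{eq:type-A-cyclic-factor-expectation} is under the bridge \emph{conditioned on} $D_h$. So $-G_a/\sqrt h$ converges to a normalized Brownian excursion at time $a/h$, whose Bessel-$3$-bridge marginal gives the transform $(1+\lambda t(1-t))^{-3/2}$. The unconditioned Brownian bridge would give exponent $-1/2$, exactly cancel the prefactor $\sqrt{1+ab/(h\rho^2)}$, and produce the wrong kernel.
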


The proof is given in the following subsections. We first assume
$\rho_n=c\sqrt n$. Our starting points are the symmetrized formulas \eqref{eq:type-B-intrinsic-volumes-symmetrized} and \eqref{eq:type-A-intrinsic-volumes-symmetrized}. For $k\ge1$, we pass to the limit in
\eqref{eq:type-B-intrinsic-volumes-symmetrized} and
\eqref{eq:type-A-intrinsic-volumes-symmetrized}, controlling the
contributions from gaps that are small compared with $n$, and
evaluate the resulting integrals by Laplace transforms.
For $k=0$, we take limits in the probabilities appearing in
\eqref{eq:type-B-zeroth-intrinsic-volume} and
\eqref{eq:type-A-zeroth-intrinsic-volume}.
Finally, we verify \eqref{eq:finite-BA-limiting-parity-normalization}
and complete the proof for $\rho_n/\sqrt n\to c$, including
convergence in $\ell^1$.

\subsection{Asymptotics of the walk and bridge probabilities}
\label{subsec:finite-BA-probability-asymptotics}

We first obtain the estimates for $q_m$, $\chi_m^B(c\sqrt n)$, and
$\chi_{a,b}^A(c\sqrt n)$ needed to pass to the limit in
\eqref{eq:type-B-intrinsic-volumes-symmetrized} and
\eqref{eq:type-A-intrinsic-volumes-symmetrized}.
For the standard Gaussian random walk $(G_j)_{j=0}^\infty$, put
\[
E_m:=\{G_1\ge0,\ldots,G_m\ge0\},\qquad
D_h:=\{G_1\le0,\ldots,G_{h-1}\le0\},
\quad m,h\ge1.
\]

\begin{lemma}
\label{lem:finite-BA-conditional-exponential-limits}
For every $M\in[0,\infty)$,
\begin{equation}
\label{eq:gaussian-conditional-rayleigh-uniform-transform}
\sup_{0\le\lambda\le M}
\left|
\Ee\left[\eee^{-\lambda G_m^2/(2m)}\,\middle|\,E_m\right]
-\frac1{1+\lambda}
\right|
\longrightarrow0
\qquad\text{as }m\to\infty.
\end{equation}
Moreover,
\begin{equation}
\label{eq:gaussian-conditional-bridge-uniform-transform}
\sup_{\substack{a\in\{0,\ldots,h\}\\0\le\lambda\le M}}
\left|
\Ee\left[\eee^{-\lambda G_a^2/(2h)}\,\middle|\,G_h=0,D_h\right]
-\left(1+\lambda\frac ah\left(1-\frac ah\right)\right)^{-3/2}
\right|
\longrightarrow0
\qquad\text{as }h\to\infty.
\end{equation}
Conditioning on $G_h=0,D_h$ means first taking the Gaussian bridge
law given $G_h=0$ and then conditioning on $D_h$, whose probability
under that law is $1/h$ by
\eqref{eq:type-B-ordinary-bridge-probability}.
\end{lemma}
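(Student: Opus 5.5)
The plan is to deduce both statements from invariance principles for conditioned Gaussian walks. For these the limiting laws are the Brownian meander and the Brownian excursion. I then identify the Laplace-type transforms of the relevant one-dimensional marginals explicitly.

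\emph{The meander limit.} For \eqref{eq:gaussian-conditional-rayleigh-uniform-transform}, I use the classical fact (Iglehart; Bolthausen) that, conditionally on $E_m$, the rescaled endpoint $G_m/\sqrt m$ converges in law to the endpoint of a Brownian meander. This endpoint has the Rayleigh density $r\eee^{-r^2/2}$ on $r>0$. Since the Gaussian walk has continuous increments, the conditioning event has positive probability $q_m$ and the invariance principle applies. Writing $R$ for a Rayleigh variable, $R^2/2$ is standard exponential, so
\[
\Ee\bigl[\eee^{-\lambda R^2/2}\bigr]=\frac1{1+\lambda}.
\]
Pointwise convergence for each fixed $\lambda$ follows because $x\mapsto\eee^{-\lambda x^2/2}$ is bounded and continuous on $[0,\infty)$. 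Uniformity over $\lambda\in[0,M]$ follows from equicontinuity: the family $\lambda\mapsto\Ee[\eee^{-\lambda X^2/2}]$ is $1/2\cdot\sup_m\Ee[G_m^2/m\mid E_m]$-Lipschitz. Alternatively one can use the general fact that pointwise convergence of monotone functions to a continuous limit on a compact interval is uniform. A second-moment bound is easy here: $\Ee[G_m^2;E_m]\le m$, and $q_m\asymp m^{-1/2}$, which gives only $O(\sqrt m)$. So I will instead rely on the monotonicity argument, since each transform is decreasing in $\lambda$ and the limit is continuous.

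\emph{The excursion limit.} For \eqref{eq:gaussian-conditional-bridge-uniform-transform}, I use the corresponding invariance principle for bridges conditioned to stay nonpositive (Kaigh for discrete walks; for the Gaussian bridge one can use the Vervaat/cyclic-shift construction). Conditionally on $G_h=0,D_h$, the process $(-G_{\lfloor sh\rfloor}/\sqrt h)_{0\le s\le1}$ converges in $C[0,1]$ to a standard Brownian excursion $\mathbf e$. The cyclic-shift lemma is behind the value $1/h$ in \eqref{eq:type-B-ordinary-bridge-probability}, and it shows the conditioned bridge is the Vervaat transform of the unconditioned one. Hence the functional convergence follows from Donsker's theorem for bridges and continuity of the Vervaat map at a.e.\ Brownian bridge path. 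The marginal $\mathbf e_s$ has density $\propto x^2\eee^{-x^2/(2s(1-s))}$, that is, it is $\sqrt{s(1-s)}$ times a $\chi_3$ variable. Therefore
\[
\Ee\bigl[\eee^{-\lambda\mathbf e_s^2/2}\bigr]=(1+\lambda s(1-s))^{-3/2}.
\]

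\emph{Uniformity in $a$ and $\lambda$.} This is the main obstacle. I argue by contradiction. Suppose there are sequences $h_\ell\to\infty$, $a_\ell$, $\lambda_\ell$ along which the difference stays at least $\varepsilon$. Pass to a subsequence with $a_\ell/h_\ell\to s\in[0,1]$ and $\lambda_\ell\to\lambda$. Functional convergence in $C[0,1]$, together with continuity of excursion paths, gives $G_{a_\ell}/\sqrt{h_\ell}\Rightarrow-\mathbf e_s$. This holds since $|G_{a_\ell}/\sqrt{h_\ell}-X^{(\ell)}_s|\le\omega(X^{(\ell)},|a_\ell/h_\ell-s|)$, and moduli of continuity are tight. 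The case $s\in\{0,1\}$ is included, where $\mathbf e_s=0$. Joint convergence of $(\lambda_\ell, G_{a_\ell}^2/h_\ell)$ and boundedness of the integrand then give convergence of the expectations to $(1+\lambda s(1-s))^{-3/2}$. The limit expression is continuous in $(s,\lambda)$, so we also have $(1+\lambda_\ell\frac{a_\ell}{h_\ell}(1-\frac{a_\ell}{h_\ell}))^{-3/2}\to(1+\lambda s(1-s))^{-3/2}$, which is a contradiction. The same subsequence argument also handles the uniformity in $\lambda$ for the meander part. The technical crux is justifying the excursion invariance principle for the Gaussian bridge, which I would do through the Vervaat transform as indicated.
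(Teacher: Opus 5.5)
Your proposal is correct and follows the paper's proof essentially step for step: Iglehart's Rayleigh limit with the monotonicity argument for uniformity in $\lambda$, then the excursion limit with the $\sqrt{s(1-s)}\,\chi_3$ marginal, and finally the subsequence-contradiction argument for uniformity in $(a,\lambda)$. The only difference is that you derive the excursion invariance principle yourself via the cyclic-shift (Vervaat) transform, whereas the paper cites Caravenna--Chaumont; your derivation is valid here, because the Gaussian bridge on the grid is exactly a Brownian bridge sampled at the points $j/h$ and the cyclic-shift identity gives the conditioned law, but you should use the linear interpolation rather than $G_{\lfloor sh\rfloor}$ so that the processes lie in $C[0,1]$.
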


\begin{proof}
\emph{The walk.}
A classical conditional limit theorem of
Iglehart~\cite[Proposition~2.1]{Iglehart1974} states, in the present
Gaussian setting, that the law of $G_m/\sqrt m$ given $E_m$ converges
weakly to the Rayleigh distribution with density
$x\eee^{-x^2/2}\ind_{(0,\infty)}(x)$; see also~\cite[(1.3)]{Caravenna2005}.
Applying this convergence to the bounded continuous function
$x\mapsto\eee^{-\lambda x^2/2}$ gives
\[
\Ee\left[\eee^{-\lambda G_m^2/(2m)}\,\middle|\,E_m\right]
\longrightarrow
\int_0^\infty x\eee^{-(1+\lambda)x^2/2}\,\dint x
=\frac1{1+\lambda}.
\]
The expectations are nonincreasing in $\lambda$, and the limit is
continuous. A finite subdivision of $[0,M]$ therefore makes the
convergence uniform, proving
\eqref{eq:gaussian-conditional-rayleigh-uniform-transform}.

\medskip
\emph{The bridge.}
Under the law conditioned on $G_h=0,D_h$, let $\mathcal E_h$ be the
continuous function on $[0,1]$ satisfying
$\mathcal E_h(j/h)=-G_j/\sqrt h$ for $0\le j\le h$ and linear
between consecutive grid points. By symmetry and
\cite[Corollary~2.5]{CaravennaChaumont2013},
\begin{equation}
\label{eq:gaussian-conditioned-bridge-excursion-limit}
\mathcal E_h\Longrightarrow\mathfrak e
\quad\text{weakly in }C[0,1]\text{ with the uniform topology},
\end{equation}
where $\mathfrak e$ is a normalized Brownian excursion on $[0,1]$.
The hypotheses of that result hold because the increments have
mean zero, variance one, and a bounded density.
For $0<t<1$, the random variable $\mathfrak e(t)$ has the same
distribution as the Euclidean norm of a three-dimensional centered
Gaussian vector with covariance $t(1-t)I_3$, since a normalized
Brownian excursion is equal in law to a three-dimensional Bessel
bridge; see Revuz--Yor~\cite[Chapter~XII, Theorem~4.2]{RevuzYor1999}.
Consequently, $\mathfrak e(t)$ has density
\[
\sqrt{\frac2\pi}\,
\frac{x^2}{[t(1-t)]^{3/2}}
\eee^{-x^2/(2t(1-t))},\qquad x>0.
\]
Integration gives
\begin{equation}
\label{eq:Brownian-excursion-one-time-transform}
\Ee\left[\eee^{-\lambda\mathfrak e(t)^2/2}\right]
=(1+\lambda t(1-t))^{-3/2},
\qquad \lambda\ge0,\quad 0\le t\le1,
\end{equation}
including $t=0,1$ because the excursion vanishes at both endpoints.

To prove uniformity in
\eqref{eq:gaussian-conditional-bridge-uniform-transform}, suppose
that it fails. Along a subsequence $h\to\infty$, we can then choose
$a_h\in\{0,\ldots,h\}$ and $\lambda_h\in[0,M]$ for which the
absolute difference stays bounded away from zero. Passing to a
further subsequence, assume $a_h/h\to t\in[0,1]$ and
$\lambda_h\to\lambda\in[0,M]$.
The map $(f,t,\lambda)\mapsto\eee^{-\lambda f(t)^2/2}$ is bounded
and continuous on $C[0,1]\times[0,1]\times[0,M]$.
Thus \eqref{eq:gaussian-conditioned-bridge-excursion-limit} implies
that the conditional expectation tends to the right-hand side of
\eqref{eq:Brownian-excursion-one-time-transform}. The explicit
expression subtracted from it has the same limit, a contradiction.
This proves the assertion, including indices approaching either
endpoint.
\end{proof}

We now apply these limits to the expectation representations
\eqref{eq:type-B-first-block-expectation} and
\eqref{eq:type-A-cyclic-factor-expectation}.

\begin{lemma}
\label{lem:finite-BA-probability-asymptotics}
Let $c>0$. For every $0<\varepsilon<M<\infty$, as $n\to\infty$,
\begin{align}
\sup_{\varepsilon n\le m\le Mn}
\left|\sqrt n\,q_m-\frac1{\sqrt{\pi(m/n)}}\right|
&\longrightarrow0,
\label{eq:type-B-scaled-terminal-asymptotics}
\\
\sup_{\varepsilon n\le m\le Mn}
\left|
\sqrt n\,\chi_m^B(c\sqrt n)
-\frac{c}{\sqrt{\pi(m/n)(c^2+m/n)}}
\right|
&\longrightarrow0,
\label{eq:type-B-scaled-first-block-asymptotics}
\\
\sup_{\substack{\varepsilon n\le h\le Mn\\1\le a\le h}}
\left|
n\chi_{a,h-a}^A(c\sqrt n)
-\frac{c^2}{c^2h/n+(a/n)((h-a)/n)}
\right|
&\longrightarrow0.
\label{eq:type-A-scaled-cyclic-factor-asymptotics}
\end{align}
The indices $m,h,a$ in these suprema are integers. No restriction
is imposed on $a/h$ in the third formula; in particular, $a=h$
is included.

There is an absolute constant $C<\infty$ such that, for all integers
$n\ge1$, $m\ge0$, and $a\ge1$, $b\ge0$ with $h=a+b$,
\begin{align}
q_m&\le\frac{C}{\sqrt{m+1}},\qquad
\chi_m^B(c\sqrt n)
\le\frac{C}{\sqrt{m+1}}\sqrt{1+\frac{m}{c^2n}},
\label{eq:type-B-scaled-block-bounds}
\\
\chi_{a,b}^A(c\sqrt n)
&\le\frac1h\sqrt{1+\frac{ab}{hc^2n}}
\le\frac1h\sqrt{1+\frac{h}{4c^2n}}.
\label{eq:type-A-scaled-cyclic-factor-bound}
\end{align}
\end{lemma}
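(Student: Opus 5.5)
The plan is to read each bound off the expectation representations in Lemmas~\ref{lem:type-B-first-block-representations} and~\ref{lem:type-A-cyclic-factor-representations}, with $\rho=c\sqrt n$. The uniform limits then follow from Lemma~\ref{lem:finite-BA-conditional-exponential-limits} and Stirling's formula.

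\emph{Terminal walk factor.} Stirling gives $q_m=4^{-m}\binom{2m}{m}\sim(\pi m)^{-1/2}$. Writing $\sqrt n\,q_m=(\sqrt{\pi m}\,q_m)\,(\pi m/n)^{-1/2}$, the first factor tends to $1$ uniformly for $m\ge\varepsilon n$. The second factor is bounded by $(\pi\varepsilon)^{-1/2}$ on the range considered. Together these give \eqref{eq:type-B-scaled-terminal-asymptotics}. The standard bound $\binom{2m}{m}\le4^m/\sqrt{\pi m}$ for $m\ge1$, together with $q_0=1$, gives $q_m\le C/\sqrt{m+1}$. The bound for $\chi_m^B$ in \eqref{eq:type-B-scaled-block-bounds} then follows from \eqref{eq:type-B-first-block-bound}. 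The bound \eqref{eq:type-A-scaled-cyclic-factor-bound} is \eqref{eq:type-A-cyclic-factor-bound} combined with $ab/h\le h/4$.

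\emph{First-block factor.} By \eqref{eq:type-B-first-block-expectation},
\[
\chi_m^B(c\sqrt n)=\sqrt{1+\tfrac{m}{c^2n}}\;q_m\;\Ee\bigl[\eee^{-\lambda G_m^2/(2m)}\bigm|E_m\bigr],
\qquad \lambda:=\frac{m}{c^2n}\in\Bigl[\frac{\varepsilon}{c^2},\frac{M}{c^2}\Bigr].
\]
The uniform statement \eqref{eq:gaussian-conditional-rayleigh-uniform-transform} applies with this bounded range of $\lambda$. It replaces the conditional expectation by $1/(1+\lambda)$ up to a $o(1)$ error, uniformly in $m$. The prefactor is bounded and $\sqrt n\,q_m$ is controlled as in the first step. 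Hence
\[
\sqrt n\,\chi_m^B(c\sqrt n)=\frac{(1+\lambda)^{-1/2}}{\sqrt{\pi m/n}}+o(1)=\frac{c}{\sqrt{\pi(m/n)(c^2+m/n)}}+o(1),
\]
uniformly on $\varepsilon n\le m\le Mn$.

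\emph{Cyclic factor.} By \eqref{eq:type-A-cyclic-factor-expectation}, conditioning first on $G_h=0$ and then on $D_h$, which has probability $1/h$,
\[
\chi_{a,h-a}^A(c\sqrt n)=\frac1h\sqrt{1+\tfrac{a(h-a)}{hc^2n}}\;\Ee\bigl[\eee^{-\lambda G_a^2/(2h)}\bigm|G_h=0,D_h\bigr],
\qquad \lambda:=\frac{h}{c^2n}\in\Bigl[\frac{\varepsilon}{c^2},\frac{M}{c^2}\Bigr].
\]
Put $s:=a/h\in(0,1]$. By \eqref{eq:gaussian-conditional-bridge-uniform-transform}, uniformly in $a$ and $h$, the conditional expectation equals $(1+\lambda s(1-s))^{-3/2}+o(1)$. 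The prefactor is exactly $(1+\lambda s(1-s))^{1/2}$. Their product is therefore $(1+\lambda s(1-s))^{-1}+o(1)$, the error staying $o(1)$ because the prefactor is bounded by $\sqrt{1+M/(4c^2)}$. Multiplying by $n/h\le1/\varepsilon$ keeps the error $o(1)$. A direct check gives
\[
\frac nh\cdot\frac1{1+\frac{h}{c^2n}s(1-s)}=\frac{c^2}{c^2h/n+(a/n)((h-a)/n)}.
\]
The case $a=h$ is covered because \eqref{eq:gaussian-conditional-bridge-uniform-transform} includes the endpoint $a=h$.

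\emph{Main obstacle.} The only delicate point is that the errors must stay uniform across the whole index ranges. This is already supplied by the uniform versions in Lemma~\ref{lem:finite-BA-conditional-exponential-limits}. What remains is to check that every multiplying factor ($n/h$, $\sqrt n\,q_m$ and the square-root prefactors) is uniformly bounded on $[\varepsilon n,Mn]$, which it is.
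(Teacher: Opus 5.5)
Your proposal is correct and follows the same route as the paper. You get $q_m$ from Stirling, feed the expectation representations at $\rho=c\sqrt n$ (with $\lambda=m/(c^2n)$, respectively $\lambda=h/(c^2n)$) into the uniform transforms of Lemma~\ref{lem:finite-BA-conditional-exponential-limits}, and read the bounds off \eqref{eq:type-B-first-block-bound} and \eqref{eq:type-A-cyclic-factor-bound} with $ab\le h^2/4$. The one step you leave implicit is $\Pp(E_m)=q_m$ by symmetry, which is needed to rewrite $\Ee[\,\cdot\,;E_m]$ as $q_m\,\Ee[\,\cdot\mid E_m]$; the paper states this step explicitly.
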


\begin{proof}
\emph{Type $B$.}
Stirling's formula in \eqref{eq:type-B-walk-persistence} gives
$\sqrt m\,q_m\to\pi^{-1/2}$ and
$q_m\le C/\sqrt{m+1}$. The convergence is uniform for
$m\ge\varepsilon n$, proving
\eqref{eq:type-B-scaled-terminal-asymptotics}.
By symmetry, $\Pp(E_m)=q_m$, so
\eqref{eq:type-B-first-block-expectation} becomes
\begin{equation}
\label{eq:type-B-first-block-scaled-expectation}
\chi_m^B(c\sqrt n)
=
\sqrt{1+\frac{m}{c^2n}}\,q_m\,
\Ee\left[\eee^{-G_m^2/(2c^2n)}\,\middle|\,E_m\right],
\qquad m\ge1.
\end{equation}
Apply \eqref{eq:gaussian-conditional-rayleigh-uniform-transform}
with $\lambda=m/(c^2n)$. Uniformly for
$\varepsilon n\le m\le Mn$, the expectation is
$(1+m/(c^2n))^{-1}+o(1)$.
Combining this with the preceding estimate for $q_m$ proves
\eqref{eq:type-B-scaled-first-block-asymptotics}.
The bound for $\chi_m^B$ follows from
\eqref{eq:type-B-first-block-bound}, also when $m=0$.

\medskip
\emph{Type $A$.}
Since $\Pp(D_h\mid G_h=0)=1/h$,
\eqref{eq:type-A-cyclic-factor-expectation} gives
\begin{equation}
\label{eq:type-A-cyclic-factor-scaled-expectation}
n\chi_{a,h-a}^A(c\sqrt n)
=
\frac nh\sqrt{1+\frac{a(h-a)}{hc^2n}}\,
\Ee\left[\eee^{-G_a^2/(2c^2n)}\,\middle|\,G_h=0,D_h\right].
\end{equation}
Apply \eqref{eq:gaussian-conditional-bridge-uniform-transform}
with $\lambda=h/(c^2n)$. Uniformly for
$\varepsilon n\le h\le Mn$ and $1\le a\le h$, the expectation is
$(1+a(h-a)/(hc^2n))^{-3/2}+o(1)$.
The prefactor in \eqref{eq:type-A-cyclic-factor-scaled-expectation}
is bounded on this range. Multiplication therefore gives, uniformly,
\[
n\chi_{a,h-a}^A(c\sqrt n)
=
\frac nh\left(1+\frac{a(h-a)}{hc^2n}\right)^{-1}+o(1)
=
\frac{c^2}{c^2h/n+(a/n)((h-a)/n)}+o(1),
\]
which proves \eqref{eq:type-A-scaled-cyclic-factor-asymptotics}.
Finally, \eqref{eq:type-A-scaled-cyclic-factor-bound} follows from
\eqref{eq:type-A-cyclic-factor-bound} and $ab\le h^2/4$.
\end{proof}

\subsection{Passage from discrete sums to multidimensional integrals}
\label{subsec:finite-BA-continuum-limits}

We now pass to the limit in
\eqref{eq:type-B-intrinsic-volumes-symmetrized} and
\eqref{eq:type-A-intrinsic-volumes-symmetrized} with
$\rho=c\sqrt n$, where $c>0$ is fixed. We first record estimates
for the sums and integrals near the boundary of their domains.
For $r\ge1$, write
\[
\Delta_r:=\{(u_1,\ldots,u_r)\in(0,\infty)^r:
 u_1+\cdots+u_r<1\}.
\]
An integral over $\Delta_0$ means evaluation at the empty tuple;
its coordinate sum is zero.

\begin{lemma}
\label{lem:finite-BA-simplex-estimates}
Let $p\ge1$, $\beta_1,\ldots,\beta_p>0$, and
$\beta:=\sum_{i=1}^p\beta_i$. There is a constant $C$, depending
only on these parameters, such that, for every integer $N\ge0$,
\begin{equation}
\label{eq:finite-BA-discrete-simplex-bound}
\sum_{\substack{j_1,\ldots,j_p\ge0\\j_1+\cdots+j_p=N}}
\prod_{i=1}^p(j_i+1)^{\beta_i-1}
\le C(N+1)^{\beta-1}.
\end{equation}
For $p\ge2$, any $i\in\{1,\ldots,p\}$, and integers
$0\le H\le N/2$,
\begin{equation}
\label{eq:finite-BA-discrete-simplex-boundary}
\sum_{\substack{j_1,\ldots,j_p\ge0\\j_1+\cdots+j_p=N,\ j_i\le H}}
\prod_{\ell=1}^p(j_\ell+1)^{\beta_\ell-1}
\le C(N+1)^{\beta-1}
\left(\frac{H+1}{N+1}\right)^{\beta_i}.
\end{equation}
All summation variables are integers. The corresponding integral,
with $u_p:=1-\sum_{i=1}^{p-1}u_i$, is
\begin{equation}
\label{eq:finite-BA-continuous-simplex-integral}
\int_{\Delta_{p-1}}\prod_{i=1}^p u_i^{\beta_i-1}
\,\dint u_1\cdots\dint u_{p-1}
=\frac{\prod_{i=1}^p\Gamma(\beta_i)}{\Gamma(\beta)}.
\end{equation}
For $0<\delta\le1/2$, its portion with $u_i\le\delta$ is at
most $C\delta^{\beta_i}$; for $p=1$ this portion is empty.
\end{lemma}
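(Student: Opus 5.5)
The plan is to prove the three assertions of Lemma~\ref{lem:finite-BA-simplex-estimates} in reverse order of difficulty: first the Dirichlet integral \eqref{eq:finite-BA-continuous-simplex-integral} and its boundary portion, then the discrete bound \eqref{eq:finite-BA-discrete-simplex-bound}, and finally its boundary refinement \eqref{eq:finite-BA-discrete-simplex-boundary}.

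\emph{The integral.} The identity \eqref{eq:finite-BA-continuous-simplex-integral} is the classical Dirichlet (multivariate Beta) integral. It follows by induction on $p$ via the Beta integral $\int_0^s u^{\alpha-1}(s-u)^{\gamma-1}\,\dint u=s^{\alpha+\gamma-1}B(\alpha,\gamma)$. For the portion with $u_i\le\delta$, I would integrate out the remaining variables first. By the same induction, the marginal density of $u_i$ under the Dirichlet measure is proportional to $u_i^{\beta_i-1}(1-u_i)^{\beta-\beta_i-1}$. For $u_i\le\delta\le1/2$ the factor $(1-u_i)^{\beta-\beta_i-1}$ is bounded by a constant, whatever the sign of its exponent. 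Integrating $u_i^{\beta_i-1}$ over $[0,\delta]$ then gives $C\delta^{\beta_i}$. If $i=p$, either use symmetry of the integrand under relabelling or note that $u_p\le\delta$ forces the same marginal computation.

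\emph{The discrete bound.} I would induct on $p$; the case $p=1$ is trivial since the sum is the single term $(N+1)^{\beta_1-1}$. The base inequality is the two-term convolution estimate
\[
\sum_{j=0}^N (j+1)^{\alpha-1}(N-j+1)^{\gamma-1}\le C(N+1)^{\alpha+\gamma-1},\qquad \alpha,\gamma>0.
\]
To prove it, split at $j\le N/2$ and $j>N/2$. On the first half, $(N-j+1)^{\gamma-1}\asymp(N+1)^{\gamma-1}$ (either sign of $\gamma-1$) and $\sum_{j\le N/2}(j+1)^{\alpha-1}\le C(N+1)^{\alpha}$ because $\alpha>0$. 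The second half is symmetric. For the inductive step, write the $p$-fold sum as a sum over $j_p$ of $(j_p+1)^{\beta_p-1}$ times the $(p-1)$-fold sum with total $N-j_p$. Bound the latter by $C(N-j_p+1)^{\beta-\beta_p-1}$ using the induction hypothesis, and apply the two-term estimate.

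\emph{The boundary refinement.} Fix $i$ and sum over $j_i=s\le H\le N/2$. By \eqref{eq:finite-BA-discrete-simplex-bound} applied to the other $p-1\ge1$ indices, the inner sum is at most $C(N-s+1)^{\beta-\beta_i-1}$. Since $s\le N/2$, we have $N-s+1\asymp N+1$, so this is at most $C(N+1)^{\beta-\beta_i-1}$. Then $\sum_{s\le H}(s+1)^{\beta_i-1}\le C(H+1)^{\beta_i}$, and the product gives $C(N+1)^{\beta-1}\bigl((H+1)/(N+1)\bigr)^{\beta_i}$.

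The only mildly delicate point is keeping the constants uniform in the cases where the exponents $\beta_\ell-1$ are negative. This is handled throughout by the comparability $N-s+1\asymp N+1$ for $s\le N/2$ and by the positivity of every $\beta_\ell$.
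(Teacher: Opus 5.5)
Your proposal is correct and follows essentially the same route as the paper: the two-term convolution estimate obtained by splitting at $N/2$, induction on $p$, summing over $j_i\le H$ after applying the bound to the remaining $p-1$ indices, and reducing the integral portion with $u_i\le\delta$ to the Beta marginal $u_i^{\beta_i-1}(1-u_i)^{\beta-\beta_i-1}$. The only omission is the trivial case $p=1$ of the integral-portion claim, where the portion is empty.
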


\begin{proof}
For $v,w>0$, we have
\begin{equation}\label{eq:beta_sum_inequality}
\sum_{j=0}^N(j+1)^{v-1}(N-j+1)^{w-1}
\le C_{v,w}(N+1)^{v+w-1}.
\end{equation}
Indeed, split the sum into $j\le N/2$ and $j>N/2$.
On the first range, $N-j+1$ is comparable to $N+1$, while on the
second range, $j+1$ is comparable to $N+1$.  Using $\sum_{j=0}^m (j+1)^{v-1}\le C_v(m+1)^v$,  $v>0$,
on the two ranges, respectively, gives the claimed inequality.

Induction on $p$ combined with~\eqref{eq:beta_sum_inequality} proves \eqref{eq:finite-BA-discrete-simplex-bound}.
If $j_i\le H\le N/2$, apply \eqref{eq:finite-BA-discrete-simplex-bound} to the other $p-1$
indices and sum over $j_i$; the result is at most
$C(N+1)^{\beta-\beta_i-1}(H+1)^{\beta_i}$, proving
\eqref{eq:finite-BA-discrete-simplex-boundary}.
Repeated beta integration gives
\eqref{eq:finite-BA-continuous-simplex-integral}.
For $p\ge2$, integrating first over the other coordinates reduces
its portion with $u_i\le\delta$ to a constant times
$\int_0^\delta v^{\beta_i-1}(1-v)^{\beta-\beta_i-1}\,\dint v$,
which is bounded as claimed.
\end{proof}

In the next result, we express the limit of
$\upsilon_k\bigl(\bW_n^\bullet(c\sqrt n)\bigr)$ as $n\to\infty$
in the form of a multidimensional integral. The resulting integrals
will be evaluated in
Subsection~\ref{subsec:finite-BA-continuum-integrals}. For $c,t>0$ and $x\ge0$, define
\begin{align}
g_c(t,x)&:=\frac{c}{\sqrt{\pi t}}
 \varphi\bigl(\sqrt{c^2+t}\,x\bigr),
\label{eq:type-B-limiting-initial-kernel}
\\
h(t)&:=\frac1{\sqrt{\pi t}},
\label{eq:type-B-limiting-terminal-kernel}
\\
f_x(t)&:=\frac{\Phi(\sqrt t\,x)-1/2}{t}.
\label{eq:type-B-limiting-ordinary-kernel}
\end{align}
For $s,u>0$ and $z<x$, put
\begin{align}
R_c(s;x,z)
&:=\frac{c^2}{2\pi}\int_0^s
\frac{\eee^{-\frac12[(s-\alpha)x^2+\alpha z^2+c^2(x-z)^2]}}
{\sqrt{\alpha(s-\alpha)+c^2s}}\,\dint\alpha,
\label{eq:type-A-limiting-kernel}
\\
b_u(x,z)&:=\frac{\Phi(\sqrt u\,x)-\Phi(\sqrt u\,z)}u.
\label{eq:type-A-limiting-ordinary-kernel}
\end{align}
The formula for $R_c$ also defines it for all $(x,z)\in\R^2$.

\begin{proposition}[Limits of the conic intrinsic volumes as multidimensional integrals]\label{prop:finite-BA-continuum-limits}
Let $c>0$ and let $k\ge1$ be an integer. Define
\begin{equation}
\label{eq:type-B-unit-time-continuum-integral}
\begin{aligned}
I_k^B(c):={}&\frac1{(k-1)!}
\int_0^\infty\int_{\Delta_k}
g_c(t,x)\,
h\left(1-t-\sum_{j=1}^{k-1}u_j\right)
\prod_{j=1}^{k-1}f_x(u_j)
\,\dint t\,\dint u_1\cdots\dint u_{k-1}\,\dint x,
\end{aligned}
\end{equation}
where the coordinates of $\Delta_k$ are
$(t,u_1,\ldots,u_{k-1})$, and
\begin{equation}
\label{eq:type-A-unit-time-continuum-integral}
\begin{aligned}
I_k^A(c):={}&\frac1{(k-1)!}
\int_{\Delta_{k-1}}\int_{z<x}
R_c\left(1-\sum_{j=1}^{k-1}u_j;x,z\right)
\prod_{j=1}^{k-1}b_{u_j}(x,z)
\,\dint x\,\dint z\,\dint u_1\cdots\dint u_{k-1}.
\end{aligned}
\end{equation}
For $k=1$, the $u_j$ are absent and the products are one;
in particular, $I_1^A(c)=\int_{z<x}R_c(1;x,z)\,\dint x\,\dint z$.
Both integrals are finite, and
\begin{equation}
\label{eq:finite-BA-unit-time-continuum-limit}
\lim_{n\to\infty}\upsilon_k\bigl(\bW_n^\bullet(c\sqrt n)\bigr)
=I_k^\bullet(c),\qquad \bullet\in\{B,A\}.
\end{equation}
Moreover, $I_k^B$ and $I_k^A$ are continuous on $(0,\infty)$
and take values in $[0,1]$.
\end{proposition}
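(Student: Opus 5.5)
We pass to the limit directly in the symmetrized formulas of Proposition~\ref{prop:finite-BA-intrinsic-volumes-symmetrized} with $\rho=c\sqrt n$, reading each as a Riemann sum. For type $B$, write $m=tn$, $d_j=u_jn$, and substitute $x=y/\sqrt n$ in the integral. Then:
\begin{itemize}
\item $\sqrt{\rho^2+m}\,\varphi(\sqrt{\rho^2+m}\,x)\,\dint x$ becomes $\sqrt{c^2+t}\,\varphi(\sqrt{c^2+t}\,y)\,\dint y$;
\item each factor $(\Phi(\sqrt{d_j}x)-1/2)/d_j$ becomes $n^{-1}f_y(u_j)$;
\item by Lemma~\ref{lem:finite-BA-probability-asymptotics}, $\chi_m^B(c\sqrt n)\approx n^{-1/2}c/\sqrt{\pi t(c^2+t)}$ and $q_{n-m-\sum d_j}\approx n^{-1/2}h(1-t-\sum u_j)$.
\end{itemize}
Counting powers, $n^{-1/2}\cdot n^{-1/2}\cdot n^{-(k-1)}$ against the $n^k$ lattice points per unit volume in the $k$ variables $(t,u_1,\dots,u_{k-1})$ gives order one. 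The product $\sqrt{c^2+t}\cdot c/\sqrt{\pi t(c^2+t)}\,\varphi(\cdot)$ is exactly $g_c(t,y)$, so the summand converges to the integrand of $I_k^B(c)$. Type $A$ is analogous: put $h=sn$ and $a=\alpha n$, and substitute $(x,z)\mapsto(x,z)/\sqrt n$. Then $p_{\rho;a,h-a}$ times the Jacobian becomes the Gaussian in the definition of $R_c$ with normalization $\sqrt{\alpha(s-\alpha)+c^2s}/(2\pi)$. Combined with $n\chi^A_{a,h-a}\approx c^2/(c^2s+\alpha(s-\alpha))$, the sum over $a$ gives a Riemann sum for $R_c(s;x,z)$, and the $b_{u_j}$ factors arise as in type $B$.

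Concretely, for fixed $\delta>0$ we split the index region into a bulk part, where every block length ($m$ or $h$, each $d_j$, and the terminal gap in type $B$) is at least $\delta n$, and its complement. On the bulk part, the uniform asymptotics \eqref{eq:type-B-scaled-terminal-asymptotics}--\eqref{eq:type-A-scaled-cyclic-factor-asymptotics} hold. Together with the uniform continuity of $f_y(u)$, $b_u(x,z)$ and the Gaussian kernels on compact $u$-ranges, dominated by integrable Gaussian tails in $y$ (respectively $(x,z)$), these give convergence of the bulk Riemann sum to the corresponding restricted integral. The remainder of the integral over $\{\text{some variable}\le\delta\}$ tends to $0$ as $\delta\to0$ by the continuous part of Lemma~\ref{lem:finite-BA-simplex-estimates}.

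The main obstacle is the uniform control of the discrete boundary region, where some block is shorter than $\delta n$. For this we use the crude bounds $0\le f_y\le 1/\sqrt{2\pi}\cdot\sqrt{u}\,|y|/u$ and $|f_y(u)|\le \min(1/(2u),|y|/\sqrt{2\pi u})$, that is $d_j^{-1}(\Phi(\sqrt{d_j}x)-1/2)\le C|x|/\sqrt{d_j}$. Combined with \eqref{eq:type-B-scaled-block-bounds} and \eqref{eq:type-A-scaled-cyclic-factor-bound}, the summand is bounded by a product $\prod(j_i+1)^{-1/2}$ times a polynomial in $|x|$ integrated against a Gaussian. Each exponent is then $\beta_i=1/2$. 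In type $A$ the cyclic factor gives $h^{-1}$, and the sum over $a\le h$ restores one power, so the factor has exponent $\beta=1$ after integrating the Gaussian in $(x,z)$ on scale $\sqrt{h}$ or $\sqrt n$. Then \eqref{eq:finite-BA-discrete-simplex-boundary} bounds the boundary contribution by $C\delta^{1/2}$, uniformly in $n$. We expect the bookkeeping of the $x$-powers against the $\sqrt{d_j}$ factors to need care for small $d_j$; there we use instead the bound $|\Phi-1/2|\le 1/2$, giving $d_j^{-1}$ summed over $d_j\le\delta n$, which is too large. The fix is the interpolation $\min(1/d_j,|x|/\sqrt{d_j})\le |x|^{1/2}d_j^{-3/4}$ whenever that exponent suffices, i.e.\ choosing $\beta_i=1/4$.

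Finally, the same domination gives finiteness of $I_k^\bullet(c)$, and $I_k^\bullet(c)\in[0,1]$ follows as a limit of intrinsic volumes. Continuity in $c$ follows by dominated convergence, since the kernels $g_c$ and $R_c$ depend continuously on $c$ and are locally uniformly dominated for $c$ in compact subsets of $(0,\infty)$.
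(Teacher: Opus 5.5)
Your proposal follows essentially the paper's own argument: Riemann sums over the bulk region where every block length exceeds $\delta n$ (driven by the uniform asymptotics of Lemma~\ref{lem:finite-BA-probability-asymptotics} and, in type $A$, the Riemann-sum convergence of $R_{n,c}(h;x,z)$ to $R_c(s;x,z)$), the simplex estimates of Lemma~\ref{lem:finite-BA-simplex-estimates} on the boundary region, and dominated convergence for continuity in $c$. Two remarks: the $\beta_i=1/4$ interpolation is unnecessary, because $f_x(d/n)\le Cx\,((d+1)/n)^{-1/2}$ holds for every $d\ge1$ and $x^{k-1}\varphi(cx)$ is integrable, so $\beta_i=1/2$ works throughout as in the paper; and in type $A$ the factors $b_{u_j}$ must be bounded through $x-z$ rather than $|x|$ or $|z|$, since the location of $[z,x]$ is only controlled at rate $s=h/n$ --- the paper keeps $\eee^{-u\dist(0,[z,x])^2/2}$ in each $b_u$ so that the rates add up to $s+\sum_j u_j=1$ and yield the single majorant $\mathcal M_{c,k}$, whereas your integrate-first bound also works (it uses that $X-Z$ has variance at most $\rho^{-2}$ under $p_{\rho;a,b}$, so the $h$-block gets exponent $1$).
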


\begin{proof}
Constants in the estimates below may depend on $c$ and $k$, but
not on $n$ or the summation indices.

\smallskip
\emph{Step 1: Pass to the limit for type $B$.}
For integers $m,\ell\ge0$, set
\[
g_{n,c}(m,x):=\sqrt n\,\chi_m^B(c\sqrt n)
 \sqrt{c^2+m/n}\,\varphi\bigl(\sqrt{c^2+m/n}\,x\bigr),
\qquad h_n(\ell):=\sqrt n\,q_\ell.
\]
In \eqref{eq:type-B-intrinsic-volumes-symmetrized}, replace the
integration variable by $x/\sqrt n$ and put
$\ell:=n-m-\sum_jd_j$. This gives the exact identity
\begin{equation}
\label{eq:type-B-unit-time-scaled-sum}
\upsilon_k\bigl(\bW_n^B(c\sqrt n)\bigr)
=\frac1{(k-1)!}\int_0^\infty\frac1{n^k}
\sum_{\substack{m,\ell\ge0,\ d_1,\ldots,d_{k-1}\ge1\\
 m+\ell+d_1+\cdots+d_{k-1}=n}}
g_{n,c}(m,x)h_n(\ell)\prod_{j=1}^{k-1}f_x(d_j/n)\,\dint x.
\end{equation}

For each fixed $x\ge0$, the sum in
\eqref{eq:type-B-unit-time-scaled-sum} will be treated as a
$k$-dimensional Riemann sum. More precisely, fix $0<\delta<1/(k+1)$.  After restricting to
$m,\ell,d_1,\ldots,d_{k-1}>\delta n$, we shall prove that
\begin{multline}
\label{eq:type-B-truncated-riemann-sum-limit}
\frac1{n^k}
\sum_{\substack{
m+\ell+d_1+\cdots+d_{k-1}=n\\
m,\ell,d_1,\ldots,d_{k-1}>\delta n}}
g_{n,c}(m,x)h_n(\ell)
\prod_{j=1}^{k-1}f_x(d_j/n)
\\
\longrightarrow
\int_{\substack{(t,u_1,\ldots,u_{k-1})\in\Delta_k\\
t,\ u_1,\ldots,u_{k-1},\
1-t-\sum_j u_j>\delta}}
g_c(t,x)\,
h\left(1-t-\sum_{j=1}^{k-1}u_j\right)
\prod_{j=1}^{k-1}f_x(u_j)
\,\dint t\,\dint u_1\cdots\dint u_{k-1}.
\end{multline}
We shall then use a bound integrable in $x$ to pass this truncated
limit through the outer $x$-integral. Finally, we shall show that
the contributions from the omitted boundary regions vanish as
$\delta\downarrow0$.

By Lemma~\ref{lem:finite-BA-probability-asymptotics}, for each fixed
$x\ge0$ the differences $g_{n,c}(m,x)-g_c(m/n,x)$ and
$h_n(\ell)-h(\ell/n)$ tend to zero uniformly on the truncated
index set. The limiting function is continuous on the corresponding
compact truncated simplex. Hence the usual $k$-dimensional
Riemann-sum theorem proves
\eqref{eq:type-B-truncated-riemann-sum-limit}.

We next justify integration in $x$, still retaining the truncation at $\delta n$, respectively $\delta$. For $m,\ell\le n$ and $d\ge1$, the bounds in
\eqref{eq:type-B-scaled-block-bounds} imply
\begin{equation}
\label{eq:type-B-unit-time-kernel-bounds}
\begin{aligned}
g_{n,c}(m,x)&\le C_c\varphi(cx)
 \left(\frac{m+1}{n}\right)^{-1/2},
&h_n(\ell)&\le C\left(\frac{\ell+1}{n}\right)^{-1/2},
\\
f_x(d/n)&\le Cx\left(\frac{d+1}{n}\right)^{-1/2}.
\end{aligned}
\end{equation}
For the first bound use
$\sqrt{1+m/(c^2n)}\sqrt{c^2+m/n}=(c^2+m/n)/c$;
for the last use $\Phi(v)-1/2\le v/\sqrt{2\pi}$ for $v\ge0$.

Applying the bounds in \eqref{eq:type-B-unit-time-kernel-bounds}, we majorize the sum in \eqref{eq:type-B-unit-time-scaled-sum} by the corresponding sum in which the variables \(d_j\) are allowed to take the value zero. Lemma~\ref{lem:finite-BA-simplex-estimates}, with \(p=k+1\) and all \(\beta_i=1/2\), then shows that the integrand after summation is bounded by \(C_{c,k}x^{k-1}\varphi(cx)\).
This bound  permits
integration in $x$ by dominated convergence. Thus,
\begin{multline}
\label{eq:type-B-truncated-integrated-limit}
\int_0^\infty \frac1{n^k}
\sum_{\substack{
m+\ell+d_1+\cdots+d_{k-1}=n\\
m,\ell,d_1,\ldots,d_{k-1}>\delta n}}
g_{n,c}(m,x)h_n(\ell)
\prod_{j=1}^{k-1}f_x(d_j/n)\,\dint x
\\
\longrightarrow
\int_0^\infty
\int_{\substack{(t,u_1,\ldots,u_{k-1})\in\Delta_k\\
t,\ u_1,\ldots,u_{k-1},\
1-t-\sum_j u_j>\delta}}
g_c(t,x)\,
h\left(1-t-\sum_{j=1}^{k-1}u_j\right)
\prod_{j=1}^{k-1}f_x(u_j)
\,\dint t\,\dint u_1\cdots\dint u_{k-1}\,\dint x .
\end{multline}

It remains to remove the truncation in the discrete and continuous
domains. By Lemma~\ref{lem:finite-BA-simplex-estimates}, the
contribution to the discrete sum from the region where at least one of
$m,\ell,d_1,\ldots,d_{k-1}$ is at most $\delta n$ is bounded, for
each $x\ge0$, by
\begin{equation}
\label{eq:type-B-unit-time-short-index-bound}
C_{c,k}x^{k-1}\varphi(cx)\sqrt{\delta+1/n}.
\end{equation}
After integration in $x$, this contribution is therefore
$O_{c,k}(\sqrt{\delta+1/n})$. In particular, the terms with
$m=0$ or $\ell=0$ contribute $O_{c,k}(n^{-1/2})$.

On the continuous side, the integrand is bounded by a constant
multiple of
\[
x^{k-1}\varphi(cx)
\left[
t\left(1-t-\sum_j u_j\right)\prod_j u_j
\right]^{-1/2}.
\]
Lemma~\ref{lem:finite-BA-simplex-estimates} shows that this majorant
is integrable and that its integral over the region where at least one
of the time coordinates is at most $\delta$ is
$O_{c,k}(\sqrt\delta)$. Combining these estimates with
\eqref{eq:type-B-truncated-integrated-limit}, letting first
$n\to\infty$ for fixed $\delta$ and then $\delta\downarrow0$,
proves \eqref{eq:finite-BA-unit-time-continuum-limit} for type $B$.

\medskip
\emph{Step 2: Obtain the limit of $R_{h,c\sqrt n}$.}
For $1\le h\le n$, define
\begin{equation}
\label{eq:type-A-scaled-kernel-definition}
R_{n,c}(h;x,z):=\frac1n
 R_{h,c\sqrt n}(x/\sqrt n,z/\sqrt n).
\end{equation}
For $s=h/n$ and $\alpha=a/n$, the density
\eqref{eq:type-A-endpoint-Gaussian-density} satisfies
\begin{equation}
\label{eq:type-A-scaled-density}
\frac1n p_{c\sqrt n;a,h-a}(x/\sqrt n,z/\sqrt n)
=\frac{\sqrt{\alpha(s-\alpha)+c^2s}}{2\pi}
 \eee^{-\frac12[(s-\alpha)x^2+\alpha z^2+c^2(x-z)^2]}.
\end{equation}
Using \eqref{eq:type-A-scaled-cyclic-factor-asymptotics} in
\eqref{eq:type-A-symmetrized-kernel}, the sum defining
$R_{n,c}$ becomes a Riemann sum in $a/n$ with mesh $1/n$.
It follows that, for every $0<\varepsilon<1$ and $M>0$,
\begin{equation}
\label{eq:type-A-scaled-kernel-convergence}
\sup_{\substack{\varepsilon n\le h\le n\\|x|,|z|\le M}}
\left|R_{n,c}(h;x,z)-R_c(h/n;x,z)\right|\longrightarrow0.
\end{equation}
Indeed, the limiting summand is the integrand in
\eqref{eq:type-A-limiting-kernel}, and its denominator is at least
$c\sqrt\varepsilon$ on this range. The limiting integrand is uniformly continuous on this compact
parameter set, including at $\alpha=0$ and $\alpha=s$, so the Riemann sums converge uniformly.
The term $a=h$, corresponding to $b=0$, is included in these sums.

\medskip
\emph{Step 3: Control small time coordinates and the integrals over $x,z$.}
We retain the Gaussian decay in each distribution-function
difference, so that a single integrable bound controls the whole
half-plane $z<x$. For $u>0$,
\begin{equation}
\label{eq:type-A-ordinary-factor-spatial-bound}
0\le b_u(x,z)
=\frac1{\sqrt{2\pi u}}\int_z^x\eee^{-u y^2/2}\,\dint y
\le\frac{x-z}{\sqrt{2\pi u}}
 \eee^{-\frac u2\dist(0,[z,x])^2}.
\end{equation}
For $s=h/n\le1$, equations
\eqref{eq:type-A-scaled-cyclic-factor-bound} and
\eqref{eq:type-A-scaled-density} give
\begin{equation}
\label{eq:type-A-kernel-spatial-bound}
R_{n,c}(h;x,z)
\le C_c\sqrt s\,
 \eee^{-\frac12[s\dist(0,[z,x])^2+c^2(x-z)^2]}.
\end{equation}
To see this, use
$\alpha(s-\alpha)+c^2s\le s(c^2+1/4)$,
$(s-\alpha)x^2+\alpha z^2\ge s\dist(0,[z,x])^2$, and
$\sum_{a=1}^h\chi_{a,h-a}^A(c\sqrt n)\le\sqrt{1+1/(4c^2)}$.
The same bound holds for $R_c(s;x,z)$: in its defining integral,
the denominator is at least $c\sqrt s$ and the integration
interval has length $s$.

Set, for $z<x$,
\[
\mathcal M_{c,k}(x,z):=(x-z)^{k-1}
 \eee^{-\frac12[\dist(0,[z,x])^2+c^2(x-z)^2]}.
\]
Combining \eqref{eq:type-A-ordinary-factor-spatial-bound} and
\eqref{eq:type-A-kernel-spatial-bound}, whenever
$s+u_1+\cdots+u_{k-1}=1$, yields
\begin{equation}
\label{eq:type-A-continuum-product-bound}
R_c(s;x,z)\prod_{j=1}^{k-1}b_{u_j}(x,z)
\le C_{c,k}\,s^{1/2}\prod_{j=1}^{k-1}u_j^{-1/2}
 \mathcal M_{c,k}(x,z).
\end{equation}
The identical bound holds with $R_c(s;x,z)$ replaced by
$R_{n,c}(h;x,z)$ when $s=h/n$ and $u_j=d_j/n$.
Moreover,
\begin{equation}
\label{eq:type-A-spatial-majorant-integrable}
\int_{z<x}\mathcal M_{c,k}(x,z)\,\dint x\,\dint z
=\int_0^\infty w^{k-1}(w+\sqrt{2\pi})
 \eee^{-c^2w^2/2}\,\dint w<\infty.
\end{equation}
Here $w=x-z$; integration in $z$ over $(-\infty,-w)$,
$[-w,0]$, and $(0,\infty)$ gives the factor $w+\sqrt{2\pi}$.

Apply \eqref{eq:finite-BA-continuous-simplex-integral} with
parameter $3/2$ for $s$ and parameter $1/2$ for each $u_j$.
Together with \eqref{eq:type-A-continuum-product-bound}, this
proves finiteness of \eqref{eq:type-A-unit-time-continuum-integral}.
For $k\ge2$, its portions with $s\le\delta$ or a specified
$u_j\le\delta$ are respectively $O_{c,k}(\delta^{3/2})$ and
$O_{c,k}(\sqrt\delta)$.

\medskip
\emph{Step 4: Pass to the limit for type $A$.}
The substitutions $x\mapsto x/\sqrt n$ and $z\mapsto z/\sqrt n$
in \eqref{eq:type-A-intrinsic-volumes-symmetrized} give
\begin{equation}
\label{eq:type-A-unit-time-scaled-sum}
\upsilon_k\bigl(\bW_n^A(c\sqrt n)\bigr)
=\frac1{(k-1)!}\int_{z<x}\frac1{n^{k-1}}
\sum_{\substack{h,d_1,\ldots,d_{k-1}\ge1\\
 h+d_1+\cdots+d_{k-1}=n}}
R_{n,c}(h;x,z)\prod_{j=1}^{k-1}b_{d_j/n}(x,z)
\,\dint x\,\dint z.
\end{equation}
The spatial Jacobian $1/n$ is absorbed by
\eqref{eq:type-A-scaled-kernel-definition}; each of the $k-1$
remaining factors contributes $1/n$.

Use the discrete bound from Step~3 and
\eqref{eq:finite-BA-discrete-simplex-bound}, now with $p=k$,
parameter $3/2$ for $h$ and $1/2$ for each $d_j$.
Since $h^{1/2}\le(h+1)^{1/2}$ and
$d^{-1/2}\le\sqrt2(d+1)^{-1/2}$ for $d\ge1$, the integrand
after summation in \eqref{eq:type-A-unit-time-scaled-sum} is
bounded by $C_{c,k}\mathcal M_{c,k}(x,z)$.
For $k\ge2$, equation
\eqref{eq:finite-BA-discrete-simplex-boundary} gives, after spatial
integration, bounds $C_{c,k}(\delta+1/n)^{3/2}$ for $h\le\delta n$
and $C_{c,k}\sqrt{\delta+1/n}$ for any specified $d_j\le\delta n$.

Restrict first to $h,d_j>\delta n$.
For each fixed $(x,z)$ with $z<x$,
\eqref{eq:type-A-scaled-kernel-convergence} and continuity of
$b_u(x,z)$ for $u>0$ give the $(k-1)$-dimensional Riemann-sum
limit on this restricted domain. Equation
\eqref{eq:type-A-spatial-majorant-integrable} permits integration
in $x,z$ by dominated convergence. Letting first $n\to\infty$
and then $\delta\downarrow0$, and using the discrete and
continuous bounds above, proves
\eqref{eq:finite-BA-unit-time-continuum-limit} for type $A$.
When $k=1$, $h=n$ and there are no gap variables: the claim follows
directly from \eqref{eq:type-A-scaled-kernel-convergence} and
the same integrable bound, with no boundary truncation.

\medskip
\emph{Step 5: Verify continuity and the range of the limits.}
All the factors in the defining integrals are continuous
in $c>0$ at interior points of their integration domains.
For $c$ in a compact interval $[c_-,c_+]\subset(0,\infty)$,
the bounds in Steps~1 and~3 hold with a common constant and with
$\varphi(c_-x)$ and $\mathcal M_{c_-,k}(x,z)$ in place of their
$c$-dependent counterparts. The accompanying time factors are
integrable by \eqref{eq:finite-BA-continuous-simplex-integral}.
Dominated convergence therefore proves continuity of $I_k^B$
and $I_k^A$. Finally, each integral is the limit of conic intrinsic
volumes, which belong to $[0,1]$, proving the last assertion.
\end{proof}

\subsection{Evaluation of the multidimensional integrals}
\label{subsec:finite-BA-continuum-integrals}

We now evaluate the integrals in
\eqref{eq:type-B-unit-time-continuum-integral} and
\eqref{eq:type-A-unit-time-continuum-integral}. To this end, we
introduce $T>0$ and take the Laplace transforms of
$T\mapsto I_k^\bullet(T^{-1/2})$, for $\bullet\in\{B,A\}$.
By Proposition~\ref{prop:finite-BA-continuum-limits}, these functions
are continuous and take values in $[0,1]$, so their Laplace
transforms are finite for every positive transform parameter.

For nonnegative functions on $(0,\infty)$, write
$(a*b)(T):=\int_0^T a(t)b(T-t)\,\dint t$, and use $a^{*r}$
for the $r$-fold convolution. Dilating the time coordinates in the
defining integrals \eqref{eq:type-B-unit-time-continuum-integral} and
\eqref{eq:type-A-unit-time-continuum-integral} gives
\begin{align}
I_k^B(T^{-1/2})
&=\frac1{(k-1)!}\int_0^\infty
\bigl(g_1(\,\cdot\,,x)*f_x^{*(k-1)}*h\bigr)(T)\,\dint x,
\label{eq:type-B-rescaled-continuum-convolution}
\\
I_k^A(T^{-1/2})
&=\frac1{(k-1)!}\int_{z<x}
\bigl(R_1(\,\cdot\,;x,z)*b_{\cdot}(x,z)^{*(k-1)}\bigr)(T)
\,\dint x\,\dint z.
\label{eq:type-A-rescaled-continuum-convolution}
\end{align}
Here $b_{\cdot}(x,z)$ denotes the function $u\mapsto b_u(x,z)$.
For $k=1$, the additional convolution factors are omitted, leaving
$g_1(\,\cdot\,,x)*h$ and $R_1(\,\cdot\,;x,z)$, respectively.
To verify these identities, substitute $c=T^{-1/2}$ in
\eqref{eq:type-B-unit-time-continuum-integral} and
\eqref{eq:type-A-unit-time-continuum-integral}, and rescale the
simplex variables from total time $1$ to total time $T$. Thus, in
type $B$, set $t=s/T$, $u_j=v_j/T$, $x=\sqrt T\,y$, so that the remaining time coordinate becomes $1-t-\sum_j u_j
=(T-s-\sum_jv_j)/T$.
In type $A$, set $u_j=v_j/T$, $x=\sqrt T\,y$, $z=\sqrt T\,w$, so that $1-\sum_j u_j= (T-\sum_jv_j)/T$.
The defining formulas for $g_c,h,f_x,R_c$, and $b_u$, together
with the corresponding Jacobians, then give
\eqref{eq:type-B-rescaled-continuum-convolution} and
\eqref{eq:type-A-rescaled-continuum-convolution}; all powers of
$T$ cancel.

For $\lambda>0$, the elementary transforms needed below are
\begin{align}
\int_0^\infty\eee^{-\lambda t}h(t)\,\dint t
&=\lambda^{-1/2},
\label{eq:type-B-terminal-kernel-transform}
\\
\int_0^\infty\eee^{-\lambda t}g_1(t,x)\,\dint t
&=\frac{\varphi(x)}{\sqrt{\lambda+x^2/2}},
\qquad x\ge0,
\label{eq:type-B-initial-kernel-transform}
\\
\int_0^\infty\eee^{-\lambda t}f_x(t)\,\dint t
&=\arsinh\frac{x}{\sqrt{2\lambda}},
\qquad x\ge0,
\label{eq:type-B-ordinary-kernel-transform}
\\
\int_0^\infty\eee^{-\lambda u}b_u(x,z)\,\dint u
&=\arsinh\frac{x}{\sqrt{2\lambda}}
 -\arsinh\frac{z}{\sqrt{2\lambda}}
 =:H_\lambda(x,z),
\qquad z<x.
\label{eq:type-A-ordinary-kernel-transform}
\end{align}
The first two follow from the gamma integral. For the last,
Tonelli's theorem and the representation
$b_u(x,z)=(2\pi u)^{-1/2}\int_z^x\eee^{-u y^2/2}\,\dint y$
give the integral $\int_z^x(2\lambda+y^2)^{-1/2}\,\dint y$.
Taking $z=0$ also gives \eqref{eq:type-B-ordinary-kernel-transform}.

For type $A$, we need one further identity. Write
$\overline\Phi:=1-\Phi$ and set
\begin{equation}
\label{eq:type-A-kernel-Laplace-transform-definition}
\widehat R_{c,\lambda}(x,z)
:=\int_0^\infty\eee^{-\lambda s}R_c(s;x,z)\,\dint s,
\qquad c,\lambda>0.
\end{equation}

\begin{lemma}
\label{lem:type-A-hyperbolic-integral-identity}
For $c,\lambda>0$ and every nonnegative measurable function
$\psi:(0,\infty)\to[0,\infty]$,
\begin{equation}
\label{eq:type-A-hyperbolic-integral-identity}
\int_{z<x}\psi\bigl(H_\lambda(x,z)\bigr)
 \widehat R_{c,\lambda}(x,z)\,\dint x\,\dint z
\\=c^2\int_0^\infty\psi(w)
 \left(\int_w^\infty
 \eee^{-4c^2\lambda\sinh^2(t/2)}\,\dint t\right)\dint w.
\end{equation}
Both sides are understood as nonnegative integrals and may be infinite.
\end{lemma}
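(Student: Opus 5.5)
Compute $\widehat R_{c,\lambda}$ explicitly, then change variables in the $(x,z)$ half-plane so that $H_\lambda(x,z)$ becomes a coordinate; everything is nonnegative, so Tonelli applies throughout.

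\emph{Step 1: the Laplace transform.} Substitute $s=\alpha+\beta$ with $\alpha,\beta>0$ in the definition \eqref{eq:type-A-limiting-kernel}:
\[
\widehat R_{c,\lambda}(x,z)=\frac{c^2}{2\pi}\int_0^\infty\!\!\int_0^\infty
\frac{\eee^{-\lambda(\alpha+\beta)-\frac12[\beta x^2+\alpha z^2+c^2(x-z)^2]}}{\sqrt{\alpha\beta+c^2(\alpha+\beta)}}\,\dint\alpha\,\dint\beta .
\]
Put $A:=2\lambda+z^2$ and $B:=2\lambda+x^2$. The exponent is then $-\frac12[\alpha A+\beta B+c^2(x-z)^2]$, and the denominator factors as $\alpha\beta+c^2(\alpha+\beta)=(\alpha+c^2)(\beta+c^2)-c^4$. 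I would keep the result as this double integral rather than evaluate it in closed form.

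\emph{Step 2: hyperbolic coordinates.} Set $x=\sqrt{2\lambda}\sinh p$ and $z=\sqrt{2\lambda}\sinh q$ with $q<p$, so that $H_\lambda=p-q=:w$. Then
\[
\dint x\,\dint z=2\lambda\cosh p\cosh q\,\dint p\,\dint q,\qquad A=2\lambda\cosh^2q,\qquad B=2\lambda\cosh^2p .
\]
Next use the midpoint $m=(p+q)/2$ and the half-difference $w/2$. The key quantity is
\[
(x-z)^2=2\lambda(\sinh p-\sinh q)^2=8\lambda\cosh^2 m\,\sinh^2(w/2).
\]
The claim reduces to showing that, for each fixed $w$, integrating over $m\in\R$ and over $\alpha,\beta$ yields $c^2\int_w^\infty\eee^{-4c^2\lambda\sinh^2(t/2)}\,\dint t$.

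\emph{Step 3: the inner integral.} First rescale $\alpha=a/(2\lambda)$ and $\beta=b/(2\lambda)$, so the exponent becomes $-\frac12[a\cosh^2q+b\cosh^2p]-4c^2\lambda\cosh^2m\,\sinh^2(w/2)$. The target's dependence on $w$ only through $\int_w^\infty$ suggests the following check: differentiate the inner quantity in $w$ and show its derivative is $-c^2\eee^{-4c^2\lambda\sinh^2(w/2)}$, with limit $0$ as $w\to\infty$. A cleaner alternative may be to recognise that, by Step 1, the inner quantity at fixed $(p,q)$ is a Gaussian-type integral representing a probability in the bridge picture, and to integrate out $m$ first.

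\emph{Main obstacle.} The expected difficulty is carrying out Step 3 explicitly. The square root $\sqrt{(\alpha+c^2)(\beta+c^2)-c^4}$ couples $\alpha$ and $\beta$, and I do not see a substitution that removes it. My first attempt would be to write it via a Gaussian auxiliary integral, using
\[
\frac{1}{\sqrt{D}}=\frac{1}{\sqrt{2\pi}}\int_{\R}\eee^{-Dy^2/2}\,\dint y
\]
with $D=\alpha\beta+c^2(\alpha+\beta)$. The $\alpha$- and $\beta$-integrals then become elementary rational functions of $y$, and what remains is to integrate over $y$ and $m$. If this becomes unwieldy, a fallback is to verify the identity for $\psi(w)=\eee^{-\mu w}$ for all $\mu>0$ and conclude by uniqueness of Laplace transforms.
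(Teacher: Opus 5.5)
\textbf{There is a genuine gap.} Your Steps~1 and~2 are correct and match the framework of the paper's proof:
\begin{itemize}
\item the Laplace transform written as a double integral over $(\alpha,\beta)$;
\item the hyperbolic coordinates $x=\sqrt{2\lambda}\sinh p$, $z=\sqrt{2\lambda}\sinh q$, with $w=p-q=H_\lambda(x,z)$ and midpoint $m$;
\item the identity $(x-z)^2=8\lambda\cosh^2m\,\sinh^2(w/2)$.
\end{itemize}
But the argument stops exactly where the real work begins. Step~3, the evaluation of the inner integral, is never carried out, and the device you propose for it does not work.

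Because $D=\alpha\beta+c^2(\alpha+\beta)$ contains the product $\alpha\beta$, the representation $D^{-1/2}=(2\pi)^{-1/2}\int_\R\eee^{-Dy^2/2}\,\dint y$ does not decouple the two variables. The $\alpha$-integral gives $2/\bigl(A+(\beta+c^2)y^2\bigr)$. The remaining $\beta$-integral of $\eee^{-\beta(B+c^2y^2)/2}/\bigl(A+(\beta+c^2)y^2\bigr)$ is an exponential integral, not a rational function of $y$. The fallback with $\psi(w)=\eee^{-\mu w}$ does not help either, since it requires the same inner integral.

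The missing idea is to linearize the reciprocals rather than $D$ itself. Since $D=c^2\alpha\beta\,(c^{-2}+\alpha^{-1}+\beta^{-1})$,
\[
\frac1{\sqrt D}=\frac1{c\sqrt{\pi\alpha\beta}}\int_0^\infty\eee^{-r^2/(4c^2)-r^2/(4\alpha)-r^2/(4\beta)}\,\dint r .
\]
Now the $\alpha$- and $\beta$-integrals factor, and each is evaluated by $\int_0^\infty t^{-1/2}\eee^{-Kt-r^2/(4t)}\,\dint t=\sqrt{\pi/K}\,\eee^{-r\sqrt K}$. Completing the square in $r$ gives the closed form
\[
\widehat R_{c,\lambda}(x,z)=\frac{2c^2}{\sqrt{AB}}\,\eee^{\frac{c^2}{2}[(\sqrt A+\sqrt B)^2-(x-z)^2]}\,\overline\Phi\bigl(c(\sqrt A+\sqrt B)\bigr).
\]
In your coordinates:
\begin{itemize}
\item the prefactor $1/\sqrt{AB}$ cancels the Jacobian $2\lambda\cosh p\cosh q$;
\item $\sqrt A+\sqrt B=2\sqrt{2\lambda}\cosh m\cosh(w/2)$;
\item $(\sqrt A+\sqrt B)^2-(x-z)^2=8\lambda\cosh^2m$.
\end{itemize}
With $\eta:=2c\sqrt{2\lambda}$, the inner quantity becomes $2c^2\int_\R\eee^{\eta^2\cosh^2m/2}\,\overline\Phi(\eta\cosh m\cosh(w/2))\,\dint m$.

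At this point your derivative check goes through. Differentiating in $w$ and substituting $r=\sinh m$ gives exactly $-c^2\eee^{-4c^2\lambda\sinh^2(w/2)}$. The quantity tends to $0$ as $w\to\infty$ by the Mills-ratio bound $\overline\Phi(y)\le\varphi(y)/y$. The paper does the same thing in integrated form, writing $\overline\Phi$ as an integral over $t\in(w,\infty)$ via $y=\eta\cosh m\cosh(t/2)$.

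Without the factorizing representation (or some other way to evaluate the inner integral), your proposal does not establish the identity.
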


\begin{proof}
\emph{Step 1: Evaluate the transform of $R_c$.}
For fixed $x,z\in\R$, put
$p:=\sqrt{2\lambda+x^2}$ and $q:=\sqrt{2\lambda+z^2}$.
The substitutions $a=\alpha$ and $b=s-\alpha$ in
\eqref{eq:type-A-limiting-kernel} give
\begin{equation}
\label{eq:type-A-kernel-transform-double-integral}
\widehat R_{c,\lambda}(x,z)
=\frac{c^2}{2\pi}\eee^{-c^2(x-z)^2/2}
\int_0^\infty\int_0^\infty
\frac{\eee^{-p^2b/2-q^2a/2}}{\sqrt{ab+c^2(a+b)}}
\,\dint a\,\dint b.
\end{equation}
We separate the two integrations using the Gaussian identity
\[
\frac1{\sqrt{ab+c^2(a+b)}}
=\frac1{c\sqrt{\pi ab}}\int_0^\infty
\eee^{-r^2/(4c^2)-r^2/(4a)-r^2/(4b)}\,\dint r
\]
and the elementary formula
\begin{equation}
\label{eq:finite-BA-Gaussian-integral-with-reciprocal-term}
\int_0^\infty t^{-1/2}\eee^{-At-r^2/(4t)}\,\dint t
=\sqrt{\frac\pi A}\,\eee^{-r\sqrt A},
\qquad A>0,\quad r\ge0.
\end{equation}
For a direct verification of the latter, denote its left-hand side
by $F_A(r)$. Differentiating for $r>0$ and substituting
$t=r^2/(4Au)$ in the resulting integral gives
$F_A'(r)=-\sqrt A\,F_A(r)$. Dominated convergence at zero gives
$F_A(0)=\sqrt{\pi/A}$, proving the formula.

Using these identities and Tonelli's theorem in
\eqref{eq:type-A-kernel-transform-double-integral} gives
\[
\widehat R_{c,\lambda}(x,z)
=\frac{c\,\eee^{-c^2(x-z)^2/2}}{\sqrt\pi\,pq}
\int_0^\infty
\eee^{-r^2/(4c^2)-r(p+q)/\sqrt2}\,\dint r.
\]
Completing the square yields the explicit expression
\begin{equation}
\label{eq:type-A-kernel-transform-normal-tail}
\widehat R_{c,\lambda}(x,z)
=\frac{2c^2}{pq}
\eee^{\frac{c^2}{2}[(p+q)^2-(x-z)^2]}
\overline\Phi\bigl(c(p+q)\bigr).
\end{equation}
In particular, this transform is finite for every $x,z\in\R$.

\medskip
\emph{Step 2: Use $H_\lambda$ as an integration variable.}
The change of variables
\begin{equation}
\label{eq:type-A-hyperbolic-change-of-variables}
x=\sqrt{2\lambda}\sinh(v+w/2),\qquad
z=\sqrt{2\lambda}\sinh(v-w/2),
\qquad v\in\R,\quad w>0,
\end{equation}
maps $\R\times(0,\infty)$ bijectively onto $\{z<x\}$ and gives
$H_\lambda(x,z)=w$. Moreover,
\[
p+q=2\sqrt{2\lambda}\cosh v\cosh(w/2),\qquad
x-z=2\sqrt{2\lambda}\cosh v\sinh(w/2),
\]
and the absolute Jacobian is $pq$.
Put $\eta:=2c\sqrt{2\lambda}$. Substitution into
\eqref{eq:type-A-kernel-transform-normal-tail} shows that the
left-hand side of \eqref{eq:type-A-hyperbolic-integral-identity} is
\begin{equation}
\label{eq:type-A-transformed-normal-tail-integral}
2c^2\int_0^\infty\psi(w)\int_{\R}
\eee^{\eta^2\cosh^2v/2}
\overline\Phi\bigl(\eta\cosh v\cosh(w/2)\bigr)
\,\dint v\,\dint w.
\end{equation}

\medskip
\emph{Step 3: Integrate the normal tail.}
For $w>0$, substituting $y=\eta\cosh v\cosh(t/2)$ in the
normal-tail integral and then using Tonelli's theorem gives
\begin{align*}
&2\int_{\R}\eee^{\eta^2\cosh^2v/2}
 \overline\Phi\bigl(\eta\cosh v\cosh(w/2)\bigr)\,\dint v
\\
&\quad\quad\quad=\frac{\eta}{\sqrt{2\pi}}\int_w^\infty\sinh(t/2)
 \int_{\R}\cosh v\,
 \eee^{-(\eta^2/2)\sinh^2(t/2)\cosh^2v}
 \,\dint v\,\dint t
\\
&\quad\quad\quad=\int_w^\infty\eee^{-(\eta^2/2)\sinh^2(t/2)}\,\dint t.
\end{align*}
For the last equality, put $r=\sinh v$ in the inner integral;
its value is
$\sqrt{2\pi}\,\eee^{-(\eta^2/2)\sinh^2(t/2)}/(\eta\sinh(t/2))$.
Substituting this result into
\eqref{eq:type-A-transformed-normal-tail-integral} and using
$\eta^2/2=4c^2\lambda$ proves
\eqref{eq:type-A-hyperbolic-integral-identity}.
All rearrangements above involve nonnegative integrands.
\end{proof}

We can now evaluate both integrals explicitly.

\begin{proposition}
\label{prop:finite-BA-continuum-integrals-evaluation}
For every $c>0$ and integer $k\ge1$,
\begin{equation}
\label{eq:finite-BA-continuum-integrals-evaluation}
I_k^B(c)=L_k^B(c),\qquad I_k^A(c)=L_k^A(c),
\end{equation}
where the right-hand sides are given by
\eqref{eq:type-B-limiting-intrinsic-volumes} and
\eqref{eq:type-A-limiting-intrinsic-volumes}, respectively.
\end{proposition}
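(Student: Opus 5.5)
Both identities will be proved by comparing Laplace transforms in $T$ of $T\mapsto I_k^\bullet(T^{-1/2})$ and $T\mapsto L_k^\bullet(T^{-1/2})$. Both functions are continuous and bounded, so equality of transforms for all $\lambda>0$ gives equality for all $T>0$ by uniqueness of the Laplace transform, and hence for all $c>0$.

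\emph{Type $B$.} Starting from \eqref{eq:type-B-rescaled-continuum-convolution}, I will use Tonelli's theorem to move the $T$-transform inside the $x$-integral. The transform of a convolution factorizes. By \eqref{eq:type-B-terminal-kernel-transform}--\eqref{eq:type-B-ordinary-kernel-transform},
\[
\int_0^\infty\eee^{-\lambda T}I_k^B(T^{-1/2})\,\dint T
=\frac1{(k-1)!}\int_0^\infty\frac{\varphi(x)}{\sqrt{\lambda}\sqrt{\lambda+x^2/2}}
\Bigl(\arsinh\frac{x}{\sqrt{2\lambda}}\Bigr)^{k-1}\dint x .
\]
Next I will substitute $x=\sqrt{2\lambda}\sinh y$. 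Then $\dint x/\sqrt{\lambda+x^2/2}=\sqrt2\,\dint y$, and the transform becomes
\[
\frac{\sqrt2}{(k-1)!\sqrt\lambda}\int_0^\infty y^{k-1}\varphi\bigl(\sqrt{2\lambda}\sinh y\bigr)\,\dint y .
\]
On the other side, $L_k^B(T^{-1/2})$ is an integral over $0<y<\arsinh\sqrt T$ of $y^{k-1}(T-\sinh^2y)^{-1/2}/(\pi(k-1)!)$. Swapping the order gives $\int_0^\infty y^{k-1}\int_{\sinh^2y}^\infty \eee^{-\lambda T}(T-\sinh^2 y)^{-1/2}\,\dint T\,\dint y$. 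The inner integral equals $\sqrt{\pi/\lambda}\,\eee^{-\lambda\sinh^2y}$. Therefore the transform of $L_k^B(T^{-1/2})$ is
\[
\frac{1}{\sqrt{\pi\lambda}\,(k-1)!}\int_0^\infty y^{k-1}\eee^{-\lambda\sinh^2 y}\,\dint y ,
\]
which matches the expression above since $\sqrt2\,\varphi(\sqrt{2\lambda}\sinh y)=\eee^{-\lambda\sinh^2y}/\sqrt\pi$.

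\emph{Type $A$.} Similarly, \eqref{eq:type-A-rescaled-continuum-convolution} together with \eqref{eq:type-A-ordinary-kernel-transform} gives a transform equal to $\frac1{(k-1)!}\int_{z<x}H_\lambda(x,z)^{k-1}\widehat R_{1,\lambda}(x,z)\,\dint x\,\dint z$. I will apply Lemma~\ref{lem:type-A-hyperbolic-integral-identity} with $c=1$ and $\psi(w)=w^{k-1}/(k-1)!$, and integrate by parts in $w$ (Fubini). This yields
\[
\frac{1}{k!}\int_0^\infty t^k\,\eee^{-4\lambda\sinh^2(t/2)}\,\dint t .
\]
For the limit side, $\theta_c$ with $c=T^{-1/2}$ satisfies $4\sinh^2(\theta/2)=T$, so $T$ as a function of $\theta$ is $2(\cosh\theta-1)$ and $\dint T=2\sinh\theta\,\dint\theta$. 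Thus
\[
\int_0^\infty\eee^{-\lambda T}\frac{\theta^k}{2k!\sinh\theta}\,\dint T=\frac1{k!}\int_0^\infty\theta^k\eee^{-4\lambda\sinh^2(\theta/2)}\,\dint\theta ,
\]
which agrees with the previous display.

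The main obstacle is bookkeeping. In type $A$ one must check that the $T$-transform of the convolution in \eqref{eq:type-A-rescaled-continuum-convolution} truly factorizes as $\widehat R_{1,\lambda}\,H_\lambda^{k-1}$, with all interchanges justified by nonnegativity and Tonelli's theorem. One must also confirm that the normalization constant $c^2=1$ in Lemma~\ref{lem:type-A-hyperbolic-integral-identity} matches the factor $\frac1{2}$ and the $\sinh\theta_c$ appearing in $L_k^A$. The computations above suggest that they do match.
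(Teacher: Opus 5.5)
Your proposal is correct and follows the paper's proof essentially step for step. Like the paper, you take Laplace transforms of $T\mapsto I_k^\bullet(T^{-1/2})$, factor the convolutions, and use the substitution $x=\sqrt{2\lambda}\sinh y$ in type $B$ and Lemma~\ref{lem:type-A-hyperbolic-integral-identity} with $c=1$ in type $A$, then conclude by uniqueness plus continuity. The one point you assert without proof is continuity of $T\mapsto L_k^B(T^{-1/2})$; the paper checks it with the substitution $v=\sinh u/\sqrt T$ and a locally uniform integrable majorant.
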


\begin{proof}
\emph{Type $B$.}
Taking the Laplace transform in
\eqref{eq:type-B-rescaled-continuum-convolution} and multiplying
\eqref{eq:type-B-terminal-kernel-transform}--\eqref{eq:type-B-ordinary-kernel-transform}
gives
\begin{align}
\int_0^\infty\eee^{-\lambda T}I_k^B(T^{-1/2})\,\dint T
&=\frac1{(k-1)!\sqrt\lambda}\int_0^\infty
 \frac{\varphi(x)}{\sqrt{\lambda+x^2/2}}
 \left(\arsinh\frac{x}{\sqrt{2\lambda}}\right)^{k-1}\,\dint x
\notag\\
&=\frac1{\sqrt{\pi\lambda}(k-1)!}\int_0^\infty
 u^{k-1}\eee^{-\lambda\sinh^2u}\,\dint u.
\label{eq:type-B-unit-height-Laplace-transform}
\end{align}
The last equality uses $x=\sqrt{2\lambda}\sinh u$.
All exchanges are justified by Tonelli's theorem.
The gamma-integral identity
\[
\int_a^\infty\frac{\eee^{-\lambda T}}{\sqrt{\pi(T-a)}}\,\dint T
=\frac{\eee^{-\lambda a}}{\sqrt\lambda},
\qquad a\ge0,
\]
shows that the last expression in
\eqref{eq:type-B-unit-height-Laplace-transform} is the Laplace
transform of
\begin{equation}
\label{eq:type-B-unit-height-inverse-transform}
\frac1{\pi(k-1)!}\int_0^{\arsinh\sqrt T}
\frac{u^{k-1}}{\sqrt{T-\sinh^2u}}\,\dint u.
\end{equation}
This function is continuous for $T>0$: the substitution
$v=\sinh u/\sqrt T$ rewrites it as
\[
\frac1{\pi(k-1)!}\int_0^1
\frac{(\arsinh(\sqrt T\,v))^{k-1}}
{\sqrt{1-v^2}\sqrt{1+Tv^2}}\,\dint v,
\]
whose integrand has an integrable majorant locally uniformly in $T$.
Uniqueness of Laplace transforms and continuity therefore identify
\eqref{eq:type-B-unit-height-inverse-transform} with
$I_k^B(T^{-1/2})$ for every $T>0$. Setting $T=c^{-2}$ gives
$I_k^B(c)=L_k^B(c)$.

\medskip
\emph{Type $A$.}
Taking the Laplace transform in
\eqref{eq:type-A-rescaled-continuum-convolution}, using
\eqref{eq:type-A-ordinary-kernel-transform}, and applying
Lemma~\ref{lem:type-A-hyperbolic-integral-identity} with $c=1$
and $\psi(w)=w^{k-1}$ gives
\begin{align}
\int_0^\infty\eee^{-\lambda T}I_k^A(T^{-1/2})\,\dint T
&=\frac1{(k-1)!}\int_{z<x}
 H_\lambda(x,z)^{k-1}\widehat R_{1,\lambda}(x,z)
 \,\dint x\,\dint z
\\
&=\frac1{(k-1)!}\int_0^\infty w^{k-1}
 \int_w^\infty\eee^{-4\lambda\sinh^2(t/2)}\,\dint t\,\dint w
\\
&=\frac1{k!}\int_0^\infty
 t^k\eee^{-4\lambda\sinh^2(t/2)}\,\dint t.
\label{eq:type-A-unit-height-Laplace-transform}
\end{align}
Again, all exchanges follow from Tonelli's theorem, and the final
integral is finite. To invert it, set $T=4\sinh^2(t/2)$,
so that $\dint T=2\sinh t\,\dint t$. The last integral in
\eqref{eq:type-A-unit-height-Laplace-transform} is thus the
Laplace transform of the continuous function
\begin{equation}
\label{eq:type-A-unit-height-inverse-transform}
\frac{\bigl(2\arsinh(\sqrt T/2)\bigr)^k}
{2k!\,\sinh\bigl(2\arsinh(\sqrt T/2)\bigr)},
\qquad T>0.
\end{equation}
Uniqueness and continuity identify this expression with
$I_k^A(T^{-1/2})$ for every $T>0$. Setting $T=c^{-2}$ gives
$I_k^A(c)=\theta_c^k/(2k!\sinh\theta_c)=L_k^A(c)$.
The conventions for empty convolution factors include $k=1$
in both calculations.
\end{proof}

Together with \eqref{eq:finite-BA-unit-time-continuum-limit}, this
proves \eqref{eq:finite-BA-intrinsic-volume-scaling-limit} for
$k\ge1$ when $\rho_n=c\sqrt n$.

\subsection{The zeroth intrinsic volumes and completion of the proof}
\label{subsec:finite-BA-scaling-completion}

Propositions~\ref{prop:finite-BA-continuum-limits} and
\ref{prop:finite-BA-continuum-integrals-evaluation} establish
\eqref{eq:finite-BA-intrinsic-volume-scaling-limit} for $k\ge1$
when $\rho_n=c\sqrt n$. We now treat $k=0$ and prove the remaining
assertions of Theorem~\ref{thm:finite-BA-scaling-limit}.

\begin{proof}[Completion of the proof of
Theorem~\ref{thm:finite-BA-scaling-limit}]
Put $c_n:=\rho_n/\sqrt n$, so that $c_n\to c \in (0,\infty)$.

\smallskip
\emph{Step 1: Determine the zeroth intrinsic volumes.}
Let $W$ be a one-dimensional standard Brownian motion, set
$W_t^B:=W_t$ and $W_t^A:=W_t-tW_1$ for $0\le t\le1$, and let
$Z_0$ be standard normal and independent of $W$. Put
$M_n^\bullet:=\max_{0\le j\le n}W_{j/n}^\bullet$ and
$M^\bullet:=\max_{0\le t\le1}W_t^\bullet$, for
$\bullet\in\{B,A\}$. The Gaussian vectors $(G_j/\sqrt n)_{j=0}^n$
and $(W_{j/n})_{j=0}^n$ have the same law; centering gives the
corresponding equality in law for the bridges. Hence
\eqref{eq:type-B-zeroth-intrinsic-volume} and
\eqref{eq:type-A-zeroth-intrinsic-volume} imply
\begin{equation}
\label{eq:finite-BA-zeroth-grid-limit}
\upsilon_0\bigl(\bW_n^\bullet(\rho_n)\bigr)
=\Ee\left[\Phi\left(-\frac{M_n^\bullet}{c_n}\right)\right]
\longrightarrow
\Ee\left[\Phi\left(-\frac{M^\bullet}{c}\right)\right]
=\Pp(cZ_0+M^\bullet\le0).
\end{equation}
Including the index $n$ does not change the maximum in type $A$,
since $W_1^A=W_0^A=0$. The convergence follows from path continuity and then by dominated (bounded) convergence, since \(0\le \Phi\le1\).

By the reflection principle, $M^B$ has the law of $|Z_1|$, where
$Z_1$ is standard normal and independent of $Z_0$. The same
principle, applied to the endpoint density of Brownian motion,
gives $\Pp(M^A\le x)=1-\eee^{-2x^2}$ for $x\ge0$: for $x>0$,
the reflected density at endpoint zero is $\varphi(2x)$, and its
ratio to $\varphi(0)$ is $\eee^{-2x^2}$. Consequently,
\begin{align}
\Pp(cZ_0+M^B\le0)
&=\Pp(cZ_0+|Z_1|\le0)
=\frac{\arctan c}{\pi}=L_0^B(c),
\label{eq:type-B-zeroth-limit-evaluation}
\\
\Pp(cZ_0+M^A\le0)
&=\int_0^\infty\varphi(x)\bigl(1-\eee^{-2c^2x^2}\bigr)\,\dint x
=\frac12\left(1-\frac1{\sqrt{1+4c^2}}\right)=L_0^A(c).
\label{eq:type-A-zeroth-limit-evaluation}
\end{align}
For the first evaluation, the event in the $(Z_0,Z_1)$-plane is
a sector of angle $2\arctan c$; the standard Gaussian angular
coordinate is uniform. This proves
\eqref{eq:finite-BA-intrinsic-volume-scaling-limit} for $k=0$.

\medskip
\emph{Step 2: Verify \eqref{eq:finite-BA-limiting-parity-normalization}.}
For type $B$, Tonelli's theorem and
\eqref{eq:type-B-limiting-intrinsic-volumes} give
\begin{align}
\sum_{j=0}^\infty L_{2j+1}^B(c)
&=\frac1\pi\int_0^{\arsinh(1/c)}
\frac{\cosh u}{\sqrt{c^{-2}-\sinh^2u}}\,\dint u
=\frac12,
\label{eq:type-B-limiting-odd-mass}
\\
\sum_{j=1}^\infty L_{2j}^B(c)
&=\frac1\pi\int_0^{\arsinh(1/c)}
\frac{\sinh u}{\sqrt{c^{-2}-\sinh^2u}}\,\dint u
=\frac12-L_0^B(c).
\label{eq:type-B-limiting-positive-even-mass}
\end{align}
The substitutions $v=c\sinh u$ and $v=\cosh u$, respectively,
give the last equalities. For type $A$, recall
$\theta_c=2\arsinh(1/(2c))$ and note that
$\tanh(\theta_c/2)=(1+4c^2)^{-1/2}$. Thus, \eqref{eq:type-A-limiting-intrinsic-volumes} gives
\begin{align*}
\sum_{j=0}^\infty L_{2j+1}^A(c)
&=\frac{\sinh\theta_c}{2\sinh\theta_c}=\frac12,
\\
L_0^A(c)+\sum_{j=1}^\infty L_{2j}^A(c)
&=L_0^A(c)+\frac{\cosh\theta_c-1}{2\sinh\theta_c}=\frac12.
\end{align*}
This proves \eqref{eq:finite-BA-limiting-parity-normalization} for
both types, and in particular $\sum_{k=0}^\infty L_k^\bullet(c)=1$.

\medskip
\emph{Step 3: Allow varying $c_n= \rho_n/\sqrt n$.} The case \(k=0\) was already proved in Step~1 for arbitrary \(c_n\to c\). We now extend the convergence for \(k\ge1\), established above for fixed normalized height, to the general case \(c_n\to c\).
Fix $\bullet\in\{B,A\}$. We use monotonicity of Grassmann angles
with respect to cone inclusion. Since $\bK_n^\bullet$ is convex
and contains the origin, for $0<c_1\le c_2$ we have
\begin{equation}
\label{eq:finite-BA-height-cone-monotonicity}
\bW_n^\bullet(c_2\sqrt n)\subseteq\bW_n^\bullet(c_1\sqrt n).
\end{equation}
Hence
$c'\mapsto\gamma_j(\bW_n^\bullet(c'\sqrt n))$ is nonincreasing
for every integer $j\ge0$.

The cones are nonzero and pointed, so
\eqref{eq:finite-dimensional-conic-crofton} and
\eqref{eq:prelim-conic-gauss-bonnet} give a finite-sum expression
for each Grassmann angle. For every fixed $c'>0$ and integer
$j\ge0$, the convergence already proved for fixed normalized
height, including the zeroth coefficient in Step~1, therefore gives
\begin{equation}
\label{eq:finite-BA-fixed-height-grassmann-limit}
\gamma_j\bigl(\bW_n^\bullet(c'\sqrt n)\bigr)
=1-2\sum_{\ell=0}^{\lfloor(j-1)/2\rfloor}
\upsilon_{j-1-2\ell}\bigl(\bW_n^\bullet(c'\sqrt n)\bigr)
\longrightarrow
1-2\sum_{\ell=0}^{\lfloor(j-1)/2\rfloor}
L_{j-1-2\ell}^\bullet(c')
=: \Gamma_j^\bullet(c').
\end{equation}
For $j=0$, both sums are empty and $\Gamma_0^\bullet=1$.
Each $\Gamma_j^\bullet$ is continuous on $(0,\infty)$, since
the functions $L_r^\bullet$ are continuous and the sum is finite.

Choose $0<c_-<c<c_+$. Since $c_n\to c$, for all sufficiently
large $n$ monotonicity gives
\[
\gamma_j\bigl(\bW_n^\bullet(c_+\sqrt n)\bigr)
\le\gamma_j\bigl(\bW_n^\bullet(c_n\sqrt n)\bigr)
\le\gamma_j\bigl(\bW_n^\bullet(c_-\sqrt n)\bigr).
\]
Using \eqref{eq:finite-BA-fixed-height-grassmann-limit} yields
\[
\Gamma_j^\bullet(c_+)
\le\liminf_{n\to\infty}
\gamma_j\bigl(\bW_n^\bullet(c_n\sqrt n)\bigr)
\le\limsup_{n\to\infty}
\gamma_j\bigl(\bW_n^\bullet(c_n\sqrt n)\bigr)
\le\Gamma_j^\bullet(c_-).
\]
Letting $c_-\uparrow c$ and $c_+\downarrow c$, continuity gives
\begin{equation}
\label{eq:finite-BA-varying-height-grassmann-limit}
\gamma_j\bigl(\bW_n^\bullet(c_n\sqrt n)\bigr)
\longrightarrow\Gamma_j^\bullet(c),\qquad j\ge0.
\end{equation}
Finally, for every fixed $k\ge1$ and all sufficiently large $n$,
\eqref{eq:prelim-intrinsic-volumes-from-grassmann-angles} and
$\rho_n=c_n\sqrt n$ give
\[
\upsilon_k\bigl(\bW_n^\bullet(\rho_n)\bigr)
=\frac12\left[
\gamma_{k-1}\bigl(\bW_n^\bullet(c_n\sqrt n)\bigr)
-\gamma_{k+1}\bigl(\bW_n^\bullet(c_n\sqrt n)\bigr)
\right]
\longrightarrow
\frac12\bigl(\Gamma_{k-1}^\bullet(c)-\Gamma_{k+1}^\bullet(c)\bigr)
=L_k^\bullet(c),
\]
where the last equality follows directly from the finite-sum
formula defining $\Gamma_j^\bullet$. This proves
\eqref{eq:finite-BA-intrinsic-volume-scaling-limit} for every
fixed $k\ge1$ in the stated regime.

\medskip
\emph{Step 4: Prove convergence in $\ell^1$.}
For either type, the intrinsic volumes
$\upsilon_k(\bW_n^\bullet(\rho_n))$, $k\ge0$, are nonnegative
and sum to one. By
\eqref{eq:finite-BA-limiting-parity-normalization}, the limiting
coefficients $L_k^\bullet(c)$ are likewise nonnegative and sum to
one. Since
\eqref{eq:finite-BA-intrinsic-volume-scaling-limit} gives
pointwise convergence for every $k\ge0$, Scheff\'e's lemma,
applied to counting measure on $\N_0$, yields
$
\sum_{k=0}^\infty
\left|
\upsilon_k\bigl(\bW_n^\bullet(\rho_n)\bigr)
-L_k^\bullet(c)
\right|
\to 0
$.
This proves
\eqref{eq:finite-BA-intrinsic-volume-l1-convergence} and completes
the proof.
\end{proof}

\section{Absorption probabilities for Brownian and Ornstein--Uhlenbeck processes}
\label{sec:brownian-ou-absorption}
We first compute the probability that the convex hull of a Brownian
motion or a Brownian bridge in $\R^d$ contains an independent Gaussian
point, proving Theorem~\ref{thm:intro-brownian-gaussian-absorption}.
We also derive the large-scale consequences stated in
Corollary~\ref{cor:intro-brownian-gaussian-large-scale-and-volumes}.
We then use deterministic rescalings and time changes to obtain
the origin-absorption probabilities for stationary
Ornstein--Uhlenbeck paths and circular Ornstein--Uhlenbeck loops
from Theorem~\ref{thm:intro-ou-origin-absorption}.
Next, we establish the long-time and high-dimensional asymptotic
results stated in Theorems~\ref{thm:intro-ou-nonabsorption-fixed-d}
and~\ref{thm:intro-ou-high-dimensional-transition} and
Corollary~\ref{cor:intro-ou-linear-time-threshold}.
Finally, we determine the expected conic intrinsic volumes of
the random OU cones, providing one of the ingredients of
Proposition~\ref{prop:intro-random-ou-expected-intrinsic-volumes}.

\subsection{Gaussian-point absorption for Brownian convex hulls}
\label{subsec:brownian-gaussian-absorption}

Fix an integer $d\ge1$. Let $(W_t)_{0\le t\le1}$ be a standard
$d$-dimensional Brownian motion starting at $0$, and let
$W_t^{\mathrm{br}}:=W_t-tW_1$, where  $0\leq t \leq 1$,  be the associated standard Brownian
bridge from $0$ to $0$. Define the Brownian convex hulls
\begin{align}
\mathcal K_d^B
&:=\conv\{W_t:0\le t\le1\},
\label{eq:type-B-brownian-hull}
\\
\mathcal K_d^A
&:=\conv\{W_t^{\mathrm{br}}:0\le t\le1\}.
\label{eq:type-A-brownian-hull}
\end{align}
These are almost surely compact convex subsets of $\R^d$.
Let $Z\sim N(0,I_d)$ be independent of the Brownian motion $(W_t)_{0\le t\le1}$, and put
\begin{equation}
\label{eq:brownian-gaussian-absorption-functions}
q_d^\bullet(\sigma)
:=\Pp(\sigma Z\in\mathcal K_d^\bullet),
\qquad \sigma\ge0,\quad \bullet\in\{B,A\}.
\end{equation}
In particular, $q_d^B(0)=q_d^A(0)=1$, since both paths start at $0$.

The next theorem gives a formula for $q_d^\bullet(\sigma)$. To state it, for every integer $m\ge0$, we define the entire function
\begin{equation}
\label{eq:brownian-hyperbolic-remainders}
\Psi_m(x):=\sum_{j=0}^\infty\frac{x^{m+2j}}{(m+2j)!},
\qquad x\in\mathbb C.
\end{equation}
For $m\ge0$, these functions can be written as
\begin{align*}
\Psi_{2m}(x)
&=\cosh x-\sum_{j=0}^{m-1}\frac{x^{2j}}{(2j)!},
\\
\Psi_{2m+1}(x)
&=\sinh x-\sum_{j=0}^{m-1}\frac{x^{2j+1}}{(2j+1)!}.
\end{align*}
In particular, $\Psi_0(x)=\cosh x$
and $\Psi_1(x)=\sinh x$. Each $\Psi_m$ is nonnegative on $[0,\infty)$.

\begin{theorem}[Gaussian-point absorption by Brownian convex hulls]
\label{thm:brownian-gaussian-absorption}
For every integer $d\ge1$ and every $\sigma>0$, the probabilities $q_d^\bullet(\sigma)
=\Pp(\sigma Z\in\mathcal K_d^\bullet)$ are given by
\begin{align}
q_d^B(\sigma)
&=2\sum_{j=0}^\infty L_{d+1+2j}^B(\sigma)
=\frac2\pi\int_0^{\arsinh(1/\sigma)}
\frac{\Psi_d(x)\,\dint x}{\sqrt{\sigma^{-2}-\sinh^2x}},
\label{eq:type-B-brownian-gaussian-absorption}
\\
q_d^A(\sigma)
&=2\sum_{j=0}^\infty L_{d+1+2j}^A(\sigma)
=\frac{\Psi_{d+1}(\theta_\sigma)}{\sinh\theta_\sigma},
\label{eq:type-A-brownian-gaussian-absorption}
\end{align}
where $\theta_\sigma:=2\arsinh(1/(2\sigma))$ and $L_k^B(\sigma),L_k^A(\sigma)$
are the limits of the conic intrinsic volumes  from Theorem~\ref{thm:finite-BA-scaling-limit}.
Both series and the integral converge. Moreover,
$\Pp(\sigma Z\in\bd\mathcal K_d^\bullet)=0$ for
$\bullet\in\{B,A\}$, so the absorption probabilities are unchanged
if the hulls are replaced by their interiors.
\end{theorem}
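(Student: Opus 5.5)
The plan is to pass through the discrete analogue and then take the Brownian scaling limit. We express the Gaussian-point absorption probability for a Gaussian random walk (respectively bridge) through Grassmann angles of $\bW_n^\bullet(\rho)$, invoke Theorem~\ref{thm:finite-BA-scaling-limit}, and then evaluate the series.

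\emph{Discrete identity.} Let $S_i=\sum_{j\le i}\xi_j$ with $\xi_j\sim N(0,I_d)$ i.i.d., and let $Z\sim N(0,I_d)$ be independent. Write the $d$ coordinates as $d$ independent standard Gaussian vectors $g^{(1)},\dots,g^{(d)}$ in $\R e_0\oplus\R^n$, and let $A$ be the $d\times(n+1)$ Gaussian matrix with rows $g^{(r)}$. Then $\rho Z_r-S_i^{(r)}$ is $\langle g^{(r)},\rho e_0+s_i\rangle$ after a sign flip of the $e_0$-coordinate, which preserves Gaussianity. Hence
\[
\rho Z\in\conv\{S_0,\dots,S_n\}
\iff 0\in A\conv\{\rho e_0+s_i\}
\iff \ker A\cap \bW_n^B(\rho)\ne\{0\}.
\]
The point $0$ lies in the convex hull of the images exactly when some nonzero nonnegative combination is killed by $A$. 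Since $\ker A$ is a uniform subspace of codimension $d$, this probability equals $\gamma_d(\bW_n^B(\rho))$, a.s.\ up to boundary events. Theorem~\ref{thm:prelim-conic-crofton} then gives $2\sum_\ell\upsilon_{d+1+2\ell}(\bW_n^B(\rho))$. The type-$A$ case is identical, using the bridge $S_i-(i/n)S_n$ and the ambient space $\R e_0\oplus s_n^\perp$, where the Gaussian is projected.

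\emph{Scaling limit.} Take $\rho_n=\sigma\sqrt n$. By scaling, $\rho_n Z\in\conv\{S_i\}$ is equivalent to $\sigma Z\in\conv\{S_i/\sqrt n\}$, and the discrete hulls increase to $\mathcal K_d^\bullet$ along dyadic $n$ (or converge in Hausdorff distance). This yields $\Pp(\sigma Z\in\intr\mathcal K_d^\bullet)\le\liminf$ and $\limsup\le\Pp(\sigma Z\in\mathcal K_d^\bullet)$. On the cone side, $\ell^1$ convergence in Theorem~\ref{thm:finite-BA-scaling-limit} gives
\[
\gamma_d\bigl(\bW_n^\bullet(\sigma\sqrt n)\bigr)\to 2\sum_j L^\bullet_{d+1+2j}(\sigma).
\]
The main obstacle is showing $\Pp(\sigma Z\in\bd\mathcal K_d^\bullet)=0$. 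I would first try conditioning on the path: $\bd\mathcal K$ is the boundary of a convex body, hence Lebesgue-null, and $\sigma Z$ has a density independent of $W$. This settles it directly by Fubini, provided $\mathcal K$ is full-dimensional a.s.; if it is lower-dimensional, it is null anyway. The sandwich then closes, and the first equalities in \eqref{eq:type-B-brownian-gaussian-absorption} and \eqref{eq:type-A-brownian-gaussian-absorption} follow.

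\emph{Evaluation.} For type $B$, sum \eqref{eq:type-B-limiting-intrinsic-volumes} over $k=d+1+2j$ using Tonelli. Since $\sum_j x^{d+2j}/(d+2j)!=\Psi_d(x)$, this gives $\frac{2}{\pi}\int_0^{\arsinh(1/\sigma)}\Psi_d(x)(\sigma^{-2}-\sinh^2x)^{-1/2}\,\dint x$. Absolute convergence holds because $\Psi_d\le\cosh$, and the integral of $\cosh$ equals $\pi/2$ as in \eqref{eq:type-B-limiting-odd-mass}. For type $A$, $\sum_j\theta^{d+1+2j}/(2(d+1+2j)!\sinh\theta)=\Psi_{d+1}(\theta)/(2\sinh\theta)$, and doubling gives \eqref{eq:type-A-brownian-gaussian-absorption}.
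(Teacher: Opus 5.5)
Your proposal is correct and follows essentially the same route as the paper. That route has four steps: the kernel identity for the Gaussian map with $e_0\mapsto -Z$ combined with the conic Crofton formula (Proposition~\ref{prop:finite-BA-gaussian-absorption}); the grid approximation with a Fubini argument for the null boundary (Lemma~\ref{lem:absorption-hausdorff-continuity}); $\ell^1$ convergence from Theorem~\ref{thm:finite-BA-scaling-limit}; and Tonelli to sum the series.

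The differences are cosmetic. You phrase the limit passage as an interior/closure sandwich rather than as almost-sure convergence of indicators. Also, your qualifier \emph{up to boundary events} in the discrete step is unnecessary: the event identity $\{\rho Z\in\conv\{S_i\}\}=\{\ker A\cap\bW_n^\bullet(\rho)\ne\{0\}\}$ holds for every realization. The only almost-sure input there is that $\ker A$ has codimension $d$ when $d\le\dim$, and is trivial otherwise.
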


The proof is given in Subsections~\ref{subsec:finite-gaussian-hulls-absorption-formula} and~\ref{subsec:finite-gaussian-hulls-absorption-proof-short}. In the remainder of the present subsection, we discuss some corollaries of Theorem~\ref{thm:brownian-gaussian-absorption}.

\begin{corollary}[Complementary formulas and low dimensions]
\label{cor:brownian-gaussian-absorption-consequences}
For every $d\ge1$, $\sigma>0$, and $\bullet\in\{B,A\}$,
\begin{align}
1-q_d^\bullet(\sigma)
&=2\sum_{j=0}^{\lfloor(d-1)/2\rfloor}
L_{d-1-2j}^\bullet(\sigma),
\label{eq:brownian-gaussian-nonabsorption-finite-sum}
\\
q_{d+2}^\bullet(\sigma)
&=q_d^\bullet(\sigma)-2L_{d+1}^\bullet(\sigma).
\label{eq:brownian-gaussian-absorption-dimension-recurrence}
\end{align}
For $d=1,2$ the Gaussian-point absorption probabilities are given by
\begin{align}
q_1^B(\sigma)
&=\frac2\pi\arctan\frac1\sigma,
\label{eq:type-B-gaussian-absorption-dimension-one}
\\
q_2^B(\sigma)
&=1-\frac2\pi\int_0^{\pi/2}
\frac{\dint\theta}{\sqrt{1+\sigma^{-2}\sin^2\theta}},
\label{eq:type-B-gaussian-absorption-dimension-two}
\\
q_1^A(\sigma)
&=\frac1{\sqrt{1+4\sigma^2}},
\label{eq:type-A-gaussian-absorption-dimension-one}
\\
q_2^A(\sigma)
&=1-\frac{\theta_\sigma}{\sinh\theta_\sigma}.
\label{eq:type-A-gaussian-absorption-dimension-two}
\end{align}
\end{corollary}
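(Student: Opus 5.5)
The plan is to derive everything from the first equality in \eqref{eq:type-B-brownian-gaussian-absorption} and \eqref{eq:type-A-brownian-gaussian-absorption}, namely $q_d^\bullet(\sigma)=2\sum_{j\ge0}L_{d+1+2j}^\bullet(\sigma)$, combined with the parity normalization \eqref{eq:finite-BA-limiting-parity-normalization} of Theorem~\ref{thm:finite-BA-scaling-limit}.

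For \eqref{eq:brownian-gaussian-nonabsorption-finite-sum}, the indices $d+1+2j$, $j\ge0$, run over all integers of the parity of $d+1$ that are at least $d+1$. The complementary indices of that parity are $d-1,d-3,\ldots$, down to $0$ or $1$, that is, $d-1-2j$ with $0\le j\le\lfloor(d-1)/2\rfloor$. Since $\sum_\ell L^\bullet_{2\ell}=\sum_\ell L^\bullet_{2\ell+1}=1/2$ and all series converge absolutely, we get
\[
1-q_d^\bullet(\sigma)=1-2\Bigl(\tfrac12-\sum_{j=0}^{\lfloor(d-1)/2\rfloor}L^\bullet_{d-1-2j}(\sigma)\Bigr),
\]
which is the claim. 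The recurrence \eqref{eq:brownian-gaussian-absorption-dimension-recurrence} follows at once: the series for $q_{d+2}^\bullet$ is the series for $q_d^\bullet$ with its first term $L^\bullet_{d+1}$ removed.

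For the low-dimensional formulas, take $d=1$ in \eqref{eq:brownian-gaussian-nonabsorption-finite-sum}, which gives $1-q_1^\bullet=2L_0^\bullet(\sigma)$.
\begin{itemize}
\item Type $B$: $1-\tfrac2\pi\arctan\sigma=\tfrac2\pi\arctan(1/\sigma)$, which is \eqref{eq:type-B-gaussian-absorption-dimension-one}.
\item Type $A$: $1-(1-(1+4\sigma^2)^{-1/2})=(1+4\sigma^2)^{-1/2}$, which is \eqref{eq:type-A-gaussian-absorption-dimension-one}.
\end{itemize}

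For $d=2$, the formula gives $1-q_2^\bullet=2L_1^\bullet(\sigma)$.
\begin{itemize}
\item Type $A$: $2L_1^A=\theta_\sigma/\sinh\theta_\sigma$, which is \eqref{eq:type-A-gaussian-absorption-dimension-two}.
\item Type $B$: $2L_1^B(\sigma)=\frac2\pi\int_0^{\arsinh(1/\sigma)}(\sigma^{-2}-\sinh^2x)^{-1/2}\,\dint x$. Here the only real computation is needed.
\end{itemize}

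The main (mild) obstacle is therefore the change of variables in the type-$B$ case with $d=2$. Substitute $\sinh x=\sigma^{-1}\sin\theta$, $\theta\in[0,\pi/2]$. Then $\cosh x\,\dint x=\sigma^{-1}\cos\theta\,\dint\theta$ and $\sqrt{\sigma^{-2}-\sinh^2x}=\sigma^{-1}\cos\theta$. Hence the integrand becomes
\[
\frac{\dint\theta}{\cosh x}=\frac{\dint\theta}{\sqrt{1+\sigma^{-2}\sin^2\theta}},
\]
which yields \eqref{eq:type-B-gaussian-absorption-dimension-two}. Alternatively, one can check consistency with the integral formula in Theorem~\ref{thm:brownian-gaussian-absorption} via $\Psi_2=\cosh-1$ and \eqref{eq:type-B-limiting-odd-mass}, but the route through $L_1^B$ is the direct one.
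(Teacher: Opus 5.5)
Your proposal is correct and follows essentially the same route as the paper: complementary-parity subtraction using the parity normalization \eqref{eq:finite-BA-limiting-parity-normalization}, dropping the first series term for the recurrence, and reading off $q_1^\bullet=1-2L_0^\bullet$ and $q_2^\bullet=1-2L_1^\bullet$. Like the paper, you use the substitution $\sigma\sinh x=\sin\theta$ for $L_1^B$.
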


\begin{proof}
The indices $d+1,d+3,\ldots$ in the series~\eqref{eq:type-B-brownian-gaussian-absorption} and~\eqref{eq:type-A-brownian-gaussian-absorption}  have the
same parity as $d-1$. Their complement among the nonnegative
indices of this parity is $d-1,d-3,\ldots$, down to $0$ or $1$.
Since the sum over all indices of either parity is $1/2$ by
\eqref{eq:finite-BA-limiting-parity-normalization}, subtracting
the series~\eqref{eq:type-B-brownian-gaussian-absorption} and~\eqref{eq:type-A-brownian-gaussian-absorption}  from $1$ gives
\eqref{eq:brownian-gaussian-nonabsorption-finite-sum}.
Removing the first term of these series gives
\eqref{eq:brownian-gaussian-absorption-dimension-recurrence}.

For $d=1$ and $d=2$, the formula~\eqref{eq:brownian-gaussian-nonabsorption-finite-sum} reads
$q_1^\bullet(\sigma)=1-2L_0^\bullet(\sigma)$ and
$q_2^\bullet(\sigma)=1-2L_1^\bullet(\sigma)$, respectively.
Insert the explicit values  from
Theorem~\ref{thm:finite-BA-scaling-limit}.
For $q_1^B$, use
$\arctan\sigma+\arctan(1/\sigma)=\pi/2$; for $q_2^B$, use the
substitution $\sigma\sinh x=\sin\theta$ in the integral for
$L_1^B(\sigma)$. The type-$A$ formulas follow directly.
\end{proof}

In the next corollary, we let $\sigma\to\infty$ in Theorem~\ref{thm:brownian-gaussian-absorption}, which allows us to determine the
expected volumes of the Brownian convex hulls $\mathcal K_d^\bullet$. Write $\operatorname{vol}_d$ for $d$-dimensional
Lebesgue measure and $\kappa_m:=\pi^{m/2}/\Gamma(m/2+1)$ for the volume
of the Euclidean unit ball in $\R^m$, for $m\ge1$.

\begin{corollary}[Large-$\sigma$ limit and expected hull volumes]
\label{cor:brownian-gaussian-large-scale-and-volumes}
For every fixed integer $d\ge1$, as $\sigma\to\infty$,
\begin{align}
q_d^B(\sigma)
&\sim\frac{\kappa_d^2}{(2\pi)^d}\,\sigma^{-d},
\label{eq:type-B-gaussian-absorption-large-scale}
\\
q_d^A(\sigma)
&\sim\frac{1}{(d+1)!}\,\sigma^{-d}.
\label{eq:type-A-gaussian-absorption-large-scale}
\end{align}
Moreover, for $\bullet\in\{B,A\}$,
\begin{equation}
\label{eq:brownian-gaussian-large-scale-volume-limit}
\lim_{\sigma\to\infty}\sigma^d q_d^\bullet(\sigma)
=\frac{\Ee\,\operatorname{vol}_d(\mathcal K_d^\bullet)}{(2\pi)^{d/2}}.
\end{equation}
Consequently,
\begin{align}
\Ee\,\operatorname{vol}_d(\mathcal K_d^B)
&=\frac{\kappa_d^2}{(2\pi)^{d/2}},
\label{eq:type-B-brownian-hull-expected-volume}
\\
\Ee\,\operatorname{vol}_d(\mathcal K_d^A)
&=\frac{(2\pi)^{d/2}}{(d+1)!}
=\frac{\kappa_d\kappa_{d+1}}{2(2\pi)^{d/2}}.
\label{eq:type-A-brownian-hull-expected-volume}
\end{align}
\end{corollary}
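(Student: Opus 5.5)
The proof has three parts. First we establish the volume identity \eqref{eq:brownian-gaussian-large-scale-volume-limit} by monotone convergence. Then we extract the large-$\sigma$ asymptotics from the explicit formulas of Theorem~\ref{thm:brownian-gaussian-absorption}. Finally we compare the two.

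\emph{Volume identity.} Conditionally on the hull, $\sigma Z$ has density $(2\pi\sigma^2)^{-d/2}\eee^{-\|x\|^2/(2\sigma^2)}$. Fubini therefore gives
\[
\sigma^dq_d^\bullet(\sigma)=(2\pi)^{-d/2}\Ee\int_{\mathcal K_d^\bullet}\eee^{-\|x\|^2/(2\sigma^2)}\,\dint x .
\]
The integrand increases to one as $\sigma\to\infty$, so monotone convergence yields \eqref{eq:brownian-gaussian-large-scale-volume-limit}, with the limit possibly infinite a priori. Consequently, \eqref{eq:type-B-brownian-hull-expected-volume} and \eqref{eq:type-A-brownian-hull-expected-volume} follow once the asymptotics \eqref{eq:type-B-gaussian-absorption-large-scale} and \eqref{eq:type-A-gaussian-absorption-large-scale} are shown. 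The second expression in \eqref{eq:type-A-brownian-hull-expected-volume} is then a routine check via $\kappa_d\kappa_{d+1}=2(2\pi)^d/(d+1)!$, which follows from the duplication formula.

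\emph{Type $A$.} As $\sigma\to\infty$ we have $\theta_\sigma=2\arsinh(1/(2\sigma))\sim1/\sigma$. Since $\Psi_{d+1}(x)\sim x^{d+1}/(d+1)!$ and $\sinh x\sim x$ as $x\to0$, we get
\[
q_d^A(\sigma)\sim\theta_\sigma^d/(d+1)!\sim\sigma^{-d}/(d+1)!,
\]
which is \eqref{eq:type-A-gaussian-absorption-large-scale}.

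\emph{Type $B$.} Put $\epsilon=1/\sigma$ and substitute $\sinh x=\epsilon v$ with $v\in[0,1]$. Then $\dint x=\epsilon\,\dint v/\sqrt{1+\epsilon^2v^2}$, and the integral becomes
\[
\frac2\pi\int_0^1\frac{\Psi_d(\arsinh(\epsilon v))}{\sqrt{1-v^2}\sqrt{1+\epsilon^2v^2}}\,\dint v .
\]
Since $\Psi_d(y)\le y^d\cosh y/d!$ and $\Psi_d(y)\sim y^d/d!$ as $y\to 0$, dominated convergence gives
\[
q_d^B(\sigma)\sim\frac{2\epsilon^d}{\pi d!}\int_0^1\frac{v^d\,\dint v}{\sqrt{1-v^2}}
=\frac{2\epsilon^d}{\pi d!}\cdot\frac{\sqrt\pi\,\Gamma((d+1)/2)}{2\Gamma(d/2+1)} .
\]
It remains to identify this constant with $\kappa_d^2/(2\pi)^d=\pi^d/((2\pi)^d\Gamma(d/2+1)^2)$. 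After cancellation, the identity to check is
\[
\Gamma\bigl(\tfrac{d+1}2\bigr)\Gamma\bigl(\tfrac d2+1\bigr)=\sqrt\pi\,d!\,2^{-d},
\]
which is exactly the Legendre duplication formula. Verifying this constant is the main place where care is needed; the analytic limits themselves are routine, and the domination is uniform for $\epsilon\le1$.
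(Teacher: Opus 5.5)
Your proposal is correct and follows essentially the same route as the paper. The paper likewise uses the substitution $u=\sigma\sinh x$ with dominated convergence and the duplication formula for type $B$, takes the small-$\theta_\sigma$ asymptotics for type $A$, and obtains the volume identity by conditioning on the hull and applying monotone convergence, with the finite limits showing that the expected volumes are finite.
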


\begin{proof}
For type $B$, the substitution $u=\sigma\sinh x$ in
\eqref{eq:type-B-brownian-gaussian-absorption} gives
\[
\sigma^d q_d^B(\sigma)
=\frac2\pi\int_0^1
\frac{\sigma^d\Psi_d(\arsinh(u/\sigma))}
     {\sqrt{1-u^2}\sqrt{1+u^2/\sigma^2}}\,\dint u.
\]
For each $u\in[0,1]$, the numerator converges to $u^d/d!$ as $\sigma \to\infty$.
For $\sigma\ge1$, the series defining $\Psi_d$ and the inequality
$\arsinh(u/\sigma)\le u/\sigma$ give
\[
0\le\sigma^d\Psi_d(\arsinh(u/\sigma))
\le\frac{u^d}{d!}\cosh 1.
\]
Dominated convergence therefore yields
\begin{align*}
\lim_{\sigma\to\infty}\sigma^d q_d^B(\sigma)
=\frac2{\pi d!}\int_0^1\frac{u^d}{\sqrt{1-u^2}}\,\dint u
=\frac{\Gamma((d+1)/2)}{\sqrt\pi\,d!\,\Gamma(d/2+1)}
=\frac{\kappa_d^2}{(2\pi)^d}.
\end{align*}
The last equality follows from the duplication formula for the gamma
function. This proves \eqref{eq:type-B-gaussian-absorption-large-scale}.
For type $A$, use $\theta_\sigma\sim\sigma^{-1}$,
$\Psi_{d+1}(x)\sim x^{d+1}/(d+1)!$, and $\sinh x\sim x$ as
$x\downarrow0$ in \eqref{eq:type-A-brownian-gaussian-absorption}.
This proves \eqref{eq:type-A-gaussian-absorption-large-scale}.

To identify the limits with expected volumes, condition on the hull $\mathcal K_d^\bullet$.
Independence of $Z$ and $\mathcal K_d^\bullet$
implies, for every $\sigma>0$,
\begin{equation}
\label{eq:brownian-gaussian-absorption-density-integral}
\sigma^d q_d^\bullet(\sigma)
=\frac1{(2\pi)^{d/2}}\,
\Ee\int_{\mathcal K_d^\bullet}
\eee^{-\|x\|^2/(2\sigma^2)}\,\dint x,
\qquad \bullet\in\{B,A\}.
\end{equation}
As $\sigma\uparrow\infty$, the integrand increases to one.
Monotone convergence gives
\eqref{eq:brownian-gaussian-large-scale-volume-limit}, initially
allowing an infinite expected volume. The finite limits of $\sigma^d q_d^\bullet(\sigma)$ computed in~\eqref{eq:type-B-gaussian-absorption-large-scale} and~\eqref{eq:type-A-gaussian-absorption-large-scale}  show that both expected volumes are finite and give
\eqref{eq:type-B-brownian-hull-expected-volume} and the first equality
in \eqref{eq:type-A-brownian-hull-expected-volume}.
The second equality follows again from the gamma duplication formula.
\end{proof}

The expected-volume formulas~\eqref{eq:type-B-brownian-hull-expected-volume} and~\eqref{eq:type-A-brownian-hull-expected-volume} are known: for Brownian motion in
$d\ge2$, see Eldan~\cite[Theorem~1]{Eldan2014}; for both Brownian motion
and Brownian bridges, see~\cite[Section~1.5]{KabluchkoZaporozhets2016}.

\begin{remark}[Arbitrary time horizon]
The unit time horizon in the definitions of $\mathcal K_d^\bullet$ entails no loss of generality.
For a standard Brownian motion $(W_t)_{t\ge0}$ in $\R^d$, define
\[
\mathcal K_{d,T}^B:=\conv\{W_t:0\le t\le T\},
\qquad
\mathcal K_{d,T}^A:=\conv\{W_t-(t/T)W_T:0\le t\le T\}.
\]
Brownian scaling, with $Z\sim N(0,I_d)$ independent of $W$, gives
\begin{equation}
\label{eq:brownian-gaussian-absorption-arbitrary-horizon}
\Pp(\sigma Z\in\mathcal K_{d,T}^\bullet)
=q_d^\bullet\left(\frac\sigma{\sqrt T}\right),
\qquad T>0,\quad\sigma\ge0,\quad\bullet\in\{B,A\}.
\end{equation}
\end{remark}

\subsection{Gaussian random walk and bridge hulls}
\label{subsec:finite-gaussian-hulls-absorption-formula}
We  compute Gaussian-point absorption probabilities for convex hulls of discrete-time Gaussian random walks and bridges. To this end, we apply the conic Crofton formula to the cones  $\bW_n^\bullet(\sigma\sqrt n)$ from Subsection~\ref{subsec:type-B-bodies-and-cones}. In the next subsection  we approximate the Brownian convex hulls by these discrete-time hulls.

Let $X_1,X_2,\ldots$ be independent $N(0,I_d)$ random vectors,
independent of $Z\sim N(0,I_d)$, and put $S_0:=0$ and $S_i:=X_1+\cdots+X_i$. Thus $(S_i)_{i=0}^\infty$ is a random walk in $\R^d$ with standard Gaussian increments.
For $n\ge1$, define
\begin{align}
\mathcal Q_{d,n}^B
&:=\conv\left\{\frac{S_i}{\sqrt n}:i=0,\ldots,n\right\},
\label{eq:type-B-normalized-gaussian-walk-hull}
\\
\mathcal Q_{d,n}^A
&:=\conv\left\{\frac{S_i-(i/n)S_n}{\sqrt n}:
 i=0,\ldots,n-1\right\}.
\label{eq:type-A-normalized-gaussian-bridge-hull}
\end{align}
Including $i=n$ in the second definition does not change the hull,
since the bridge values at $i=0$ and $i=n$ coincide.
The next proposition determines $\Pp(\sigma Z \in \mathcal Q_{d,n}^\bullet)$.

\begin{proposition}[Gaussian-point absorption for convex hulls of Gaussian random walks and bridges]
\label{prop:finite-BA-gaussian-absorption}
For every $n,d\ge1$, $\sigma>0$, and $\bullet\in\{B,A\}$,
\begin{equation}
\label{eq:finite-BA-gaussian-absorption}
\Pp(\sigma Z\in\mathcal Q_{d,n}^\bullet)
=2\sum_{j=0}^\infty
\upsilon_{d+1+2j}\bigl(\bW_n^\bullet(\sigma\sqrt n)\bigr).
\end{equation}
The probabilities vanish for $n<d$ in type $B$ and for $n\le d$
in type $A$. Explicit formulas for the conic intrinsic volumes on the right-hand side are given in Proposition~\ref{prop:finite-BA-intrinsic-volumes-symmetrized}.
\end{proposition}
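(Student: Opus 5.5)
The plan is to realize the Gaussian-point absorption event as the event that a random linear subspace of the right codimension meets the cone $\bW_n^\bullet(\sigma\sqrt n)$ nontrivially. Once this is done, the conic Crofton formula (Theorem~\ref{thm:prelim-conic-crofton}) gives the stated sum, since the cone is pointed and nonzero and hence not a linear subspace.

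Consider type $B$ first. Write $X=[X_1,\ldots,X_n]$ as a random $d\times n$ matrix with i.i.d.\ standard normal entries, and let $Z$ be the extra column. Define the random linear map $A:\R e_0\oplus\R^n\to\R^d$ by $Ae_0:=-Z$ and $Ae_j:=X_j$ for $1\le j\le n$. Then $A(\rho e_0+s_i)=S_i-\rho Z$. With $\rho:=\sigma\sqrt n$, this equals $\sqrt n\,(S_i/\sqrt n-\sigma Z)$. Hence
\[
\sigma Z\in\mathcal Q_{d,n}^B
\iff 0\in\conv\{A(\rho e_0+s_i):0\le i\le n\}
\iff \ker A\cap\bW_n^B(\rho)\ne\{0\}.
\]
The middle step uses that $0\in\conv\{Av_i\}$ means $\sum\lambda_iv_i\in\ker A$ for some nonzero nonnegative $\lambda$. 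Because the generators $v_i$ are linearly independent, such a combination is a nonzero vector of the cone; conversely, every nonzero cone vector in $\ker A$ can be normalized to a convex combination. The matrix $A$ has i.i.d.\ standard Gaussian entries, so its law is invariant under right multiplication by orthogonal transformations of $\R e_0\oplus\R^n$, and $A$ has rank $\min(d,n+1)$ almost surely. Consequently $\ker A$ is uniformly distributed on the Grassmannian of subspaces of codimension $d$ when $d\le n+1$. If $d>n+1$, the kernel is trivial and both sides vanish. The probability is therefore $\gamma_d(\bW_n^B(\rho))$, and the conic Crofton formula gives \eqref{eq:finite-BA-gaussian-absorption}.

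For type $A$, restrict the same $A$ to the ambient space $E_A:=\R e_0\oplus s_n^\perp$. Since $A(s_i-(i/n)s_n)=S_i-(i/n)S_n$, the same equivalence holds with the generators of $\bW_n^A(\rho)$. Composing $A$ with an orthonormal basis of $E_A$ produces a $d\times n$ Gaussian matrix with i.i.d.\ standard entries. Its kernel is thus uniform of codimension $d$ in the $n$-dimensional space $E_A$ when $d\le n$, and trivial otherwise. Crofton then applies with $\dim E_A=n$.

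The vanishing statements follow from dimensions. The cones have dimensions $n+1$ and $n$, so $\gamma_d=0$ as soon as $d\ge\dim C$. This gives $n+1\le d$, i.e.\ $n<d$, for type $B$, and $n\le d$ for type $A$; the same conclusion also follows because intrinsic volumes vanish above the dimension. The step needing most care is the equivalence between the convex-hull event and a nontrivial kernel intersection, in particular checking that the normalization to a convex combination and the degenerate boundary cases are correct. The rotation invariance giving uniformity of $\ker A$ is standard.
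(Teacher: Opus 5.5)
Your proposal is correct and follows the paper's proof essentially step for step. Both use the Gaussian map $e_0\mapsto -Z$, $e_j\mapsto X_j$ (restricted to $\R e_0\oplus s_n^\perp$ in type $A$), the equivalence $\{\sigma Z\in\mathcal Q_{d,n}^\bullet\}=\{\ker\mathcal A_n^\bullet\cap\bW_n^\bullet(\sigma\sqrt n)\ne\{0\}\}$, orthogonal invariance to make the kernel uniformly distributed, the conic Crofton formula, and a separate treatment of the injective case $d>\dim$. The only cosmetic difference is how you show the relevant cone vector is nonzero: you use linear independence of the generators, whereas the paper uses its positive $e_0$-coordinate.
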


\begin{proof}
The idea is to  represent the random polytopes $\mathcal Q_{d,n}^B$ and $\mathcal Q_{d,n}^A$ as random Gaussian images of the deterministic bodies $\bK_n^B$ and $\bK_n^A$ defined in Subsection~\ref{subsec:type-B-bodies-and-cones}.
Write $E_n^B:=\R e_0\oplus\R^n$ and
$E_n^A:=\R e_0\oplus s_n^\perp$ for the natural ambient spaces of
the cones $\bW_n^B(\sigma\sqrt n)$ and $\bW_n^A(\sigma\sqrt n)$ defined in Subsection~\ref{subsec:type-B-bodies-and-cones}, and let $D_n^B:=n+1$ and $D_n^A:=n$ be their dimensions.
Define the  Gaussian linear map $\mathcal A_n:E_n^B\to\R^d$ by
$\mathcal A_ne_0=-Z$ and $\mathcal A_ne_j=X_j$ for $1\le j\le n$,
and let $\mathcal A_n^\bullet$ denote its restriction to
$E_n^\bullet$, for $\bullet\in\{B,A\}$.
The definitions give
\begin{equation}
\label{eq:finite-gaussian-hulls-as-images}
\mathcal Q_{d,n}^\bullet
=\frac1{\sqrt n}\mathcal A_n^\bullet(\bK_n^\bullet),
\qquad \bullet\in\{B,A\}.
\end{equation}
We claim that for every realization of the Gaussian vectors $Z, X_1,\ldots, X_n$, one has
\begin{equation}
\label{eq:finite-BA-absorption-kernel-equivalence}
\{\sigma Z\in\mathcal Q_{d,n}^\bullet\}
=\{\ker\mathcal A_n^\bullet\cap
       \bW_n^\bullet(\sigma\sqrt n)\ne\{0\}\}.
\end{equation}
Indeed, since $\bK_n^\bullet$ is convex, every nonzero vector of
$\bW_n^\bullet(\sigma\sqrt n)$ has the form
$\lambda(\sigma\sqrt n\,e_0+x)$ with $\lambda>0$ and
$x\in\bK_n^\bullet$. Conversely, every such vector belongs to
the cone and is nonzero, because its $e_0$ coordinate is positive.
Its image under $\mathcal A_n^\bullet$ vanishes precisely when
$\mathcal A_n^\bullet x=\sigma\sqrt n\,Z$.
Together with \eqref{eq:finite-gaussian-hulls-as-images}, this
proves \eqref{eq:finite-BA-absorption-kernel-equivalence}.

Next we apply the conic Crofton formula
\eqref{eq:finite-dimensional-conic-crofton} to determine the probability of the event on the right-hand side of~\eqref{eq:finite-BA-absorption-kernel-equivalence}.  In any fixed orthonormal bases, $\mathcal A_n^\bullet$ is represented
by a matrix with independent standard normal entries.
Thus $\mathcal A_n^\bullet$ has rank $\min(d,D_n^\bullet)$ almost surely.
If $d\le D_n^\bullet$, orthogonal invariance shows that its kernel is
uniformly distributed among the subspaces of codimension $d$ in
$E_n^\bullet$.
The cone $\bW_n^\bullet(\sigma\sqrt n)$ is pointed and
full-dimensional in this space by
Subsection~\ref{subsec:type-B-bodies-and-cones}; in particular,
it is not a linear subspace. The conic Crofton formula
\eqref{eq:finite-dimensional-conic-crofton} therefore gives
\eqref{eq:finite-BA-gaussian-absorption}.
This includes $d=D_n^\bullet$, when the kernel is $\{0\}$.
If $d>D_n^\bullet$, the map $\mathcal A_n^\bullet$ is injective almost surely and the
left-hand side is zero by
\eqref{eq:finite-BA-absorption-kernel-equivalence}.
The right-hand side is also zero, since all its indices exceed
$D_n^\bullet$. The stated vanishing cases follow.
\end{proof}

\subsection{Proof of Theorem~\ref{thm:brownian-gaussian-absorption}}
\label{subsec:finite-gaussian-hulls-absorption-proof-short}
To pass from the discrete-time hulls to the continuous ones, we use the
following elementary lemma. Write $d_{\mathrm H}$ for Hausdorff
distance between nonempty compact subsets of $\R^d$.

\begin{lemma}
\label{lem:absorption-hausdorff-continuity}
Let $K,K_1,K_2,\ldots$ be random nonempty compact convex subsets
of $\R^d$. Suppose that $d_{\mathrm H}(K_n,K)\to0$ almost surely.
Let $Y$ be an $\R^d$-valued random vector with a Lebesgue density,
independent of $K,K_1,K_2,\ldots$. Then
$\Pp(Y\in\bd K)=0$ and
\begin{equation}\label{eq:lem:absorption-hausdorff-continuity}
\ind_{\{Y\in K_n\}}\longrightarrow\ind_{\{Y\in K\}}
\quad\text{almost surely}.
\end{equation}
In particular, $\Pp(Y\in K_n)\to\Pp(Y\in K)$.
\end{lemma}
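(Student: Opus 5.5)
The plan is to handle the boundary claim first, then show that for almost every realization the indicators converge at every point off $\bd K$.

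\emph{Step 1: $\Pp(Y\in\bd K)=0$.} The boundary of a compact convex set in $\R^d$ has Lebesgue measure zero. If $\dim K=d$, this is a standard fact; one can see it by covering with finitely many pieces that are graphs of Lipschitz functions, or via $\operatorname{vol}_d(K)=\lim_{r\downarrow1}\operatorname{vol}_d(x_0+r(K-x_0))$ for $x_0\in\intr K$. If $\dim K<d$, then $K$ lies in an affine hyperplane, which is itself Lebesgue-null. Conditioning on $K$ and using independence together with the density of $Y$ gives
\[
\Pp(Y\in\bd K)=\Ee\bigl[\Pp(Y\in\bd K\mid K)\bigr]=0.
\]
For this conditioning step I will note that $\{(k,y):y\in\bd k\}$ is measurable in the space of compact sets with the Hausdorff metric times $\R^d$. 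This follows because $\bd k=k\setminus\intr k$, and both conditions $\dist(y,k)=0$ and $\dist(y,\R^d\setminus k)>0$ are measurable in $(k,y)$.

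\emph{Step 2: deterministic convergence.} Fix a realization with $d_{\mathrm H}(K_n,K)\to0$ and $Y\notin\bd K$. There are two cases.
\begin{itemize}
\item If $Y\notin K$, then $\dist(Y,K)=\varepsilon>0$. Once $d_{\mathrm H}(K_n,K)<\varepsilon$, every point of $K_n$ lies within distance less than $\varepsilon$ of $K$, so $Y\notin K_n$.
\item If $Y\in\intr K$, pick $r>0$ with $Y+rB_d\subseteq K$.
\end{itemize}

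\emph{Step 3: the interior case, which is the main point.} I claim that $Y\in K_n$ as soon as $d_{\mathrm H}(K_n,K)<r$; this is where convexity is used. Suppose not. Then $Y\notin K_n$, and since $K_n$ is compact and convex, the separation theorem gives a unit vector $u$ with $\langle u,x\rangle\le\langle u,Y\rangle$ for all $x\in K_n$. The point $Y+ru$ belongs to $K$, so Hausdorff closeness gives some $x\in K_n$ with $\|x-(Y+ru)\|<r$. But then
\[
\langle u,x\rangle>\langle u,Y+ru\rangle-r=\langle u,Y\rangle,
\]
which is a contradiction. Hence $\ind_{\{Y\in K_n\}}\to\ind_{\{Y\in K\}}$ on an event of probability one, namely the intersection of $\{d_{\mathrm H}(K_n,K)\to0\}$ with $\{Y\notin\bd K\}$.

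\emph{Step 4: conclusion.} The convergence $\Pp(Y\in K_n)\to\Pp(Y\in K)$ follows from the almost sure convergence of indicators by bounded convergence.
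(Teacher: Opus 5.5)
Your proposal is correct and follows the paper's proof essentially step for step. Boundary nullity plus independence gives $\Pp(Y\in\bd K)=0$, the same two-case deterministic stabilization argument (with a separating unit vector $u$ and the point $Y+ru\in K$) handles $Y\notin\bd K$, and bounded convergence finishes. One small slip: the displayed identity $\operatorname{vol}_d(K)=\lim_{r\downarrow1}\operatorname{vol}_d(x_0+r(K-x_0))$ is trivially true because $\operatorname{vol}_d(x_0+r(K-x_0))=r^d\operatorname{vol}_d(K)$, so it does not by itself give $\operatorname{vol}_d(\bd K)=0$. What you actually need is the inclusion $x_0+s(K-x_0)\subseteq\intr K$ for $0<s<1$, which yields $\operatorname{vol}_d(\bd K)\le(1-s^d)\operatorname{vol}_d(K)$, exactly as in the paper.
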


\begin{proof}
First, the boundary $\bd M$ of any nonempty compact convex set
$M\subseteq\R^d$ has zero Lebesgue measure. Indeed, if $M$ has empty
interior, it lies in a proper affine subspace and the assertion follows. Otherwise, choose
$a\in\intr M$. For $0<r<1$, convexity gives
$a+r(M-a)\subseteq\intr M$, whence
$\operatorname{vol}_d(\bd M)\le(1-r^d)\operatorname{vol}_d(M)$.
Letting $r\uparrow1$ proves that $\bd M$ is a Lebesgue-zero set.

If $f_Y$ is the density of $Y$,  independence of $Y$ and $K$  yields
\[
\Pp(Y\in\bd K)
=\Ee\left[\int_{\bd K}f_Y(y)\,\dint y\right]=0.
\]

For deterministic compact convex sets $K_n$ converging to $K$ in Hausdorff
distance, membership stabilizes at every $x\notin\bd K$, that is $\ind_{\{x\in K_n\}} = \ind_{\{x\in K\}}$ for sufficiently large $n$.
Indeed, if $x\notin K$, then $\dist(x,K)>0$, so eventually
$x\notin K_n$. If $x\in\intr K$, choose $r>0$ such that the
closed ball of radius $r$ about $x$ lies in $K$. Suppose that
$d_{\mathrm H}(K_n,K)<r$ but $x\notin K_n$. Strict separation
provides a unit vector $u$ satisfying
$\sup_{z\in K_n}\langle u,z\rangle<\langle u,x\rangle$.
However, $x+ru\in K$ has distance less than $r$ from some
$z\in K_n$, which gives
$\langle u,z\rangle>\langle u,x\rangle$, a contradiction.

On the event of Hausdorff convergence, the deterministic argument
therefore applies to $x=Y$ almost surely, proving~\eqref{eq:lem:absorption-hausdorff-continuity}. Dominated (bounded) convergence
gives the convergence of probabilities.
\end{proof}

\begin{proof}[Proof of Theorem~\ref{thm:brownian-gaussian-absorption}]
\emph{Step 1: Approximate the continuous hulls by discrete grids.}
Write $W_t^B:=W_t$ and $W_t^A:=W_t^{\mathrm{br}}$, and define
\begin{equation}
\label{eq:finite-BA-brownian-grid-hulls}
\mathcal K_d^{\bullet,[n]}
:=\conv\{W_{i/n}^\bullet:i=0,\ldots,n\},
\qquad n\ge1,\quad \bullet\in\{B,A\}.
\end{equation}
For each $n$, the vectors
$\sqrt n\,(W_{j/n}-W_{(j-1)/n})$, $1\le j\le n$, are
independent $N(0,I_d)$ vectors, independent of $Z$.
Their partial sums are $\sqrt n\,W_{i/n}$.  Thus $(\mathcal Q_{d,n}^\bullet,Z)$ has the same law as
$(\mathcal K_d^{\bullet,[n]},Z)$.

For $\bullet\in\{B,A\}$, path continuity of the Brownian motion  gives almost surely
\begin{equation}
\label{eq:finite-BA-brownian-grid-hausdorff-convergence}
d_{\mathrm H}(\mathcal K_d^{\bullet,[n]},\mathcal K_d^\bullet)
\le\sup_{0\le t\le1}
\bigl\|W_t^\bullet-W_{\lfloor nt\rfloor/n}^\bullet\bigr\|
\longrightarrow0.
\end{equation}
Indeed, the discrete  hull $\mathcal K_d^{\bullet,[n]}$ is contained in the full hull $\mathcal K_d^\bullet$, and every
finite convex combination of path values is approximated by the
same combination of their grid values with at most the displayed
error. Since $\sigma>0$, the independent point $\sigma Z$ has a
Lebesgue density. Lemma~\ref{lem:absorption-hausdorff-continuity}
therefore proves both $\Pp(\sigma Z\in\bd\mathcal K_d^\bullet)=0$
and
\begin{equation}
\label{eq:finite-BA-grid-absorption-limit}
q_d^\bullet(\sigma)
=\lim_{n\to\infty}\Pp(\sigma Z\in\mathcal K_d^{\bullet,[n]})
=\lim_{n\to\infty}\Pp(\sigma Z\in\mathcal Q_{d,n}^\bullet).
\end{equation}

\medskip
\emph{Step 2: Use conic intrinsic volumes and pass to the limit.}
Substituting \eqref{eq:finite-BA-gaussian-absorption} into
\eqref{eq:finite-BA-grid-absorption-limit} gives
\[
q_d^\bullet(\sigma)
=\lim_{n\to\infty}2\sum_{j=0}^\infty
\upsilon_{d+1+2j}\bigl(\bW_n^\bullet(\sigma\sqrt n)\bigr).
\]
The series $2\sum_{j=0}^\infty L_{d+1+2j}^\bullet(\sigma)$ is
finite by \eqref{eq:finite-BA-limiting-parity-normalization}.
The absolute difference between the sum inside the limit and this
series is at most
\[
2\sum_{k=0}^\infty
\left|\upsilon_k\bigl(\bW_n^\bullet(\sigma\sqrt n)\bigr)
      -L_k^\bullet(\sigma)\right|,
\]
which tends to zero by
\eqref{eq:finite-BA-intrinsic-volume-l1-convergence}, with
$\rho_n=\sigma\sqrt n$ and $c=\sigma$. Hence
$
q_d^\bullet(\sigma)=2\sum_{j=0}^\infty
L_{d+1+2j}^\bullet(\sigma).
$

\medskip
\emph{Step 3: Evaluate the sums.}
For type $B$, insert
\eqref{eq:type-B-limiting-intrinsic-volumes} into the series $q_d^\bullet(\sigma)=2\sum_{j=0}^\infty L_{d+1+2j}^\bullet(\sigma)$.
All integrands are nonnegative, so Tonelli's theorem allows the
sum to be taken under the integral sign. Its numerator becomes
$\sum_{j=0}^\infty x^{d+2j}/(d+2j)!=\Psi_d(x)$, proving the
integral formula in
\eqref{eq:type-B-brownian-gaussian-absorption}.
The integral is finite because the series is finite.
For type $A$, inserting~\eqref{eq:type-A-limiting-intrinsic-volumes} into the series gives
\[
q_d^A(\sigma)
=
2\sum_{j=0}^\infty L_{d+1+2j}^A(\sigma)
=\frac1{\sinh\theta_\sigma}
  \sum_{j=0}^\infty
  \frac{\theta_\sigma^{d+1+2j}}{(d+1+2j)!}
=\frac{\Psi_{d+1}(\theta_\sigma)}{\sinh\theta_\sigma},
\]
which proves \eqref{eq:type-A-brownian-gaussian-absorption}.
\end{proof}

\subsection{Origin absorption for Ornstein--Uhlenbeck paths and circular loops}
\label{subsec:ou-origin-absorption}

We now relate the Gaussian-point absorption by Brownian convex hulls to origin absorption
for Ornstein--Uhlenbeck hulls. Throughout this subsection, the
ordinary Ornstein--Uhlenbeck process has unit variance and rate $1/2$.
Let $U=(U_u)_{u\in \mathbb R}$ be a centered stationary Gaussian process in
$\R^d$ with continuous paths and covariance
\begin{equation}
\label{eq:ordinary-ou-covariance}
\Ee[U_uU_v^{\mathsf T}]=\eee^{-|u-v|/2}I_d,
\qquad u,v\in \mathbb R.
\end{equation}
For each $T>0$, by a \emph{circular Ornstein--Uhlenbeck loop of
circumference $T$} we mean a centered Gaussian process
$V^{(T)}=(V_u^{(T)})_{0\le u\le T}$ in $\R^d$ with continuous paths
and covariance
\begin{equation}
\label{eq:circular-ou-covariance}
\Ee\bigl[V_u^{(T)}(V_v^{(T)})^{\mathsf T}\bigr]
=\frac{\cosh(T/4-|u-v|/2)}{\cosh(T/4)}I_d,
\qquad 0\le u,v\le T.
\end{equation}
In particular, $V_0^{(T)}=V_T^{(T)}$ almost surely. The covariance is
unchanged when the separation $h=|u-v|$ is replaced by $T-h$; hence,
after identifying the endpoints, the law is invariant under rotations
of the circle. The representation in Proposition~\ref{prop:ou-brownian-representations} below also establishes the existence
of a continuous process with this covariance.

Define the random convex hulls
\begin{equation}
\label{eq:ou-convex-hulls}
\mathcal H_{d,T}^B:=\conv\{U_u:0\le u\le T\}\subseteq \R^d,
\qquad
\mathcal H_{d,T}^A:=\conv\{V_u^{(T)}:0\le u\le T\}\subseteq \R^d.
\end{equation}
The superscript $A$ thus refers to the circular
loop, whereas $B$ refers to the ordinary stationary Ornstein--Uhlenbeck process. We are interested in evaluating the origin absorption probability
$$
a_d^\bullet(T):=\Pp(0\in\mathcal H_{d,T}^\bullet), \qquad \bullet\in\{B,A\}.
$$

The next proposition relates appropriately time-changed Ornstein--Uhlenbeck processes $U$ and $V^{(T)}$ to the Brownian motion and the Brownian bridge, respectively. First we introduce the notation for these time-changes.  For $\rho>0$, put
\begin{equation}
\label{eq:ou-time-lengths}
T_B(\rho):=\log(1+\rho^{-2}),
\qquad
T_A(\rho):=2\theta_\rho=4\arsinh\frac1{2\rho},
\end{equation}
where $\theta_\rho=2\arsinh(1/(2\rho))$ as before. For $\rho>0$, put $r:=\eee^{-T_A(\rho)/2}\in(0,1)$ and define
\begin{align}
u_\rho^B(t)&:=\log(1+t/\rho^2),
\qquad 0\le t\le1,
\label{eq:ordinary-ou-time-change}
\\
u_\rho^A(t)&:=
\log\frac{r+(1-r)t}{r(1-(1-r)t)},
\qquad 0\le t\le1.
\label{eq:circular-ou-time-change}
\end{align}

\begin{proposition}[Representations of Ornstein--Uhlenbeck processes]
\label{prop:ou-brownian-representations}
Let $W$, $W^{\mathrm{br}}$, and $Z$ be as in
Subsection~\ref{subsec:brownian-gaussian-absorption}, and let $\rho>0$.
The maps $u_\rho^B$ and $u_\rho^A$ are increasing bijections from
$[0,1]$ onto $[0,T_B(\rho)]$ and $[0,T_A(\rho)]$, respectively.
Define the processes
\begin{align}
\widetilde U_{u_\rho^B(t)}
&:=\frac{W_t-\rho Z}{\sqrt{\rho^2+t}},
\qquad 0\le t\le1,
\label{eq:ordinary-ou-brownian-representation}
\\
\widetilde V_{u_\rho^A(t)}
&:=\frac{W_t^{\mathrm{br}}-\rho Z}
{\sqrt{\rho^2+t(1-t)}},
\qquad 0\le t\le1.
\label{eq:circular-ou-bridge-representation}
\end{align}
Then $\widetilde U$ and $\widetilde V$ have continuous sample
paths and
\begin{align*}
(\widetilde U_u)_{0\le u\le T_B(\rho)}
\stackrel{\mathrm d}{=}
(U_u)_{0\le u\le T_B(\rho)},
\qquad
(\widetilde V_u)_{0\le u\le T_A(\rho)}
\stackrel{\mathrm d}{=}
(V_u^{(T_A(\rho))})_{0\le u\le T_A(\rho)}.
\end{align*}
Both equalities hold in the spaces of continuous
$\R^d$-valued functions on the respective index intervals.
\end{proposition}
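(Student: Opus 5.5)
The plan is to verify the three ingredients of each equality in law separately: bijectivity and monotonicity of the time changes, continuity of the paths, and equality of covariance functions. Since both sides are centered Gaussian processes, covariance equality identifies the finite-dimensional distributions. Continuity of paths then identifies the laws on the space of continuous functions, because the Borel $\sigma$-field there is generated by evaluations. Coordinates are independent on both sides, so it suffices to treat $d=1$.

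\emph{Time changes.} For $u_\rho^B$ this is immediate: $u_\rho^B(0)=0$, $u_\rho^B(1)=\log(1+\rho^{-2})=T_B(\rho)$, and $u_\rho^B$ is strictly increasing. For $u_\rho^A$, write it as $\log\frac{r+(1-r)t}{r}-\log(1-(1-r)t)$. Both terms are increasing in $t$, the function vanishes at $t=0$, and at $t=1$ it equals $\log(1/r^2)=T_A(\rho)$. Continuity of $\widetilde U,\widetilde V$ follows from continuity of Brownian paths, because the inverse time changes and the normalizing denominators are continuous and positive.

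\emph{Type $B$ covariance.} For $s\le t$, independence of $Z$ and $W$ gives $\Ee[(W_s-\rho Z)(W_t-\rho Z)]=s+\rho^2$. The normalized covariance is therefore $\sqrt{(\rho^2+s)/(\rho^2+t)}$. Since $\eee^{u_\rho^B(t)}=(\rho^2+t)/\rho^2$, this equals $\eee^{-(u_\rho^B(t)-u_\rho^B(s))/2}$, which is the OU covariance \eqref{eq:ordinary-ou-covariance}.

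\emph{Type $A$ covariance.} This is the main computation. For $s\le t$, the numerator covariance is $s(1-t)+\rho^2$, so we must show
\[
\frac{s(1-t)+\rho^2}{\sqrt{(\rho^2+s(1-s))(\rho^2+t(1-t))}}
=\frac{\cosh(T/4-h/2)}{\cosh(T/4)},\qquad h=u_\rho^A(t)-u_\rho^A(s),\quad T=T_A(\rho).
\]
I would first express $\rho$ through $r$. From $\theta_\rho=T/2$ one gets $\sinh(T/4)=1/(2\rho)$. Using $\eee^{-T/2}=r$, this gives $\rho^2=r/(1-r)^2$, and also $\cosh(T/4)=(1+r)/(2\sqrt r)$.

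Next I would introduce the substitution $x(t)=\eee^{u_\rho^A(t)}=\frac{r+(1-r)t}{r(1-(1-r)t)}$, a Möbius map. The right-hand side becomes
\[
\frac{\sqrt{x(s)/x(t)}\,\eee^{T/4}+\sqrt{x(t)/x(s)}\,\eee^{-T/4}}{\eee^{T/4}+\eee^{-T/4}}
=\frac{x(s)+r\,x(t)}{(1+r)\sqrt{x(s)x(t)}}.
\]

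The step I expect to be hardest is matching this with the left-hand side, and I plan to do it by direct algebra. Set $A(t)=r+(1-r)t$ and $B(t)=1-(1-r)t$. Multiplying $\rho^2+t(1-t)$ by $(1-r)^2$, I expect it to factor as $A(t)B(t)$; this should be checked by expanding, which gives $r+(1-r)^2t-(1-r)^2t^2$ on both sides. Similarly, $(1-r)^2(\rho^2+s(1-t))$ should equal $A(s)B(t)+r\,A(t)B(s)$ up to a factor of $(1+r)$, which again needs checking by expansion. Once these factorizations hold, the square roots combine to $\sqrt{A(s)B(s)A(t)B(t)}$, and both sides agree.

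Finally, the identity $V_0=V_T$ is consistent with this construction, since at $t=0$ and $t=1$ both processes reduce to $-Z$, up to the same normalization.
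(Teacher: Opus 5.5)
Your proposal is correct and follows essentially the same route as the paper. You check the time changes, then match covariances using $\rho^2=r/(1-r)^2$ and the circular kernel in the form $(\eee^{-h/2}+r\eee^{h/2})/(1+r)$, and finally conclude from Gaussianity and path continuity. Your two factorizations, $A(t)B(t)=(1-r)^2(\rho^2+t(1-t))$ and $A(s)B(t)+rA(t)B(s)=(1+r)(1-r)^2(\rho^2+s(1-t))$, do hold on expansion and simply make explicit the simplification step that the paper leaves to the reader.
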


\begin{proof}
Both constructed processes are centered Gaussian. We first
verify that their covariance functions agree with those of
the claimed Ornstein--Uhlenbeck processes.

\medskip
\emph{Ordinary OU process.}
The map $u_\rho^B$ is strictly increasing on $[0,1]$, with endpoint values $0$
and $T_B(\rho)$; its inverse is $t=\rho^2(\eee^u-1)$.
For $0\le s\le t\le1$, independence of $Z$ and $(W_t)_{t\geq 0}$ gives
\begin{equation}
\label{eq:ordinary-ou-brownian-covariance}
\Ee\bigl[\widetilde U_{u_\rho^B(s)}
         \widetilde U_{u_\rho^B(t)}^{\mathsf T}\bigr]
=\frac{\rho^2+s}{\sqrt{\rho^2+s}\sqrt{\rho^2+t}}I_d
=\eee^{-(u_\rho^B(t)-u_\rho^B(s))/2}I_d
=
\Ee\bigl[U_{u_\rho^B(s)} U_{u_\rho^B(t)}^{\mathsf T}\bigr],
\end{equation}
where the last step uses~\eqref{eq:ordinary-ou-covariance}.

\medskip
\emph{Circular loop.}
Put $T:=T_A(\rho)$ and recall that $r=\eee^{-T/2}$.
Then $\rho^2= r/(1-r)^2$.
Equation~\eqref{eq:circular-ou-time-change} shows that $t\mapsto u_\rho^A(t)$ is strictly increasing, with
$u_\rho^A(0)=0$ and $u_\rho^A(1)=-2\log r=T$.
For $0\le s\le t\le1$, independence of $Z$ and
$W^{\mathrm{br}}$, together with the Brownian-bridge
covariance function, gives
\begin{equation}
\label{eq:circular-ou-bridge-covariance}
\Ee\bigl[\widetilde V_{u_\rho^A(s)}
         \widetilde V_{u_\rho^A(t)}^{\mathsf T}\bigr]
=\frac{\rho^2+s(1-t)}
 {\sqrt{\rho^2+s(1-s)}\sqrt{\rho^2+t(1-t)}}I_d.
\end{equation}
On the other hand, the covariance kernel of the circular Ornstein--Uhlenbeck process satisfies
\[
\frac{\cosh(T/4-h/2)}{\cosh(T/4)}
=\frac{\eee^{-h/2}+r\eee^{h/2}}{1+r},
\qquad 0\le h\le T.
\]
By \eqref{eq:circular-ou-time-change},
\[
\eee^{-(u_\rho^A(t)-u_\rho^A(s))/2}
=\sqrt{\frac{(r+(1-r)s)(1-(1-r)t)}
             {(r+(1-r)t)(1-(1-r)s)}}.
\]
Substituting into the circular covariance kernel and simplifying,
we obtain
\begin{align*}
\Ee\bigl[
V_{u_\rho^A(s)}^{(T)}
(V_{u_\rho^A(t)}^{(T)})^{\mathsf T}
\bigr]
&=
\frac{r+(1-r)^2s(1-t)}
{\sqrt{r+(1-r)^2s(1-s)}
 \sqrt{r+(1-r)^2t(1-t)}}I_d
\\
&=
\frac{\rho^2+s(1-t)}
{\sqrt{\rho^2+s(1-s)}
 \sqrt{\rho^2+t(1-t)}}I_d.
\end{align*}
This agrees with \eqref{eq:circular-ou-bridge-covariance}.
By symmetry, the covariance functions agree for all pairs
of time parameters. Since $T_A$ maps $(0,\infty)$ onto
$(0,\infty)$, the construction also establishes the existence
of the circular loop for every circumference $T>0$.

\medskip
Finally, both constructed processes have continuous sample
paths, since the Brownian motion and the Brownian bridge are
continuous, the time changes are homeomorphisms, and the
denominators in
\eqref{eq:ordinary-ou-brownian-representation} and
\eqref{eq:circular-ou-bridge-representation}
are strictly positive.
The covariance identities therefore imply equality of
finite-dimensional distributions and hence equality in law
in the respective spaces of continuous functions.
\end{proof}

Recall from~\eqref{eq:ou-convex-hulls} the definitions of the convex hulls  $\mathcal H_{d,T}^B$ and $\mathcal H_{d,T}^A$.  The next theorem determines the origin absorption probabilities $a_d^\bullet(T)=\Pp(0\in\mathcal H_{d,T}^\bullet)$,  $\bullet\in\{B,A\}$.

\begin{theorem}[Origin absorption by Ornstein--Uhlenbeck processes]
\label{thm:ou-origin-absorption}
For $T>0$, put
\begin{equation}
\label{eq:ou-inverse-time-scales}
\rho_B(T):=\frac1{\sqrt{\eee^T-1}},
\qquad
\rho_A(T):=\frac1{2\sinh(T/4)}.
\end{equation}
For every integer $d\ge1$ and every $T>0$,
\begin{align}
a_d^B(T)
&=q_d^B(\rho_B(T))
=\frac2\pi\int_0^{\arsinh\sqrt{\eee^T-1}}
\frac{\Psi_d(x)\,\dint x}{\sqrt{\eee^T-1-\sinh^2x}},
\label{eq:ordinary-ou-origin-absorption}
\\
a_d^A(T)
&=q_d^A(\rho_A(T))
=\frac{\Psi_{d+1}(T/2)}{\sinh(T/2)},
\label{eq:circular-ou-origin-absorption}
\end{align}
where $\Psi_m$ is defined in \eqref{eq:brownian-hyperbolic-remainders}.
The integral is convergent. Moreover,
$\Pp(0\in\bd\mathcal H_{d,T}^\bullet)=0$ for $\bullet\in\{B,A\}$, and
\begin{equation}
\label{eq:ou-absorption-positive-spanning}
a_d^\bullet(T)
=\Pp(0\in\intr\mathcal H_{d,T}^\bullet)
=\Pp(\pos\mathcal H_{d,T}^\bullet=\R^d),
\qquad \bullet\in\{B,A\}.
\end{equation}
The last event says that the positive hull of the corresponding
process over $[0,T]$ is all of $\R^d$.
\end{theorem}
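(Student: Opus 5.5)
The plan is to combine Proposition~\ref{prop:ou-brownian-representations} with Theorem~\ref{thm:brownian-gaussian-absorption}. Fix $T>0$ and choose $\rho$ with $T_B(\rho)=T$, resp.\ $T_A(\rho)=T$. Inverting \eqref{eq:ou-time-lengths} gives $\rho=\rho_B(T)=(\eee^T-1)^{-1/2}$, resp.\ $\rho=\rho_A(T)=1/(2\sinh(T/4))$. By Proposition~\ref{prop:ou-brownian-representations}, the path $(U_u)_{0\le u\le T}$ has the same law as $\widetilde U$. The image set $\{\widetilde U_u:0\le u\le T\}$ equals $\{(W_t-\rho Z)/\sqrt{\rho^2+t}:0\le t\le1\}$, since $u_\rho^B$ is a bijection onto $[0,T]$.

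The key deterministic observation concerns a curve $\gamma:[0,1]\to\R^d$ and a continuous positive function $w$. We claim that $0\in\conv\{\gamma(t)/w(t)\}$ if and only if $0\in\conv\{\gamma(t)\}$. Indeed, a convex combination $\sum\lambda_i\gamma(t_i)/w(t_i)=0$ can be renormalized with weights $\lambda_i/w(t_i)$, and conversely. The same argument applies to interiors. Use positive hulls for this: $0\in\intr\conv M$ iff $\pos M=\R^d$, and $\pos$ is invariant under positive rescaling of each generator. Applying this with $\gamma(t)=W_t-\rho Z$ gives
\[
a_d^B(T)=\Pp(\rho Z\in\mathcal K_d^B)=q_d^B(\rho_B(T)).
\]
The same argument with the bridge gives $a_d^A(T)=q_d^A(\rho_A(T))$. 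The identity $\Pp(0\in\bd\mathcal H^\bullet_{d,T})=0$ transfers from $\Pp(\rho Z\in\bd\mathcal K_d^\bullet)=0$ in Theorem~\ref{thm:brownian-gaussian-absorption}, using the interior version of the equivalence. Both sets have nonempty interior on the relevant event, so $0\in\intr$ is also preserved.

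Next, substitute into the explicit formulas. For type $B$, $\sigma^{-2}=\eee^T-1$ and $\arsinh(1/\sigma)=\arsinh\sqrt{\eee^T-1}$, which gives \eqref{eq:ordinary-ou-origin-absorption} directly. For type $A$, $\theta_{\rho_A(T)}=2\arsinh(\sinh(T/4))=T/2$, which gives \eqref{eq:circular-ou-origin-absorption}. Convergence of the integral is inherited from Theorem~\ref{thm:brownian-gaussian-absorption}.

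For \eqref{eq:ou-absorption-positive-spanning}, we need that for compact convex $K$, $0\in\intr K$ iff $\pos K=\R^d$. If $0\in\intr K$, this is clear. If $0\notin\intr K$, separation gives a closed half-space containing $K$, hence containing $\pos K$. Also $\pos\mathcal H=\pos\{U_u\}$, so the stated probabilities coincide with $a_d^\bullet(T)$ by the null-boundary statement.

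The main subtlety is the boundary/interior transfer under the rescaling. The interior condition must be phrased through $\pos$, where invariance under positive reweighting of generators is immediate. Once that is done, everything else is substitution.
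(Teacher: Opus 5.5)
Your proposal is correct and follows the paper's proof essentially step by step. You use the Brownian representation of Proposition~\ref{prop:ou-brownian-representations} with $\rho=\rho_\bullet(T)$, then invariance of origin membership and interior membership (via $\pos$) under positive rescaling of the generators, which is the paper's Lemma~\ref{lem:positive-rescaling-origin}, and finally you substitute $\rho_B(T)^{-2}=\eee^T-1$ and $\theta_{\rho_A(T)}=T/2$ into Theorem~\ref{thm:brownian-gaussian-absorption}.

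The differences are cosmetic. You prove ``$\pos K=\R^d\Rightarrow 0\in\intr K$'' by a supporting half-space, whereas the paper uses the $\pm e_i$ argument. Also, the boundary transfer really rests on closedness of the compact hulls (boundary $=$ hull minus interior), not on the hulls having nonempty interior as your sentence suggests.
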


For the proof we  use the following elementary observation.
\begin{lemma}[Positive rescaling and origin absorption]
\label{lem:positive-rescaling-origin}
Let $E,F\subseteq\R^d$ be nonempty sets such that every point
of $E$ is a positive multiple of some point of $F$, and
conversely. Then $\pos E=\pos F$, and
\begin{equation}
\label{eq:lem:positive-rescaling-origin_1}
0\in\conv E\iff 0\in\conv F,
\qquad
0\in\intr\conv E\iff 0\in\intr\conv F.
\end{equation}
If both sets are compact, then also
\begin{equation}
\label{eq:lem:positive-rescaling-origin_2}
0\in\bd\conv E\iff 0\in\bd\conv F.
\end{equation}
Moreover, for every nonempty set $E\subseteq\R^d$,
\begin{equation}
\label{eq:origin-interior-positive-spanning}
0\in\intr\conv E
\quad\Longleftrightarrow\quad
\pos E=\R^d.
\end{equation}
\end{lemma}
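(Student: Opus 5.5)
The plan is to prove the four assertions in order, reducing each to elementary convex-geometry facts in $\R^d$.

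\emph{Equality of positive hulls.} By hypothesis, every $e\in E$ is $\lambda f$ for some $\lambda>0$ and $f\in F$. Hence $E\subseteq\pos F$, and since $\pos F$ is a convex cone, $\pos E\subseteq\pos F$. The reverse inclusion follows by symmetry, so $\pos E=\pos F$.

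\emph{First equivalence in \eqref{eq:lem:positive-rescaling-origin_1}.} The key observation is that $0\in\conv E$ holds if and only if there are finitely many $e_1,\dots,e_m\in E$ and nonnegative coefficients, not all zero, with $\sum\mu_ie_i=0$. Indeed, normalizing such a combination to total mass one gives a convex combination. Writing $e_i=\lambda_if_i$ with $\lambda_i>0$ turns this into a nontrivial nonnegative relation $\sum(\mu_i\lambda_i)f_i=0$ among points of $F$. Renormalizing gives $0\in\conv F$, and the converse is symmetric.

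\emph{Equivalence \eqref{eq:origin-interior-positive-spanning}, proved before the interior statement.} Suppose first that $0\in\intr\conv E$. Then a ball $rB_d$ lies in $\conv E\subseteq\pos E$, and since $\pos E$ is a cone, $\pos E=\R^d$. Conversely, suppose $\pos E=\R^d$. Each of the $2d$ vectors $\pm e_j$ is then a finite nonnegative combination of points of $E$. Collect the finitely many points used into a set $E_0\subseteq E$, so that $\pos E_0=\R^d$. Apply the first equivalence to $e_j$ and $-e_j$: their combinations sum to $0$ with positive total mass, so $0\in\conv E_0$. It remains to upgrade this to an interior point. If $0\notin\intr\conv E_0$, the polytope $\conv E_0$ would admit a supporting hyperplane at $0$ (or lie in a hyperplane), giving a unit vector $u$ with $\langle u,x\rangle\le0$ for all $x\in E_0$. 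This contradicts $u\in\pos E_0$, since then $\langle u,u\rangle\le 0$. Hence $0\in\intr\conv E_0\subseteq\intr\conv E$.

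\emph{Second equivalence in \eqref{eq:lem:positive-rescaling-origin_1}.} This follows immediately from \eqref{eq:origin-interior-positive-spanning} together with $\pos E=\pos F$.

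\emph{Boundary equivalence \eqref{eq:lem:positive-rescaling-origin_2}.} When $E$ is compact, $\conv E$ is compact, hence closed. Therefore $0\in\bd\conv E$ if and only if $0\in\conv E$ and $0\notin\intr\conv E$. Both conditions transfer between $E$ and $F$ by \eqref{eq:lem:positive-rescaling-origin_1}, which gives \eqref{eq:lem:positive-rescaling-origin_2}.

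The only mildly delicate step is the converse direction of \eqref{eq:origin-interior-positive-spanning}. Here one must pass to a finite subset $E_0$ so that the separation argument applies to a closed polytope, since $\conv E$ itself need not be closed. Compactness is invoked only for the boundary statement, where the closedness of $\conv E$ is what makes the boundary characterization valid.
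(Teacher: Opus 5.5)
Your proof is correct, and for $\pos E=\pos F$, the first equivalence, the second equivalence (deduced from \eqref{eq:origin-interior-positive-spanning} and $\pos E=\pos F$), and the boundary statement it matches the paper's argument step by step.

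The one genuine difference is the converse direction of \eqref{eq:origin-interior-positive-spanning}.
\begin{itemize}
\item The paper takes nonnegative representations of $\pm e_1,\ldots,\pm e_d$ by points of $E$ and normalizes their coefficients. This places positive multiples $t_je_j$ and $-s_je_j$ inside $\conv E$ itself. The convex hull of these $2d$ points is a cross-polytope, which contains a neighborhood of zero.
\item You instead pass to a finite subset $E_0$ and argue by contradiction. You use a supporting hyperplane at $0$ to the polytope $\conv E_0$, and then the computation $\langle u,u\rangle=\sum\mu_i\langle u,x_i\rangle\le0$ for $u\in\pos E_0$.
\end{itemize}

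The paper's route is constructive and avoids separation theorems entirely. It therefore never has to address closedness of $\conv E$, and it has no need for your auxiliary step $0\in\conv E_0$.

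Your route is equally valid. It isolates the conceptual reason behind the equivalence: a positively spanning set cannot lie in a closed half-space bounded by a hyperplane through the origin.

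In $\R^d$ you could also dispense with $E_0$. A point not in the interior of an arbitrary convex set in $\R^d$ can still be weakly separated from that set, so the same contradiction applies to $\conv E$ directly.
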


\begin{proof}
By assumption, every point of either set belongs to the
positive hull of the other, so $\pos E=\pos F$.
Suppose that $0\in\conv E$. Then
$0=\sum_{i=1}^m\lambda_i x_i$ for some $x_i\in E$ and
$\lambda_i\ge0$ with $\sum_i\lambda_i=1$.
Write $x_i=c_i y_i$, where $c_i>0$ and $y_i\in F$.
Dividing the coefficients $\lambda_i c_i$ by their positive
sum gives a convex combination of the $y_i$ equal to zero.
The reverse implication follows in the same way, proving the first equivalence in~\eqref{eq:lem:positive-rescaling-origin_1}.

To prove \eqref{eq:origin-interior-positive-spanning},
observe first that a neighborhood of zero contained in
$\conv E$ implies $\pos E=\R^d$. Conversely, if
$\pos E=\R^d$, then each of the vectors
$\pm e_1,\ldots,\pm e_d$ can be expressed as a finite
nonnegative linear combination of points of $E$.
Normalizing the coefficients shows that $\conv E$
contains a positive multiple of each of these vectors.
Their convex hull contains a neighborhood of zero,
proving the converse implication of~\eqref{eq:origin-interior-positive-spanning}.

The equality $\pos E=\pos F$ now implies the second
equivalence in~\eqref{eq:lem:positive-rescaling-origin_1}.
Finally, if $E$ and $F$ are compact, then their convex
hulls are compact and hence closed. Thus zero belongs
to the boundary of either convex hull precisely when
it belongs to that convex hull but not to its interior. 
In view of~\eqref{eq:lem:positive-rescaling-origin_1},
this implies~\eqref{eq:lem:positive-rescaling-origin_2}.
\end{proof}

\begin{proof}[Proof of Theorem~\ref{thm:ou-origin-absorption}]
Fix $T>0$ and write $\rho_B:=\rho_B(T)$ and
$\rho_A:=\rho_A(T)$. By \eqref{eq:ou-inverse-time-scales},
$T_B(\rho_B)=T_A(\rho_A)=T$.
Apply Proposition~\ref{prop:ou-brownian-representations} with
$\rho=\rho_B$ in the ordinary OU case and $\rho=\rho_A$ in the
circular OU case. Denote the convex hulls of the resulting processes
$\widetilde U$ and $\widetilde V$ by
\[
\widetilde{\mathcal H}_{d,T}^B
:=\conv\{\widetilde U_u:0\le u\le T\},
\qquad
\widetilde{\mathcal H}_{d,T}^A
:=\conv\{\widetilde V_u:0\le u\le T\}.
\]
By Proposition~\ref{prop:ou-brownian-representations},
$\widetilde{\mathcal H}_{d,T}^\bullet
\stackrel{\mathrm d}{=}\mathcal H_{d,T}^\bullet$
for $\bullet\in\{B,A\}$.

The time changes in Proposition~\ref{prop:ou-brownian-representations} are bijective, and the
normalizing factors in
\eqref{eq:ordinary-ou-brownian-representation} and
\eqref{eq:circular-ou-bridge-representation} are strictly positive.
Lemma~\ref{lem:positive-rescaling-origin} therefore gives the
following equality  of events:
\begin{align*}
\{0\in\widetilde{\mathcal H}_{d,T}^B\}
&=\bigl\{0\in\conv\{W_t-\rho_B Z:0\le t\le1\}\bigr\}
=\{\rho_B Z\in\mathcal K_d^B\},
\\
\{0\in\widetilde{\mathcal H}_{d,T}^A\}
&=\bigl\{0\in\conv\{W_t^{\mathrm{br}}-\rho_A Z:
0\le t\le1\}\bigr\}
=\{\rho_A Z\in\mathcal K_d^A\}.
\end{align*}
Taking probabilities and using
\eqref{eq:brownian-gaussian-absorption-functions}, we obtain
\[
a_d^\bullet(T)
=
\Pp(0 \in \mathcal H_{d,T}^\bullet)
=
\Pp(0 \in \widetilde{\mathcal H}_{d,T}^\bullet)
=\Pp(\rho_\bullet Z\in\mathcal K_d^\bullet)
=q_d^\bullet(\rho_\bullet),
\qquad \bullet\in\{B,A\}.
\]
The explicit formulas in
\eqref{eq:ordinary-ou-origin-absorption} and
\eqref{eq:circular-ou-origin-absorption}, including convergence
of the integral, now follow from
Theorem~\ref{thm:brownian-gaussian-absorption}, using
$\rho_B^{-2}=\eee^T-1$ and $\theta_{\rho_A}=T/2$.

Since the ranges of the processes are compact,
Lemma~\ref{lem:positive-rescaling-origin} also gives
\[
\{0\in\bd\widetilde{\mathcal H}_{d,T}^\bullet\}
=\{\rho_\bullet Z\in\bd\mathcal K_d^\bullet\},
\qquad \bullet\in\{B,A\}.
\]
The assertion $\Pp(\sigma Z\in\bd\mathcal K_d^\bullet)=0$ of
Theorem~\ref{thm:brownian-gaussian-absorption} therefore implies
$\Pp(0\in\bd\mathcal H_{d,T}^\bullet)=0$.
Finally, by~\eqref{eq:origin-interior-positive-spanning}, we have $\{0\in\intr\mathcal H_{d,T}^\bullet\}=\{\pos\mathcal H_{d,T}^\bullet=\R^d\}$,
which proves~\eqref{eq:ou-absorption-positive-spanning}.
\end{proof}

\begin{corollary}[Low-dimensional examples]
\label{cor:ou-origin-absorption-low-dimensions}
For $T>0$ and $d=1,2$, the origin absorption probabilities are
\begin{align}
a_1^B(T)&=\frac2\pi\arctan\sqrt{\eee^T-1},
\label{eq:ordinary-ou-origin-absorption-dimension-one}
\\
a_2^B(T)&=1-\frac2\pi\int_0^{\pi/2}
\frac{\dint\theta}{\sqrt{1+(\eee^T-1)\sin^2\theta}},
\label{eq:ordinary-ou-origin-absorption-dimension-two}
\\
a_1^A(T)&=\tanh(T/4),
\label{eq:circular-ou-origin-absorption-dimension-one}
\\
a_2^A(T)&=1-\frac{T/2}{\sinh(T/2)}.
\label{eq:circular-ou-origin-absorption-dimension-two}
\end{align}
\end{corollary}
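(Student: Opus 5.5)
The four formulas follow by specializing Theorem~\ref{thm:ou-origin-absorption} and evaluating the resulting expressions for $d=1,2$. The shortest route is Corollary~\ref{cor:brownian-gaussian-absorption-consequences}, which already gives $q_1^\bullet$ and $q_2^\bullet$ in closed form. By Theorem~\ref{thm:ou-origin-absorption} we have $a_d^\bullet(T)=q_d^\bullet(\rho_\bullet(T))$, with $\rho_B(T)=(\eee^T-1)^{-1/2}$ and $\rho_A(T)=1/(2\sinh(T/4))$. So it suffices to substitute these values of $\sigma$ into \eqref{eq:type-B-gaussian-absorption-dimension-one}--\eqref{eq:type-A-gaussian-absorption-dimension-two}.

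For type $B$, we have $1/\rho_B=\sqrt{\eee^T-1}$ and $\rho_B^{-2}=\eee^T-1$. Then \eqref{eq:type-B-gaussian-absorption-dimension-one} gives $a_1^B(T)=\frac2\pi\arctan\sqrt{\eee^T-1}$ directly. Likewise \eqref{eq:type-B-gaussian-absorption-dimension-two} gives the integral formula for $a_2^B(T)$ directly, with $\sigma^{-2}$ replaced by $\eee^T-1$.

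For type $A$, the key identity is $\theta_{\rho_A}=2\arsinh(1/(2\rho_A))=2\arsinh(\sinh(T/4))=T/2$. Inserting it into \eqref{eq:type-A-gaussian-absorption-dimension-two} gives $a_2^A(T)=1-\frac{T/2}{\sinh(T/2)}$. For $d=1$ we need $1+4\rho_A^2=1+\sinh^{-2}(T/4)=\coth^2(T/4)$. Hence \eqref{eq:type-A-gaussian-absorption-dimension-one} gives $(1+4\rho_A^2)^{-1/2}=\tanh(T/4)$, since $T>0$ makes the square root positive.

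As a cross-check, one can instead start from \eqref{eq:circular-ou-origin-absorption} with $\Psi_2(x)=\cosh x-1$. The half-angle identity $(\cosh(T/2)-1)/\sinh(T/2)=\tanh(T/4)$ then gives $a_1^A$ again, and $\Psi_3(x)=\sinh x-x$ gives $a_2^A$. There is no real obstacle here. The only points needing care are the hyperbolic identities $\sinh^{-2}+1=\coth^2$ and the choice of the positive square-root branch.
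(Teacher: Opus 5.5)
Your proposal is correct and matches the paper's proof: substitute $\rho_B(T)$ and $\rho_A(T)$ into the closed forms of Corollary~\ref{cor:brownian-gaussian-absorption-consequences}, using $\theta_{\rho_A(T)}=T/2$ and $(1+4\rho_A(T)^2)^{-1/2}=\tanh(T/4)$. Your cross-check via $\Psi_2(x)=\cosh x-1$ and $\Psi_3(x)=\sinh x-x$ is also valid, but not needed.
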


\begin{proof}
Substitute \eqref{eq:ou-inverse-time-scales} into
Corollary~\ref{cor:brownian-gaussian-absorption-consequences}, using
$\theta_{\rho_A(T)}=T/2$ and
$(1+4\rho_A(T)^2)^{-1/2}=\tanh(T/4)$.
\end{proof}

\begin{remark}[Circular OU loops versus endpoint conditioning]
\label{rem:circular-ou-endpoint-conditioning}
The circular OU loop $V^{(T)}$ with covariance given by~\eqref{eq:circular-ou-covariance} is not the
stationary process $U$ conditioned to have equal values at times
$0$ and $T$. Indeed, let $X_u$ be one coordinate of $U_u$ and write
$r:=\eee^{-T/2}$. Gaussian conditioning on $X_T-X_0=0$ gives
\[
\operatorname{Var}(X_0\mid X_T-X_0=0)=\frac{1+r}{2},
\qquad
\operatorname{Var}(X_{T/2}\mid X_T-X_0=0)=1.
\]
To see this, the conditioning variable $X_T-X_0$ has variance $2(1-r)$ and
covariances $r-1$ with $X_0$ and $0$ with $X_{T/2}$.
The resulting conditioned process has nonconstant variance, unlike
the circular loop.
\end{remark}

\subsection{Asymptotics of the Ornstein--Uhlenbeck absorption probabilities}
\label{subsec:ou-absorption-asymptotics}

Recall that $a_d^B(T)$ is the probability that the convex hull of the
stationary Ornstein--Uhlenbeck process over $[0,T]$ in $\R^d$ contains
zero, whereas $a_d^A(T)$ is the corresponding probability for the
circular OU loop of circumference $T$. We retain the covariance
normalizations \eqref{eq:ordinary-ou-covariance} and
\eqref{eq:circular-ou-covariance}, respectively. We now investigate the asymptotic behavior of the origin absorption
probabilities $a_d^B(T)$ and $a_d^A(T)$. We first consider a fixed
dimension $d$ and a growing time horizon $T$, and then the joint growth of
$d$ and $T$.

\subsubsection{Fixed dimension and a long time horizon}
\label{subsubsec:ou-absorption-fixed-dimension}
The following theorem shows that, for each fixed dimension $d$,
the probability of origin non-absorption decays exponentially
as $T\to\infty$, and determines its precise asymptotic behavior
for both Ornstein--Uhlenbeck models.

\begin{theorem}[Long-time non-absorption in fixed dimension]
\label{thm:ou-nonabsorption-fixed-dimension}
For every fixed integer $d\ge1$, as $T\to\infty$,
\begin{align}
1-a_d^B(T)
&=\frac{2^{2-d}}{\pi(d-1)!}\,
  T^{d-1}\eee^{-T/2}\bigl(1+O(T^{-1})\bigr),
\label{eq:ordinary-ou-nonabsorption-long-time}
\\
1-a_d^A(T)
&=\frac{2^{2-d}}{(d-1)!}\,
  T^{d-1}\eee^{-T/2}\bigl(1+O(T^{-2})\bigr).
\label{eq:circular-ou-nonabsorption-long-time}
\end{align}
The constants implicit in the error terms may depend on $d$.
\end{theorem}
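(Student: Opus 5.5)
The plan is to start from the explicit formulas in Theorem~\ref{thm:ou-origin-absorption}, rewritten as finite sums via Corollary~\ref{cor:brownian-gaussian-absorption-consequences}.

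\emph{Circular case.} By \eqref{eq:brownian-gaussian-nonabsorption-finite-sum} with $\sigma=\rho_A(T)$ and $\theta=T/2$, we have
\[
1-a_d^A(T)=\frac{\sinh(T/2)-\Psi_{d+1}(T/2)}{\sinh(T/2)}
=\frac{1}{\sinh(T/2)}\sum_{0\le m\le d-1,\ m\equiv d-1\ (2)}\frac{(T/2)^m}{m!}.
\]
This identity is equivalent to $\Psi_{d+1}+\sum(\dots)=\sinh$ or $\cosh$ according to parity; when $d$ is even, the $\cosh-\sinh$ discrepancy is absorbed, as in \eqref{eq:intro-circular-ou-absorption-dimension-two}. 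I will double-check this parity bookkeeping. The top term $m=d-1$ gives $(T/2)^{d-1}/(d-1)!$. The next term has degree $d-3$, so it is a relative $O(T^{-2})$ correction. Moreover $1/\sinh(T/2)=2\eee^{-T/2}(1+O(\eee^{-T}))$. Multiplying, the result is $2^{2-d}T^{d-1}\eee^{-T/2}/(d-1)!\,(1+O(T^{-2}))$, which is exactly \eqref{eq:circular-ou-nonabsorption-long-time}.

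\emph{Ordinary case.} Set $S:=\eee^T-1$ and $A:=\arsinh\sqrt S$, so that $\sinh A=\sqrt S$ and $A=T/2+\log 2+O(\eee^{-T})$. By \eqref{eq:brownian-gaussian-nonabsorption-finite-sum},
\[
1-a_d^B(T)=2\sum_{j}L^B_{d-1-2j}(\rho_B),
\qquad
L_k^B=\frac{1}{\pi(k-1)!}\int_0^A\frac{x^{k-1}\,\dint x}{\sqrt{S-\sinh^2x}} \quad (k\ge1),
\]
while $L_0^B=\arcsin(\eee^{-T/2})/\pi=O(\eee^{-T/2})$. The integral is dominated by $x$ near $A$. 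To see this, substitute $x=A-y$ and write
\[
S-\sinh^2(A-y)=\cosh^2A-\cosh^2(A-y)=\sinh(y)\sinh(2A-y).
\]
Since $\sinh(2A-y)\approx \tfrac12\eee^{2A-y}$ and $\eee^{A}\approx 2\sqrt S\approx 2\eee^{T/2}$, the integral becomes
\[
\int_0^A\frac{(A-y)^{k-1}\,\dint y}{\sqrt{\sinh y\,\sinh(2A-y)}}
\approx \sqrt2\,\eee^{-A}\int_0^A\frac{(A-y)^{k-1}\eee^{y/2}}{\sqrt{\sinh y}}\,\dint y.
\]
The weight $\eee^{y/2}/\sqrt{\sinh y}$ behaves like $y^{-1/2}$ near $0$ and tends to $\sqrt2$ as $y\to\infty$. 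The weight is therefore \emph{not} integrable at infinity, so the mass is spread over all of $[0,A]$. In the limit the integral is $\sim\sqrt2\int_0^A(A-y)^{k-1}\,\dint y=\sqrt2A^k/k$.

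This gives
\[
L_k^B\approx\frac{1}{\pi(k-1)!}\cdot 2\eee^{-A}\frac{A^k}{k}\approx\frac{1}{\pi k!}\,\eee^{-T/2}(T/2)^k .
\]
The dominant index is $k=d-1$, so $1-a_d^B\sim\frac{2}{\pi(d-1)!}(T/2)^{d-1}\eee^{-T/2}$, matching the constant $2^{2-d}/(\pi(d-1)!)$.

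\emph{Main obstacle.} The difficult step is the $O(T^{-1})$ error in the ordinary case. Three sources contribute:
\begin{itemize}
\item the difference $\eee^{y/2}/\sqrt{\sinh y}-\sqrt2=O(y^{-1/2})$ on bounded $y$ and $O(\eee^{-y})$ at infinity, which integrated against $(A-y)^{k-1}$ gives $O(A^{k-1})$;
\item the replacement of $A$ by $T/2$, which costs $A^k-(T/2)^k=O(T^{k-1})$;
\item the lower-order indices $k\le d-3$ together with $L_0^B$.
\end{itemize}
Each is a relative $O(T^{-1})$. I will handle these by explicit estimates, splitting the $y$-integral at $y=1$ and using $\sinh(2A-y)=\tfrac12\eee^{2A-y}(1+O(\eee^{-2(A-y)}))$. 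The region $y$ near $A$ requires separate care, since there $\sinh(2A-y)$ is no longer exponentially large. That region contributes only $O(\eee^{-A}\cdot 1)$ times a polynomially small factor, because $(A-y)^{k-1}$ is small there. For $k=1$ this factor is not small, but the contribution is still $O(\eee^{-A})$, which is negligible relative to $A$.
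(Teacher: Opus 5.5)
Your proposal is correct and follows essentially the same route as the paper. Both use the finite-sum reduction $1-a_d^\bullet(T)=2\sum_j L^\bullet_{d-1-2j}(\rho_\bullet(T))$, the explicit type-$A$ coefficients, and an endpoint analysis of the type-$B$ integral giving $b_T^k/k+O(b_T^{k-1})$. Your factorization $S-\sinh^2(A-y)=\sinh y\,\sinh(2A-y)$ is just a rewriting of the paper's integrand $\bigl(1-(\sinh x/\sinh b_T)^2\bigr)^{-1/2}$. The paper controls that integrand in one stroke via $\sinh(b-u)/\sinh b\le\eee^{-u}$, which yields $0\le\int_0^{b}\frac{x^{k-1}\,\dint x}{\sqrt{1-(\sinh x/\sinh b)^2}}-\frac{b^k}{k}\le b^{k-1}\log2$.

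Three small repairs are needed.

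\emph{The type-$A$ identity for odd $d$.} Your numerator should be $\sum_m (T/2)^m/m!-\eee^{-T/2}$, because $\sinh-\cosh=-\eee^{-x}$; for example, $d=1$ gives $2/(1+\eee^{T/2})$. The extra term is exponentially negligible, so the asymptotics are unaffected.

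\emph{The region near $y=A$.} No separate care is needed there. Since $2A-y\ge A$ on all of $[0,A]$, you have $\sinh(2A-y)=\tfrac12\eee^{2A-y}(1+O(\eee^{-2A}))$ uniformly.

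\emph{The case $d=1$ in type $B$.} Here the leading term is $2L_0^B$ itself, so you need the asymptotic $L_0^B=\pi^{-1}\eee^{-T/2}(1+O(\eee^{-T}))$, not just an $O$-bound.
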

\begin{remark}
In particular, $(1-a_d^A(T))/(1-a_d^B(T))\to\pi$ as $T\to\infty$.
\end{remark}
\begin{proof}
By \eqref{eq:brownian-gaussian-nonabsorption-finite-sum} and
Theorem~\ref{thm:ou-origin-absorption},
\begin{equation}
\label{eq:ou-nonabsorption-finite-sum}
1-a_d^\bullet(T)
=2\sum_{j=0}^{\lfloor(d-1)/2\rfloor}
  L_{d-1-2j}^\bullet(\rho_\bullet(T)),
\qquad \bullet\in\{B,A\},
\end{equation}
where $\rho_B(T)$ and $\rho_A(T)$ are given in
\eqref{eq:ou-inverse-time-scales}. We estimate the coefficients $L_{k}^\bullet(\rho_\bullet(T))$ at
fixed indices $k= d-1-2j$ and show that the term with index $k = d-1$ dominates.

\medskip
\emph{Ordinary OU process.}
Put $b_T:=\arsinh\sqrt{\eee^T-1}$. Then
$\sinh b_T=\rho_B(T)^{-1}$, and
\begin{equation}
\label{eq:ordinary-ou-long-time-parameters}
b_T=\frac T2+\log2+O(\eee^{-T}),
\qquad
\rho_B(T)=\eee^{-T/2}\bigl(1+O(\eee^{-T})\bigr).
\end{equation}
For $k\ge1$, \eqref{eq:type-B-limiting-intrinsic-volumes} gives
\begin{equation}
\label{eq:ordinary-ou-fixed-index-rescaled-integral}
L_k^B(\rho_B(T))
=\frac{\rho_B(T)}{\pi(k-1)!}
 \int_0^{b_T}
 \frac{x^{k-1}}
      {\sqrt{1-(\sinh x/\sinh b_T)^2}}\,\dint x.
\end{equation}
For $b>0$ and $0\le u\le b$, the inequality
$\sinh(b-u)/\sinh b\le\eee^{-u}$ implies, after setting $u=b-x$,
\begin{align*}
0
\le \int_0^b
 \frac{x^{k-1}}{\sqrt{1-(\sinh x/\sinh b)^2}}\,\dint x
 -\frac{b^k}{k}
\le b^{k-1}\int_0^\infty
 \left(\frac1{\sqrt{1-\eee^{-2u}}}-1\right)\dint u
 =b^{k-1}\log2.
\end{align*}
The last integral equals $\int_0^1(1+v)^{-1}\,\dint v$ under the
substitution $v=\sqrt{1-\eee^{-2u}}$.
Consequently, the integral in
\eqref{eq:ordinary-ou-fixed-index-rescaled-integral} is
$b_T^k/k+O(b_T^{k-1})$ for every fixed $k\ge1$.
Combining this estimate with
\eqref{eq:ordinary-ou-long-time-parameters} yields
\begin{equation}
\label{eq:ordinary-ou-fixed-index-asymptotics}
L_k^B(\rho_B(T))
=\frac1{\pi k!}\left(\frac T2\right)^k
 \eee^{-T/2}\bigl(1+O(T^{-1})\bigr),
\qquad k\ge1.
\end{equation}
For $k=0$, \eqref{eq:type-B-limiting-zeroth-intrinsic-volume} gives
\[
L_0^B(\rho_B(T))
=\frac{\arctan\rho_B(T)}\pi
=\frac{\eee^{-T/2}}\pi\bigl(1+O(\eee^{-T})\bigr).
\]
Thus \eqref{eq:ordinary-ou-fixed-index-asymptotics} also holds for
$k=0$.

\medskip
\emph{Circular loop.}
Since $\theta_{\rho_A(T)}=T/2$,
Theorem~\ref{thm:finite-BA-scaling-limit} gives
\begin{align*}
L_0^A(\rho_A(T))
&=\frac1{1+\eee^{T/2}}
 =\eee^{-T/2}\bigl(1+O(\eee^{-T/2})\bigr),
\\
L_k^A(\rho_A(T))
&=\frac{(T/2)^k}{2k!\sinh(T/2)}
 =\frac1{k!}\left(\frac T2\right)^k
  \eee^{-T/2}\bigl(1+O(\eee^{-T})\bigr),
\qquad k\ge1.
\end{align*}

\medskip
For either type, the coefficient $L_{k}^\bullet(\rho_\bullet(T))$ at a fixed index $k$ is asymptotic
to a positive constant times $T^k\eee^{-T/2}$.
The largest index $d-1-2j$ in \eqref{eq:ou-nonabsorption-finite-sum} is $d-1$;
when $d\ge3$, every remaining index is at most $d-3$.
Their combined contribution, relative to the term of index $d-1$,
is therefore $O(T^{-2})$. For $d=1,2$, there are no remaining terms.
The type-$B$ leading coefficient has relative error $O(T^{-1})$,
whereas the type-$A$ leading coefficient has exponentially small
relative error, including when $d=1$.
Substituting the coefficient estimates into
\eqref{eq:ou-nonabsorption-finite-sum} proves
\eqref{eq:ordinary-ou-nonabsorption-long-time} and
\eqref{eq:circular-ou-nonabsorption-long-time}.
\end{proof}

\subsubsection{A high-dimensional transition}
\label{subsubsec:ou-absorption-high-dimension}

We now let the dimension $d$ and the time horizon $T$ grow together.
The next theorem shows that, in both Ornstein--Uhlenbeck models,
the absorption probability undergoes a transition from $0$ to $1$
as $T$ crosses the critical time $2d$. This transition occurs
in a window of order $\sqrt d$, and both models have the same
Gaussian limiting profile.

\begin{theorem}[Transition from non-absorption to absorption in high dimension]
\label{thm:ou-absorption-high-dimensional-transition}
Let $(T_d)_{d\ge1}$ be positive and suppose that, for some $\tau\in\R$,
\begin{equation}
\label{eq:ou-critical-time-scaling}
\frac{T_d-2d}{2\sqrt d}\longrightarrow\tau
\qquad\text{as }d\to\infty.
\end{equation}
Then
\begin{equation}
\label{eq:ou-high-dimensional-absorption-limit}
a_d^B(T_d)\longrightarrow\Phi(\tau),
\qquad
a_d^A(T_d)\longrightarrow\Phi(\tau),
\end{equation}
where $\Phi$ is the standard normal distribution function.
\end{theorem}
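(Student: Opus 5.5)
Both limits will come from probabilistic readings of the explicit formulas in Theorem~\ref{thm:ou-origin-absorption}.

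\emph{Circular loop.} Put $x:=T_d/2$. Then
\[
a_d^A(T_d)=\frac{\Psi_{d+1}(x)}{\sinh x}
=\frac{2\eee^{-x}\Psi_{d+1}(x)}{1-\eee^{-2x}}.
\]
Write $N\sim\mathrm{Poisson}(x)$. The quantity $2\eee^{-x}\Psi_{d+1}(x)$ is twice the probability that $N\ge d+1$ and $N\equiv d+1 \pmod 2$. Its complement in parity has weight
\[
\eee^{-x}\bigl(\cosh x\mp\sinh x\bigr)=\eee^{-2x}\ \text{or}\ 1,
\]
so the parity-restricted tail differs from $\tfrac12\Pp(N\ge d+1)$ by at most $\eee^{-2x}$ plus a local term. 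Concretely, the parity-restricted tail is half the full tail up to an error bounded by $\sup_k\Pp(N=k)=O(x^{-1/2})$; one can pair consecutive terms $k,k+1$, whose difference is small near the mode. Hence
\[
a_d^A(T_d)=\Pp(N\ge d+1)+o(1).
\]
Now $x=d+\tau\sqrt d+o(\sqrt d)$, and the CLT for the Poisson law gives
\[
\Pp(N\ge d+1)=\Pp\Bigl(\frac{N-x}{\sqrt x}\ge\frac{d+1-x}{\sqrt x}\Bigr)\to 1-\Phi(-\tau)=\Phi(\tau).
\]

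\emph{Ordinary OU process.} I use the form
\[
a_d^B=2\sum_{j\ge0}L^B_{d+1+2j}(\rho_B),\qquad
L^B_k(\rho_B)=\frac{1}{\pi(k-1)!}\int_0^{b}\frac{x^{k-1}\,\dint x}{\sqrt{\eee^T-1-\sinh^2x}},
\]
with $b=b_T=\arsinh\sqrt{\eee^T-1}$. By \eqref{eq:type-B-limiting-odd-mass} and \eqref{eq:type-B-limiting-positive-even-mass}, the weight
\[
\mu(\dint x):=\frac{1}{\pi}\,\frac{\cosh x\,\dint x}{\sqrt{\sinh^2b-\sinh^2x}}
\]
is a probability measure on $[0,b]$ with total mass $\tfrac12\cdot2=1$. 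Up to the parity issue, $a_d^B$ is therefore $\int \Pp(N_x\ge d)\,\mu(\dint x)$ with $N_x\sim\mathrm{Poisson}(x)$. Indeed, the kernel $\eee^{-x}\Psi_d(x)$ is, as above, $\tfrac12\Pp(N_x\ge d)+O(x^{-1/2}+\eee^{-2x})$, while
\[
\frac{2}{\pi}\,\frac{\Psi_d(x)}{\sqrt{\cdots}}=\frac{2\eee^{-x}\Psi_d(x)}{1+\eee^{-2x}}\cdot\frac{\cosh x}{\pi\sqrt{\cdots}}.
\]
The key point is that $\mu$ concentrates near $x=b$. After the substitution $u=b-x$, the density is asymptotically $\pi^{-1}(1-\eee^{-2u})^{-1/2}$ for large $b$, which is not normalizable on its own; normalization comes from the exact expression. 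A cleaner route is to substitute $\sinh x=\sinh b\,\sin\theta$, which turns $\mu$ into the uniform law of $\theta$ on $[0,\pi/2]$ up to a factor $2$, that is $\dint\theta\cdot\frac{2}{\pi}$. Then $x=\arsinh(\sinh b\sin\theta)=b+\log\sin\theta+o(1)$ for fixed $\theta>0$, so the distribution of $b-x$ is tight. Since $b=T_d/2+\log2+o(1)=d+\tau\sqrt d+O(1)$, for each fixed $\theta$ we get $\Pp(N_x\ge d)\to\Phi(\tau)$, again by the Poisson CLT, because an $O(1)$ shift is negligible on the scale $\sqrt d$. Dominated convergence in $\theta$, with the small-$\theta$ region of vanishing measure, then gives $a_d^B(T_d)\to\Phi(\tau)$.

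\emph{Main obstacle.} The delicate step is the parity reduction: I must show that restricting the Poisson tail to one parity class halves it up to $o(1)$, uniformly over the relevant $x$. I would handle it by Abel summation, using that the Poisson pmf at $k$ and at $k+1$ differ by $O(1/x)$ near the bulk, together with a crude tail bound away from the bulk. For type $B$, the region of small $x$ (small $\theta$) needs a separate but trivial bound, since there the integrand is at most $1$ and the measure is small. The region near $\theta=\pi/2$ is harmless because $x\le b$ there.
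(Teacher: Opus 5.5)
Your argument is correct, apart from a factor-$2$ slip noted below, but it handles the parity restriction differently from the paper.

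The paper never estimates a parity-restricted Poisson tail. It introduces an integer variable $J_T^\bullet$ with $\Pp(J_T^\bullet=k)=L_k^\bullet(\rho_\bullet(T))$ and observes that $\Pp(J_T^\bullet\ge m)=\tfrac12\bigl(a_{m-1}^\bullet(T)+a_m^\bullet(T)\bigr)$. It combines this with the monotonicity $a_{m+1}^\bullet(T)\le a_m^\bullet(T)$, obtained by projecting onto $m$ coordinates, to get the sandwich $\Pp(J_T^\bullet\ge m+1)\le a_m^\bullet(T)\le\Pp(J_T^\bullet\ge m)$. It then proves a CLT for $J_T^\bullet$:
\begin{itemize}
\item in type $A$, $J_T^A$ given $J_T^A>0$ is exactly a Poisson variable with mean $T/2$ conditioned to be positive;
\item in type $B$, $J_T^B-1$ given $J_T^B>0$ is a Poisson mixture with mixing density proportional to $\eee^x/\sqrt{R_T^2-\sinh^2x}$; the mixing variable is shown to be $b_T-O_{\Pp}(1)$, and characteristic functions finish the argument.
\end{itemize}

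You instead work with $a_d^\bullet$ directly and remove the parity constraint analytically. An alternating tail sum of the unimodal Poisson pmf is bounded by $2\max_k\Pp(N_x=k)=O(x^{-1/2})$, so your ``main obstacle'' reduces to a one-line lemma. In type $B$, your mixing law (density proportional to $\cosh x/\sqrt{\sinh^2b-\sinh^2x}$) becomes exactly uniform in $\theta$. Hence the Poisson CLT at each fixed $\theta$ plus bounded convergence suffices, with no tightness or characteristic-function argument.

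Your route is self-contained and yields quantitative byproducts, e.g.\ $a_d^A(T)=\Pp(N_{T/2}\ge d+1)+O(T^{-1/2})$ as $T\to\infty$, uniformly in $d$. The paper's route needs no parity estimates at all. It presents the transition as a CLT for the intrinsic-volume distribution itself, with monotonicity in the dimension as the only extra input.

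On the slip: $\mu$ as you define it has total mass $\frac1\pi\cdot\frac\pi2=\frac12$, not $1$; this is exactly \eqref{eq:type-B-limiting-odd-mass}. Your displayed factorization of $\frac2\pi\Psi_d(x)/\sqrt{\cdots}$ is short by the same factor $2$. The correct statement is $a_d^B(T)=\int_0^b\frac{2\eee^{-x}\Psi_d(x)}{1+\eee^{-2x}}\,\nu(\dint x)$ with the probability measure $\nu:=2\mu$. After your substitution, $\nu$ is $\frac2\pi\,\dint\theta$ on $[0,\pi/2]$. The two errors cancel, so your conclusion is unaffected.

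Your aside about $u=b-x$ is also wrong. The local limit density there is $\pi^{-1}\eee^{-u}(1-\eee^{-2u})^{-1/2}$, which is integrable with mass $\tfrac12$; you dropped the factor $\eee^{-u}$ coming from $\cosh x$. You do not use this aside, so nothing depends on it.
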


\begin{proof}
\emph{Step 1: Compare absorption probabilities with random variable tails.}
For $T>0$ and $\bullet\in\{B,A\}$, let $J_T^\bullet$ be a
nonnegative integer-valued random variable with distribution
\begin{equation}
\label{eq:ou-auxiliary-index-distribution}
\Pp(J_T^\bullet=k)
:=L_k^\bullet(\rho_\bullet(T)),\qquad k\ge0.
\end{equation}
This is a probability distribution by
\eqref{eq:finite-BA-limiting-parity-normalization}.
Set $a_0^\bullet(T):=1$.
Theorems~\ref{thm:brownian-gaussian-absorption} and
\ref{thm:ou-origin-absorption} express $a_m^\bullet(T)$ as twice
the sum of the coefficients $L_k^\bullet(\rho_\bullet(T))$ at indices $m+1,m+3,\ldots$.
Adding the formulas for dimensions $m-1$ and $m$ therefore gives
\begin{equation}
\label{eq:ou-absorption-ordinary-tail-identity}
\Pp(J_T^\bullet\ge m)
=\frac{a_{m-1}^\bullet(T)+a_m^\bullet(T)}2,
\qquad m\ge1.
\end{equation}
For $m=1$, the same identity follows from the sum over odd indices
in \eqref{eq:finite-BA-limiting-parity-normalization}.
Projecting either $(m+1)$-dimensional OU process onto its first $m$
coordinates shows that $a_{m+1}^\bullet(T)\le a_m^\bullet(T)$.
Together with \eqref{eq:ou-absorption-ordinary-tail-identity}, this
implies
\begin{equation}
\label{eq:ou-absorption-tail-interlacing}
\Pp(J_T^\bullet\ge m+1)
\le a_m^\bullet(T)
\le\Pp(J_T^\bullet\ge m),\qquad m\ge1.
\end{equation}
We will prove the central limit theorem
\begin{equation}
\label{eq:ou-auxiliary-index-central-limit}
\frac{J_T^\bullet-T/2}{\sqrt{T/2}}
\ \Longrightarrow\ N(0,1)
\qquad\text{as }T\to\infty,\qquad \bullet\in\{B,A\},
\end{equation}
and then use \eqref{eq:ou-absorption-tail-interlacing}.

\medskip
\emph{Step 2: The ordinary OU process.}
To prove the CLT~\eqref{eq:ou-auxiliary-index-central-limit} for $J_T^B$, we first show that the conditional law of $J_T^B-1$ given $J_T^B>0$ is a mixture of Poisson distributions.
Put $\mu_T:=T/2$, $R_T:=\sqrt{\eee^T-1}$, and
$b_T:=\arsinh R_T$, as in
\eqref{eq:ordinary-ou-long-time-parameters}. Let
$p_T:=\Pp(J_T^B>0)=1-L_0^B(\rho_B(T))$.
Since $L_0^B(\rho_B(T))=\arctan(\rho_B(T))/\pi$, we have
$1/2<p_T<1$ and $p_T\to1$ as $T\to\infty$.
Summing the integrals in
\eqref{eq:type-B-limiting-intrinsic-volumes} over $k\ge1$ by
Tonelli's theorem shows that
\begin{equation}
\label{eq:ordinary-ou-poisson-mixing-density}
f_T(x):=\frac{\eee^x}
 {\pi p_T\sqrt{R_T^2-\sinh^2x}},\qquad 0<x<b_T,
\end{equation}
is a probability density. Let $X_T$ have this density and, conditionally
on $X_T$, let $M_T$ be Poisson with mean $X_T$. For every $k\ge1$,
\[
\Pp(1+M_T=k)
=\int_0^{b_T}\frac{\eee^{-x}x^{k-1}}{(k-1)!}
 f_T(x)\,\dint x
=\frac{L_k^B(\rho_B(T))}{p_T}.
\]
Thus the conditional law of $J_T^B$ given $J_T^B>0$ is the law of
$1+M_T$.

We next show that the random Poisson mean $X_T$ differs from $T/2$ by
only a tight error. Under the substitution
$Y_T:=\sinh X_T/R_T$, the density in
\eqref{eq:ordinary-ou-poisson-mixing-density} becomes
\[
g_T(y)
=\frac{1+R_Ty/\sqrt{1+R_T^2y^2}}
 {\pi p_T\sqrt{1-y^2}}
\le\frac4{\pi\sqrt{1-y^2}},\qquad 0<y<1.
\]
For $0\le r<b_T$, the inequality
$\sinh(b_T-r)/\sinh b_T\le\eee^{-r}$ gives
\[
\Pp(b_T-X_T>r)
\le\frac4\pi\arcsin(\eee^{-r})\le2\eee^{-r}.
\]
For $r\ge b_T$, the probability is zero. Hence $b_T-X_T$ is tight,
and \eqref{eq:ordinary-ou-long-time-parameters} yields
\begin{equation}
\label{eq:ordinary-ou-poisson-mean-concentration}
X_T=\mu_T+O_{\Pp}(1),\qquad T\to\infty.
\end{equation}
Here $O_{\Pp}(1)$ denotes a family of random variables that is tight.

We now prove the required CLT using characteristic functions. For each fixed $u\in\R$,
\begin{equation}\label{eq:char-funct-identity}
\Ee\exp\left\{\frac{\mathrm i u(M_T-\mu_T)}{\sqrt{\mu_T}}\right\}
=\Ee\exp\left\{
 X_T\bigl(\eee^{\mathrm i u/\sqrt{\mu_T}}-1\bigr)
 -\mathrm i u\sqrt{\mu_T}\right\}.
\end{equation}
Taylor expansion and
\eqref{eq:ordinary-ou-poisson-mean-concentration} give
\[
X_T\bigl(\eee^{\mathrm i u/\sqrt{\mu_T}}-1\bigr)
 -\mathrm i u\sqrt{\mu_T}
=\frac{\mathrm i u(X_T-\mu_T)}{\sqrt{\mu_T}}
 -\frac{u^2X_T}{2\mu_T}
 +O\left(\frac{X_T}{\mu_T^{3/2}}\right)
\ \longrightarrow\ -\frac{u^2}{2}
\]
in probability. Since $X_T\ge0$ and $\cos(u/\sqrt{\mu_T})\le1$, the exponential inside the expectation on the
right-hand side of~\eqref{eq:char-funct-identity} has modulus at most one. The convergence of the exponential in probability to
$\eee^{-u^2/2}$ therefore implies convergence of its expectation
to the same limit. By L\'evy's continuity theorem for characteristic functions,
we conclude that
$(M_T-\mu_T)/\sqrt{\mu_T}\Longrightarrow N(0,1)$.
The added constant $1$ is negligible on this scale, and
$\Pp(J_T^B=0)=1-p_T\to0$. The conditional representation therefore
proves \eqref{eq:ou-auxiliary-index-central-limit} for type $B$.

\medskip
\emph{Step 3: The circular OU loop.}
To prove the CLT~\eqref{eq:ou-auxiliary-index-central-limit} for $J_T^A$, we show that $J_T^A$ conditioned on $J_T^A>0$ has the same distribution as a Poisson variable with mean $T/2$ conditioned on being non-zero.
Indeed, with $\mu_T=T/2$,
Theorem~\ref{thm:finite-BA-scaling-limit} gives
\[
\Pp(J_T^A=0)=\frac1{1+\eee^{\mu_T}},
\qquad
\Pp(J_T^A=k)
=\frac{\mu_T^k}{k!(\eee^{\mu_T}-\eee^{-\mu_T})},\quad k\ge1.
\]
If $N_T$ is Poisson with mean $\mu_T$, then for $k\ge1$,
\[
\Pp(J_T^A=k\mid J_T^A>0)
=\frac{\mu_T^k}{k!(\eee^{\mu_T}-1)}
=\Pp(N_T=k\mid N_T>0).
\]
Both $\Pp(J_T^A=0)$ and $\Pp(N_T=0)$ tend to zero.
The central limit theorem for the Poisson distribution gives $(N_T-\mu_T)/\sqrt{\mu_T}\Longrightarrow N(0,1)$ and  proves
\eqref{eq:ou-auxiliary-index-central-limit} for type $A$.

\medskip
\emph{Step 4: Complete the proof.}
Under \eqref{eq:ou-critical-time-scaling}, we have $T_d\to\infty$,
$\mu_{T_d}/d\to1$, and
\[
\frac{d-\mu_{T_d}}{\sqrt{\mu_{T_d}}}\longrightarrow-\tau,
\qquad
\frac{d+1-\mu_{T_d}}{\sqrt{\mu_{T_d}}}\longrightarrow-\tau.
\]
By \eqref{eq:ou-auxiliary-index-central-limit} and continuity of
the normal distribution function, both
$\Pp(J_{T_d}^\bullet\ge d)$ and
$\Pp(J_{T_d}^\bullet\ge d+1)$ tend to
$1-\Phi(-\tau)=\Phi(\tau)$.
The inequalities \eqref{eq:ou-absorption-tail-interlacing} now give
\eqref{eq:ou-high-dimensional-absorption-limit}.
\end{proof}

\begin{corollary}[A linear-time threshold]
\label{cor:ou-absorption-linear-time-threshold}
Let $T_d>0$ and suppose that $T_d/d\to c\in[0,\infty]$ with $c\ne2$.
Then, for $\bullet\in\{B,A\}$,
\begin{equation}
\label{eq:ou-absorption-linear-time-threshold}
\lim_{d\to\infty}a_d^\bullet(T_d)
=\begin{cases}
0,&c<2,\\
1,&c>2.
\end{cases}
\end{equation}
\end{corollary}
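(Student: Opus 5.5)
The plan is to deduce the corollary from Theorem~\ref{thm:ou-absorption-high-dimensional-transition} using monotonicity of $a_d^\bullet(T)$ in $T$ together with a comparison argument, rather than redoing the central limit analysis.

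\emph{Step 1: monotonicity in $T$.} For fixed $d$, the map $T\mapsto a_d^B(T)$ is nondecreasing. This is immediate from the definition, since $\mathcal H_{d,T}^B\subseteq\mathcal H_{d,T'}^B$ for $T\le T'$ by stationarity; it can also be read off from $a_d^B(T)=q_d^B(\rho_B(T))$ and the cone inclusion \eqref{eq:finite-BA-height-cone-monotonicity}. For type $A$ the inclusion argument is not available, because the loop changes with $T$. I would instead use the representation $a_d^A(T)=q_d^A(\rho_A(T))$ with $\rho_A$ decreasing, together with monotonicity of $\sigma\mapsto q_d^A(\sigma)$. The latter follows from Theorem~\ref{thm:brownian-gaussian-absorption}: $q_d^A(\sigma)=2\sum_j L^A_{d+1+2j}(\sigma)$ is a limit of Grassmann angles $\gamma_d(\bW_n^A(\sigma\sqrt n))$, and these are nonincreasing in $\sigma$ by \eqref{eq:finite-BA-height-cone-monotonicity}. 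The same argument works verbatim for type $B$.

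\emph{Step 2: comparison.} Suppose first that $c<2$, including $c=0$. Fix any $\tau\in\R$ and set $T_d':=2d+2\tau\sqrt d$. Since $T_d/d\to c<2$, we have $T_d\le T_d'$ for all large $d$. Step~1 then gives $a_d^\bullet(T_d)\le a_d^\bullet(T_d')$, and Theorem~\ref{thm:ou-absorption-high-dimensional-transition} gives $a_d^\bullet(T_d')\to\Phi(\tau)$. Hence
\[
\limsup_{d\to\infty} a_d^\bullet(T_d)\le\Phi(\tau)\quad\text{for every }\tau\in\R.
\]
Letting $\tau\to-\infty$ yields the limit $0$.

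If $c>2$, including $c=\infty$, then $T_d\ge T_d'$ for all large $d$ and each fixed $\tau$, so $\liminf_{d\to\infty} a_d^\bullet(T_d)\ge\Phi(\tau)$. Letting $\tau\to\infty$ yields the limit $1$.

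\emph{Main obstacle.} The only delicate point is establishing monotonicity in $T$ for the circular loop, which is handled in Step~1. Once that is in place, the rest of the argument is routine.
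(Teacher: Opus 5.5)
Your proof is correct and follows the paper's route: monotonicity of $a_d^\bullet(T)=q_d^\bullet(\rho_\bullet(T))$ in $T$, then a sandwich against the critical-window sequences $2d\pm 2M\sqrt d$ from Theorem~\ref{thm:ou-absorption-high-dimensional-transition}, letting $M\to\infty$. The one difference is how you show that $\sigma\mapsto q_d^\bullet(\sigma)$ is nonincreasing: the paper does it in one line from the convexity of $\mathcal K_d^\bullet$ and $0\in\mathcal K_d^\bullet$ (so $\sigma Z\in\mathcal K_d^\bullet$ implies $\sigma' Z\in\mathcal K_d^\bullet$ for $\sigma'\le\sigma$), whereas your argument through limits of Grassmann angles of the finite cones is valid but unnecessarily indirect.
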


\begin{proof}
For each fixed $d$ and for $\bullet\in\{B,A\}$, the function $a_d^\bullet(T)$ is nondecreasing
in $T$. Indeed, the Brownian convex hull $\mathcal K_d^\bullet$ contains zero,
so convexity implies that $q_d^\bullet(\sigma)$ is nonincreasing
in $\sigma$. Both functions $\rho_\bullet(T)$ in
\eqref{eq:ou-inverse-time-scales} are decreasing, and
$a_d^\bullet(T)=q_d^\bullet(\rho_\bullet(T))$ by Theorem~\ref{thm:ou-origin-absorption}.
Note that this argument applies also to the circular loops.

Fix $M>0$. If $c<2$, then $T_d\le2d-2M\sqrt d$ for all sufficiently
large $d$; if $c>2$, then eventually $T_d\ge2d+2M\sqrt d$.
Monotonicity and
Theorem~\ref{thm:ou-absorption-high-dimensional-transition} give
\[
\limsup_{d\to\infty}a_d^\bullet(T_d) \le\Phi(-M) \text{ if } c<2,
\qquad \qquad
\liminf_{d\to\infty}a_d^\bullet(T_d)\ge\Phi(M)  \text{ if } c>2.
\]
Letting $M\to\infty$ proves the assertion.
\end{proof}

\subsection{Expected conic intrinsic volumes of random Ornstein--Uhlenbeck cones}
\label{subsec:random-ou-cones-expected-intrinsic-volumes}

We now determine the expected conic intrinsic volumes of the positive
hulls of the ordinary OU process and the circular OU loop; see Subsection~\ref{subsec:ou-origin-absorption} for the definitions of these processes.
For $d\ge1$ and $T>0$, define the random closed convex cones
\begin{equation}
\label{eq:random-ou-cones-definition}
\mathcal U_{d,T}^B
:=\posclos\mathcal H_{d,T}^B
=\posclos\{U_u:0\le u\le T\},
\qquad
\mathcal U_{d,T}^A
:=\posclos\mathcal H_{d,T}^A
=\posclos\{V_u^{(T)}:0\le u\le T\}.
\end{equation}
We refer to $\mathcal U_{d,T}^B$ and $\mathcal U_{d,T}^A$ as the
\emph{random OU cones} associated with the ordinary OU process
and the circular OU loop, respectively.

\begin{remark}[On measurability]
We regard these cones as random closed subsets of $\R^d$ with respect to the
Effros $\sigma$-field on the space of closed subsets of $\R^d$,
which coincides with the Borel $\sigma$-field of the Fell topology;
see \cite[Section~1.1]{Molchanov2017}.
To prove measurability, it suffices to show that, for every open set $O\subseteq\R^d$,
the set of outcomes $\{\omega:\mathcal U_{d,T}^\bullet(\omega)\cap O\ne\varnothing\}$
is measurable. By path continuity, each cone $\mathcal U_{d,T}^\bullet(\omega)$ is the closure of
the countable set of finite nonnegative rational linear combinations
of process values at times $rT$, where $r\in\Q\cap[0,1]$.
Each such combination is a measurable random vector.
Since taking closure does not change whether a set intersects
an open set, the displayed set is a countable union of
measurable sets.
\end{remark}

The following lemma shows that the positive hulls in
\eqref{eq:random-ou-cones-definition} are almost surely
closed and describes their geometry according to whether
the origin is absorbed.

\begin{lemma}
\label{lem:random-ou-cones-geometry}
For every $d\ge1$, $T>0$, and $\bullet\in\{B,A\}$,
the cone $\mathcal U_{d,T}^\bullet$ is full-dimensional
almost surely.

Moreover, whenever $0\in\intr\mathcal H_{d,T}^\bullet$,
we have
$\mathcal U_{d,T}^\bullet
=\pos\mathcal H_{d,T}^\bullet=\R^d$.
Whenever $0\notin\mathcal H_{d,T}^\bullet$,
the positive hull $\pos\mathcal H_{d,T}^\bullet$ is
closed and pointed, and hence
$\mathcal U_{d,T}^\bullet=\pos\mathcal H_{d,T}^\bullet$.
Consequently,
\begin{equation}
\label{eq:random-ou-cones-almost-sure-closedness}
\mathcal U_{d,T}^\bullet=\pos\mathcal H_{d,T}^\bullet
\quad\text{almost surely}.
\end{equation}
Moreover, on an almost sure event,
$\mathcal U_{d,T}^\bullet=\R^d$ if and only if
$0\in\intr\mathcal H_{d,T}^\bullet$, and
$\mathcal U_{d,T}^\bullet$ is pointed if and only if
$\mathcal U_{d,T}^\bullet\ne\R^d$.
In particular,
\begin{equation}
\label{eq:random-ou-cones-full-space-probability}
\Pp(\mathcal U_{d,T}^\bullet=\R^d)
=\Pp(0\in\intr\mathcal H_{d,T}^\bullet)
=a_d^\bullet(T).
\end{equation}
\end{lemma}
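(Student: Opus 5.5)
We split the argument into three parts: full-dimensionality, the two structural cases, and the almost-sure dichotomy.

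\emph{Full-dimensionality.} It suffices that the process values span $\R^d$ almost surely. Pick fixed times $0\le u_1<\cdots<u_d\le T$; in the circular case take $u_d<T$, so the times are distinct points of the circle. The vectors $U_{u_1},\ldots,U_{u_d}$ (resp.\ $V^{(T)}_{u_j}$) form a $d\times d$ matrix whose rows are independent copies of a centered Gaussian vector in $\R^d$. Its covariance is the $d\times d$ kernel matrix of the one-dimensional process at these times. This matrix is nondegenerate: through Proposition~\ref{prop:ou-brownian-representations}, these values are positive multiples of $W_t-\rho Z$ (resp.\ $W^{\mathrm{br}}_t-\rho Z$) at distinct $t$ in $[0,1)$, and such values are linearly independent in $L^2$ because $\rho>0$ and the covariance $\rho^2+\min(s,t)$ (resp.\ $\rho^2+s(1-t)$) is positive definite. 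Hence the random matrix has a density on $\R^{d\times d}$ and is almost surely invertible. Since $\lin\mathcal U^\bullet_{d,T}\supseteq\lin\{\text{process values}\}$, the cone is full-dimensional almost surely.

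\emph{The two cases.} If $0\in\intr\mathcal H$, then \eqref{eq:origin-interior-positive-spanning} gives $\pos\mathcal H=\R^d$, so the closure is $\R^d$ as well. If $0\notin\mathcal H$, note that $\mathcal H$ is compact, being the convex hull of a compact path range. Strict separation then yields $y$ with $\langle y,x\rangle\ge\varepsilon>0$ for all $x\in\mathcal H$.

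We next show $\pos\mathcal H=\{\lambda x:\lambda\ge0,\ x\in\mathcal H\}$ is closed. Suppose $\lambda_n x_n\to w$. From $\lambda_n\varepsilon\le\langle y,\lambda_nx_n\rangle$, the $\lambda_n$ are bounded. Extracting convergent subsequences $\lambda_n\to\lambda$ and $x_n\to x\in\mathcal H$ gives $w=\lambda x\in\pos\mathcal H$.

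Pointedness follows from the same separation. If $w$ and $-w$ both lie in the cone, then $\langle y,w\rangle\ge0$ and $\langle y,-w\rangle\ge0$, so $\langle y,w\rangle=0$. But $\langle y,\lambda x\rangle\ge\lambda\varepsilon$, which forces $\lambda=0$ and hence $w=0$.

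\emph{Almost-sure statements.} Theorem~\ref{thm:ou-origin-absorption} gives $\Pp(0\in\bd\mathcal H^\bullet_{d,T})=0$. So almost surely exactly one of the two cases above occurs, which yields \eqref{eq:random-ou-cones-almost-sure-closedness}.

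On this event the remaining equivalences follow. In the interior case the cone is $\R^d$, which is not pointed. In the exterior case the cone is pointed and hence $\neq\R^d$. Therefore, almost surely, $\mathcal U=\R^d$ if and only if $0\in\intr\mathcal H$, and $\mathcal U$ is pointed if and only if $\mathcal U\ne\R^d$. Formula \eqref{eq:random-ou-cones-full-space-probability} then follows from \eqref{eq:ou-absorption-positive-spanning}.

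The only step needing care is the nondegeneracy of the finite-dimensional covariance matrix in the circular case. Via the bridge representation it reduces to checking that the kernel $\rho^2+s(1-t)$ is strictly positive definite on distinct points of $[0,1)$.
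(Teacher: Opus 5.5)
Your proposal is correct and follows essentially the same route as the paper: \eqref{eq:origin-interior-positive-spanning} in the interior case, strict separation plus compactness for closedness and pointedness when $0\notin\mathcal H$, the null boundary event from Theorem~\ref{thm:ou-origin-absorption} for the almost-sure dichotomy, and full-dimensionality via an almost surely invertible Gaussian matrix whose covariance is shown nondegenerate through the Brownian representation. The only difference is cosmetic: the paper picks interior Brownian times $0<t_1<\cdots<t_d<1$, which avoids your endpoint bookkeeping (in type $B$ your choice $u_d=T$ corresponds to $t=1$, not to $t\in[0,1)$, although positive definiteness of $\rho^2+\min(s,t)$ still holds there).
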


\begin{proof}
We first establish the deterministic geometric assertions.
If $0\in\intr\mathcal H_{d,T}^\bullet$, then
\eqref{eq:origin-interior-positive-spanning} gives
$\pos\mathcal H_{d,T}^\bullet=\R^d$ and hence
$\mathcal U_{d,T}^\bullet=\R^d$.

If $0\notin\mathcal H_{d,T}^\bullet$, compactness and strict
separation give a vector $v\in\R^d$ and $\varepsilon>0$
such that $\langle v,y\rangle\ge\varepsilon$ for all
$y\in\mathcal H_{d,T}^\bullet$.
Since $\mathcal H_{d,T}^\bullet$ is convex, every point of
its positive hull can be written as $\lambda y$ with
$\lambda\ge0$ and $y\in\mathcal H_{d,T}^\bullet$.
For any convergent sequence $\lambda_m y_m$ in this hull,
the inequality
$\lambda_m\varepsilon\le\langle v,\lambda_m y_m\rangle$
shows that $(\lambda_m)$ is bounded.
Compactness of $\mathcal H_{d,T}^\bullet$ then yields a
subsequence for which both $\lambda_m$ and $y_m$ converge.
Its limit belongs to the positive hull, proving that
$\pos\mathcal H_{d,T}^\bullet$ is closed.
Moreover, every nonzero vector in this hull has strictly
positive inner product with $v$, so the cone is pointed
and cannot equal $\R^d$.

By Theorem~\ref{thm:ou-origin-absorption},
$\Pp(0\in\bd\mathcal H_{d,T}^\bullet)=0$.
Thus, almost surely, precisely one of the two cases above
occurs. In the first case, $\mathcal U_{d,T}^\bullet=\R^d$;
in the second, $\mathcal U_{d,T}^\bullet$ is a closed
pointed cone distinct from $\R^d$.
This proves \eqref{eq:random-ou-cones-almost-sure-closedness},
the two asserted equivalences, and
\eqref{eq:random-ou-cones-full-space-probability}.

It remains to establish full dimensionality.
Use Proposition~\ref{prop:ou-brownian-representations}
with $\rho=\rho_\bullet(T)$ and choose
$0<t_1<\cdots<t_d<1$.
Write $W_t^B:=W_t$ and $W_t^A:=W_t^{\mathrm{br}}$.
The $d$ random vectors
$W_{t_i}^\bullet-\rho Z$, $1\le i\le d$,
have a joint Lebesgue density on $(\R^d)^d$.
Indeed, the scalar Brownian-motion and Brownian-bridge
covariance matrices at distinct interior times are
positive definite, as follows by expressing linear
combinations in terms of independent Brownian increments.
Adding the independent Gaussian shift preserves positive
definiteness, and the spatial coordinates are independent.
Consequently, the matrix formed by these $d$ vectors
is nonsingular almost surely, since singular matrices
form a Lebesgue null set.
The positive normalizations and time changes in
Proposition~\ref{prop:ou-brownian-representations}
preserve their linear span.
Hence $\mathcal U_{d,T}^\bullet$ contains $d$ linearly
independent vectors almost surely and is therefore
full-dimensional.
\end{proof}

The next lemma expresses the  expected $j$-th Grassmann angle of the random OU cone $\mathcal U_{d,T}^\bullet$ through
the OU absorption probability in
dimension $j$.  Set $a_0^\bullet(T):=1$.

\begin{lemma}[Expected Grassmann angles of random OU cones]
\label{lem:random-ou-cones-expected-grassmann-angles}
For every $d\ge1$, $T>0$, and $\bullet\in\{B,A\}$,
\begin{equation}
\label{eq:random-ou-cones-expected-grassmann-angles}
\Ee\,\gamma_j(\mathcal U_{d,T}^\bullet)=a_j^\bullet(T),
\qquad 0\le j<d.
\end{equation}
For $j\ge d$, the Grassmann angles are zero by definition.
\end{lemma}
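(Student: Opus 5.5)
The plan is to compute $\Ee\,\gamma_j(\mathcal U_{d,T}^\bullet)$ for $0\le j<d$ by representing the uniform random subspace of codimension $j$ as the kernel of an independent Gaussian map, and then collapsing the resulting event to an origin-absorption event in dimension $j$. The case $j=0$ is immediate: $\mathcal U_{d,T}^\bullet\ne\{0\}$ almost surely, since it is full-dimensional by Lemma~\ref{lem:random-ou-cones-geometry}. Hence $\gamma_0=1=a_0^\bullet(T)$. Now fix $1\le j<d$.

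\emph{Representing the random subspace.} Let $A$ be a $j\times d$ matrix with independent standard normal entries, independent of the process. Almost surely $A$ has rank $j$, and by orthogonal invariance $\ker A$ is uniformly distributed among the $(d-j)$-dimensional subspaces of $\R^d$. Conditioning on the process (measurability is covered by the preceding remark), Fubini gives
\[
\Ee\,\gamma_j(\mathcal U_{d,T}^\bullet)=\Pp\bigl(\mathcal U_{d,T}^\bullet\cap\ker A\ne\{0\}\bigr).
\]
By \eqref{eq:random-ou-cones-almost-sure-closedness} we may replace $\mathcal U_{d,T}^\bullet$ by $\pos\mathcal H_{d,T}^\bullet$. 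A nonzero vector of this cone has the form $\lambda y$ with $\lambda>0$ and $y\in\mathcal H_{d,T}^\bullet\setminus\{0\}$. On the event $0\notin\mathcal H_{d,T}^\bullet$, the event above is therefore exactly $\{0\in A\mathcal H_{d,T}^\bullet\}$. On the complementary event, which up to a null set is $\{\mathcal U_{d,T}^\bullet=\R^d\}$, the intersection is nontrivial because $j<d$, and also $0\in A\mathcal H_{d,T}^\bullet$. So in all cases, almost surely,
\[
\{\mathcal U_{d,T}^\bullet\cap\ker A\ne\{0\}\}=\{0\in A\mathcal H_{d,T}^\bullet\}=\{0\in\conv\{AU_u:0\le u\le T\}\},
\]
and similarly with $V^{(T)}$ in place of $U$.

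\emph{Identifying the law.} Conditionally on $A$, the process $AU$ is a centered continuous Gaussian process in $\R^j$ with covariance $\eee^{-|u-v|/2}AA^{\mathsf T}$. Write $AA^{\mathsf T}=BB^{\mathsf T}$ with $B$ invertible, which holds almost surely. Then $B^{-1}AU$ is a $j$-dimensional stationary OU process with the normalization \eqref{eq:ordinary-ou-covariance}. An invertible linear map preserves the event that the origin lies in the convex hull. Hence the conditional probability equals $a_j^\bullet(T)$ for almost every $A$, and the same argument applies to the circular loop. Integrating over $A$ gives \eqref{eq:random-ou-cones-expected-grassmann-angles}.

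\emph{Main obstacle.} The only delicate point is the dichotomy step: we must make sure that closure and boundary effects do not interfere. This is exactly what Lemma~\ref{lem:random-ou-cones-geometry} supplies, together with $\Pp(0\in\bd\mathcal H_{d,T}^\bullet)=0$ from Theorem~\ref{thm:ou-origin-absorption}. The case $j\ge d$ needs no argument, since those Grassmann angles vanish by definition.
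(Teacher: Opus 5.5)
Your proposal is correct and follows essentially the same route as the paper. The paper takes an independent uniform subspace $L$ of codimension $j$ and the orthogonal projection $\Pi$ onto $L^\perp$. It proves the same almost-sure equivalence $\{\mathcal U_{d,T}^\bullet\cap L\ne\{0\}\}=\{0\in\Pi\mathcal H_{d,T}^\bullet\}$ from Lemma~\ref{lem:random-ou-cones-geometry} and the null boundary event, and then uses the scalar covariance $\propto I_d$ to identify the projected process as a $j$-dimensional OU process or loop. Your Gaussian matrix $A$, followed by whitening with $B^{-1}$, is only a cosmetic variant of choosing an orthonormal basis of $L^\perp$.
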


\begin{proof}
For $j=0$, the assertion follows from the almost-sure nontriviality
of the cone. Let $1\le j<d$, let $L$ be an independent uniformly
distributed subspace of codimension $j$ in $\R^d$, and write
$\Pi:\R^d \to L^\perp$ for orthogonal projection onto $L^\perp$.
Outside the null event $\{0\in\bd\mathcal H_{d,T}^\bullet\}$ of
Theorem~\ref{thm:ou-origin-absorption},
\begin{equation}
\label{eq:random-ou-projection-absorption-equivalence}
\mathcal U_{d,T}^\bullet\cap L\ne\{0\}
\quad\Longleftrightarrow\quad
0\in\Pi\mathcal H_{d,T}^\bullet.
\end{equation}
Indeed, if $0\in\intr\mathcal H_{d,T}^\bullet$, both events hold:
the cone $\mathcal U_{d,T}^\bullet$ equals $\R^d$, so its
intersection with $L$ is nontrivial because $\dim L=d-j>0$;
moreover, the projected hull $\Pi\mathcal H_{d,T}^\bullet$
contains zero. If $0\notin\mathcal H_{d,T}^\bullet$, the cone
$\mathcal U_{d,T}^\bullet$ equals $\pos\mathcal H_{d,T}^\bullet$
by Lemma~\ref{lem:random-ou-cones-geometry}.
A nonzero point of its intersection with $L$ is $\lambda y$ for
some $\lambda>0$ and $y\in\mathcal H_{d,T}^\bullet$, and hence
$\Pi y=0$. Conversely, any $y\in\mathcal H_{d,T}^\bullet$ with
$\Pi y=0$ is nonzero and belongs to that intersection.
This proves \eqref{eq:random-ou-projection-absorption-equivalence}.

Conditionally on $L$, the projected OU process or the OU loop, expressed in an
orthonormal basis of $L^\perp$, has the law of the corresponding
$j$-dimensional OU process or loop with the same parameter $T$.
This follows because the covariance kernels in
\eqref{eq:ordinary-ou-covariance} and
\eqref{eq:circular-ou-covariance} are scalar multiples
of the identity matrix $I_d$.
Since projection commutes with taking the convex hull,
\eqref{eq:random-ou-projection-absorption-equivalence} gives
\[
\Ee\,\gamma_j(\mathcal U_{d,T}^\bullet)
=\Pp(\mathcal U_{d,T}^\bullet\cap L\ne\{0\})
=\Pp(0\in\Pi\mathcal H_{d,T}^\bullet)
=a_j^\bullet(T).
\]
The first equality is the definition of the Grassmann angle,
conditioning on the cone.
\end{proof}

\begin{theorem}[Expected conic intrinsic volumes of random OU cones]
\label{thm:random-ou-cones-expected-intrinsic-volumes}
For every $d\ge1$, $T>0$, and $\bullet\in\{B,A\}$,
\begin{equation}
\label{eq:random-ou-expected-intrinsic-volume-coefficients}
\Ee\,\upsilon_k(\mathcal U_{d,T}^\bullet)
=\begin{cases}
L_k^\bullet(\rho_\bullet(T)),&0\le k<d,\\[2pt]
\sum_{\ell=d}^\infty L_\ell^\bullet(\rho_\bullet(T)),
&k=d,
\end{cases}
\end{equation}
where $L_k^\bullet$ is defined in
Theorem~\ref{thm:finite-BA-scaling-limit} and $\rho_\bullet(T)$ in
\eqref{eq:ou-inverse-time-scales}.
Explicitly, for the ordinary Ornstein--Uhlenbeck process,
\begin{align}
\Ee\,\upsilon_0(\mathcal U_{d,T}^B)
&=\frac1\pi\arcsin(\eee^{-T/2}),
\label{eq:ordinary-random-ou-expected-zeroth-volume}
\\
\Ee\,\upsilon_k(\mathcal U_{d,T}^B)
&=\frac1{\pi(k-1)!}
\int_0^{\arsinh\sqrt{\eee^T-1}}
\frac{x^{k-1}}{\sqrt{\eee^T-1-\sinh^2x}}\,\dint x,
\qquad 1\le k<d,
\label{eq:ordinary-random-ou-expected-intrinsic-volumes}
\\
\Ee\,\upsilon_d(\mathcal U_{d,T}^B)
&=\frac1\pi\int_0^{\arsinh\sqrt{\eee^T-1}}
\frac{\Psi_{d-1}(x)+\Psi_d(x)}
     {\sqrt{\eee^T-1-\sinh^2x}}\,\dint x.
\label{eq:ordinary-random-ou-expected-top-volume}
\end{align}
For the circular Ornstein--Uhlenbeck loop,
\begin{align}
\Ee\,\upsilon_0(\mathcal U_{d,T}^A)
&=\frac12\left(1-\tanh\frac T4\right),
\label{eq:circular-random-ou-expected-zeroth-volume}
\\
\Ee\,\upsilon_k(\mathcal U_{d,T}^A)
&=\frac{(T/2)^k}{2k!\sinh(T/2)},
\qquad 1\le k<d,
\label{eq:circular-random-ou-expected-intrinsic-volumes}
\\
\Ee\,\upsilon_d(\mathcal U_{d,T}^A)
&=\frac{\Psi_d(T/2)+\Psi_{d+1}(T/2)}{2\sinh(T/2)}.
\label{eq:circular-random-ou-expected-top-volume}
\end{align}
Here $\Psi_m$ is defined in \eqref{eq:brownian-hyperbolic-remainders};
in particular $\Psi_0(x)=\cosh x$ and $\Psi_1(x)=\sinh x$. All the integrals are absolutely convergent.
\end{theorem}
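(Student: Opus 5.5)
The plan is to convert the expected Grassmann angles of Lemma~\ref{lem:random-ou-cones-expected-grassmann-angles} into expected intrinsic volumes. The conversion uses the inversion formulas \eqref{eq:prelim-intrinsic-volumes-from-grassmann-angles}, corrected on the full-space event by Remark~\ref{rem:prelim-conic-full-space-correction}. We then rewrite the resulting absorption probabilities via the series representations of Theorems~\ref{thm:brownian-gaussian-absorption} and~\ref{thm:ou-origin-absorption}.

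By Lemma~\ref{lem:random-ou-cones-geometry}, almost surely $\mathcal U_{d,T}^\bullet$ is full-dimensional and is either pointed or equal to $\R^d$. A full-dimensional pointed cone is not a linear subspace when $d\ge1$. So Remark~\ref{rem:prelim-conic-full-space-correction} applies pathwise and gives the following almost-sure identities, with $\ind:=\ind_{\{\mathcal U_{d,T}^\bullet=\R^d\}}$:
\begin{itemize}
\item $\upsilon_0=1-(\gamma_0+\gamma_1)/2$, valid for $d\ge2$;
\item $\upsilon_k=(\gamma_{k-1}-\gamma_{k+1})/2$ for $1\le k\le d-2$;
\item $\upsilon_{d-1}=(\gamma_{d-2}-\ind)/2$;
\item $\upsilon_d=(\gamma_{d-1}+\ind)/2$.
\end{itemize}
Here $\gamma_j=0$ for $j\ge d$. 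For $d=1$, the cone is a ray or the whole line, and we read the values $(1/2,1/2)$ or $(0,1)$ directly. Taking expectations, we use $\Ee\gamma_j=a_j^\bullet(T)$ for $j<d$ (Lemma~\ref{lem:random-ou-cones-expected-grassmann-angles}) and $\Pp(\ind=1)=a_d^\bullet(T)$ from \eqref{eq:random-ou-cones-full-space-probability}. Since $a_{d}^\bullet$ plays exactly the role of $\gamma_d$ in the index-$(d-1)$ formula, we obtain uniformly
\[
\Ee\,\upsilon_0=1-\tfrac{a_0^\bullet+a_1^\bullet}{2},\qquad
\Ee\,\upsilon_k=\tfrac{a_{k-1}^\bullet-a_{k+1}^\bullet}{2}\ (1\le k\le d-1),\qquad
\Ee\,\upsilon_d=\tfrac{a_{d-1}^\bullet+a_d^\bullet}{2}.
\]
The same formulas also hold for $d=1$, since $a_0=1$ and $\Ee\upsilon_0=(1-a_1)/2$.

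Next we write $a_m^\bullet(T)=2\sum_{j\ge0}L^\bullet_{m+1+2j}(\rho_\bullet(T))$ for $m\ge1$. For $m=0$ we use the parity identity \eqref{eq:finite-BA-limiting-parity-normalization}, which gives $a_0=1=2\sum_j L_{1+2j}$. Telescoping then yields $(a_{k-1}-a_{k+1})/2=L_k$ for $k\ge1$. It also yields $1-(a_0+a_1)/2=1-\sum_{\ell\ge1}L_\ell=L_0$, using total mass one. Finally, $(a_{d-1}+a_d)/2=\sum_{\ell\ge d}L_\ell$, which is exactly \eqref{eq:ou-absorption-ordinary-tail-identity}. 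This proves \eqref{eq:random-ou-expected-intrinsic-volume-coefficients}.

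For the explicit forms, we substitute $\rho_B(T)=(\eee^T-1)^{-1/2}$ and $\theta_{\rho_A(T)}=T/2$ into the definitions \eqref{eq:type-B-limiting-zeroth-intrinsic-volume}--\eqref{eq:type-A-limiting-intrinsic-volumes}. The identity $\arctan(1/\sqrt{\eee^T-1})=\arcsin(\eee^{-T/2})$ handles $L_0^B$, and $(1+4\rho_A^2)^{-1/2}=\tanh(T/4)$ handles $L_0^A$. For the top coefficient, we use $\tfrac12(a_{d-1}+a_d)$ together with the closed forms \eqref{eq:ordinary-ou-origin-absorption} and \eqref{eq:circular-ou-origin-absorption}. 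For $d\ge2$ this directly gives \eqref{eq:ordinary-random-ou-expected-top-volume} and \eqref{eq:circular-random-ou-expected-top-volume}. For $d=1$, the term $a_0=1$ must match $\frac2\pi\int\cosh x/\sqrt{\cdot}$, respectively $\cosh$-free $\Psi_1/\sinh=1$. These are exactly the computations \eqref{eq:type-B-limiting-odd-mass} and the type-$A$ parity identity. Absolute convergence follows from nonnegativity and Tonelli.

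The main point requiring care is the full-space bookkeeping: we must verify that the $\ind$ corrections at indices $d-1$ and $d$ line up with $a_d^\bullet$, so that one telescoping formula covers all $k$. We must also check the degenerate case $d=1$.
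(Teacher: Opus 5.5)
Your proposal is correct and follows the paper's proof: you invert the Grassmann angles pathwise with the full-space correction of Remark~\ref{rem:prelim-conic-full-space-correction}, take expectations via Lemma~\ref{lem:random-ou-cones-expected-grassmann-angles} and \eqref{eq:random-ou-cones-full-space-probability}, treat $d=1$ separately, and telescope the series from Theorem~\ref{thm:brownian-gaussian-absorption}. The only deviation is at the top index. You average the closed forms of $a_{d-1}^\bullet$ and $a_d^\bullet$, which requires the separate check at $d=1$ that $a_0^\bullet=1$ matches the formula. The paper instead sums the integral representations of $L_\ell^\bullet$ over $\ell\ge d$ by Tonelli, which handles all $d\ge1$ at once.
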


\begin{proof}
\emph{Step 1: Recover the expected conic intrinsic volumes from Grassmann angles.}
Fix $T>0$ and $\bullet\in\{B,A\}$, and abbreviate
$\rho:=\rho_\bullet(T)$ and $C:=\mathcal U_{d,T}^\bullet$.
Suppose first that $d\ge2$.
By \eqref{eq:prelim-intrinsic-volumes-from-grassmann-angles} and
Remark~\ref{rem:prelim-conic-full-space-correction}, every
full-dimensional closed convex cone in $\R^d$ satisfies
\begin{align}
\upsilon_0(C)&=\frac{1-\gamma_1(C)}2,
\label{eq:random-ou-volume-grassmann-zeroth}
\\
\upsilon_k(C)&=\frac{\gamma_{k-1}(C)-\gamma_{k+1}(C)}2,
\qquad 1\le k\le d-2.
\label{eq:random-ou-volume-grassmann-lower}
\end{align}
At the top index, \eqref{eq:prelim-top-intrinsic-volume-full-space}
gives
\begin{equation}
\label{eq:random-ou-top-volume-full-space-correction}
\upsilon_d(C)
=\frac{\gamma_{d-1}(C)+\ind_{\{C=\R^d\}}}{2}.
\end{equation}
Taking expectations in
\eqref{eq:random-ou-volume-grassmann-zeroth} and
\eqref{eq:random-ou-volume-grassmann-lower}, and using
Lemma~\ref{lem:random-ou-cones-expected-grassmann-angles}, gives
\[
\Ee\,\upsilon_0(C)=\frac{1-a_1^\bullet(T)}2=L_0^\bullet(\rho),
\qquad
\Ee\,\upsilon_k(C)
=\frac{a_{k-1}^\bullet(T)-a_{k+1}^\bullet(T)}2
=L_k^\bullet(\rho),\quad 1\le k\le d-2.
\]
The coefficient equalities follow from
$a_j^\bullet(T)=q_j^\bullet(\rho)$ and
Corollary~\ref{cor:brownian-gaussian-absorption-consequences}.
For $k=1$, the equality uses $a_0^\bullet(T)=1$ and
$1-q_2^\bullet(\rho)=2L_1^\bullet(\rho)$.

At the top index,
\eqref{eq:random-ou-top-volume-full-space-correction} and
\eqref{eq:random-ou-cones-full-space-probability} yield
\begin{equation}
\label{eq:random-ou-expected-top-volume-absorption}
\Ee\,\upsilon_d(C)
=\frac{a_{d-1}^\bullet(T)+a_d^\bullet(T)}2
=\sum_{\ell=d}^\infty L_\ell^\bullet(\rho).
\end{equation}
For the last equality, use the series in
Theorem~\ref{thm:brownian-gaussian-absorption}: the two sums have
indices $d,d+2,\ldots$ and $d+1,d+3,\ldots$, respectively.

For the remaining index $k=d-1$, we use
\eqref{eq:prelim-next-to-top-intrinsic-volume-full-space},
which accounts for the possibility that $C=\R^d$.
Taking expectations and applying
Lemma~\ref{lem:random-ou-cones-expected-grassmann-angles}
and \eqref{eq:random-ou-cones-full-space-probability}, we obtain
\[
\Ee\,\upsilon_{d-1}(C)
=\frac{a_{d-2}^\bullet(T)-a_d^\bullet(T)}2
=L_{d-1}^\bullet(\rho).
\]
The last equality follows by subtracting the  series
in Theorem~\ref{thm:brownian-gaussian-absorption};
when $d=2$, we use $a_0^\bullet(T)=1$ and
\eqref{eq:finite-BA-limiting-parity-normalization}.

\medskip
If $d=1$, Lemma~\ref{lem:random-ou-cones-geometry} shows that $C$
is almost surely either a ray or $\R$. Their intrinsic-volume
sequences are $(1/2,1/2)$ and $(0,1)$, respectively. Thus
\[
\Ee\,\upsilon_0(C)=\frac{1-a_1^\bullet(T)}2=L_0^\bullet(\rho),
\qquad
\Ee\,\upsilon_1(C)=\frac{1+a_1^\bullet(T)}2
=1-L_0^\bullet(\rho).
\]

Combining the cases $d\ge2$ and $d=1$, we have shown that
$\Ee\,\upsilon_k(C)=L_k^\bullet(\rho)$ for every $0\le k<d$,
whereas
$\Ee\,\upsilon_d(C)=\sum_{\ell=d}^\infty L_\ell^\bullet(\rho)$.
This proves \eqref{eq:random-ou-expected-intrinsic-volume-coefficients} and~\eqref{eq:random-ou-expected-top-volume-absorption}
for all $d\ge1$. It remains to derive the explicit formulas
in terms of $T$.

\medskip
\emph{Step 2: Derivation of the explicit formulas.}
We now evaluate the right-hand side of
\eqref{eq:random-ou-expected-intrinsic-volume-coefficients}
using the explicit expressions for $L_k^\bullet$
from Theorem~\ref{thm:finite-BA-scaling-limit}.
For $k<d$, substitute \eqref{eq:ou-inverse-time-scales}
into these expressions, using $\arctan((\eee^T-1)^{-1/2})=\arcsin(\eee^{-T/2})$,
$\theta_{\rho_A(T)}=T/2$, and
$(1+4\rho_A(T)^2)^{-1/2}=\tanh(T/4)$.
For the top-dimensional intrinsic volume of the ordinary OU cone,
\eqref{eq:random-ou-expected-intrinsic-volume-coefficients} gives
$\Ee\,\upsilon_d(\mathcal U_{d,T}^B)
=\sum_{\ell=d}^\infty L_\ell^B(\rho_B(T))$.
By Tonelli's theorem, we may sum the nonnegative integral
representations in \eqref{eq:type-B-limiting-intrinsic-volumes}
over $\ell\ge d$. The sum of the factors in their numerators is
\[
\sum_{\ell=d}^\infty\frac{x^{\ell-1}}{(\ell-1)!}
=\Psi_{d-1}(x)+\Psi_d(x).
\]
Substituting $\rho=\rho_B(T)$ yields
\eqref{eq:ordinary-random-ou-expected-top-volume}.
Similarly, for the circular OU cone,
\[
\sum_{\ell=d}^\infty L_\ell^A(\rho_A(T))
=\frac{\Psi_d(\theta_{\rho_A(T)})
+\Psi_{d+1}(\theta_{\rho_A(T)})}
{2\sinh\theta_{\rho_A(T)}}.
\]
Since $\theta_{\rho_A(T)}=T/2$, this proves
\eqref{eq:circular-random-ou-expected-top-volume}.
Both calculations are valid for $d=1$.

Finally, all the integrals in
\eqref{eq:ordinary-random-ou-expected-intrinsic-volumes}
and \eqref{eq:ordinary-random-ou-expected-top-volume}
are finite: they represent individual coefficients $L_k^B$
and the sum of these coefficients over $\ell\ge d$,
respectively, and both quantities are bounded by one.
\end{proof}

\begin{corollary}[Expected solid angle of random OU cones]
\label{cor:random-ou-cones-expected-solid-angles}
For every $d\ge1$, $T>0$, and $\bullet\in\{B,A\}$, with
$a_0^\bullet(T)=1$,
\begin{align}
\Ee\,\alpha(\mathcal U_{d,T}^\bullet)
&=\frac{a_{d-1}^\bullet(T)+a_d^\bullet(T)}2,
\label{eq:random-ou-cones-expected-solid-angle}
\\
\Ee\bigl[\alpha(\mathcal U_{d,T}^\bullet);
             \mathcal U_{d,T}^\bullet\ne\R^d\bigr]
&=\frac{a_{d-1}^\bullet(T)-a_d^\bullet(T)}2,
\label{eq:random-ou-cones-truncated-solid-angle}
\\
\Ee\bigl[\alpha(\mathcal U_{d,T}^\bullet)
          \mid\mathcal U_{d,T}^\bullet\ne\R^d\bigr]
&=\frac{a_{d-1}^\bullet(T)-a_d^\bullet(T)}
        {2(1-a_d^\bullet(T))}.
\label{eq:random-ou-cones-conditional-solid-angle}
\end{align}
Here $\Ee[Y;A]:=\Ee[Y\ind_A]$. The event $\{\mathcal U_{d,T}^\bullet\ne\R^d\}$ has positive
probability.
\end{corollary}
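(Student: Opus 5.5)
The plan is to read off all three identities from Theorem~\ref{thm:random-ou-cones-expected-intrinsic-volumes} and Lemma~\ref{lem:random-ou-cones-geometry}. By Lemma~\ref{lem:random-ou-cones-geometry}, $C:=\mathcal U_{d,T}^\bullet$ is full-dimensional almost surely. For a full-dimensional closed convex cone, $\alpha(C)=\upsilon_d(C)$ by \eqref{eq:prelim-conic-intrinsic-volume-endpoints}, so $\Ee\,\alpha(C)=\Ee\,\upsilon_d(C)$. Equation \eqref{eq:random-ou-expected-top-volume-absorption} then gives $\Ee\,\upsilon_d(C)=(a_{d-1}^\bullet(T)+a_d^\bullet(T))/2$. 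For $d=1$ the same value follows from the ray/line dichotomy in the proof of that theorem, with $a_0^\bullet(T)=1$. This proves \eqref{eq:random-ou-cones-expected-solid-angle}.

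For \eqref{eq:random-ou-cones-truncated-solid-angle}, split according to whether $C=\R^d$. On $\{C=\R^d\}$ we have $\alpha(C)=1$. Hence
\[
\Ee\bigl[\alpha(C);C\ne\R^d\bigr]=\Ee\,\alpha(C)-\Pp(C=\R^d)=\frac{a_{d-1}^\bullet(T)+a_d^\bullet(T)}2-a_d^\bullet(T),
\]
where the last step uses \eqref{eq:random-ou-cones-full-space-probability}. This simplifies to the claimed expression.

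For \eqref{eq:random-ou-cones-conditional-solid-angle}, divide by $\Pp(C\ne\R^d)=1-a_d^\bullet(T)$. It then remains to check that this probability is positive, i.e.\ $a_d^\bullet(T)<1$. By \eqref{eq:brownian-gaussian-nonabsorption-finite-sum} and Theorem~\ref{thm:ou-origin-absorption}, $1-a_d^\bullet(T)\ge 2L_{d-1}^\bullet(\rho_\bullet(T))$. This coefficient is strictly positive from the explicit formulas of Theorem~\ref{thm:finite-BA-scaling-limit}: $L_0^B=\arctan(\rho)/\pi>0$, and $L_0^A>0$ since $\sqrt{1+4\rho^2}>1$. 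For $k\ge1$ the integrands of $L_k^B$ and the expression $\theta^k/(2k!\sinh\theta)$ for $L_k^A$ are positive.

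There is no real obstacle here. The only points needing care are the $d=1$ case of the top-volume identity and the positivity of $1-a_d^\bullet(T)$.
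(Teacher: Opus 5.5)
Your proof is correct and follows the paper's argument: almost-sure full-dimensionality gives $\alpha=\upsilon_d$, the top-volume identity gives the first formula, and subtracting $\Pp(\mathcal U_{d,T}^\bullet=\R^d)=a_d^\bullet(T)$ gives the second and third. The only deviation is how you show $a_d^\bullet(T)<1$. You use the finite-sum formula $1-a_d^\bullet(T)=2\sum_j L_{d-1-2j}^\bullet(\rho_\bullet(T))\ge 2L_{d-1}^\bullet(\rho_\bullet(T))>0$, whereas the paper projects onto one coordinate to get $a_d^\bullet(T)\le a_1^\bullet(T)$ and then uses the explicit value $a_1^\bullet(T)<1$; both routes are equally short and valid.
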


\begin{proof}
By Lemma~\ref{lem:random-ou-cones-geometry}, the cone $\mathcal U_{d,T}^\bullet$ is
full-dimensional almost surely, so its solid angle is its $d$th
conic intrinsic volume. Equation
\eqref{eq:random-ou-expected-top-volume-absorption} proves
\eqref{eq:random-ou-cones-expected-solid-angle}.
Since $\alpha(\R^d)=1$ and
$\Pp(\mathcal U_{d,T}^\bullet=\R^d)=a_d^\bullet(T)$,
subtracting the contribution of this event gives
\eqref{eq:random-ou-cones-truncated-solid-angle}.
Projection onto one coordinate shows that
$a_d^\bullet(T)\le a_1^\bullet(T)<1$, where the strict inequality
follows from Corollary~\ref{cor:ou-origin-absorption-low-dimensions}.
Division by $1-a_d^\bullet(T)>0$ gives
\eqref{eq:random-ou-cones-conditional-solid-angle}.
\end{proof}

\section{Conic intrinsic volumes of Ornstein--Uhlenbeck spirals in a Hilbert space}
\label{sec:ornstein-uhlenbeck-spirals}
In this section we prove the results on Ornstein--Uhlenbeck
spiral cones announced in
Subsection~\ref{subsec:intro-ou-spirals-conic-intrinsic-volumes}.
We first construct the Brownian-motion and Brownian-bridge
bodies in Hilbert spaces and identify the cones over them,
up to isometry, with the corresponding OU spiral cones.
After recalling the definitions of conic intrinsic volumes
in Hilbert spaces, we determine these volumes by
finite-dimensional approximation, proving
Theorem~\ref{thm:intro-ou-spiral-intrinsic-volumes}
and Corollary~\ref{cor:intro-ou-spiral-grassmann-absorption}.
We also derive the generating functions of the conic intrinsic
volumes of the OU spiral cones.
Finally, we establish the general small-opening limit stated in
Theorem~\ref{thm:intro-small-opening-intrinsic-volumes}
and apply it to recover the ordinary intrinsic volumes of
the Brownian bodies given in
Corollary~\ref{cor:intro-hilbert-brownian-intrinsic-volumes}.

\subsection{Brownian bodies and Ornstein--Uhlenbeck spirals}
\label{subsec:brownian-bodies-ou-spirals}
Let $\mathbb H^B:=L^2[0,1]$ be the real Hilbert space of equivalence
classes of square-integrable real-valued functions on $[0,1]$
with respect to Lebesgue measure, equipped with the inner product
$\langle f,g\rangle:=\int_0^1 f(s)g(s)\,\dint s$
and the induced norm $\|\cdot\|_2$.
Let  $\mathbb H^A:=\{f\in \mathbb H^B:\int_0^1 f(s)\,\dint s=0\}$.
Adjoin a unit vector $e_0$ orthogonal to $\mathbb H^B$, and put
$E^\bullet:=\R e_0\oplus \mathbb H^\bullet$ for $\bullet\in\{B,A\}$.
All closures below are in the norm topology. For $0\le t\le1$, define
\begin{equation}
\label{eq:hilbert-brownian-generators}
h_t^B:=\ind_{[0,t]}\in \mathbb H^B,
\qquad
h_t^A:=\ind_{[0,t]}-t\ind_{[0,1]} \in \mathbb H^A.
\end{equation}
Values at individual endpoints are immaterial in $L^2[0,1]$;
in particular, $h_0^B=h_0^A=h_1^A=0$.
The two Gram kernels are
\begin{equation}
\label{eq:hilbert-brownian-gram-kernels}
\langle h_s^B,h_t^B\rangle=\min(s,t),
\qquad
\langle h_s^A,h_t^A\rangle=\min(s,t)-st,
\qquad 0\le s,t\le1.
\end{equation}
Define the \emph{Brownian-motion body} $\bK_\infty^B$ and the \emph{Brownian-bridge
body} $\bK_\infty^A$ by
\begin{equation}
\label{eq:hilbert-brownian-bodies}
\bK_\infty^\bullet:=\convclos\{h_t^\bullet:0\le t\le1\}
\subseteq \mathbb H^\bullet,
\qquad \bullet\in\{B,A\}.
\end{equation}
These bodies were studied by Gao--Vitale~\cite{GaoVitale2001}
and Gao~\cite{Gao2003}, respectively, who determined their
ordinary intrinsic volumes. For $\rho>0$, define the cones over these Brownian bodies  by
\begin{equation}
\label{eq:hilbert-brownian-cones}
\bW_\infty^\bullet(\rho)
:=\pos\bigl(\rho e_0+\bK_\infty^\bullet\bigr)
\subseteq E^\bullet.
\end{equation}
The orthogonal projection $P_\infty:E^B\to E^A$ is given by
\begin{equation}
\label{eq:hilbert-centering-projection}
P_\infty(ae_0+f)
=ae_0+f-\left(\int_0^1 f(s)\,\dint s\right)\ind_{[0,1]}.
\end{equation}

\begin{lemma}[Basic properties of the Brownian bodies and their cones]
\label{lem:hilbert-brownian-bodies-and-cones}
The bodies $\bK_\infty^B$ and $\bK_\infty^A$ are compact. For each
$\rho>0$ and $\bullet\in\{B,A\}$, the cone
$\bW_\infty^\bullet(\rho)$ is closed and pointed, has closed linear
span $E^\bullet$, and has compact intersection with every closed ball.
Moreover,
\begin{equation}
\label{eq:hilbert-cone-base-and-curve}
\bW_\infty^\bullet(\rho)
=\{\lambda(\rho e_0+f):
       \lambda\ge0,\ f\in\bK_\infty^\bullet\}
=\posclos\{\rho e_0+h_t^\bullet:0\le t\le1\},
\end{equation}
and the projection satisfies
\begin{equation}
\label{eq:hilbert-centering-bodies-and-cones}
P_\infty\bK_\infty^B=\bK_\infty^A,
\qquad
P_\infty\bW_\infty^B(\rho)=\bW_\infty^A(\rho).
\end{equation}
\end{lemma}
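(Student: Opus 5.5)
The argument is elementary and follows the finite-dimensional pattern of Subsection~\ref{subsec:type-B-bodies-and-cones}. It proceeds in five steps.

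\emph{Compactness of the bodies.} The curve $t\mapsto h_t^\bullet$ is norm continuous: by \eqref{eq:hilbert-brownian-gram-kernels}, $\|h_t^B-h_s^B\|_2^2=|t-s|$, and similarly $\|h_t^A-h_s^A\|_2^2=|t-s|-(t-s)^2$. Hence $\{h_t^\bullet:0\le t\le1\}$ is a compact subset of $\mathbb H^\bullet$. In a Banach space (Mazur's theorem) the closed convex hull of a compact set is compact, so $\bK_\infty^\bullet$ is compact.

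\emph{Pointedness, closedness and the description \eqref{eq:hilbert-cone-base-and-curve}.} The linear functional $\ell(ae_0+f):=a$ equals $\rho>0$ on the compact convex set $\rho e_0+\bK_\infty^\bullet$. Exactly as in the proof of Lemma~\ref{lem:random-ou-cones-geometry}, every element of $\pos(\rho e_0+\bK_\infty^\bullet)$ has the form $\lambda(\rho e_0+f)$ with $\lambda\ge0$ and $f\in\bK_\infty^\bullet$, by convexity. Here $\lambda=\ell(x)/\rho$ is determined by $x$, which gives the first equality.

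Now let $x_m=\lambda_m(\rho e_0+f_m)$ lie in the cone with $\|x_m\|\le R$. Then $\lambda_m\le R/\rho$. Extracting convergent subsequences of $\lambda_m$ and, by compactness, of $f_m$ shows two things at once: the intersection with every closed ball is compact, and the cone is closed. Pointedness follows because $\ell>0$ on every nonzero element of the cone.

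For the second equality in \eqref{eq:hilbert-cone-base-and-curve}, note that $\pos\{\rho e_0+h_t^\bullet\}$ is contained in the cone. Since the cone is closed, the closed positive hull is contained in it as well. Conversely, $\rho e_0+\conv\{h_t^\bullet\}$ lies in the positive hull of the curve, and taking closures gives $\rho e_0+\bK_\infty^\bullet$. Taking positive multiples then yields the reverse inclusion.

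\emph{Closed linear span.} It suffices to show that $\lin\{\rho e_0+h_t^\bullet\}$ is dense in $E^\bullet$.
\begin{itemize}
\item Type $B$: the generator with $t=0$ gives $e_0$, since $h_0^B=0$. Differences give all $\ind_{(s,t]}$, and step functions are dense in $L^2[0,1]$.
\item Type $A$: again $t=0$ gives $e_0$. The functions $h_t^A$ are the projections $P_\infty h_t^B$. Since $P_\infty$ is a continuous surjection of $\mathbb H^B$ onto $\mathbb H^A$, it maps a dense subspace onto a dense subspace of $\mathbb H^A$.
\end{itemize}

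\emph{Projection identities \eqref{eq:hilbert-centering-bodies-and-cones}.} We have $P_\infty h_t^B=h_t^A$ and $P_\infty e_0=e_0$. By linearity, $P_\infty$ maps $\conv\{h_t^B\}$ onto $\conv\{h_t^A\}$. Continuity gives $P_\infty\bK_\infty^B\subseteq\bK_\infty^A$. The image of the compact set $\bK_\infty^B$ is compact, hence closed, and it contains $\conv\{h_t^A\}$; this gives equality. The cone identity then follows from the first equality in \eqref{eq:hilbert-cone-base-and-curve}.

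\emph{Main difficulty.} The only step that is not purely formal is the compactness of the closed convex hull in infinite dimensions. Everything else reduces to the separating functional $\ell$ and the compactness of the base.
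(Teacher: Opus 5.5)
Your proof is correct and follows essentially the same route as the paper. The shared steps are: norm continuity of $t\mapsto h_t^\bullet$; compactness of the closed convex hull (you cite Mazur's theorem, whereas the paper proves total boundedness directly with finite $\varepsilon$-nets); the $e_0$-coordinate bound $\lambda\le R/\rho$, which yields closedness, pointedness and compactness on balls; density of step functions for the closed linear span; and compactness of $P_\infty\bK_\infty^B$ for the projection identities. The remaining differences are cosmetic: you argue by sequential extraction rather than writing the ball intersection as the ball intersected with a compact set, and you make the type-$A$ density explicit via continuity of $P_\infty$, which the paper only asserts.
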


\begin{proof}
By \eqref{eq:hilbert-brownian-gram-kernels},
$\|h_t^B-h_s^B\|_2^2=|t-s|$ and
$\|h_t^A-h_s^A\|_2^2=|t-s|(1-|t-s|)$.
Thus both curves are norm-continuous and have compact ranges.
A finite $\varepsilon$-net of a compact set approximates each of
its finite convex combinations, within $\varepsilon$, by a point
in the convex hull of that net. This latter hull is a compact
finite-dimensional polytope and has a finite $\varepsilon$-net of
its own. Hence the closed convex hulls in
\eqref{eq:hilbert-brownian-bodies} are totally bounded; completeness
proves compactness.

Convexity of the base gives the first equality in
\eqref{eq:hilbert-cone-base-and-curve}.
The $e_0$-coordinate of $\lambda(\rho e_0+f)$ is $\lambda\rho$,
so the cone is pointed. For $R>0$, its intersection with the closed
ball of radius $R$ centered at zero is the intersection of that ball
with the compact set
\[
\{\lambda(\rho e_0+f):
   0\le\lambda\le R/\rho,\ f\in\bK_\infty^\bullet\}.
\]
It is therefore compact. This also proves that the cone is closed
and that its intersection with any closed ball is compact.
Approximating $f\in\bK_\infty^\bullet$ by finite convex combinations
of the generators proves the second equality in
\eqref{eq:hilbert-cone-base-and-curve}.

Since $P_\infty h_t^B=h_t^A$ for every $t\in[0,1]$,
the compact image $P_\infty\bK_\infty^B$ equals the
closed convex hull of the functions $h_t^A$, $0\le t\le1$,
and hence coincides with $\bK_\infty^A$. This proves the first identity in~\eqref{eq:hilbert-centering-bodies-and-cones}; the second
follows by linearity and $P_\infty e_0=e_0$.
Finally, the indicators $h_t^B$ have dense linear span in $\mathbb H^B$,
and their centered projections have dense linear span in $\mathbb H^A$.
Each cone contains $\rho e_0$ because $h_0^\bullet=0$; subtracting
this vector from the other generators shows that its closed linear
span is $E^\bullet$.
\end{proof}

The following remarks record alternative descriptions
of the Brownian bodies. Since they are not needed in the
subsequent proofs, we state these properties without proof.

\begin{remark}[Brownian bodies as classes of functions]
\label{rem:hilbert-brownian-function-descriptions}
The two Brownian bodies have the following descriptions, parallel to
\eqref{eq:type-B-body-coordinates} and
\eqref{eq:type-A-body-coordinates}:
\begin{align}
\bK_\infty^B
&=\left\{f\in \mathbb H^B:
\begin{array}{l}
f\text{ has a nonincreasing representative on }(0,1),\\
0\le f\le1\text{ almost everywhere}
\end{array}\right\},
\label{eq:type-B-hilbert-body-functions}
\\
\bK_\infty^A
&=\left\{f\in \mathbb H^A:
\begin{array}{l}
f\text{ has a nonincreasing representative on }(0,1),\\
\operatorname*{ess\,sup} f-\operatorname*{ess\,inf} f\le1
\end{array}\right\}.
\label{eq:type-A-hilbert-body-functions}
\end{align}
The essential bounds are taken over $(0,1)$.
The projection in \eqref{eq:hilbert-centering-projection} transforms
the first description into the second.
\end{remark}

\begin{remark}[Brownian bodies as Wasserstein spaces]
\label{rem:hilbert-brownian-wasserstein}
The preceding remark  yields a Wasserstein-space interpretation of Brownian bodies.
For a Borel probability measure $\nu$ on $\R$, let
\[
Q_\nu(u):=\inf\{x\in\R:\nu((-\infty,x])\ge u\},
\qquad 0<u<1,
\]
denote its quantile function. For probability measures
$\mu,\nu$ on $\R$ with finite second moments, the one-dimensional
$2$-Wasserstein distance is given by
\[
W_2(\mu,\nu)^2
=\int_0^1|Q_\mu(u)-Q_\nu(u)|^2\,\dint u.
\]
The \emph{reversed quantile function} of $\nu$ is the
nonincreasing function $s\mapsto Q_\nu(1-s)$, $0<s<1$.
The correspondence $\nu\mapsto Q_\nu(1-\cdot)$ is an
isometry from the $2$-Wasserstein space of probability
measures on $[0,1]$ onto $\bK_\infty^B$.
The same correspondence identifies $\bK_\infty^A$
isometrically with the space of probability measures
on $\R$ having mean zero and support of diameter at
most one, equipped with the $2$-Wasserstein distance.
Under these identifications, the orthogonal projection
$P_\infty$ corresponds to centering a probability measure
by subtracting its mean.
\end{remark}

\begin{remark}[Bochner-integral representations]
\label{rem:hilbert-brownian-bochner-representations}
Let $\mathcal P([0,1])$ denote the set of Borel probability measures on
$[0,1]$. Every element of $\bK_\infty^\bullet$ has a representation
\begin{equation}
\label{eq:hilbert-brownian-bochner-representations}
f_\mu^\bullet:=\int_{[0,1]}h_t^\bullet\,\mu(\dint t),
\qquad
\bK_\infty^\bullet
=\{f_\mu^\bullet:\mu\in\mathcal P([0,1])\},
\qquad \bullet\in\{B,A\}.
\end{equation}
The integrals are Bochner integrals in $\mathbb H^\bullet$; their existence
follows from norm-continuity and boundedness of the generating curves.
Explicitly, for almost every $s\in(0,1)$,
\begin{equation}
\label{eq:hilbert-brownian-bochner-tail-functions}
f_\mu^B(s)=\mu([s,1]),
\qquad
f_\mu^A(s)=\mu([s,1])-\int_{[0,1]}t\,\mu(\dint t).
\end{equation}
In type $B$, the representing probability measure $\mu$ is unique.
In type $A$, its restriction to $(0,1)$ is uniquely determined;
the only possible ambiguity is the division of the remaining mass
between $0$ and $1$, since $h_0^A=h_1^A=0$. The measure $\mu$ in the Bochner representation is generally
different from the measure whose reversed quantile function
represents the same element in the preceding Wasserstein
interpretation.
\end{remark}

We next relate the cones over the Brownian bodies  to Ornstein--Uhlenbeck spirals. For $T>0$,
by an \emph{ordinary Ornstein--Uhlenbeck spiral} on $[0,T]$ and a
\emph{circular Ornstein--Uhlenbeck spiral} of circumference $T$ we
mean curves $(\zeta_T^B(u))_{u\in [0,T]}$ and $(\zeta_T^A(u))_{u\in [0,T]}$ in real Hilbert spaces with
Gram kernels
\begin{align}
\langle\zeta_T^B(u),\zeta_T^B(v)\rangle
&=\eee^{-|u-v|/2},
\qquad 0\le u,v\le T,
\label{eq:ordinary-ou-spiral-kernel}
\\
\langle\zeta_T^A(u),\zeta_T^A(v)\rangle
&=\frac{\cosh(T/4-|u-v|/2)}{\cosh(T/4)},
\qquad 0\le u,v\le T,
\label{eq:circular-ou-spiral-kernel}
\end{align}
respectively. These are norm-continuous curves in the unit spheres of the respective Hilbert spaces,  and
$\zeta_T^A(0)=\zeta_T^A(T)$.
A realization of the OU spirals  is obtained by taking the processes from
Subsection~\ref{subsec:ou-origin-absorption}, specialized to the case $d=1$, and viewing their random
variables as vectors in $L^2(\Omega,\mathcal F,\Pp)$.
Note that an OU spiral is a deterministic Hilbert-space curve, not a random
sample path in a Euclidean space.
We define the closed cones $\bU_{\infty,T}^\bullet$ generated by the OU spirals and take
the closed linear spans of the OU spirals as their
ambient Hilbert spaces $\mathbb G_T^\bullet$:
\begin{equation}
\label{eq:ou-spiral-spans-and-cones}
\bU_{\infty, T}^\bullet
:=\posclos\{\zeta_T^\bullet(u):0\le u\le T\},
\qquad
\mathbb G_T^\bullet
:=\overline{\lin\{\zeta_T^\bullet(u):0\le u\le T\}},
\qquad \bullet\in\{B,A\}.
\end{equation}
We refer to $\bU_{\infty, T}^B$ and $\bU_{\infty, T}^A$ as the
\emph{OU spiral cones}. Note that $\bU_{\infty, T}^\bullet\subseteq\mathbb G_T^\bullet$.

The next proposition shows that, after suitable time changes, the cones over the Brownian bodies
become isometric to OU spiral cones.

\begin{proposition}[Isometric identification of Brownian cones and OU spiral cones]
\label{prop:hilbert-brownian-cones-ou-spirals}
For $\rho>0$, let $T_\bullet(\rho)$ and $u_\rho^\bullet$ be as in
\eqref{eq:ou-time-lengths}, \eqref{eq:ordinary-ou-time-change}, and
\eqref{eq:circular-ou-time-change}. Define the unit-norm curves by
\begin{equation}
\label{eq:hilbert-normalized-ou-generators}
\eta_\rho^\bullet\bigl(u_\rho^\bullet(t)\bigr)
:=\frac{\rho e_0+h_t^\bullet}
        {\sqrt{\rho^2+\|h_t^\bullet\|_2^2}},
\qquad 0\le t\le1,\quad \bullet\in\{B,A\}.
\end{equation}
Then $(\eta_\rho^B(u))_{0\le u\le T_B(\rho)}$ is an ordinary
Ornstein--Uhlenbeck spiral, and
$(\eta_\rho^A(u))_{0\le u\le T_A(\rho)}$ is a circular
Ornstein--Uhlenbeck spiral of circumference $T_A(\rho)$.  Their closed positive hulls are
$\bW_\infty^\bullet(\rho)$.
There is a surjective linear isometry
$\mathcal J_\rho^\bullet:E^\bullet\to
\mathbb G_{T_\bullet(\rho)}^\bullet$ mapping
$\eta_\rho^\bullet(u)$ to $\zeta_{T_\bullet(\rho)}^\bullet(u)$ for
every $u\in[0,T_\bullet(\rho)]$, and hence
\begin{equation}
\label{eq:hilbert-brownian-ou-cone-isometry}
\mathcal J_\rho^\bullet\bigl(\bW_\infty^\bullet(\rho)\bigr)
=\bU_{\infty, T_\bullet(\rho)}^\bullet.
\end{equation}
In particular, for every $T>0$, the cone $\bU_{\infty, T}^\bullet$ is isometric
to $\bW_\infty^\bullet(\rho_\bullet(T))$, with
$\rho_\bullet(T)$  from \eqref{eq:ou-inverse-time-scales}, that is
\begin{equation}
\label{eq:ou-inverse-time-scales-repeat}
\rho_B(T)=\frac1{\sqrt{\eee^T-1}},
\qquad
\rho_A(T)=\frac1{2\sinh(T/4)}.
\end{equation}
\end{proposition}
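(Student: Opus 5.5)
The plan is to compute Gram kernels directly. The curves $\eta_\rho^\bullet$ are deterministic analogues of the processes $\widetilde U,\widetilde V$ in Proposition~\ref{prop:ou-brownian-representations}, so the covariance computations there can be reused verbatim.

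\emph{Unit norm and Gram kernel.} Since $e_0\perp\mathbb H^\bullet$, we have $\langle\rho e_0+h_s^\bullet,\rho e_0+h_t^\bullet\rangle=\rho^2+\langle h_s^\bullet,h_t^\bullet\rangle$. By \eqref{eq:hilbert-brownian-gram-kernels}, for $s\le t$ this equals $\rho^2+s$ in type $B$ and $\rho^2+s(1-t)$ in type $A$. After normalization, the inner product of $\eta_\rho^\bullet(u_\rho^\bullet(s))$ and $\eta_\rho^\bullet(u_\rho^\bullet(t))$ is exactly the middle expression in \eqref{eq:ordinary-ou-brownian-covariance}, respectively \eqref{eq:circular-ou-bridge-covariance} (without $I_d$). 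The proof of Proposition~\ref{prop:ou-brownian-representations} already shows that these equal $\eee^{-|u-v|/2}$ and $\cosh(T/4-|u-v|/2)/\cosh(T/4)$, with $T=T_A(\rho)$, at $u=u_\rho^\bullet(s)$, $v=u_\rho^\bullet(t)$. The maps $u_\rho^\bullet$ are increasing bijections onto $[0,T_\bullet(\rho)]$, so $\eta_\rho^\bullet$ is a well-defined curve with the required Gram kernel. Its norm-continuity follows from continuity of $t\mapsto h_t^\bullet$ (proof of Lemma~\ref{lem:hilbert-brownian-bodies-and-cones}) and of the positive denominators.

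\emph{Positive hulls.} Each $\eta_\rho^\bullet(u)$ is a positive multiple of $\rho e_0+h_t^\bullet$, and conversely. Hence $\pos$ of the two generating sets coincide, and so do their closures. By \eqref{eq:hilbert-cone-base-and-curve}, the closed positive hull of $\eta_\rho^\bullet$ is $\bW_\infty^\bullet(\rho)$.

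\emph{Isometry.} I would use the standard fact that two families of vectors with identical Gram kernels, indexed by the same set, are related by a unique linear isometry between their closed linear spans. First define $\mathcal J$ on finite linear combinations by $\sum c_i\eta(u_i)\mapsto\sum c_i\zeta(u_i)$. This is well defined and isometric because norms of such combinations are computed from the Gram kernels. It then extends by continuity to an isometry of the closures, and it is onto $\mathbb G_T^\bullet$ by definition of that space. The domain is the closed linear span of $\eta_\rho^\bullet$. This span equals the closed span of $\{\rho e_0+h_t^\bullet\}$, which is $E^\bullet$ by Lemma~\ref{lem:hilbert-brownian-bodies-and-cones}. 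A linear isometry maps $\pos$ onto $\pos$ and preserves norm closures, which gives \eqref{eq:hilbert-brownian-ou-cone-isometry}.

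\emph{Last assertion.} It remains to check that $T_\bullet(\rho_\bullet(T))=T$, which was already noted in the proof of Theorem~\ref{thm:ou-origin-absorption}. Then apply the above with $\rho=\rho_\bullet(T)$.

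The only step requiring care is the well-definedness of the isometry, in particular verifying that the closed span of $\eta_\rho^\bullet$ is all of $E^\bullet$. Everything else is a direct reuse of earlier computations.
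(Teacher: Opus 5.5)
Your proposal is correct and follows the paper's proof essentially step by step. You obtain the Gram kernels by reusing the covariance identities from the proof of Proposition~\ref{prop:ou-brownian-representations}, drop the positive normalizations to identify the closed positive hulls via \eqref{eq:hilbert-cone-base-and-curve}, and extend the kernel-preserving map to a surjective linear isometry between closed spans, with source span $E^\bullet$ supplied by Lemma~\ref{lem:hilbert-brownian-bodies-and-cones}.
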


\begin{proof}
By \eqref{eq:hilbert-brownian-gram-kernels},
$\|h_t^B\|_2^2=t$ and $\|h_t^A\|_2^2=t(1-t)$.
The inner products of the unit vectors in
\eqref{eq:hilbert-normalized-ou-generators} are therefore
exactly the covariance expressions in
\eqref{eq:ordinary-ou-brownian-covariance} and
\eqref{eq:circular-ou-bridge-covariance},
specialized to the case $d=1$.
Those identities and the bijective time changes give the kernels
\eqref{eq:ordinary-ou-spiral-kernel} and
\eqref{eq:circular-ou-spiral-kernel}, with $T=T_\bullet(\rho)$.
Since all normalizing terms $\sqrt{\rho^2+\|h_t^\bullet\|_2^2}$ are positive, we can drop them without changing the positive hulls.
Equation~\eqref{eq:hilbert-cone-base-and-curve} proves the assertion about the closed positive hulls. 

Finally, equality of the Gram kernels makes the map sending each
$\eta_\rho^\bullet(u)$ to $\zeta_{T_\bullet(\rho)}^\bullet(u)$ a
well-defined linear isometry on their linear spans: the squared norm
of any finite linear combination is determined by the Gram kernel.
It extends to a linear isometry between their closed spans.
The source span is $E^\bullet$ by
Lemma~\ref{lem:hilbert-brownian-bodies-and-cones}.
Taking closed positive hulls gives
\eqref{eq:hilbert-brownian-ou-cone-isometry}, and substituting
$\rho=\rho_\bullet(T)$ proves the last assertion.
\end{proof}

\subsection{Conic intrinsic volumes in Hilbert spaces}
\label{subsec:hilbert-conic-intrinsic-volumes}
We recall the definitions of Grassmann angles and conic intrinsic
volumes for cones in Hilbert spaces, together with an approximation
result that will allow us to determine these volumes for the
OU spiral cones introduced in
Subsection~\ref{subsec:brownian-bodies-ou-spirals}.
Let $\mathbb H$ be a real separable Hilbert space. Throughout this subsection,
cones are nonempty, closed, and convex.
For a finite-dimensional cone $C_0\subseteq \mathbb H$, its Grassmann angles
are those computed in $\lin C_0$, extended by zero for
$j\ge\dim(\lin C_0)$. They do not depend on the choice of a
finite-dimensional ambient space containing $C_0$.

Following Dospolova~\cite[Section~2, equation~(7)]{Dospolova2023},
for a cone $C\subseteq \mathbb H$ define
\begin{equation}
\label{eq:hilbert-space-grassmann-angle}
\gamma_j(C)
:=\sup\left\{\gamma_j(C_0):
 C_0\subseteq C\text{ is a finite-dimensional closed convex cone}
 \right\},
\qquad j\ge0.
\end{equation}
This definition is also recalled in
\cite[Section~3, equation~(8)]{DospolovaZaporozhets2024}.
If $C$ is finite-dimensional, it is itself admissible in the
supremum; monotonicity of the finite-dimensional Grassmann angles
then shows that \eqref{eq:hilbert-space-grassmann-angle} agrees with
their usual definition. The quantities $\gamma_j(C)$ are invariant
under linear isometries and monotone under inclusion of cones.
For fixed $C$, they form a nonincreasing sequence in $[0,1]$.
Moreover, $\gamma_0(C)=1$ when $C\ne\{0\}$, whereas
$\gamma_j(\{0\})=0$ for every $j\ge0$.

For a cone $C\subseteq \mathbb H$ which is not a linear subspace, define its conic
intrinsic volumes, following
Dospolova--Zaporozhets~\cite[Section~3, equations~(9)--(10)]{DospolovaZaporozhets2024},
by
\begin{equation}
\label{eq:hilbert-space-conic-intrinsic-volumes}
\upsilon_0(C):=1-\frac{\gamma_0(C)+\gamma_1(C)}2,
\qquad
\upsilon_k(C):=\frac{\gamma_{k-1}(C)-\gamma_{k+1}(C)}2,
\quad k\ge1.
\end{equation}
These numbers are nonnegative and invariant under linear isometries.
For finite-dimensional cones that are not linear subspaces,
they agree with the usual conic intrinsic volumes by
\eqref{eq:prelim-intrinsic-volumes-from-grassmann-angles}.

The cones $\bW_\infty^\bullet(\rho)$ are nonzero and pointed by
Lemma~\ref{lem:hilbert-brownian-bodies-and-cones}.
By Proposition~\ref{prop:hilbert-brownian-cones-ou-spirals},
the OU spiral cones $\bU_{\infty, T}^\bullet$ have the same properties.
Consequently, \eqref{eq:hilbert-space-conic-intrinsic-volumes}
applies to both families of cones.

\begin{remark}[Linear subspaces]
\label{rem:hilbert-conic-intrinsic-volumes-subspaces}
For a finite-dimensional linear subspace $L\subseteq \mathbb H$, we retain the classical
convention $\upsilon_k(L)=\ind_{\{k=\dim L\}}$, including
$\upsilon_0(\{0\})=1$. Formula
\eqref{eq:hilbert-space-conic-intrinsic-volumes} is not used to define
these values for subspaces.
\end{remark}

\begin{remark}[Total mass]
\label{rem:hilbert-conic-intrinsic-volumes-total-mass}
For a cone $C\subseteq \mathbb H$ which is not a linear subspace, summing
\eqref{eq:hilbert-space-conic-intrinsic-volumes} gives
\begin{equation}
\label{eq:hilbert-conic-intrinsic-volume-partial-sums}
\sum_{k=0}^N\upsilon_k(C)
=1-\frac{\gamma_N(C)+\gamma_{N+1}(C)}2,
\qquad N\ge0.
\end{equation}
Consequently, the total mass $\sum_{k=0}^\infty \upsilon_k(C)$ is at most one, and it equals one if
and only if $\gamma_N(C)\to0$ as $N\to\infty$; compare
\cite[Section~3, Remark~4]{DospolovaZaporozhets2024}.
In infinite dimensions, the equality
$\sum_{k=0}^\infty\upsilon_k(C)=1$ need not hold,
even for nonzero closed pointed cones.
For example, consider the nonnegative orthant
\[
C:=\{(x_i)_{i\ge1}\in\ell^2:
x_i\ge0\text{ for all }i\}.
\]
For the $n$-dimensional orthant,
$\upsilon_k(\R_+^n)=2^{-n}\binom nk$, $0\le k\le n$.
By the finite-dimensional Crofton formula
\eqref{eq:finite-dimensional-conic-crofton},
$\gamma_j(\R_+^n)\to1$ for every fixed $j\ge0$,
since the sums of these intrinsic volumes over even and odd
indices are both $1/2$, whereas each fixed-index term
tends to zero.
Hence $\gamma_j(C)=1$ for all $j\ge0$, so
$\upsilon_k(C)=0$ for every $k\ge0$ and therefore
$\sum_{k=0}^\infty\upsilon_k(C)=0$.
\end{remark}

To determine the conic intrinsic volumes of the OU spiral cones, we use the following result on monotone approximation from inside; see
\cite[Section~4, Lemma~2 and the subsequent argument on
pp.~112--113]{DospolovaZaporozhets2024}, with the underlying
Grassmann-angle approximation proved in
\cite[Section~3, Assertion~2 and the subsequent argument]{Dospolova2023}.

\begin{proposition}[Approximation from inside]
\label{prop:hilbert-cone-inner-approximation}
Let $C\subseteq \mathbb H$ be a closed convex cone, and let
$C_1,C_2,\ldots\subseteq \mathbb H$ be finite-dimensional closed convex cones
such that
\[
C_1\subseteq C_2\subseteq\cdots\subseteq C,
\qquad
\overline{\bigcup_{n\ge1}C_n}=C,
\]
where the closure is in the norm topology. Then, for every $j\ge0$,
\begin{equation}
\label{eq:hilbert-cone-grassmann-angle-approximation}
\gamma_j(C_n)\uparrow\gamma_j(C)
\qquad\text{as }n\to\infty.
\end{equation}
If $C$ and all $C_n$ are not linear subspaces, then, for every $k\ge0$,
\begin{equation}
\label{eq:hilbert-cone-intrinsic-volume-approximation}
\upsilon_k(C_n)\longrightarrow\upsilon_k(C)
\qquad\text{as }n\to\infty.
\end{equation}
\end{proposition}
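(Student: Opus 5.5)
The first step is \eqref{eq:hilbert-cone-grassmann-angle-approximation}; the statement about intrinsic volumes then follows immediately. By monotonicity of Grassmann angles under inclusion, the sequence $\gamma_j(C_n)$ is nondecreasing and bounded by $\gamma_j(C)$. So it suffices to show that
\[
\gamma_j(C_0)\le\lim_n\gamma_j(C_n)
\]
for every finite-dimensional closed convex cone $C_0\subseteq C$. Given such a $C_0$ and $\varepsilon>0$, I would approximate $C_0$ from inside by a polyhedral cone $P=\pos\{x_1,\ldots,x_m\}\subseteq C_0$ with $\gamma_j(P)\ge\gamma_j(C_0)-\varepsilon$. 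This uses finite-dimensional continuity of $\gamma_j$ in the conic Hausdorff metric, where the Crofton formula expresses $\gamma_j$ through intrinsic volumes, which are continuous by \cite{McCoyTropp2014}. Care is needed if $C_0$ is a subspace; there one approximates by a polyhedral cone spanning the same subspace. Since $\bigcup_n C_n$ is dense in $C$, each generator $x_i$ is a limit of points $x_i^{(n)}\in C_n$, using that the $C_n$ are nested. The cones $P_n:=\pos\{x_1^{(n)},\ldots,x_m^{(n)}\}\subseteq C_n$ all lie in finite-dimensional spaces of dimension at most $m$. After an isometric identification of $\lin\{x_i^{(n)}\}$ with a subspace of a fixed $\R^m$ (chosen to converge), they converge to $P$.

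The main obstacle is that $\gamma_j$ is only lower semicontinuous-friendly when dimensions may jump: $\gamma_j$ depends on codimension in the span, and perturbing generators can raise the dimension of the span. I would handle this by noting that $\gamma_j(P)$ does not depend on the ambient space, with $j$ kept fixed, and that Grassmann angles are monotone. Then I would use the probabilistic form $\gamma_j(Q)=\Pp(\ker G\cap Q\neq\{0\})$ for a Gaussian map $G:\mathbb H\to\R^j$ (an isonormal-type map restricted to finite spans), which is well defined in any ambient space. The event $\{0\in\intr\conv G(\text{base})\}$ is stable under small perturbations of the generators. Hence $\liminf_n\Pp(\ker G\cap P_n\ne\{0\})\ge\Pp(0\in\intr\conv\{Gx_i\})=\gamma_j(P)$, the last equality holding since the boundary event has probability zero for a Gaussian image. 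This gives $\lim_n\gamma_j(C_n)\ge\gamma_j(C_0)-\varepsilon$.

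Finally, for \eqref{eq:hilbert-cone-intrinsic-volume-approximation}, when $C$ and all $C_n$ are not subspaces, the definition \eqref{eq:hilbert-space-conic-intrinsic-volumes} expresses each $\upsilon_k$ as a fixed finite linear combination of $\gamma_{k-1},\gamma_k,\gamma_{k+1}$. These coincide with the usual ones for the finite-dimensional $C_n$ by \eqref{eq:prelim-intrinsic-volumes-from-grassmann-angles}, so the convergence follows directly from the Grassmann-angle convergence.
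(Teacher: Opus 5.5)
Your route differs from the paper's. The paper does not prove the Grassmann-angle convergence: it cites Dospolova's Assertion~2 \cite{Dospolova2023} via \cite{DospolovaZaporozhets2024}, and then reads off the intrinsic-volume convergence from \eqref{eq:hilbert-space-conic-intrinsic-volumes}, exactly as in your last paragraph. Your Gaussian-map argument is a reasonable self-contained replacement. It correctly gets lower semicontinuity despite dimension jumps. For pointed $C$, which is the only case used later since $\bW_\infty^\bullet(\rho)$ is pointed, it is essentially complete. In the generality of the statement, however, it has a gap.

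\emph{The gap.} Two steps need a nontrivial nonnegative combination of the generators to be nonzero, that is, $P$ and $P_n$ must be pointed (this is what your word ``base'' presupposes):
\begin{itemize}
\item the equality $\Pp(0\in\intr\conv\{Gx_i\})=\gamma_j(P)$;
\item the passage from $0\in\intr\conv\{Gx_i^{(n)}\}$ to $\ker G\cap P_n\ne\{0\}$.
\end{itemize}
Without pointedness both fail. Take $P=L=\pos\{\pm e_1,\ldots,\pm e_j\}$ with $\dim L=j$. Then $0\in\intr\conv\{\pm Ge_1,\ldots,\pm Ge_j\}$ almost surely, whereas $\gamma_j(L)=0$ and $\ker G\cap L=\{0\}$ almost surely. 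With $C=C_n=L$ your chain would yield $0\ge 1$.

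Your remedy for subspaces does not help. Taking $P=L$ produces exactly this failure. Taking some pointed cone that merely spans $L$ loses the lower bound: an orthant gives $\gamma_{r-1}(\R_+^r)=2^{1-r}$, while $\gamma_{r-1}(L)=1$. Non-pointed $C_0$ that are not subspaces are not addressed at all.

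\emph{The fix.} Choose $P$ pointed with $\gamma_j(P)\ge\gamma_j(C_0)-\varepsilon$. This is always possible:
\begin{itemize}
\item If $C_0$ is pointed, every inner polyhedral approximation is pointed.
\item If $C_0=L'\oplus C'$ with lineality space $L'$ and $C'\ne\{0\}$ pointed, approximate $C_0$ from inside in the conic Hausdorff metric by pointed cones and use continuity of $\gamma_j$ via Crofton and McCoy--Tropp.
\item If $C_0=L$ is a subspace of dimension $r>j$, approximate a half-space of $L$ by pointed polyhedral cones; a half-space has $\gamma_j=1=\gamma_j(L)$. The case $r\le j$ is trivial.
\end{itemize}
Once $P$ is pointed, there is $v$ with $\langle v,x_i\rangle>0$ for all $i$. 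This persists under perturbation, so $P_n$ is pointed for large $n$. The implication then holds even with $\conv$ in place of $\intr$.

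Your claim that the boundary event is null also needs pointedness and a short argument. If $0\in\bd\conv\{Gx_i\}$, Carath\'eodory in a supporting hyperplane gives a set $J$ with $|J|\le j$ such that $G$ kills a nonzero vector of $\lin\{x_l:l\in J\}$. Since $G$ is almost surely injective on subspaces of dimension at most $j$, this is a null event.

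\emph{A smaller point.} An isonormal map is not a bounded operator, so $Gx_i^{(n)}\to Gx_i$ holds only in $L^2$, not automatically almost surely. Pass to a subsequence along which the convergence is almost sure before invoking stability and Fatou. This is harmless, because $\gamma_j(C_n)$ is monotone in $n$.
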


The intrinsic-volume conclusion~\eqref{eq:hilbert-cone-intrinsic-volume-approximation} follows directly from
\eqref{eq:hilbert-space-conic-intrinsic-volumes} and
\eqref{eq:hilbert-cone-grassmann-angle-approximation}.
Only the Grassmann angles are asserted to converge monotonically.

\subsection{Finite-dimensional approximation and conic intrinsic volumes}
\label{subsec:hilbert-finite-approximation-intrinsic-volumes}
We approximate the Brownian bodies $\bK_\infty^B$ and
$\bK_\infty^A$ by their grid hulls $\widehat{\bK}_n^B$ and
$\widehat{\bK}_n^A$, which we introduce below, and identify
these hulls, up to isometry, with the rescaled finite-dimensional
bodies $n^{-1/2}\bK_n^B$ and $n^{-1/2}\bK_n^A$ from
Subsection~\ref{subsec:type-B-bodies-and-cones}.
Combining the approximation result in
Proposition~\ref{prop:hilbert-cone-inner-approximation}
with the scaling limit in
Theorem~\ref{thm:finite-BA-scaling-limit},
we determine the conic intrinsic volumes of the Brownian cones
$\bW_\infty^\bullet(\rho)$.
The isometry of
Proposition~\ref{prop:hilbert-brownian-cones-ou-spirals}
then gives the corresponding intrinsic volumes of the
Ornstein--Uhlenbeck spiral cones.

For $n\ge1$, put $I_{n,j}:=((j-1)/n,j/n]$, $1\le j\le n$, and let
$\mathbb H_n^B\subseteq \mathbb H^B = L^2[0,1]$ be the space of functions that are constant
almost everywhere on each interval $I_{n,j}$. Set $\mathbb H_n^A:=\mathbb H_n^B\cap \mathbb H^A$ and
$\widehat E_n^\bullet:=\R e_0\oplus \mathbb H_n^\bullet$.
Let $P_n:E^B\to\widehat E_n^B$ be the orthogonal projection given by
\begin{equation}
\label{eq:hilbert-grid-projection}
P_n(ae_0+f)
=ae_0+\sum_{j=1}^n
\left(n\int_{(j-1)/n}^{j/n}f(s)\,\dint s\right)\ind_{I_{n,j}}.
\end{equation}
This projection preserves the constant function and the integral of
$f$, so it commutes with $P_\infty$ from
\eqref{eq:hilbert-centering-projection}. Its restriction to $E^A$ is
the orthogonal projection onto $\widehat E_n^A$.
Define the grid bodies and cones by
\begin{align}
\widehat{\bK}_n^B
&:=\conv\{h_{i/n}^B:i=0,\ldots,n\},
\label{eq:type-B-hilbert-grid-body}
\\
\widehat{\bK}_n^A
&:=\conv\{h_{i/n}^A:i=0,\ldots,n-1\},
\label{eq:type-A-hilbert-grid-body}
\\
\widehat{\bW}_n^\bullet(\rho)
&:=\pos\bigl(\rho e_0+\widehat{\bK}_n^\bullet\bigr)
\subseteq\widehat E_n^\bullet,
\qquad \rho>0,\quad\bullet\in\{B,A\}.
\label{eq:hilbert-grid-cones}
\end{align}
In the type-$A$ case, the index $i=n$ may be included without changing the
body, since $h_1^A=h_0^A=0$.

\medskip
The next proposition identifies, up to isometry, the grid hulls
$\widehat{\bK}_n^B$ and $\widehat{\bK}_n^A$ with the rescaled bodies
$n^{-1/2}\bK_n^B$ and $n^{-1/2}\bK_n^A$ from
Subsection~\ref{subsec:type-B-bodies-and-cones},
and the grid cones $\widehat{\bW}_n^\bullet(\rho)$ with
the finite-dimensional cones $\bW_n^\bullet(\rho\sqrt n)$.
It also provides a uniform approximation bound for the
Brownian bodies.

\begin{proposition}[Grid approximation]
\label{prop:hilbert-brownian-grid-approximation}
For every $n\ge1$, $\rho>0$, and $\bullet\in\{B,A\}$,
\begin{equation}
\label{eq:hilbert-grid-projection-images}
P_n\bK_\infty^\bullet=\widehat{\bK}_n^\bullet,
\qquad
P_n\bW_\infty^\bullet(\rho)=\widehat{\bW}_n^\bullet(\rho),
\end{equation}
and
\begin{equation}
\label{eq:hilbert-grid-uniform-approximation}
\sup_{f\in\bK_\infty^\bullet}\|f-P_nf\|_2
\le\frac1{2\sqrt n}.
\end{equation}
The linear isometry
$\iota_n:\R e_0\oplus\R^n\to\widehat E_n^B$ defined by
\begin{equation}
\label{eq:hilbert-grid-isometry}
\iota_n e_0=e_0,
\qquad
\iota_n e_j=\sqrt n\,\ind_{I_{n,j}},\quad 1\le j\le n,
\end{equation}
restricts to an isometry from $\R e_0\oplus s_n^\perp$ onto
$\widehat E_n^A$, and satisfies
\begin{align}
\iota_n\bigl(n^{-1/2}\bK_n^\bullet\bigr)
&=\widehat{\bK}_n^\bullet,
\label{eq:hilbert-grid-body-isometry}
\\
\iota_n\bigl(\bW_n^\bullet(\rho\sqrt n)\bigr)
&=\widehat{\bW}_n^\bullet(\rho).
\label{eq:hilbert-grid-cone-isometry}
\end{align}
In particular, the grid cones $\widehat{\bW}_n^\bullet(\rho)$ are closed, nonzero, and pointed, with
dimensions $n+1$ in type $B$ and $n$ in type $A$.
\end{proposition}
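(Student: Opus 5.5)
The plan is to verify everything by direct computation on the finitely many generators, exploiting the explicit form of $P_n$ in \eqref{eq:hilbert-grid-projection} and of $\iota_n$ in \eqref{eq:hilbert-grid-isometry}.

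First, the isometry claims. The vectors $\sqrt n\,\ind_{I_{n,j}}$, $1\le j\le n$, are orthonormal in $L^2[0,1]$ and orthogonal to $e_0$, and they span $\mathbb H_n^B$, so $\iota_n$ is a surjective linear isometry onto $\widehat E_n^B$. Since $\iota_n s_n=\sqrt n\,\ind_{[0,1]}$, it maps $s_n^\perp$ onto the mean-zero step functions, i.e.\ onto $\mathbb H_n^A$; hence it restricts to an isometry $\R e_0\oplus s_n^\perp\to\widehat E_n^A$. Next, $\iota_n(n^{-1/2}s_i)=\ind_{[0,i/n]}=h^B_{i/n}$, and similarly $\iota_n(n^{-1/2}(s_i-(i/n)s_n))=h^A_{i/n}$. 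Taking convex hulls gives \eqref{eq:hilbert-grid-body-isometry}. Because $\iota_n e_0=e_0$, we also get $\iota_n(\rho\sqrt n\,e_0+s_i)=\sqrt n(\rho e_0+h^B_{i/n})$, and likewise in type $A$. Positive hulls are insensitive to the factor $\sqrt n$, so \eqref{eq:hilbert-grid-cone-isometry} follows. Closedness, pointedness and the dimensions then transfer from the simplicial cones $\bW_n^\bullet(\rho\sqrt n)$ of Subsection~\ref{subsec:type-B-bodies-and-cones}.

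Second, the projection images. For $t\in I_{n,j}$, $P_nh_t^B$ equals $1$ on the blocks before $j$, the constant $n(t-(j-1)/n)=:\lambda\in[0,1]$ on block $j$, and $0$ afterwards. Thus $P_nh_t^B=(1-\lambda)h^B_{(j-1)/n}+\lambda h^B_{j/n}\in\widehat{\bK}_n^B$, while $P_nh^B_{i/n}=h^B_{i/n}$. Since $P_n$ is continuous and linear, $P_n\bK_\infty^B$ is the image of a compact convex set (Lemma~\ref{lem:hilbert-brownian-bodies-and-cones}). This image equals the closed convex hull of $\{P_nh_t^B\}$, which is contained in $\widehat{\bK}_n^B$ and contains its vertices, giving equality. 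For type $A$, $P_n$ commutes with $P_\infty$, so $P_nh_t^A=P_\infty P_nh_t^B$ is the same convex combination of $h^A_{(j-1)/n}$ and $h^A_{j/n}$. Combined with \eqref{eq:hilbert-centering-bodies-and-cones}, this gives the body identity. The cone identity follows from \eqref{eq:hilbert-cone-base-and-curve}, linearity and $P_ne_0=e_0$.

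Third, the uniform bound \eqref{eq:hilbert-grid-uniform-approximation}. The map $f\mapsto\|f-P_nf\|_2$ is convex and continuous, so its supremum over $\bK_\infty^\bullet$ is attained at generators, i.e.\ it suffices to bound $\sup_t\|h_t^\bullet-P_nh_t^\bullet\|_2$. For type $B$, $h_t^B-P_nh_t^B$ is supported on one block of length $1/n$ and takes the values $1-\lambda$ and $-\lambda$ on subintervals of lengths $\lambda/n$ and $(1-\lambda)/n$. Its squared norm is therefore $\lambda(1-\lambda)/n\le 1/(4n)$. For type $A$, $h_t^A-P_nh_t^A=P_\infty(h_t^B-P_nh_t^B)$, and $P_\infty$ is a contraction, so the same bound holds.

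The only mildly delicate point is the reduction of the supremum to generators, which needs compactness and convexity; both are already provided by Lemma~\ref{lem:hilbert-brownian-bodies-and-cones}.
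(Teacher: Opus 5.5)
Your proof is correct and follows essentially the same route as the paper: the two-point interpolation $P_nh_t^\bullet=(1-\lambda)h^\bullet_{(j-1)/n}+\lambda h^\bullet_{j/n}$ combined with closed convex hulls, the one-block computation $\|h_t^B-P_nh_t^B\|_2^2=\lambda(1-\lambda)/n$ extended to the whole body by convexity and continuity, and the explicit action of $\iota_n$ on the generators. The differences are cosmetic. For type $A$ you use $P_nP_\infty=P_\infty P_n$ and the fact that $P_\infty$ is a contraction, whereas the paper subtracts $t\ind_{[0,1]}$ directly and notes that $h_t^A-P_nh_t^A=h_t^B-P_nh_t^B$ holds exactly. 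Also, the reduction of the supremum to the generators needs only the triangle inequality and continuity, not compactness.
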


\begin{proof}
For $i\in\{0,\ldots,n-1\}$ and $t\in[i/n,(i+1)/n]$, put
$r:=nt-i\in[0,1]$. Averaging the indicator $h_t^B$ gives
\begin{equation}
\label{eq:hilbert-grid-generator-interpolation}
P_nh_t^\bullet
=(1-r)h_{i/n}^\bullet+r h_{(i+1)/n}^\bullet,
\qquad \bullet\in\{B,A\}.
\end{equation}
For type $A$, the same identity follows by subtracting
$t\ind_{[0,1]}$, since
$(1-r)i/n+r(i+1)/n=t$ and $P_n$ fixes the constant function.
Thus the image of every generator $h_t^\bullet$ under $P_n$ belongs to the corresponding
grid body $\widehat{\bK}_n^\bullet$, and every grid generator $h_{i/n}^\bullet$ is fixed by $P_n$.
Taking closed convex hulls proves the first identity in
\eqref{eq:hilbert-grid-projection-images}; here the grid bodies
are finite-dimensional polytopes and hence closed.
The cone identity in \eqref{eq:hilbert-grid-projection-images} follows from linearity, $P_ne_0=e_0$, and
\eqref{eq:hilbert-cone-base-and-curve}.

The error $h_t^B-P_nh_t^B$ is supported on $I_{n,i+1}$, and
\[
\|h_t^B-P_nh_t^B\|_2^2
=\frac{r(1-r)^2+(1-r)r^2}{n}
=\frac{r(1-r)}n\le\frac1{4n}.
\]
Moreover, $h_t^A-P_nh_t^A=h_t^B-P_nh_t^B$.
The triangle inequality extends the bound $1/(2\sqrt n)$ to every
finite convex combination of the generators; continuity then
extends it to their closed convex hulls. This proves
\eqref{eq:hilbert-grid-uniform-approximation}.

The vectors $\sqrt n\,\ind_{I_{n,j}}$, $1\le j\le n$, form an
orthonormal basis of $\mathbb H_n^B$, so $\iota_n$ is an isometry.
Since $\iota_ns_n=\sqrt n\,\ind_{[0,1]}$, it maps
$s_n^\perp$ onto $\mathbb H_n^A$. The partial sums satisfy
\[
\iota_ns_i=\sqrt n\,h_{i/n}^B,
\qquad
\iota_n\left(s_i-\frac in s_n\right)
=\sqrt n\,h_{i/n}^A.
\]
Consequently, the definitions of the finite-dimensional bodies
and their grid counterparts give
\eqref{eq:hilbert-grid-body-isometry}.
Since
\[
\bW_n^\bullet(\rho\sqrt n)
=\pos\bigl(\rho e_0+n^{-1/2}\bK_n^\bullet\bigr),
\]
the cone identity
\eqref{eq:hilbert-grid-cone-isometry}
follows from linearity and $\iota_ne_0=e_0$.
The remaining assertions follow from the corresponding properties
of the finite cones established in
Subsection~\ref{subsec:type-B-bodies-and-cones}.
\end{proof}

\begin{theorem}[Conic intrinsic volumes of the OU spiral cones]
\label{thm:hilbert-brownian-ou-intrinsic-volumes}
For every integer $k\ge0$, every $\rho,T>0$, and
$\bullet\in\{B,A\}$,
\begin{align}
\upsilon_k\bigl(\bW_\infty^\bullet(\rho)\bigr)
&=L_k^\bullet(\rho),
\label{eq:hilbert-brownian-intrinsic-volumes}
\\
\upsilon_k(\bU_{\infty, T}^\bullet)
&=L_k^\bullet(\rho_\bullet(T)),
\label{eq:ou-spiral-intrinsic-volume-identification}
\end{align}
where $L_k^\bullet$ is defined in
Theorem~\ref{thm:finite-BA-scaling-limit} and $\rho_\bullet(T)$ in
\eqref{eq:ou-inverse-time-scales-repeat}.
Explicitly, for the ordinary Ornstein--Uhlenbeck spiral,
\begin{align}
\upsilon_0(\bU_{\infty, T}^B)
&=\frac1\pi\arcsin(\eee^{-T/2}),
\label{eq:ordinary-ou-spiral-zeroth-intrinsic-volume}
\\
\upsilon_k(\bU_{\infty, T}^B)
&=\frac1{\pi(k-1)!}
\int_0^{\arsinh\sqrt{\eee^T-1}}
\frac{x^{k-1}\,\dint x}{\sqrt{\eee^T-1-\sinh^2x}},
\qquad k\ge1.
\label{eq:ordinary-ou-spiral-intrinsic-volumes}
\end{align}
For the circular Ornstein--Uhlenbeck spiral,
\begin{align}
\upsilon_0(\bU_{\infty, T}^A)
&=\frac12\left(1-\tanh\frac T4\right),
\label{eq:circular-ou-spiral-zeroth-intrinsic-volume}
\\
\upsilon_k(\bU_{\infty, T}^A)
&=\frac{(T/2)^k}{2k!\sinh(T/2)},
\qquad k\ge1.
\label{eq:circular-ou-spiral-intrinsic-volumes}
\end{align}
For each of these cones, the sums of the intrinsic volumes over
the even and odd indices are both $1/2$; in particular, the total
mass is one.
\end{theorem}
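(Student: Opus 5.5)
The plan is to compute the intrinsic volumes of $\bW_\infty^\bullet(\rho)$ by approximating from inside with finite-dimensional grid cones, using Proposition~\ref{prop:hilbert-cone-inner-approximation}. The statement for $\bU_{\infty, T}^\bullet$ then follows from the isometry of Proposition~\ref{prop:hilbert-brownian-cones-ou-spirals}.

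The naive choice $\widehat{\bW}_n^\bullet(\rho)$ is not nested in $n$. I would therefore use the dyadic subsequence $n=2^m$. Since $h^\bullet_{i/2^m}=h^\bullet_{2i/2^{m+1}}$, the generators of $\widehat{\bK}_{2^m}^\bullet$ are among those of $\widehat{\bK}_{2^{m+1}}^\bullet$. Hence
\[
\widehat{\bW}_{2^m}^\bullet(\rho)\subseteq\widehat{\bW}_{2^{m+1}}^\bullet(\rho)\subseteq\bW_\infty^\bullet(\rho).
\]
For density, every generator $\rho e_0+h_t^\bullet$ is a norm limit of grid generators $\rho e_0+h^\bullet_{\lfloor 2^m t\rfloor/2^m}$, by norm-continuity of $t\mapsto h_t^\bullet$. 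Thus the closure of the union contains every generator, and by \eqref{eq:hilbert-cone-base-and-curve} it equals $\bW_\infty^\bullet(\rho)$. None of these cones is a linear subspace: all are pointed and nonzero, by Lemma~\ref{lem:hilbert-brownian-bodies-and-cones} and Proposition~\ref{prop:hilbert-brownian-grid-approximation}. Proposition~\ref{prop:hilbert-cone-inner-approximation} therefore gives
\[
\upsilon_k\bigl(\widehat{\bW}_{2^m}^\bullet(\rho)\bigr)\to\upsilon_k\bigl(\bW_\infty^\bullet(\rho)\bigr)
\]
for every $k\ge0$.

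Next, by the isometry \eqref{eq:hilbert-grid-cone-isometry}, $\upsilon_k(\widehat{\bW}_n^\bullet(\rho))=\upsilon_k(\bW_n^\bullet(\rho\sqrt n))$. The finite-dimensional intrinsic volumes agree with the Hilbert-space definition here, since these are non-subspace finite-dimensional cones. Theorem~\ref{thm:finite-BA-scaling-limit} with $\rho_n=\rho\sqrt n$, so $c=\rho$, gives convergence to $L_k^\bullet(\rho)$ along the full sequence, hence along $n=2^m$. This proves \eqref{eq:hilbert-brownian-intrinsic-volumes}.

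Isometry invariance and \eqref{eq:hilbert-brownian-ou-cone-isometry} with $\rho=\rho_\bullet(T)$ then yield \eqref{eq:ou-spiral-intrinsic-volume-identification}. The explicit formulas follow by substitution, exactly as in Step~2 of the proof of Theorem~\ref{thm:random-ou-cones-expected-intrinsic-volumes}, using
\[
\rho_B(T)^{-2}=\eee^T-1,\qquad \arctan\rho_B(T)=\arcsin(\eee^{-T/2}),\qquad \theta_{\rho_A(T)}=T/2,\qquad (1+4\rho_A(T)^2)^{-1/2}=\tanh(T/4).
\]
The parity statement is \eqref{eq:finite-BA-limiting-parity-normalization}.

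The only delicate point is the nestedness hypothesis of the inner approximation proposition, which the dyadic subsequence resolves. Everything else is assembly of earlier results.
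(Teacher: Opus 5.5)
Your proposal is correct and follows the paper's proof essentially step by step. You use the nested dyadic grid cones, the inner-approximation proposition, the isometry with $\bW_{2^m}^\bullet(\rho\sqrt{2^m})$, Theorem~\ref{thm:finite-BA-scaling-limit} with $c=\rho$, and finally the OU isometry and substitution; the only minor difference is that you get density of the union by approximating generators (tacitly using that the closure of a nested union of convex cones is a closed convex cone), whereas the paper uses the projections $P_{2^m}$ and the uniform bound \eqref{eq:hilbert-grid-uniform-approximation}, and both arguments work.
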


\begin{proof}
Fix $\rho>0$ and $\bullet\in\{B,A\}$.
The dyadic grid cones $\widehat{\bW}_{2^m}^\bullet(\rho)$,
$m\ge0$, are nested and contained in $\bW_\infty^\bullet(\rho)$,
since each dyadic grid is contained in the next one.
For type $A$, the omitted endpoint generator is already present
at time zero. Every $x\in\bW_\infty^\bullet(\rho)$ can be written
as $x=\lambda(\rho e_0+f)$ with $\lambda\ge0$ and
$f\in\bK_\infty^\bullet$. By
\eqref{eq:hilbert-grid-projection-images} and
\eqref{eq:hilbert-grid-uniform-approximation},
$P_{2^m}x\in\widehat{\bW}_{2^m}^\bullet(\rho)$ and
$\|x-P_{2^m}x\|\le\lambda/(2\sqrt{2^m})\to0$.
Thus
\[
\overline{\bigcup_{m\ge0}\widehat{\bW}_{2^m}^\bullet(\rho)}
=\bW_\infty^\bullet(\rho).
\]
All these cones are nonzero and pointed by
Lemma~\ref{lem:hilbert-brownian-bodies-and-cones} and
Proposition~\ref{prop:hilbert-brownian-grid-approximation}.
Proposition~\ref{prop:hilbert-cone-inner-approximation}, followed
by \eqref{eq:hilbert-grid-cone-isometry}, therefore gives
\[
\upsilon_k\bigl(\bW_\infty^\bullet(\rho)\bigr)
=\lim_{m\to\infty}
  \upsilon_k\bigl(\widehat{\bW}_{2^m}^\bullet(\rho)\bigr)
=\lim_{m\to\infty}
  \upsilon_k\bigl(\bW_{2^m}^\bullet(\rho\sqrt{2^m})\bigr)
=L_k^\bullet(\rho).
\]
The last equality is
Theorem~\ref{thm:finite-BA-scaling-limit} with $\rho_n=\rho\sqrt n$
and $c=\rho$, restricted to the dyadic subsequence.
This proves \eqref{eq:hilbert-brownian-intrinsic-volumes}.
Equation~\eqref{eq:ou-spiral-intrinsic-volume-identification} follows from
Proposition~\ref{prop:hilbert-brownian-cones-ou-spirals} and
isometry invariance of the conic intrinsic volumes.

The explicit formulas for the conic intrinsic volumes
$\upsilon_k(\bU_{\infty,T}^\bullet)$ follow by substituting
$c=\rho_\bullet(T)$ from \eqref{eq:ou-inverse-time-scales-repeat}
into the expressions for $L_k^\bullet(c)$ in
Theorem~\ref{thm:finite-BA-scaling-limit}.
For type $B$ and $k=0$, we use
$\arctan((\eee^T-1)^{-1/2})=\arcsin(\eee^{-T/2})$.
For type $A$, we use $\theta_{\rho_A(T)}=T/2$ and
$(1+4\rho_A(T)^2)^{-1/2}=\tanh(T/4)$.
The equalities between the sums of the intrinsic volumes over
even and odd indices follow from
\eqref{eq:finite-BA-limiting-parity-normalization}.
\end{proof}

\begin{corollary}[Grassmann angles of the OU spiral cones]
\label{cor:hilbert-ou-grassmann-absorption}
For every integer $d\ge1$, every $\rho,T>0$, and
$\bullet\in\{B,A\}$,
\begin{align}
\gamma_d\bigl(\bW_\infty^\bullet(\rho)\bigr)
&=2\sum_{j=0}^\infty
\upsilon_{d+1+2j}\bigl(\bW_\infty^\bullet(\rho)\bigr)
=q_d^\bullet(\rho),
\label{eq:hilbert-brownian-grassmann-absorption}
\\
\gamma_d(\bU_{\infty, T}^\bullet)
&=2\sum_{j=0}^\infty\upsilon_{d+1+2j}(\bU_{\infty, T}^\bullet)
=a_d^\bullet(T).
\label{eq:ou-spiral-grassmann-absorption}
\end{align}
Here $q_d^\bullet(\rho)=\Pp(\rho Z\in\mathcal K_d^\bullet)$
is the Gaussian-point absorption probability for the corresponding Brownian hull $\mathcal K_d^\bullet$ in $\R^d$, evaluated in
Theorem~\ref{thm:brownian-gaussian-absorption}, and
$a_d^\bullet(T)=\Pp(0\in\mathcal H_{d,T}^\bullet)$
is the origin absorption probability for the corresponding
OU convex hull in $\R^d$, evaluated in
Theorem~\ref{thm:ou-origin-absorption}.
\end{corollary}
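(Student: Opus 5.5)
The plan is to invert the Hilbert-space definitions by telescoping, using the total-mass information from Theorem~\ref{thm:hilbert-brownian-ou-intrinsic-volumes}, and then to read off the absorption probabilities from Theorem~\ref{thm:brownian-gaussian-absorption} and Theorem~\ref{thm:ou-origin-absorption}. Fix $\bullet\in\{B,A\}$ and $\rho>0$, and write $C:=\bW_\infty^\bullet(\rho)$. This cone is nonzero, pointed, and hence not a linear subspace, so the definitions \eqref{eq:hilbert-space-conic-intrinsic-volumes} apply to it. For $k\ge1$ they give
\[
\gamma_{k-1}(C)-\gamma_{k+1}(C)=2\upsilon_k(C).
\]

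Now fix $d\ge1$ and sum these relations over $k=d+1,d+3,\ldots,d+1+2J$. The sum telescopes to
\[
\gamma_d(C)-\gamma_{d+2J+2}(C)=2\sum_{j=0}^{J}\upsilon_{d+1+2j}(C).
\]
By Remark~\ref{rem:hilbert-conic-intrinsic-volumes-total-mass}, the total mass of the intrinsic volumes equals one if and only if $\gamma_N(C)\to0$. Theorem~\ref{thm:hilbert-brownian-ou-intrinsic-volumes} asserts total mass one, so $\gamma_N(C)\to0$. Letting $J\to\infty$ therefore yields the first equality in \eqref{eq:hilbert-brownian-grassmann-absorption}. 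Next substitute $\upsilon_k(C)=L_k^\bullet(\rho)$ from \eqref{eq:hilbert-brownian-intrinsic-volumes}. The resulting series $2\sum_{j}L^\bullet_{d+1+2j}(\rho)$ equals $q_d^\bullet(\rho)$ by \eqref{eq:type-B-brownian-gaussian-absorption} and \eqref{eq:type-A-brownian-gaussian-absorption}.

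For the OU spiral cones, we use the isometry of Proposition~\ref{prop:hilbert-brownian-cones-ou-spirals}, under which $\bU_{\infty,T}^\bullet$ is isometric to $\bW_\infty^\bullet(\rho_\bullet(T))$. Grassmann angles and conic intrinsic volumes are invariant under linear isometries, so the same chain of equalities holds for $\bU_{\infty,T}^\bullet$ with $\rho=\rho_\bullet(T)$. Finally, Theorem~\ref{thm:ou-origin-absorption} gives $q_d^\bullet(\rho_\bullet(T))=a_d^\bullet(T)$, which proves \eqref{eq:ou-spiral-grassmann-absorption}.

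The only delicate point is justifying that $\gamma_N(C)\to0$. In infinite dimensions this can fail, as the orthant example shows, but here it follows from the total-mass assertion combined with the partial-sum identity \eqref{eq:hilbert-conic-intrinsic-volume-partial-sums}. Everything else is bookkeeping, and all the series involved have nonnegative terms.
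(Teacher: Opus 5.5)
Your proposal is correct and follows essentially the same route as the paper: total mass one plus the partial-sum identity gives $\gamma_N(C)\to0$, telescoping the defining relations gives the series, and substituting $L_k^\bullet$ and applying Theorems~\ref{thm:brownian-gaussian-absorption} and~\ref{thm:ou-origin-absorption} identifies the sums. The only cosmetic difference is that you transfer the result to $\bU_{\infty,T}^\bullet$ via the isometry of Proposition~\ref{prop:hilbert-brownian-cones-ou-spirals}, whereas the paper runs the telescoping argument directly on both cones using the already-transferred intrinsic volumes; this amounts to the same thing.
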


\begin{proof}
Fix $\bullet\in\{B,A\}$ and $\rho,T>0$, and let
$C$ denote either $\bW_\infty^\bullet(\rho)$ or
$\bU_{\infty, T}^\bullet$. By
Theorem~\ref{thm:hilbert-brownian-ou-intrinsic-volumes},
we have
$\sum_{k=0}^\infty\upsilon_k(C)=1$.
Hence \eqref{eq:hilbert-conic-intrinsic-volume-partial-sums}
implies $\gamma_N(C)\to0$ as $N\to\infty$.
For $M\ge0$, telescoping
\eqref{eq:hilbert-space-conic-intrinsic-volumes} gives
\[
2\sum_{j=0}^M\upsilon_{d+1+2j}(C)
=\gamma_d(C)-\gamma_{d+2M+2}(C).
\]
Letting $M\to\infty$ proves the series representations
in \eqref{eq:hilbert-brownian-grassmann-absorption} and
\eqref{eq:ou-spiral-grassmann-absorption}.
Finally, substituting
\eqref{eq:hilbert-brownian-intrinsic-volumes} and
\eqref{eq:ou-spiral-intrinsic-volume-identification}
into these series and applying
Theorems~\ref{thm:brownian-gaussian-absorption} and
\ref{thm:ou-origin-absorption} identifies their sums with
$q_d^\bullet(\rho)$ and $a_d^\bullet(T)$, respectively.
\end{proof}

\subsection{Generating functions of the conic intrinsic volumes of the OU spiral cones}
\label{subsec:conic-generating-functions}
For $\rho>0$ and $\bullet\in\{B,A\}$, consider the generating function of the conic intrinsic volumes of the cone $\bW_\infty^\bullet(\rho)$:
\begin{equation}
\label{eq:hilbert-conic-generating-functions-definition}
G_\rho^\bullet(z)
:=\sum_{k=0}^\infty z^k
\upsilon_k\bigl(\bW_\infty^\bullet(\rho)\bigr)
=\sum_{k=0}^\infty z^kL_k^\bullet(\rho),
\end{equation}
where the last equality uses
Theorem~\ref{thm:hilbert-brownian-ou-intrinsic-volumes}.

\begin{proposition}[Generating functions for $\bW_\infty^\bullet(\rho)$]
\label{prop:hilbert-conic-generating-functions}
For every $\rho>0$, both power series defined in
\eqref{eq:hilbert-conic-generating-functions-definition}
converge absolutely and locally uniformly on $\mathbb C$.
They define entire functions given by
\begin{align}
G_\rho^B(z)
&=\frac{\arctan\rho}{\pi}
+\frac z\pi\int_0^{\arsinh(1/\rho)}
\frac{\eee^{zx}\,\dint x}{\sqrt{\rho^{-2}-\sinh^2x}},
\label{eq:type-B-hilbert-conic-generating-function}
\\
G_\rho^A(z)
&=\frac{\eee^{\theta_\rho z}-\eee^{-\theta_\rho}}
        {2\sinh\theta_\rho},
\qquad \theta_\rho=2\arsinh\frac1{2\rho}.
\label{eq:type-A-hilbert-conic-generating-function}
\end{align}
\end{proposition}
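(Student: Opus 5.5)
The plan is to substitute the explicit coefficients $L_k^\bullet(\rho)$ from Theorem~\ref{thm:finite-BA-scaling-limit} into the power series and sum them, first proving locally uniform absolute convergence so that sums and integrals can be interchanged. The type~$A$ case needs no integral at all.

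\emph{Type $A$.} By \eqref{eq:type-A-limiting-intrinsic-volumes} we have $|z^kL_k^A(\rho)|\le |z|^k\theta_\rho^k/(2k!\sinh\theta_\rho)$. This is dominated by an exponential series, so the series converges absolutely and locally uniformly on $\mathbb C$. Summing over $k\ge1$ gives $(\eee^{\theta_\rho z}-1)/(2\sinh\theta_\rho)$. For the constant term, $L_0^A(\rho)=\tfrac12(1-(1+4\rho^2)^{-1/2})$, and $\tanh(\theta_\rho/2)=(1+4\rho^2)^{-1/2}$ (as noted in the completion of the proof of Theorem~\ref{thm:finite-BA-scaling-limit}). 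Hence
\[
L_0^A(\rho)=\frac{1-\tanh(\theta_\rho/2)}{2}=\frac{1-\eee^{-\theta_\rho}}{2\sinh\theta_\rho},
\]
where the last equality uses $\sinh\theta-\cosh\theta+1=1-\eee^{-\theta}$. Adding the two pieces yields \eqref{eq:type-A-hilbert-conic-generating-function}.

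\emph{Type $B$.} Put $b:=\arsinh(1/\rho)$. The weight $w(x):=(\rho^{-2}-\sinh^2x)^{-1/2}$ is integrable on $(0,b)$, since $L_1^B(\rho)$ is finite; equivalently, the singularity at $x=b$ is of square-root type. For $|z|\le R$ we have
\[
|z^kL_k^B(\rho)|\le \frac{R}{\pi}\,\frac{(Rb)^{k-1}}{(k-1)!}\int_0^b w(x)\,\dint x,
\]
which gives absolute and locally uniform convergence on $\mathbb C$. By dominated convergence, using $\sum_k |zx|^{k-1}/(k-1)!\le \eee^{Rb}$ on $[0,b]$, we may exchange sum and integral. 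This gives
\[
\sum_{k\ge1}z^kL_k^B(\rho)=\frac z\pi\int_0^b \eee^{zx}w(x)\,\dint x.
\]
Adding $L_0^B(\rho)=\arctan(\rho)/\pi$ gives \eqref{eq:type-B-hilbert-conic-generating-function}.

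\emph{Entireness.} Both series converge locally uniformly, so they are entire. The integral expression is also visibly entire by differentiation under the integral sign, which is consistent with this.

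The only mildly delicate point is the integrability of $w$ near $x=b$, which justifies the domination. This is already implied by the finiteness of $L_1^B(\rho)$ established in Proposition~\ref{prop:finite-BA-continuum-integrals-evaluation}, or directly by $\sinh^2b-\sinh^2x\ge \sinh(2x)(b-x)$ near $b$.
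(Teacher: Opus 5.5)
Your proposal is correct and follows essentially the same route as the paper's proof. You substitute the explicit $L_k^\bullet(\rho)$, use the bound $L_k^B(\rho)\le b^{k-1}L_1^B(\rho)/(k-1)!$ to get locally uniform convergence, justify the interchange via $\sum_{k\ge1}z^kx^{k-1}/(k-1)!=z\eee^{zx}$, and in type $A$ rewrite $L_0^A(\rho)$ using $\tanh(\theta_\rho/2)=(1+4\rho^2)^{-1/2}$. Your extra check of the integrability of the weight near $x=b$ only makes explicit what the paper takes from the finiteness of $L_1^B(\rho)$.
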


\begin{proof}
We substitute the explicit formulas for $L_k^\bullet(\rho)$
from Theorem~\ref{thm:finite-BA-scaling-limit} into the
power series \eqref{eq:hilbert-conic-generating-functions-definition}
and evaluate the resulting sums.
For type $B$, put $b:=\arsinh(1/\rho)$.
By \eqref{eq:type-B-limiting-intrinsic-volumes}, for $k\ge1$,
\[
0\le L_k^B(\rho)
\le\frac{b^{k-1}}{(k-1)!}L_1^B(\rho).
\]
This proves absolute and locally uniform convergence of the
series defining $G_\rho^B$. It also justifies interchanging
summation and integration in
\eqref{eq:type-B-limiting-intrinsic-volumes}, since
\[
\sum_{k=1}^\infty
\frac{z^k x^{k-1}}{(k-1)!}
=z\eee^{zx}.
\]
Together with \eqref{eq:type-B-limiting-zeroth-intrinsic-volume},
this proves \eqref{eq:type-B-hilbert-conic-generating-function}.
For type $A$, \eqref{eq:type-A-limiting-intrinsic-volumes}
gives
\[
G_\rho^A(z)
=L_0^A(\rho)
+\frac{\eee^{\theta_\rho z}-1}{2\sinh\theta_\rho}.
\]
Since $\tanh(\theta_\rho/2)=(1+4\rho^2)^{-1/2}$,
\eqref{eq:type-A-limiting-zeroth-intrinsic-volume} yields
$L_0^A(\rho)=(1-\eee^{-\theta_\rho})/(2\sinh\theta_\rho)$.
This proves \eqref{eq:type-A-hilbert-conic-generating-function}.
Absolute and locally uniform convergence of the series defining
$G_\rho^A$ follows from~\eqref{eq:type-A-limiting-intrinsic-volumes}.
\end{proof}


\begin{corollary}[Generating functions for $\bU_{\infty, T}^\bullet$]
\label{cor:ou-spiral-conic-generating-functions}
For every $T>0$ and $z\in\mathbb C$,
\begin{align}
\sum_{k=0}^\infty z^k\upsilon_k(\bU_{\infty, T}^B)
&=\frac{\arcsin(\eee^{-T/2})}{\pi}
+\frac z\pi\int_0^{\arsinh\sqrt{\eee^T-1}}
\frac{\eee^{zx}\,\dint x}{\sqrt{\eee^T-1-\sinh^2x}},
\label{eq:ordinary-ou-spiral-conic-generating-function}
\\
\sum_{k=0}^\infty z^k\upsilon_k(\bU_{\infty, T}^A)
&=\frac{\eee^{zT/2}-\eee^{-T/2}}{2\sinh(T/2)}.
\label{eq:circular-ou-spiral-conic-generating-function}
\end{align}
\end{corollary}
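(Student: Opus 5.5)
The corollary follows by substituting $\rho=\rho_\bullet(T)$ from \eqref{eq:ou-inverse-time-scales-repeat} into Proposition~\ref{prop:hilbert-conic-generating-functions}. The identification \eqref{eq:ou-spiral-intrinsic-volume-identification} of Theorem~\ref{thm:hilbert-brownian-ou-intrinsic-volumes} gives $\upsilon_k(\bU_{\infty,T}^\bullet)=L_k^\bullet(\rho_\bullet(T))$ for every $k\ge0$. Hence the left-hand sides equal $G^\bullet_{\rho_\bullet(T)}(z)$, and absolute, locally uniform convergence on $\mathbb C$ is inherited from that proposition.

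For type $B$, I would use $\rho_B(T)^{-2}=\eee^T-1$, so $\arsinh(1/\rho_B(T))=\arsinh\sqrt{\eee^T-1}$, and the integrand denominator becomes $\sqrt{\eee^T-1-\sinh^2x}$. The constant term follows from $\arctan((\eee^T-1)^{-1/2})=\arcsin(\eee^{-T/2})$. To check this, put $\phi=\arcsin(\eee^{-T/2})$; then $\tan\phi=\eee^{-T/2}/\sqrt{1-\eee^{-T}}=(\eee^T-1)^{-1/2}$. This gives \eqref{eq:ordinary-ou-spiral-conic-generating-function}.

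For type $A$, I would use $\theta_{\rho_A(T)}=2\arsinh(\sinh(T/4))=T/2$, already recorded in the proof of Theorem~\ref{thm:ou-origin-absorption}. Substituting this into \eqref{eq:type-A-hilbert-conic-generating-function} gives $(\eee^{zT/2}-\eee^{-T/2})/(2\sinh(T/2))$, which is \eqref{eq:circular-ou-spiral-conic-generating-function}.

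There is no real obstacle here; the only points to verify are the elementary trigonometric identity in type $B$ and the value of $\theta_{\rho_A(T)}$ in type $A$.
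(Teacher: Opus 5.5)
Your proposal is correct and follows essentially the same route as the paper: you identify the series with $G^\bullet_{\rho_\bullet(T)}(z)$ using $\upsilon_k(\bU_{\infty,T}^\bullet)=L_k^\bullet(\rho_\bullet(T))$, which the paper obtains from the isometry of Proposition~\ref{prop:hilbert-brownian-cones-ou-spirals}. You then substitute $\rho_B(T)^{-2}=\eee^T-1$, $\arctan\rho_B(T)=\arcsin(\eee^{-T/2})$ and $\theta_{\rho_A(T)}=T/2$ into Proposition~\ref{prop:hilbert-conic-generating-functions}, as the paper does, and your verifications of these two identities are correct.
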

\begin{proof}
By Proposition~\ref{prop:hilbert-brownian-cones-ou-spirals},
the cones $\bU_{\infty, T}^\bullet$ and
$\bW_\infty^\bullet(\rho_\bullet(T))$ are isometric,
where $\rho_\bullet(T)$ is given in
\eqref{eq:ou-inverse-time-scales-repeat}.
By isometry invariance of the conic intrinsic volumes
and \eqref{eq:hilbert-conic-generating-functions-definition},
\begin{equation}
\label{eq:ou-spiral-conic-generating-functions}
\sum_{k=0}^\infty z^k\upsilon_k(\bU_{\infty, T}^\bullet)
=G_{\rho_\bullet(T)}^\bullet(z),
\qquad z\in\mathbb C.
\end{equation}
Substituting \eqref{eq:ou-inverse-time-scales-repeat} into
\eqref{eq:type-B-hilbert-conic-generating-function} and
\eqref{eq:type-A-hilbert-conic-generating-function},
and using
$\rho_B(T)^{-2}=\eee^T-1$,
$\arctan\rho_B(T)=\arcsin(\eee^{-T/2})$, and
$\theta_{\rho_A(T)}=T/2$,
gives the asserted formulas.
\end{proof}

\subsection{Small-opening limits and ordinary intrinsic volumes}
\label{subsec:small-opening-euclidean-intrinsic-volumes}

We finally relate conic intrinsic volumes to the ordinary intrinsic
volumes of compact convex sets. For a bounded base set $K$, the cone
$\pos(\sigma e_0+K)$ becomes concentrated around the positive
$e_0$-ray as $\sigma\to\infty$. The following theorem shows that,
after suitable rescaling, the conic intrinsic volumes of $\pos(\sigma e_0+K)$ converge to
explicit multiples of the ordinary intrinsic volumes of $K$.

For a nonempty compact convex set $K\subseteq\R^m$,  the ordinary intrinsic volumes are defined by the Steiner formula
\begin{equation}
\label{eq:small-opening-euclidean-steiner-normalization}
\operatorname{vol}_m(K+rB_m)
=\sum_{j=0}^m\kappa_{m-j}V_j(K)r^{m-j},\qquad r\ge0,
\end{equation}
where $B_m$ is
the closed unit ball, and $\kappa_m=\pi^{m/2}/\Gamma(m/2+1)$,
with $\kappa_0:=1$. For a nonempty compact convex set $K$ in a
real separable Hilbert space $\mathbb H$, define
\begin{equation}
\label{eq:small-opening-hilbert-euclidean-intrinsic-volumes}
V_j(K):=\sup_{K'\subseteq K}V_j(K')\in[0,\infty],
\qquad j\ge0,
\end{equation}
where $K'$ ranges over nonempty finite-dimensional compact convex
sets, with their intrinsic volumes computed in any finite-dimensional
Euclidean space containing them. In particular, $V_0(K)=1$.
The theory originates in work of Sudakov~\cite{Sudakov1976} and
Chevet~\cite{Chevet1976} and was further developed by
Tsirelson~\cite{Tsirelson1986}, who related intrinsic volumes to
Gaussian spectra; see also Vitale~\cite{Vitale2001} and
\cite[Sections~1.1 and~1.4]{KabluchkoZaporozhets2016}.

\begin{theorem}[Small opening limit]
\label{thm:general-small-opening-intrinsic-volumes}
Let $K$ be a nonempty compact convex subset of a real separable
Hilbert space $\mathbb H$, and suppose that $V_1(K)<\infty$.
Adjoin a unit vector $e_0$ orthogonal to $\mathbb H$, and define
\begin{equation}
\label{eq:small-opening-lifted-cones}
C_\sigma(K):=\pos(\sigma e_0+K)\subseteq\R e_0\oplus \mathbb H,
\qquad \sigma>0.
\end{equation}
These cones are closed, nonzero, and pointed. For every integer
$j\ge0$,
\begin{align}
\lim_{\sigma\to\infty}\sigma^j\gamma_j(C_\sigma(K))
&=\frac{2V_j(K)}{(j+1)\kappa_{j+1}},
\label{eq:general-small-opening-grassmann-limit}
\\
\lim_{\sigma\to\infty}\sigma^j\upsilon_{j+1}(C_\sigma(K))
&=\frac{V_j(K)}{(j+1)\kappa_{j+1}}.
\label{eq:general-small-opening-conic-limit}
\end{align}
Moreover, $\sigma\mapsto\sigma^j\gamma_j(C_\sigma(K))$ is
nondecreasing.
\end{theorem}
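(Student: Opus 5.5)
The approach is to reduce the Grassmann angles of $C_\sigma(K)$ to Gaussian-point absorption probabilities for Gaussian images of $K$, exactly as in the proof of Proposition~\ref{prop:finite-BA-gaussian-absorption}. The limit $\sigma\to\infty$ is then handled by the density-integral trick of Corollary~\ref{cor:brownian-gaussian-large-scale-and-volumes}, and the resulting expected volumes are identified with $V_j(K)$ through Tsirelson's formula~\eqref{eq:intro-tsirelson-brownian-hull-volumes}.

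Closedness and pointedness of $C_\sigma(K)$ follow as in Lemma~\ref{lem:hilbert-brownian-bodies-and-cones}. Every nonzero point has the form $\lambda(\sigma e_0+f)$ with $\lambda>0$, so its $e_0$-coordinate is positive, which gives pointedness. Intersections with balls are continuous images of compact sets $[0,R/\sigma]\times K$, which gives closedness.

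\emph{Step 1: the finite-dimensional case.} Let $K\subseteq\R^m$ and $1\le j$. Define a Gaussian map $\mathcal A$ on $\R e_0\oplus\R^m$ by $\mathcal Ae_0=-Z$ with $Z\sim N(0,I_j)$, and let $\mathcal A|_{\R^m}=G$ be a $j\times m$ standard Gaussian matrix independent of $Z$. The kernel argument of \eqref{eq:finite-BA-absorption-kernel-equivalence} gives
\[
\gamma_j(C_\sigma(K))=\Pp(\sigma Z\in GK).
\]
The degenerate cases where $j$ exceeds the dimension are trivial. Conditioning on $GK$ yields
\[
\sigma^j\gamma_j(C_\sigma(K))=(2\pi)^{-j/2}\,\Ee\int_{GK}\eee^{-\|x\|^2/(2\sigma^2)}\,\dint x.
\]
The right-hand side is visibly nondecreasing in $\sigma$, and by monotone convergence it tends to $(2\pi)^{-j/2}\Ee\operatorname{vol}_j(GK)$. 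Tsirelson's (Sudakov's) identity gives $\Ee\operatorname{vol}_j(GK)=j!\kappa_j(2\pi)^{-j/2}V_j(K)$. Combining this with $\kappa_j\kappa_{j+1}=2(2\pi)^j/(j+1)!$ turns the limit into $2V_j(K)/((j+1)\kappa_{j+1})$. The case $j=0$ is immediate, since $\gamma_0=1=V_0$ and $2/\kappa_1=1$.

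\emph{Step 2: passage to Hilbert space.} Choose nested finite-dimensional compact convex $K_1\subseteq K_2\subseteq\cdots\subseteq K$ whose union is dense in $K$, for instance convex hulls of increasing finite $1/n$-nets. Then the cones $C_\sigma(K_n)$ are nested, and their union is dense in $C_\sigma(K)$ by compactness of $K$. Proposition~\ref{prop:hilbert-cone-inner-approximation} therefore gives $\gamma_j(C_\sigma(K_n))\uparrow\gamma_j(C_\sigma(K))$. Since $\sigma^j\gamma_j(C_\sigma(K_n))$ is nondecreasing both in $n$ and in $\sigma$, the two suprema commute:
\[
\lim_{\sigma\to\infty}\sigma^j\gamma_j(C_\sigma(K))=\sup_n\frac{2V_j(K_n)}{(j+1)\kappa_{j+1}}.
\]
Monotonicity in $\sigma$ for $C_\sigma(K)$ also follows, as a supremum of nondecreasing functions. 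It remains to show $V_j(K_n)\uparrow V_j(K)$; this is the main obstacle.

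Monotonicity of $V_j$ gives $\le$. For $\ge$, I would use the hypothesis $V_1(K)<\infty$. By Sudakov--Tsirelson this says the isonormal process is a.s.\ bounded on $K$, equivalently $K$ is a GB set; I will assume it is even GC, as holds in the cases of interest. Then the Gaussian image $GK\subseteq\R^j$ is a well-defined a.s.\ compact set, and $GK_n\to GK$ in Hausdorff distance a.s. Lemma~\ref{lem:absorption-hausdorff-continuity}, together with boundedness of the Gaussian density, then lets one pass to the limit in $\Pp(\sigma Z\in GK_n)$ at each fixed $\sigma$. A cleaner alternative is to use that for any finite-dimensional $K'\subseteq K$ one has $V_j(K')\le\liminf_n V_j(\overline{\conv}(K_n\cup K'))$ by continuity of intrinsic volumes in finite dimensions, with the hulls increasing. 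If the GB/GC subtlety resists, I would instead cite the known fact that $V_j$ is continuous along increasing dense approximations when $V_1<\infty$.

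\emph{Step 3: the conic limit.} Apply \eqref{eq:hilbert-space-conic-intrinsic-volumes}:
\[
\sigma^j\upsilon_{j+1}=\tfrac12\bigl(\sigma^j\gamma_j-\sigma^{-2}\,\sigma^{j+2}\gamma_{j+2}\bigr).
\]
The second term tends to zero because $\sigma^{j+2}\gamma_{j+2}$ is bounded by its finite limit. Finiteness of that limit requires $V_{j+2}(K)<\infty$, which follows from $V_1(K)<\infty$ by the standard Sudakov--Tsirelson bound $V_k(K)\le V_1(K)^k/k!$.
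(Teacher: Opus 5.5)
Your Step~1 (kernel identity, density integral, monotone convergence, Gaussian representation of $V_j$) and your Step~3 (disposing of $\sigma^j\gamma_{j+2}$ via $V_{j+2}(K)\le V_1(K)^{j+2}/(j+2)!$) coincide with the paper's argument. The exchange of suprema in Step~2 is also fine.

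The gap is the step you flag yourself. Having fixed one nested sequence $K_n$, you still need $\sup_n V_j(K_n)=V_j(K)$, and none of your patches proves it.
\begin{itemize}
\item Assuming $K$ is GC adds a hypothesis. The condition $V_1(K)<\infty$ only gives GB, and compact convex GB sets need not be GC (e.g.\ the closed convex hull of $\{0\}\cup\{e_n/\sqrt{\log(n+1)}:n\ge1\}$ in $\ell^2$).
\item The ``cleaner alternative'' does not work as justified. The inequality $V_j(K')\le V_j(\convclos(K_n\cup K'))$ is trivial, but comparing with $V_j(K_n)$ requires $V_j(\convclos(K_n\cup K'))-V_j(K_n)\to0$. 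The Hausdorff distance does tend to zero, but the ambient dimension $\dim(\lin K_n+\lin K')$ grows, so finite-dimensional continuity gives nothing uniform. Already $V_1(\varepsilon B_N)\sim\varepsilon\sqrt{2\pi N}$ while $d_{\mathrm H}(\varepsilon B_N,\{0\})=\varepsilon$.
\item An unnamed ``known fact'' is not a proof.
\end{itemize}

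The missing idea is that no particular approximating sequence is needed. The Hilbert-space Grassmann angle is itself a supremum over finite-dimensional closed subcones $D\subseteq C_\sigma(K)$. Every nonzero such $D$ equals $C_\sigma(K_D)$ with $K_D:=\{h\in K:\sigma e_0+h\in D\}$, which is a finite-dimensional compact convex subset of $K$. Hence $\gamma_j(C_\sigma(K))=\sup_{K'}\gamma_j(C_\sigma(K'))$ over all finite-dimensional compact convex $K'\subseteq K$. Exchanging this supremum with $\sup_\sigma$ and using your Step~1 gives $\sup_{K'}2V_j(K')/((j+1)\kappa_{j+1})$. This equals $2V_j(K)/((j+1)\kappa_{j+1})$ by the very definition of $V_j(K)$, and it is the paper's route.

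Within your own framework the repair is equally short. For finite-dimensional $K'\subseteq K$, monotonicity of $\gamma_j$ under $C_\sigma(K')\subseteq C_\sigma(K)$ and the inner approximation give
$\sigma^j\gamma_j(C_\sigma(K'))\le\sup_n\sigma^j\gamma_j(C_\sigma(K_n))\le\sup_n 2V_j(K_n)/((j+1)\kappa_{j+1})$.
Let $\sigma\to\infty$ and take the supremum over $K'$.

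This part needs no assumption on $K$ at all. The hypothesis $V_1(K)<\infty$ is used only in your Step~3, so the GB/GC discussion was aimed at the wrong step.
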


The finiteness condition $V_1(K)<\infty$ in the theorem
has a probabilistic characterization in terms of
isonormal Gaussian processes. An
\emph{isonormal process} on $\mathbb H$ is a centered Gaussian family
$(\xi(h))_{h\in \mathbb H}$ with covariance
$\Ee[\xi(h)\xi(g)]=\langle h,g\rangle$.
A compact set is called \emph{Gaussian bounded} (GB) if the restriction
of this process has a version with almost surely bounded sample
paths. For compact convex $K$, Gaussian boundedness is equivalent
to $V_1(K)<\infty$.
Moreover, Chevet's inequality gives
\begin{equation}
\label{eq:small-opening-chevet-bound}
V_j(K)\le\frac{V_1(K)^j}{j!},\qquad j\ge1;
\end{equation}
see \cite{Chevet1976} and \cite[Theorem~1]{Vitale2001}.
Thus the assumption $V_1(K)<\infty$  guarantees finiteness of all
intrinsic volumes.

\begin{proof}[Proof of Theorem~\ref{thm:general-small-opening-intrinsic-volumes}.]
\emph{Step 1.} Convexity of $K$ gives
$C_\sigma(K)=\{\lambda(\sigma e_0+h):\lambda\ge0,\ h\in K\}$.
To see that $C_\sigma(K)$ is closed, suppose that
$\lambda_m(\sigma e_0+h_m)\to x$, where $\lambda_m\ge0$
and $h_m\in K$.
Taking $e_0$-coordinates yields
$\lambda_m\to\lambda:=\langle x,e_0\rangle/\sigma\ge0$.
By compactness of $K$, we may pass to a subsequence along which
$h_m\to h\in K$. Hence
$x=\lambda(\sigma e_0+h)\in C_\sigma(K)$.
The cone $C_\sigma(K)$ is nonzero because $K$ is nonempty, and it is pointed
because every nonzero vector in it has positive $e_0$-coordinate.

\medskip
\emph{Step 2: Finite-dimensional bases.}
Fix a nonempty finite-dimensional compact convex set $K'\subseteq K$
and an integer $j\ge1$. In this step we show that 
\begin{equation}
\label{eq:small-opening-finite-base-limit}
\sigma^j\gamma_j(C_\sigma(K'))
\uparrow\frac{j!\kappa_j}{(2\pi)^j}V_j(K')
=\frac{2V_j(K')}{(j+1)\kappa_{j+1}}.
\end{equation}
Let $F:=\lin K'$, let
$G:F\to\R^j$ be a standard Gaussian linear map, and let
$Z\sim N(0,I_j)$ be independent of $G$.
The map $A:\R e_0\oplus F\to\R^j$ given by
$A(ae_0+h)=-aZ+Gh$ is standard Gaussian in orthonormal bases.
When $j<\dim(\R e_0\oplus F)$, its kernel is uniformly distributed
among subspaces of codimension $j$. A nonzero vector
$\lambda(\sigma e_0+h)$, with $h\in K'$, belongs to this kernel
if and only if $Gh=\sigma Z$. Consequently,
\begin{equation}
\label{eq:small-opening-gaussian-kernel-identity}
\gamma_j(C_\sigma(K'))=\Pp(\sigma Z\in GK').
\end{equation}
If $j\ge\dim(\R e_0\oplus F)$, the map $A$ is injective almost
surely and both sides are zero, so the identity remains valid.
Thus \eqref{eq:small-opening-gaussian-kernel-identity}
holds for every integer $j\ge1$.
Conditioning on $G$ gives
\begin{equation}
\label{eq:small-opening-gaussian-density-identity}
\sigma^j\gamma_j(C_\sigma(K'))
=\frac1{(2\pi)^{j/2}}\,
\Ee\int_{GK'}\eee^{-\|x\|^2/(2\sigma^2)}\,\dint x.
\end{equation}
On the other hand, the Gaussian representation of Euclidean intrinsic volumes gives
\begin{equation}
\label{eq:small-opening-finite-gaussian-volume}
\Ee\,\operatorname{vol}_j(GK')
=\frac{j!\kappa_j}{(2\pi)^{j/2}}V_j(K');
\end{equation}
see \cite{Vitale2008}, or the finite-dimensional case of
Tsirelson's theorem~\cite{Tsirelson1986}, recalled in
\cite[Theorem~1.9]{KabluchkoZaporozhets2016}.
When $j>\dim K'$, both sides of
\eqref{eq:small-opening-finite-gaussian-volume} are zero.
The integrand in \eqref{eq:small-opening-gaussian-density-identity}
increases to one as $\sigma\uparrow\infty$. Monotone convergence
and the gamma duplication formula therefore yield~\eqref{eq:small-opening-finite-base-limit}. 

\medskip
\emph{Step 3: Passage to $K$.}
We claim that
\begin{equation}
\label{eq:small-opening-finite-base-supremum}
\gamma_j(C_\sigma(K))
=\sup_{K'\subseteq K}\gamma_j(C_\sigma(K')),
\end{equation}
where $K'$ ranges over nonempty finite-dimensional compact convex
subsets of $K$, as in
\eqref{eq:small-opening-hilbert-euclidean-intrinsic-volumes}.
Indeed, each $C_\sigma(K')$ is an admissible finite-dimensional
closed subcone of $C_\sigma(K)$. Conversely, for any nonzero finite-dimensional
closed subcone $D\subseteq C_\sigma(K)$, set
$K_D:=\{h\in K:\sigma e_0+h\in D\}$.
This is a nonempty compact convex set contained in the finite-dimensional
affine space $\lin D-\sigma e_0$. Normalizing every nonzero vector
of $D$ to have $e_0$-coordinate $\sigma$ gives
$D=C_\sigma(K_D)$. The zero subcone contributes nothing to the
supremum, proving \eqref{eq:small-opening-finite-base-supremum}.

By \eqref{eq:small-opening-finite-base-supremum}, the function
$\sigma\mapsto\sigma^j\gamma_j(C_\sigma(K))$ is a supremum of
nondecreasing functions and hence is nondecreasing. Taking suprema
in either order gives
\begin{equation}\label{eq:lim-sigma-gamma-C-sigma-proof}
\lim_{\sigma\to\infty}\sigma^j\gamma_j(C_\sigma(K))
=\sup_{\sigma>0}\sup_{K'\subseteq K}
  \sigma^j\gamma_j(C_\sigma(K'))
=\sup_{K'\subseteq K}
  \frac{2V_j(K')}{(j+1)\kappa_{j+1}}
=\frac{2V_j(K)}{(j+1)\kappa_{j+1}}.
\end{equation}
This argument assumed $j\geq 1$. For $j=0$, the assertion follows directly from
$\gamma_0(C_\sigma(K))=V_0(K)=1$ and $\kappa_1=2$.

\medskip
\emph{Step 4: Conic intrinsic volumes.}
Since $C_\sigma(K)$ is not a linear subspace,
\eqref{eq:hilbert-space-conic-intrinsic-volumes} gives
\[
\sigma^j\upsilon_{j+1}(C_\sigma(K))
=\frac12\sigma^j\gamma_j(C_\sigma(K))
 -\frac12\sigma^j\gamma_{j+2}(C_\sigma(K)).
\]
By \eqref{eq:small-opening-chevet-bound}, $V_{j+2}(K)<\infty$.
Equation~\eqref{eq:lim-sigma-gamma-C-sigma-proof}, applied at index $j+2$, therefore implies
$\sigma^j\gamma_{j+2}(C_\sigma(K))=O(\sigma^{-2})$.
Combining this with \eqref{eq:lim-sigma-gamma-C-sigma-proof}
proves \eqref{eq:general-small-opening-conic-limit}, including $j=0$.
\end{proof}

\begin{remark}
\label{rem:small-opening-without-finiteness}
Neither the proof of
\eqref{eq:general-small-opening-grassmann-limit}
nor that of the monotonicity assertion uses $V_1(K)<\infty$.
Consequently, for every nonempty compact convex $K$ and every
integer $j\ge0$,
\[
\sigma^j\gamma_j(C_\sigma(K))
\uparrow\frac{2V_j(K)}{(j+1)\kappa_{j+1}}
\qquad\text{as }\sigma\to\infty,
\]
where the limit may be infinite.
In particular, $V_1(K)=\pi\lim_{\sigma\to\infty} \sigma\gamma_1(C_\sigma(K))$.
Thus, finiteness of this limit verifies the assumption
$V_1(K)<\infty$ of
Theorem~\ref{thm:general-small-opening-intrinsic-volumes},
allowing us to apply its conclusion for conic intrinsic volumes.
\end{remark}

We apply Theorem~\ref{thm:general-small-opening-intrinsic-volumes}
to the two Brownian bodies. Together with the explicit formulas
for the conic intrinsic volumes of $\bW_\infty^\bullet(\rho)$
in Theorem~\ref{thm:hilbert-brownian-ou-intrinsic-volumes},
this yields the following formulas, originally obtained by
Gao--Vitale~\cite{GaoVitale2001} and Gao~\cite{Gao2003}.

\begin{corollary}[Intrinsic volumes of Brownian bodies]
\label{cor:hilbert-brownian-euclidean-intrinsic-volumes}
For every integer $j\ge0$,
\begin{equation}
\label{eq:hilbert-brownian-euclidean-intrinsic-volumes}
V_j(\bK_\infty^B)=\frac{\kappa_j}{j!},
\qquad
V_j(\bK_\infty^A)=\frac{\kappa_{j+1}}{2j!}.
\end{equation}
\end{corollary}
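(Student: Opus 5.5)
The plan is to apply Theorem~\ref{thm:general-small-opening-intrinsic-volumes} with $K=\bK_\infty^\bullet$ and $\mathbb H=\mathbb H^\bullet$. Then $C_\sigma(K)=\bW_\infty^\bullet(\sigma)$, by the definition \eqref{eq:hilbert-brownian-cones}. By Theorem~\ref{thm:hilbert-brownian-ou-intrinsic-volumes}, its conic intrinsic volumes are $L_k^\bullet(\sigma)$. So everything reduces to computing the limits $\lim_{\sigma\to\infty}\sigma^jL_{j+1}^\bullet(\sigma)$ and then multiplying by $(j+1)\kappa_{j+1}$.

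\emph{Verifying $V_1(K)<\infty$.} By Remark~\ref{rem:small-opening-without-finiteness}, $V_1(K)=\pi\lim_{\sigma\to\infty}\sigma\gamma_1(C_\sigma(K))$. By Corollary~\ref{cor:hilbert-ou-grassmann-absorption}, $\gamma_1(\bW_\infty^\bullet(\sigma))=q_1^\bullet(\sigma)$, and Corollary~\ref{cor:brownian-gaussian-absorption-consequences} gives this explicitly. In type $B$ we have $\sigma q_1^B(\sigma)=(2\sigma/\pi)\arctan(1/\sigma)\to2/\pi$, so $V_1=2$. In type $A$ we have $\sigma/\sqrt{1+4\sigma^2}\to1/2$, so $V_1=\pi/2$. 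Both values are finite, and they agree with $\kappa_1/1!=2$ and $\kappa_2/2=\pi/2$. This also settles the case $j=1$, and $j=0$ is trivial.

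\emph{Computing the limits for general $j$.} For type $A$, $\theta_\sigma=2\arsinh(1/(2\sigma))\sim1/\sigma$ and $\sinh\theta_\sigma\sim\theta_\sigma$. Hence
\[
\sigma^jL_{j+1}^A(\sigma)=\frac{\sigma^j\theta_\sigma^{j+1}}{2(j+1)!\sinh\theta_\sigma}\to\frac1{2(j+1)!}.
\]
It follows that $V_j(\bK_\infty^A)=(j+1)\kappa_{j+1}/(2(j+1)!)=\kappa_{j+1}/(2j!)$.

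For type $B$, substitute $u=\sigma\sinh x$, as in the proof of Corollary~\ref{cor:brownian-gaussian-large-scale-and-volumes}. This gives
\[
\sigma^jL_{j+1}^B(\sigma)=\frac1{\pi j!}\int_0^1\frac{\sigma^j(\arsinh(u/\sigma))^j}{\sqrt{1-u^2}\sqrt{1+u^2/\sigma^2}}\,\dint u.
\]
The integrand is dominated by $u^j/\sqrt{1-u^2}$, so dominated convergence yields the limit $\frac1{\pi j!}\int_0^1u^j(1-u^2)^{-1/2}\dint u=\frac{\Gamma((j+1)/2)}{2\sqrt\pi\,j!\,\Gamma(j/2+1)}$. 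Therefore $V_j(\bK_\infty^B)=(j+1)\kappa_{j+1}\Gamma((j+1)/2)/(2\sqrt\pi\,j!\,\Gamma(j/2+1))$.

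The only remaining step is to check the gamma identity. We have $(j+1)\kappa_{j+1}=2\pi^{(j+1)/2}/\Gamma((j+1)/2)$. Substituting this, the expression collapses to $\pi^{j/2}/(j!\,\Gamma(j/2+1))=\kappa_j/j!$, as claimed. I expect this last algebraic simplification to be the only point requiring care; the rest is direct substitution into earlier results.
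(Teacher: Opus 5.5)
Your proposal is correct and follows the paper's proof essentially step for step. You verify $V_1(\bK_\infty^\bullet)<\infty$ via Remark~\ref{rem:small-opening-without-finiteness} together with $\gamma_1(\bW_\infty^\bullet(\sigma))=q_1^\bullet(\sigma)$, compute $\lim_{\sigma\to\infty}\sigma^jL_{j+1}^\bullet(\sigma)$ (by the substitution $u=\sigma\sinh x$ and dominated convergence in type $B$, and by $\sigma\theta_\sigma\to1$ in type $A$), and multiply by $(j+1)\kappa_{j+1}$; your final gamma-function simplification is also correct.
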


\begin{proof}
We first verify the assumption
$V_1(\bK_\infty^\bullet)<\infty$ of
Theorem~\ref{thm:general-small-opening-intrinsic-volumes}
without relying on the formulas to be proved.
Remark~\ref{rem:small-opening-without-finiteness} at index $1$
and \eqref{eq:hilbert-brownian-grassmann-absorption} give
\[
V_1(\bK_\infty^\bullet)
=\pi\lim_{\rho\to\infty}\rho\,q_1^\bullet(\rho).
\]
By \eqref{eq:type-B-gaussian-absorption-dimension-one} and
\eqref{eq:type-A-gaussian-absorption-dimension-one},
the right-hand side equals $2$ in the type-$B$ case and
$\pi/2$ in the type-$A$ case. Thus both bodies satisfy
$V_1(\bK_\infty^\bullet)<\infty$, and Theorem~\ref{thm:general-small-opening-intrinsic-volumes}
applies.

We now evaluate the limits on the left-hand side of
\eqref{eq:general-small-opening-conic-limit}
for the two Brownian bodies.
For type $B$, \eqref{eq:hilbert-brownian-intrinsic-volumes} and the
substitution $u=\rho\sinh x$ in
\eqref{eq:type-B-limiting-intrinsic-volumes} yield
\[
\rho^j\upsilon_{j+1}(\bW_\infty^B(\rho))
=\frac1{\pi j!}\int_0^1
\frac{\bigl(\rho\arsinh(u/\rho)\bigr)^j}
     {\sqrt{1-u^2}\sqrt{1+u^2/\rho^2}}\,\dint u.
\]
Since $0\le\rho\arsinh(u/\rho)\le u$ and
$\rho\arsinh(u/\rho)\to u$, dominated convergence and the beta
integral give
\begin{equation}
\label{eq:type-B-hilbert-small-opening-limit}
\lim_{\rho\to\infty}\rho^j\upsilon_{j+1}(\bW_\infty^B(\rho))
=\frac1{\pi j!}\int_0^1\frac{u^j}{\sqrt{1-u^2}}\,\dint u
=\frac{\Gamma((j+1)/2)}{2\sqrt\pi\,j!\,\Gamma(j/2+1)}
=\frac{\kappa_j}{j!(j+1)\kappa_{j+1}}.
\end{equation}
For $j=0$, the powers in the integrand are interpreted as one.
For type $A$, $\theta_\rho=2\arsinh(1/(2\rho))$ satisfies
$\theta_\rho\to0$ and $\rho\theta_\rho\to1$, so
\begin{equation}
\label{eq:type-A-hilbert-small-opening-limit}
\rho^j\upsilon_{j+1}(\bW_\infty^A(\rho))
=\frac{(\rho\theta_\rho)^j}{2(j+1)!}
  \frac{\theta_\rho}{\sinh\theta_\rho}
\longrightarrow\frac1{2(j+1)!}.
\end{equation}
Multiplying the two limits by $(j+1)\kappa_{j+1}$ and using
\eqref{eq:general-small-opening-conic-limit} proves
\eqref{eq:hilbert-brownian-euclidean-intrinsic-volumes}.
\end{proof}

\section*{Acknowledgements}
This work was supported by the DFG under Germany's Excellence Strategy
EXC 2044/2--390685587, Mathematics M\"unster:
\emph{Dynamics-Geometry-Structure}; by the DFG Priority Programme
SPP 2265, \emph{Random Geometric Systems}; and by the DFG Research
Training Group \emph{Rigorous Analysis of Complex Random Systems}
(RTG 3027, Project Number 524444762).

\section*{Declarations}

\subsection*{Statement on the use of generative AI}

This paper grew out of an extended dialogue between the human author and ChatGPT~6 (OpenAI). One of the starting points was the approach of~\cite{KabluchkoZaporozhets2020}, where absorption probabilities for Gaussian polytopes are studied by means of conic integral geometry, in particular through Grassmann angles and conic intrinsic volumes of suitable deterministic cones. The initial program for the present work was to develop an analogous geometric approach to Gaussian-point absorption by Gaussian random-walk and random-bridge convex hulls and then to pass to the Brownian setting by a scaling limit. The latter step, as well as the determination and asymptotic analysis of the corresponding type~\(B\) and type~\(A\) cones, introduces substantial additional difficulties not present in the Gaussian-polytope setting.

ChatGPT~6 played a key role throughout the mathematical development
and writing of the paper. Its contributions included proposing and refining mathematical ideas, carrying out technical calculations, suggesting proof strategies and generalizations, deriving and checking formulas, identifying connections with related areas of probability and convex geometry, researching relevant literature, and assisting with the organization and writing of the manuscript. The work developed iteratively: suggestions generated by ChatGPT were examined, corrected, refined, or omitted from the final manuscript, while the human author supplied mathematical direction, additional ideas, research questions and editorial decisions throughout.
Mathematical arguments and claims included in the final manuscript were critically examined and independently verified by the human author, who takes full responsibility for the contents of the paper.

\subsection*{Conflict of interest statement}
The author declares that he has no conflict of interest.

\subsection*{Data availability statement}
We do not analyze or generate any datasets.

\bibliographystyle{amsplain}
\bibliography{brownian_convex_hulls_d_dim_bib}

\end{document}